%Summary of major changes: added to appendix B, Created Section 4, elaborated existing examples in Section 3, added further examples, improved intro, fixed various errors. Sec 2 is same

\documentclass[12pt, reqno]{amsart}
\usepackage[utf8]{inputenc}
\usepackage{mathtools}
\usepackage{amsmath}
\usepackage{amssymb}
\usepackage{graphicx}
\usepackage{changepage}
\graphicspath{ {images/} }
\usepackage{mathrsfs}
\usepackage{listings}
\usepackage{amsthm}
\usepackage[toc,page]{appendix}
\usepackage[margin=0.9in]{geometry}
\usepackage{framed,enumitem}
\usepackage{amsaddr}
\usepackage{newtxtext}
\usepackage{bbm}
%\usepackage{showlabels}
%\usepackage[compact]{titlesec}         % you need this package
%\titlespacing{\section}{0pt}{0pt}{0pt} % this reduces space between (sub)sections to 0pt, for example
%\AtBeginDocument{%                     % this will reduce spaces between parts (above and below) of texts within a (sub)section to 0pt, for example - like between an 'eqnarray' and text \setlength\abovedisplayskip{0pt}\setlength\belowdisplayskip{0pt}}

\usepackage[colorlinks = true,
linkcolor = blue,
urlcolor  = blue,
citecolor = blue,
anchorcolor = blue]{hyperref}

\usepackage{thmtools, thm-restate}

% \renewenvironment{abstract}
% {\small
% 	\begin{center}
% 		\bfseries \abstractname\vspace{-.5em}\vspace{0pt}
% 	\end{center}
% 	\list{}{%
% 		\setlength{\leftmargin}{14mm}% <---------- CHANGE HERE
% 		\setlength{\rightmargin}{\leftmargin}%
% 	}%
% 	\item\relax}
% {\endlist}

\newtheorem{thm}{Theorem}[section]
\newtheorem{defn}[thm]{Definition}
\newtheorem{cor}[thm]{Corollary}
\newtheorem{lem}[thm]{Lemma}
\theoremstyle{definition}
\newtheorem{prop}[thm]{Proposition}
\newtheorem{rk}[thm]{Remark}

\newcommand*{\Cdot}{{\raisebox{-0.5ex}{\scalebox{1.8}{$\cdot$}}}} 
%=================================
%Yier's macro

% semigroup

\renewcommand{\H}{H}
% Hilbert space

\newcommand{\B}{X}
%Banach space

\newcommand{\ball}{B}
%ball

%Brownian bridge

%Gaussian measure

\newcommand{\K}{K}
%compact unit ball

\newcommand{\normH}[1]{\Vert #1 \Vert_{\H}}
%norm of H
\newcommand{\normB}[1]{\Vert #1 \Vert_{\B}}
%norm of B
\newcommand{\normBdual}[1]{|#1|}
%norm of dual of B
\newcommand{\e}{\epsilon}
%shorthand notation for epsilon
\newcommand{\R}{\mathbb{R}}
\renewcommand{\P}{\mathbb{P}}
%probability measure

%covariance operator

%
\newcommand{\fo}{\quad f.o.}
%finitely often

\newcommand{\io}{\quad i.o.}
%infinitely often

\newtheorem{example}[thm]{Example}

\newcommand{\Csp}{C}
%continuous function
\newcommand{\RR}{R}
%multiplicative semigroup 
\newcommand{\w}{w}
%sample path in Wiener space
\newcommand{\Hsp}{H}
%Sobolev space
\newcommand{\h}{h}
%stochastic heat equation (additive)
\newcommand{\pa}{\partial}

\newcommand{\Cznorm}[1]{\|#1\|_{0}}
%\Csp_0 norm

%heat kernel

%Lp space
\newcommand{\normL}[1]{\|#1\|_{L^2}}
%L2 norm

\renewcommand{\Im}{\text{Im}}
%Image
%%%
\newcommand{\red}[1]{\textcolor{red}{#1}}

\newcommand{\N}{\mathbb{N}}
%=============================
%===============================

\numberwithin{equation}{section}

\title{A nonlinear Strassen Law for singular SPDEs}
\author{Shalin Parekh}

%\address[Yier Lin]{\hspace{22pt}Department of Mathematics, Columbia University}
\email[Shalin Parekh]{parekh@umd.edu}
\keywords{Large deviations of Wiener chaos, Singular SPDEs, Law of the iterated logarithm, Ergodicity}

\begin{document}

\begin{abstract}
A result of Arcones \cite{Arc95} implies that if a measure-preserving linear operator $S$ on an abstract Wiener space $(X,H,\mu)$ is strongly mixing, then the set of limit points of the random sequence $((2\log n)^{-1/2}S^nx)_{n\in\mathbb N}$ equals the closed unit ball of $H$ for a.e. $x \in X$, which may be seen as a generalization of the classical Strassen's law of the iterated logarithm. We extend this result to the case of a continuous parameter $n$ and higher Gaussian chaoses, and we also prove a contraction principle for Strassen laws of such chaoses under continuous maps. We then use these extensions to recover or prove Strassen-type laws for a broad collection of processes derived from a Gaussian measure, including ``nonlinear" Strassen laws for singular SPDEs such as the KPZ equation.
\end{abstract}

\maketitle

\tableofcontents 

\section{Introduction}

The law of the iterated logarithm for Brownian motion states that if $B$ is a standard Brownian motion then $\limsup_{t\to 0} (2t\log\log (1/t))^{-1/2}B_t = 1$. Strassen in a seminal work \cite{Str64} generalized this statement to show the functional form of this statement, namely that if we let $B^\epsilon(t) = \epsilon^{-1/2}B(\epsilon t)$ then the set of limit points as $\e \to 0$ in $C[0,1]$ of the sequence $\{(2\log\log(1/\epsilon))^{-1/2}B^\epsilon\}_\epsilon$ is almost surely equal to the closed unit ball of its Cameron-Martin space. Since Strassen's original work, there has been a tremendous effort resulting in a large literature expanding the scope of the theorem into many different settings including Banach space-valued processes \cite{KL73, Kue1, GKZ, Ale2}, Gaussian processes and higher chaoses \cite{Oo72, GK91, GK93, Arc95}, iterated processes \cite{Bur93, Arc95, CCFR95, Neu98}, stronger topologies \cite{PBK92}, sharper envelopes \cite{Rev79}, as well as more complicated stochastic processes driven by multiparameter fields and fractional processes \cite{Park75, DU99, CWZ04, MO05, Fa19}, etc. See \cite[Chapter 8]{LT96} for surveys of classical topics. % and results on the law of the iterated logarithm, see \cite[Chapter 8]{LT96} or \cite{Kue2, Bin}.

The goal of the present paper is to extend Strassen's law in yet another general direction, related to many of the aforementioned extensions, eventually culminating with a compact limit set theorem for the small-noise regime of subcritical singular SPDEs such as the KPZ equation, which have been of popular interest in the probability literature recently, see e.g. \cite{Cor12, CW15, CS19}, etc. One difference from the aforementioned results is that we take a dynamical-systems and semigroup-theory perspective on Strassen's law, rather than considering an arbitrary sequence of random variables in a Banach space. We are uncertain if this perspective is novel, or if it merely provides a convenient notational framework to summarize proof methods that may already be well-known to experts in the area. Nonetheless the dynamical systems perspective does allow us to concisely recover and generalize some of the more classical results cited above, in particular our main results (see Theorem \ref{mainr} and its generalizations Theorems \ref{gen}, and \ref{repar}) recover results from \cite{KL73, Park75, PBK92, Bur93,GK93,CCFR95,Arc95, Neu98, DU99, CWZ04}, see Section \ref{rats}.

First we establish some notation. If $\H$ is any real Hilbert space and $S:\H \to \H$ is any bounded operator, we denote by $S^*$ its adjoint operator with respect to the inner product of $\H$. Throughout this work we will use the notion of abstract Wiener spaces introduced by Leonard Gross \cite{Gro}. These are formal triples $(X,H,\mu)$ where $X$ is a Banach space, $\mu$ is a centered Gaussian measure on $X$, and $H\hookrightarrow X$ is the compactly embedded Cameron-Martin space.
%The subject of the current work will be a.e.-defined measure-preserving linear transformations from $\B\to\B.$ First we note that the set of all such linear transformations is in bijection with the set of bounded linear maps from $H\to H$ satisfying $SS^*=I$. Indeed, any such linear transformation necessarily maps $\H\to \H$ boundedly and satisfies $SS^*=I$ on $\H$. To prove this, let $S:E \to \B$ be a measure preserving linear map, where $E\subset \B$ satisfies $\mu(E)=1$. It is known (see Proposition 3.42 in \cite{Hai09}) that $H$ equals the intersection of all Borel-measurable linear subspaces of $\B$ of measure 1. If $F$ is any such subspace, then $S^{-1}(F)$ is also a subspace, and thus $x\in H$ implies $x\in S^{-1}(F)$ so that $Sx\in F$. This implies that $Sx\in H$ for $x \in H$. Thus $S$ is a globally defined Borel measurable map from $H\to H$, from which it follows that $S$ is automatically bounded (see remark 3.38 in \cite{Hai09}). In order for $S$ to be measure-reserving, it must then satisfy $SS^*=I$. Conversely, 
In this paper we will consider measure-preserving a.e. linear maps on $X$. These are Borel-measurable linear maps $S:E\to X$ where $E\subset X$ is a Borel-measurable linear subspace of full measure in $X$, such that $S$ preserves the measure $\mu$. It is known that any such map necessarily maps $H$ to itself and must satisfy $SS^*=I$ there, see Lemma \ref{bij}. 

Conversely, given any bounded linear map $S:\H \to \H$ satisfying $SS^*=I$, there exists a Borel-measurable linear subspace $E\subset X$ of full measure and a linear extension $\hat S:E \to \B$ of $S$ which is unique up to a.e. equivalence (this is despite the fact that $H$ may have measure zero). Moreover the condition $SS^*=I$ guarantees that this extension is measure-preserving. We refer to Lemma \ref{bij} for a proof of these statements. Letting $E_2:=\{x \in E: \hat Sx \in E\}$, we see that $E_2$ is a measurable linear subspace and we can sensibly define $\hat S^2x:=\hat S(\hat Sx)$ for $x\in E_2$. Moreover, the measure-preserving property implies that $\mu(E_2)=1$. Continuing iteratively we may define $E_{k+1} = \{x: \hat Sx\in E_k\},$ which will have full measure and admits a sensible definition of $\hat S^{k+1}$. Then $E_\infty:= \bigcap E_k$ will have full measure and admits a simultaneous definition of all $\hat S^n$. 

In the sequel we will not distinguish between $\hat S$ and $S$ and simply write $\hat S^n=:S_n$ without explicitly specifying these formalities of its definition on a linear subspace of full measure. The forward implication of the following result may be seen as a special case of \cite[Theorem 2.1]{Arc95} (albeit formulated differently) and the main focus of the present paper will be to extend it to continuous-parameter settings and higher Gaussian chaoses, ultimately culminating in a nonlinear version of Strassen's Law for some singular SPDEs. The reverse implication has not appeared in the literature as far as we know.

\begin{prop}\label{thm1} Let $(\B,\H,\mu)$ be an abstract Wiener space. Let $S: H \to H$ be a bounded linear operator and write $S_N:=S^N$ as explained above. Suppose that $SS^* = I$, i.e., $S$ is measure-preserving.

If $\langle S_Nx,y\rangle_\H\to 0$ as $N\to \infty$ for all $x,y \in H$ (i.e., $S$ is strongly mixing) then for $\mu$-almost every $x \in \B$, the set of limit points of $\{\frac{S_Nx}{\sqrt{2\log N}}: N \in \mathbb N\}$ is equal to the closed unit ball of $\H.$

Conversely, if the set of limit points of $\{\frac{S_Nx}{\sqrt{2\log N}}: N \in \mathbb N\}$ is a.s. equal to the closed unit ball of $\H,$ then one necessarily has $\frac1N\sum_{k=1}^N |\langle S_k x,y\rangle_H| \to 0$ for all $x,y\in H$ (i.e., $S$ is weakly mixing).
\end{prop}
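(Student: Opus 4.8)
The plan is to treat the two implications separately. The forward direction essentially recapitulates the argument behind \cite[Theorem 2.1]{Arc95}, so I would either cite that result directly or reproduce the standard two-part proof: writing $\K$ for the closed unit ball of $\H$, which is compact in $\B$ since $\H\hookrightarrow\B$ compactly, the \emph{upper bound} uses only that each $S_N$ is measure-preserving, so $S_Nx$ has law $\gms$ and, for any open $U\supseteq\K$, a standard consequence of Gaussian isoperimetry gives $\gms\big((\sqrt{2\log N}\,U)^c\big)\le N^{-1-\eta(U)}$ with $\eta(U)>0$; Borel--Cantelli then forces every limit point into $\bigcap_{U\supseteq\K}\overline U=\K$. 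For the \emph{lower bound} one fixes $h\in j(\B^\ast)$ with $\normH h<1$ (these are dense in the open unit ball), chooses a subsequence $N_j$ growing slowly enough that $\sum_j\gms(|S_{N_j}x/\sqrt{2\log N_j}-h|_\B<\e)=\infty$ (the probabilities being bounded below by the Cameron--Martin shift formula) yet with gaps $N_{j+1}-N_j\to\infty$; since $S^m(S^\ast)^m=I$ one checks $\mathbb E[\ell(S_{N_j}x)\ell(S_{N_k}x)]=\langle j\ell,(S^\ast)^{N_k-N_j}j\ell\rangle_\H\to0$, so strong mixing makes these events asymptotically independent, a Kochen--Stone type Borel--Cantelli makes $h$ a.s.\ a limit point, and taking closures (the limit set is closed and contains the open ball) yields all of $\K$.

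For the converse I would argue by contraposition: assuming $S$ is not weakly mixing, I will exhibit a single $h\in\K$ that is almost surely \emph{not} a limit point of $\{S_Nx/\sqrt{2\log N}\}$, contradicting the hypothesis. The first step is a spectral reduction. Since $SS^\ast=I$, the operator $T:=S^\ast$ satisfies $T^\ast T=SS^\ast=I$, i.e.\ $T$ is an isometry of $\H$, and for $g,h\in\H$ one has $|\langle S_kg,h\rangle_\H|=|\langle T^kh,g\rangle_\H|$, so $S$ is weakly mixing iff $T$ is. Apply the Wold decomposition $T=T_u\oplus T_s$ on $\H=\H_u\oplus\H_s$ with $T_u$ unitary and $T_s$ a unilateral shift; the shift part contributes Ces\`aro-negligibly because $T_s^{\ast k}\to0$ strongly, so the failure of weak mixing must originate in $T_u$, and by Wiener's lemma (together with the fact that for a bounded sequence $\tfrac1N\sum_{k\le N}|a_k|\to0$ iff $\tfrac1N\sum_{k\le N}|a_k|^2\to0$) $T_u$ fails to be weakly mixing exactly when its spectral measure has an atom, i.e.\ when $T_u$ has an eigenvalue on the unit circle. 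Over $\R$ this produces a nonzero $h$, normalised to $\normH h=1$, lying in a $T$-invariant --- hence also $S$-invariant --- subspace $W\subseteq\H$ of dimension $1$ or $2$, on which $S$ and therefore every $S_N$ and every $S_N^\ast=(S^\ast)^N=T^N$ acts by an orthogonal map of $W$.

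The second step exploits the rigidity of $W$. Fix an orthonormal basis of $W$ inside $\H$, consisting of one vector $e_1$ or two vectors $e_1,e_2$; the corresponding coordinates $\langle x,e_i\rangle_\H$ are i.i.d.\ standard Gaussians. Since $S_N^\ast h\in W$ with $\normH{S_N^\ast h}=1$ for every $N$, the first-chaos variable $\langle S_Nx,h\rangle_\H=\langle x,S_N^\ast h\rangle_\H$ is, for every $N$, a fixed unit-modulus linear combination of these one or two Gaussians, so $\sup_N|\langle S_Nx,h\rangle_\H|=:M(x)<\infty$ almost surely. Now use that $\B^\ast$ is dense in $\H$: pick $\ell\in\B^\ast$ with $\normH{j\ell-h}<\delta$ for a fixed $\delta<\tfrac12$, $j\ell\in\H$ being the representative of $\ell$. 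Then $\ell(S_Nx)=\langle x,S_N^\ast j\ell\rangle_\H=\langle S_Nx,h\rangle_\H+\langle x,S_N^\ast(j\ell-h)\rangle_\H$, where the first term is at most $M(x)$ and the second is a centered Gaussian of variance $\normH{j\ell-h}^2<\delta^2$ \emph{independent of $N$} (as $S_N^\ast=T^N$ is an isometry), so a Gaussian tail bound plus Borel--Cantelli give $\limsup_N|\langle x,S_N^\ast(j\ell-h)\rangle_\H|/\sqrt{2\log N}\le\delta$ a.s. Hence a.s.\ $\limsup_N|\ell(S_Nx)|/\sqrt{2\log N}\le\delta<1-\delta\le\langle j\ell,h\rangle_\H=\ell(h)$. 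But if $h$ were a limit point of $\{S_Nx/\sqrt{2\log N}\}$ in $\B$, then by continuity of $\ell$ the number $\ell(h)$ would be a subsequential limit of $\ell(S_Nx)/\sqrt{2\log N}$, forcing $\limsup_N|\ell(S_Nx)|/\sqrt{2\log N}\ge\ell(h)$ --- a contradiction. Thus a.s.\ $h\notin\mathrm{Lim}\{S_Nx/\sqrt{2\log N}\}$ although $h\in\K$, so the assumed equality fails, proving the contrapositive.

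I expect two points to carry the real difficulty, both in the converse. First, converting ``not weakly mixing'' into a genuine finite-dimensional rigid invariant subspace requires treating $T=S^\ast$ as an isometry (not a unitary) through the Wold decomposition and then applying the spectral theorem carefully over $\R$. Second, the rigid vector $h$ naturally lives in $\H$ whereas limit points in $\B$ can only be tested against $\B^\ast$-functionals; this gap is closed by the density of $\B^\ast$ in $\H$ combined with the crucial almost sure uniform-in-$N$ boundedness of $\langle S_Nx,h\rangle_\H$, which is exactly what makes the $\delta$-approximation lossless enough to beat the normalisation $\sqrt{2\log N}$. In the forward implication the only substantial work is the quantitative second Borel--Cantelli underlying the lower bound, where the subsequence must be chosen and strong mixing invoked so as to control the pairwise dependence.
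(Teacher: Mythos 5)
Your converse is correct and is, at its core, the same argument as the paper's: both extract from the failure of weak mixing a one- or two-dimensional subspace $W$ invariant under $S$ and $S^*$ on which $S$ acts orthogonally (the paper via Proposition \ref{prop1} \textit{(3d)}--\textit{(3e)}, you via the Wold decomposition of $T=S^*$ plus Wiener's lemma --- equivalent routes, and your observation that the shift part is Ces\`aro-negligible is the same reduction to the unitary part as in Lemma \ref{easy}). Where you diverge is the endgame: the paper splits $\xi=P\xi+Q\xi$ along $W\oplus W^\perp$, shows $(2\log n)^{-1/2}S_nP\xi\to0$, and confines the limit points of the $Q\xi$ part to the ball of $W^\perp$ via Lemma \ref{lem:part1}; you instead test against $\ell\in X^*$ with $\|j\ell-h\|_H<\delta$ and use the a.s.\ uniform-in-$N$ boundedness of $\langle S_Nx,h\rangle_H$ to get $\limsup_N|\ell(S_Nx)|/\sqrt{2\log N}\le\delta<\ell(h)$. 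Your version is a legitimate, slightly more self-contained alternative; the decomposition $\ell(S_Nx)=\langle x,S_N^*h\rangle+\langle x,S_N^*(j\ell-h)\rangle$, with the second term of $N$-independent variance below $\delta^2$ because $S_N^*$ is an isometry, is exactly the right bridge across the $H$-versus-$X^*$ gap you identify.

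The forward direction is where you genuinely part ways with the paper, and where your sketch has a gap if you do not simply cite \cite{Arc95}. The paper's lower bound reduces to finite dimensions via the projections $P_k$ and then invokes Lemma \ref{lem:lim1} (proved by Slepian comparison with an $\epsilon$-correlated stationary sequence) together with the rotation trick of Corollary \ref{cor:sphere} and the density statement of Lemma \ref{lem:s=b}; the only use of mixing is the qualitative statement $\mathrm{cov}(X_0,X_n)\to0$, which Slepian's lemma tolerates with no rate. Your proposed route --- Cameron--Martin lower bounds along a sparse subsequence $N_j$ plus a Kochen--Stone second Borel--Cantelli --- requires a quantitative bound of the form $\mu(A_j\cap A_k)\le(1+o(1))\,\mu(A_j)\mu(A_k)$. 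For jointly Gaussian events at level $u_N\asymp\sqrt{2\log N}$ with cross-correlation $\rho$, the multiplicative correction is of order $\exp(c\,\rho\,u_{N_k}^2)=N_k^{c'\rho}$; since strong mixing gives $\rho_{jk}\to0$ with \emph{no rate}, while the subsequence must grow at most polynomially for $\sum_j\mu(A_j)$ to diverge when $\|h\|_H<1$, this factor need not stay bounded and the second-moment estimate does not close. This is precisely why the paper remarks after Lemma \ref{lem:lim1} that this lemma is the only place the mixing hypothesis enters, and why it cannot handle the merely ergodic case. So either cite \cite{Arc95} for the lower bound, as you offer, or replace Kochen--Stone by the Slepian/finite-dimensional-reduction argument; as written, the asymptotic-independence step is the one genuinely missing piece. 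Your upper bound (Gaussian concentration plus Borel--Cantelli, using only the measure-preserving property) is fine and matches Lemma \ref{lem:part1} in substance.
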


This is proved as Proposition \ref{mr2.1} in the main body. Just to be clear, we are referring to limit points with respect to the topology of $\B$, and $S_Nx$ is meant to be understood in terms of the unique measurable linear extensions (explained above) if $x \in \B$. The equivalence of strong mixing or weak mixing to the stated conditions on the inner products $\langle S_kx,y\rangle$ is proved in Proposition \ref{prop1} below. It turns out that in many cases of interest, an even stronger mixing condition $\| S_Nx\|_\H\to 0$ is satisfied for all $x\in H$, and by Lemma \ref{easy} this can be equivalently stated as $\bigcap_{N \in \N} \Im(S_N^*)=\{0\}$, where $\Im(S)$ denotes the image in $H$ of $S$. This is easier to check in certain instances. Lemma \ref{sg'} provides another efficient way to check that the mixing condition is satisfied in many examples of interest.

Now we will formulate a continuous-time version of Proposition \ref{thm1}. Recall that a strongly continuous semigroup on a Banach space $\B$ is a semigroup $(S_t)_{t \geq 0}$ of bounded operators from $\B\to \B$ such that $\|S_tx-x\|_{\B}\to 0$ as $t \to 0$ for all $x\in \B$. Moreover, if $\B$ is a Banach space and if $\gamma:[0,\infty) \to \B$ is any function, then we call $x\in\B$ a \textit{cluster point at infinity} of $\gamma$ if there exists a sequence $t_n\uparrow \infty$ such that $\|\gamma(t_n)-x\|_{\B}\to 0$. Similarly if $\gamma:(0,1] \to \B$ then we will call $x\in\B$ a \textit{cluster point at zero} of $\gamma$ if there exists a sequence $\e_n\downarrow 0$ such that $\|\gamma(\e_n)-x\|_{\B}\to 0$.

\begin{prop}\label{thm 2}
Let $(\B,\H,\mu)$ be an abstract Wiener space. Suppose that $(S_t)_{t \ge 0}$ is a family of bounded operators from $H\to H$ satisfying the following four properties:

\begin{enumerate}[leftmargin = 2em]
    \item $S_{t+u}=S_tS_u$ for all $t,u \ge 0$.
    
    \item $S_tS_t^*=I$.
    
    \item $\langle S_tx,y\rangle_H\to 0$ as $t\to \infty$ for all $x,y\in \H$.
    
    \item $(S_t)_{t\ge 0}$ extends to a strongly continuous semigroup on the larger space $\B.$
\end{enumerate}
Then for $\mu$-almost every $x \in \B$, the set of cluster points at infinity of $\{\frac{S_tx}{\sqrt{2\log t}}: t \geq 0\}$ is equal to the closed unit ball of $\H.$

\end{prop}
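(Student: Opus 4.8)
\emph{Proof strategy.} The plan is to deduce this from the discrete statement, Proposition~\ref{thm1}, by sampling the flow along the arithmetic grid $\{N/m:N\in\N\}$ for each fixed mesh $1/m$ and then interpolating between grid points using the strong continuity hypothesis~(4). Write $\mathcal K$ for the closed unit ball of $\H$; it is a \emph{compact} subset of $\B$ since $\H\hookrightarrow\B$ compactly. By~(4) the operators $S_t$ extend to a $C_0$-semigroup on $\B$; since this extension is a Borel-measurable linear extension of $S_t|_\H$, by the uniqueness statement recalled in the introduction it agrees $\gms$-a.e. with the measurable linear extension used there, so after discarding a countable union of null sets we may work with the $\B$-extension, for which $S_tS_u=S_{t+u}$ and the continuity of $t\mapsto S_tx$ hold for \emph{every} $x\in\B$.

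First I would fix an integer $m\ge 1$ and apply Proposition~\ref{thm1} to the bounded operator $T_m:=S_{1/m}$ on $\H$: hypothesis~(2) gives $T_mT_m^*=I$, and hypothesis~(3) gives $\langle T_m^Nx,y\rangle_\H=\langle S_{N/m}x,y\rangle_\H\to 0$ as $N\to\infty$ for all $x,y\in\H$, so $T_m$ is measure-preserving and strongly mixing. Hence there is $\Omega_m\subset\B$ with $\gms(\Omega_m)=1$ such that for every $x\in\Omega_m$ the sequence $\bigl\{(2\log N)^{-1/2}S_{N/m}x\bigr\}_{N\in\N}$ is relatively compact in $\B$ with set of limit points exactly $\mathcal K$; I will use the relative compactness, i.e.\ that $\mathrm{dist}_\B\bigl((2\log N)^{-1/2}S_{N/m}x,\mathcal K\bigr)\to 0$, which is part of the conclusion of Proposition~\ref{thm1} (and of its proof via Borel--Cantelli). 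Put $\Omega:=\bigcap_{m\ge1}\Omega_m$, a set of full measure, and fix $x\in\Omega$. The inclusion ``$\mathcal K\subseteq$ cluster set of $(2\log t)^{-1/2}S_tx$'' is then immediate by taking $m=1$: any $h\in\mathcal K$ is the limit of $(2\log N_k)^{-1/2}S_{N_k}x$ along some $N_k\uparrow\infty$, and $t_k:=N_k\uparrow\infty$ with $(2\log t_k)^{-1/2}S_{t_k}x\to h$.

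The reverse inclusion is the crux. Given a cluster point $h$, say $(2\log t_k)^{-1/2}S_{t_k}x\to h$ with $t_k\uparrow\infty$, I would fix $m$, set $N_k:=\lfloor mt_k\rfloor$ and $s_k:=t_k-N_k/m\in[0,1/m)$, so that $S_{t_k}x=S_{s_k}\bigl(S_{N_k/m}x\bigr)$. Writing $y_k:=(2\log N_k)^{-1/2}S_{N_k/m}x$ and using $\log N_k/\log t_k\to1$, one has $(2\log t_k)^{-1/2}S_{t_k}x=(1+o(1))\,S_{s_k}y_k$. The naive estimate $\|(S_{s_k}-I)y_k\|_\B\le(\|S_{s_k}\|+1)\|y_k\|_\B$ is useless, since $\|y_k\|_\B$ is only bounded (of order one) rather than small — \textbf{this is the main obstacle}. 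The remedy: on $\Omega_m$ we have $\mathrm{dist}_\B(y_k,\mathcal K)\to0$, so picking $p_k\in\mathcal K$ with $\|y_k-p_k\|_\B\to0$ gives
\[
\|S_{s_k}y_k-p_k\|_\B\le\|S_{s_k}\|_{\B\to\B}\,\|y_k-p_k\|_\B+\|(S_{s_k}-I)p_k\|_\B\le M\,\|y_k-p_k\|_\B+\omega(1/m),
\]
where $M:=\sup_{0\le s\le1}\|S_s\|_{\B\to\B}<\infty$ (uniform boundedness principle) and $\omega(\delta):=\sup_{0\le s\le\delta}\sup_{p\in\mathcal K}\|(S_s-I)p\|_\B$. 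Since $(s,p)\mapsto S_sp$ is jointly continuous on $[0,1]\times\B$ and $\mathcal K$ is $\B$-compact, this map is uniformly continuous on $[0,1]\times\mathcal K$, whence $\omega(\delta)\to0$ as $\delta\downarrow0$. Letting $k\to\infty$ yields $\mathrm{dist}_\B(h,\mathcal K)\le\omega(1/m)$, and then $m\to\infty$ forces $h\in\mathcal K$.

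In summary, I expect the only genuinely delicate point to be the oscillation estimate above: although $(2\log t)^{-1/2}S_tx$ is not controlled uniformly enough for a crude semigroup bound to close the argument, it is asymptotically close to the $\B$-compact set $\mathcal K$, on which the $C_0$-semigroup acts $\B$-equicontinuously over short time intervals. The reduction along grids, the countable intersection of the $\Omega_m$, and the a.e.\ identification of the various extensions of $S_t$ should all be routine bookkeeping.
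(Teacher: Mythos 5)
Your argument is correct in outline, but it takes a genuinely different route from the paper. The paper proves Lemma \ref{glue}: viewing $(S_t\xi)_{t\in[0,1]}$ as a Gaussian random variable in $C([0,1],\B)$, it uses Fernique, the concentration bound of Lemma \ref{ledoux}, stationarity of $k\mapsto \sup_{t\in[k\rho,(k+1)\rho]}\|S_tx-S_{k\rho}x\|_\B$, and Borel--Cantelli to show that this oscillation, divided by $\sqrt{\log k}$, has $\limsup$ at most $4C(\rho)$ with $C(\rho)\to0$; hence the continuous-time path and its restriction to the grid $\rho\mathbb N$ have the same cluster set, and Proposition \ref{thm1} finishes. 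You instead sample along $\{N/m\}$, and for the interpolation you replace the probabilistic oscillation estimate by a purely deterministic one: equicontinuity of the $C_0$-semigroup on the $\B$-compact set $\mathcal K=B(H)$, applied to a point $p_k\in\mathcal K$ near $y_k$. That is a nice trade: your interpolation step needs no Gaussian analysis at all, whereas the paper's needs no information about where the discrete sequence sits. The price is that your argument requires the quantitative input $\mathrm{dist}_\B\bigl((2\log N)^{-1/2}S_{N/m}x,\mathcal K\bigr)\to0$, which the paper's route never uses.

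That input is the one step you should not wave through. The stated conclusion of Proposition \ref{thm1} is only that the \emph{set of limit points} equals $\mathcal K$; in an infinite-dimensional Banach space this does not formally imply $\mathrm{dist}_\B(a_N,\mathcal K)\to0$, since a subsequence could simply have no limit points (compare $a_N=e_N$ for an orthonormal basis). So "relative compactness" is an additional assertion requiring its own proof --- it is exactly the content of Lemma \ref{ar} (specialized to $m=1$, $k_1=1$, $T^1=\mathrm{id}$), or, for first-order chaos, of a Gaussian isoperimetric/large-deviation estimate of the type $\sum_N\mu\bigl((2\log N)^{-1/2}\xi\notin\mathcal K^\e\bigr)<\infty$. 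Once you cite or prove that, the rest of your argument closes: the identification of the $\B$-extension with the measure-preserving extension, the comparison $\log\lfloor mt_k\rfloor/\log t_k\to1$, the bound $\|S_{s_k}y_k-p_k\|_\B\le M\|y_k-p_k\|_\B+\omega(1/m)$ with $\omega(\delta)\to0$ by uniform convergence of $S_s\to I$ on the compact set $\mathcal K$, and the limits $k\to\infty$ then $m\to\infty$ are all sound.
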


This will be proved by combining the previous proposition with Lemma \ref{glue} in the main body. Now let us relate this to the usual law of the iterated logarithm. We will say that a strongly continuous \textit{multiplicative} semigroup is a family of bounded operators $(R_{\epsilon})_{\epsilon \in (0,1]}$ satisfying $R_{\epsilon}R_{\delta} = R_{\epsilon\delta}$ and moreover $\|R_{\epsilon}x-x\|_{\B} \to 0$ as $\epsilon \uparrow 1$ for all $x\in\B$. Then by setting $S_t:=R_{e^{-t}}$, Proposition \ref{thm 2} can be restated in terms of multiplicative semigroups as follows.
\\
\\
Let $(\B,\H,\mu)$ be an abstract Wiener space. Suppose that $(R_\epsilon)_{\epsilon\in (0,1]}$ is a family of bounded operators from $H \to H$ satisfying the following four properties:

\begin{enumerate}
    \item $R_{\epsilon}R_{\delta} = R_{\epsilon\delta}$ for all $\epsilon,\delta\in (0,1]$.
    
    \item $R_{\epsilon}R_{\epsilon}^*=I$.
    
    \item $\langle R_{\epsilon}x,y\rangle_H\to 0$ as $\epsilon \to 0$ for all $x,y\in \H$.
    
    \item $(R_{\epsilon})_{\epsilon\in (0,1]}$ extends to a strongly continuous multiplicative semigroup on the larger space $\B.$
\end{enumerate}
Then for $\mu$-almost every $x \in \B$, the set of cluster points at zero of $\{\frac{R_{\epsilon}x}{\sqrt{2\log \log (1/\epsilon)}}: \epsilon\in (0, 1]\}$ is equal to the closed unit ball of $\H.$
\\
\\
The quintessential example of such a setup is the classical Wiener space $\B=C[0,1], \H=H_0^1[0,1]$, and $\mu$ is the law of a standard Brownian motion. Then one sets $R_{\epsilon}f(x) = \epsilon^{-1/2}f(\epsilon x).$ One easily checks that $\|R_\e f\|_{H_0^1} \to 0$ as $\e\to 0$ for all $f\in H_0^1[0,1],$ and thus the above statement recovers the classical version of Strassen's law. Another important example in the context of stochastic PDEs is the additive-noise stochastic heat equation $h$ driven by space-time white noise, started either from two-sided Brownian motion or from zero initial condition, in which one obtains a compact limit set theorem for the $\e$-indexed family of functions $\epsilon^{-1/2} h(\epsilon^2 t, \epsilon x)$, see Example \ref{additive she} below. We will give a self-contained proof of Propositions \ref{thm1} and \ref{thm 2} in Subsection \ref{sub_mr} below.%, as well as the converse statement which says that Strassen's law necessarily implies ergodicity of the operator semigroup $S$. 

Next let us discuss our main result of the paper, the generalization to higher chaos and the contraction principle. If $(\B,\H,\mu)$ is an abstract Wiener space, then the $n^{th}$ homogeneous Wiener chaos, denoted by $\mathcal H^k(\B,\mu)$ is defined to be the closure in $L^2(\B,\mu)$ of the linear span of $H_k \circ g$ as $g$ varies through all elements of the continuous dual space $X^*,$ where $H_k$ denotes the $k^{th}$ Hermite polynomial (normalized so that $H_k(Z)$ has unit variance for a standard normal $Z$). Letting $Y$ be another separable Banach space, a Borel-measurable map $T:\B\to Y$ is called a chaos of order $k$ over $X$ if $f\circ T \in \mathcal H^k(\B,\mu)$ for all $f \in Y^*$. For such a chaos we may define (following \cite{Led96, HW13}) its ``homogeneous form" $T_{\mathrm{hom}} : H \to Y_i$ by the Bochner integral formula $$T_{\mathrm{hom}}(h):= \int_X T(x+h)\mu(dx) = \frac1{k!} \int_X T(x) \langle x,h\rangle^k\mu(dx),$$ which may be shown to converge, see Proposition \ref{eq'} below which is based on work of \cite{AG93}. 

For an abstract Wiener space $(X,H,\mu)$ we always let $B(H)$ denote the closed unit ball of the Cameron-Martin space $H$. Our main result in this work is the following.

\begin{thm}\label{mainr}
Let $(\B,\H,\mu)$ be an abstract Wiener space, and let $(R_\epsilon)_{\epsilon\in (0,1]}$ be a family of Borel-measurable a.e. linear maps from $\B\to \B$ which are measure-preserving and satisfy $\langle R_\e x,y\rangle_H \to 0$ as $\e \to 0$ for all $x,y\in H$ (i.e., each $R_\e$ is strongly mixing). Let $T^i:\B\to Y_i$ be a chaos of degree $k_i$ over $X$ for $1\le i \le m$, where $Y_i$ may be any separable Banach spaces. Suppose that there exist strongly continuous semigroups $(Q^i_\epsilon)_{\epsilon\in (0,1]}$ of operators from $Y_i\to Y_i$ for $1\le i \le m$ such that 
\begin{equation}\label{commute}T^i\circ R_\epsilon = Q^i_\epsilon \circ T^i, \;\;\;\;\;\;\;\; \mu\text{-a.e.}\;\;\;\;\;\; \text{ for all } \e\in (0,1] \text{ and } 1\le i \le m.
\end{equation}
Let $\mathcal M\subset Y_1\times \cdots Y_m$ be a closed subset, such that for all $\delta>0$  $$\mu(\{x\in X: (\delta^{k_1} T^1(x),...,\delta^{k_m} T^m(x)) \in \mathcal M\})=1.$$
Then the compact set $K:=\{(T^1_{\mathrm{hom}}(h),...,T^m_{\mathrm{hom}}(h)):h \in B(H)\}$ is contained in $\mathcal M$. Moreover for any Banach space $Z$ and any continuous map $\Phi:\mathcal M \to Z$, the set of cluster points at zero of
$$\big\{ \Phi\big( (2\log \log(1/\e))^{-k_1/2} Q_\e^1T^1 (x),\;.\;.\;.\;,(2\log \log(1/\e))^{-k_m/2}Q_\e^mT^m (x)\big): \e \in (0,1]\}$$ is almost surely equal to $\Phi(K).$
\end{thm}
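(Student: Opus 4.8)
The plan is to reduce Theorem~\ref{mainr} to the abstract semigroup Strassen law (Propositions~\ref{thm1} and~\ref{thm 2}) applied on a suitably enlarged Wiener space, and then transport the conclusion through the chaos maps $T^i$ and the continuous map $\Phi$. First I would form the product-type abstract Wiener space on which all the $T^i$ live simultaneously, and lift the family $(R_\e)$ to a strongly mixing measure-preserving family there; the hypothesis $\langle R_\e x,y\rangle_H\to 0$ is exactly the strong mixing needed. The key point is that a strongly mixing family of measure-preserving maps conjugating the $T^i$ to the semigroups $Q^i_\e$ forces the vector $(Q^1_\e T^1,\dots,Q^m_\e T^m)(x)$ to itself be a ``chaotic analogue'' of the Strassen setup: after the change of variables $\e=e^{-e^t}$ the relation \eqref{commute} turns the one-parameter family into a genuine semigroup action, and Proposition~\ref{thm 2} (in its multiplicative-semigroup reformulation, and its higher-chaos extension announced in the introduction, i.e.\ the results the paper proves in the main body) identifies the cluster set at zero of the normalized Wiener-chaos-valued process. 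Concretely, the normalization $(2\log\log(1/\e))^{-k_i/2}$ is precisely the correct scaling for a degree-$k_i$ chaos, and the limit set of the $i$-th coordinate alone is $\{T^i_{\mathrm{hom}}(h):h\in B(H)\}$ by the definition of the homogeneous form together with the chaos large-deviation input (Proposition~\ref{eq'} and the Arcones-type estimates on Wiener chaos).

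Next I would handle the joint cluster set. The subtlety is that cluster sets of a vector-valued family are in general smaller than the product of the coordinate cluster sets, so one must show that the joint cluster set of $\big((2\log\log(1/\e))^{-k_1/2}Q^1_\e T^1(x),\dots\big)$ equals exactly $K=\{(T^1_{\mathrm{hom}}(h),\dots,T^m_{\mathrm{hom}}(h)):h\in B(H)\}$ and not something larger or smaller. The containment ``$\subseteq K$'' (the upper bound) follows from a joint large deviation / concentration estimate: along any sequence $\e_n\downarrow 0$ one writes $R_{\e_n}x$ in terms of the mixing structure and uses that the Wiener chaos tail bounds (of Borell--Arcones type) give exponential control with the right rate function, whose zero set on $B(H)$ produces exactly the points $T^i_{\mathrm{hom}}(h)$. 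The containment ``$\supseteq K$'' (the lower bound, i.e.\ every point of $K$ is attained infinitely often) is where the mixing hypothesis does the real work: strong mixing lets one choose a sparse subsequence $\e_n\downarrow 0$ along which the increments $R_{\e_n}x$ are asymptotically independent, so a Borel--Cantelli argument shows that for each fixed $h\in B(H)$ the event ``$(2\log\log(1/\e_n))^{-k_i/2}Q^i_{\e_n}T^i(x)$ is within $\delta$ of $T^i_{\mathrm{hom}}(h)$ for all $i$'' occurs infinitely often a.s.; a countable-dense exhaustion of $B(H)$ and continuity of $h\mapsto(T^1_{\mathrm{hom}}(h),\dots,T^m_{\mathrm{hom}}(h))$ then gives all of $K$. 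This is the step I expect to be the main obstacle, because it requires the joint independence extraction to respect all $m$ chaoses at once and because the maps $T^i$ are only a.e.\ defined, so one must be careful that $R_{\e_n}x$ lands in the full-measure domains along the chosen subsequence.

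Having established that the joint cluster set at zero of the un-transformed family is exactly $K$, the statement $K\subseteq\mathcal M$ follows immediately: for each fixed $\delta>0$ the scaled vector $(\delta^{k_1}T^1(x),\dots,\delta^{k_m}T^m(x))$ lies in $\mathcal M$ a.s., hence so does $(Q^1_\e T^1(x),\dots)$ for the corresponding $\e$ (using \eqref{commute} to rewrite $\delta^{k_i}T^i$ in terms of $Q^i$), and since $\mathcal M$ is closed and $K$ is a subset of the cluster set, $K\subseteq\mathcal M$. Finally, to pass to $\Phi(K)$: since $K$ is compact (it is the continuous image of the compact ball $B(H)$ under $h\mapsto(T^1_{\mathrm{hom}}(h),\dots)$, compactness of $B(H)$ in $X$ coming from the compact embedding $H\hookrightarrow X$ together with continuity of each $T^i_{\mathrm{hom}}$) and $\Phi$ is continuous on $\mathcal M\supseteq K$, the cluster set at zero of $\Phi$ applied to a family whose cluster set is $K$ is exactly $\Phi(K)$ — this is the elementary topological fact that for a continuous $\Phi$ and a family $\gamma$ with cluster set $C$ contained in the domain of $\Phi$, the cluster set of $\Phi\circ\gamma$ equals $\Phi(C)$, valid because $\Phi$ maps convergent sequences to convergent sequences and, conversely, any cluster point of $\Phi\circ\gamma$ pulls back along a subsequence (using sequential compactness of $K$) to a cluster point of $\gamma$. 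Combining these gives the claim.
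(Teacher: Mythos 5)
Your overall architecture (upper bound from chaos tail estimates, lower bound from mixing, then push through $\Phi$ by compactness of $K$) matches the paper's, but two of your key mechanisms do not work as stated. First, for the lower bound you propose to extract a sparse subsequence along which the $R_{\e_n}x$ are ``asymptotically independent'' and then apply Borel--Cantelli. The mixing hypothesis only yields decay of covariances of the stationary Gaussian sequence $\langle S_n\xi,h\rangle_H$; it does not produce an independent (or even pairwise independent) subsequence, and the second Borel--Cantelli lemma is not applicable to merely asymptotically independent events. The paper's proof runs the lower bound entirely through Lemma \ref{lem:lim1} (a Slepian-comparison statement: for a stationary standard Gaussian sequence with $\mathrm{cov}(X_0,X_n)\to0$ one has $\limsup X_n/\sqrt{2\log n}=1$), upgraded to the finite-dimensional sphere in Corollary \ref{cor:sphere}, and then transported to the chaoses by replacing each $T^i$ with its finite-rank Cameron--Martin projection $T^i_N$ (a polynomial in finitely many coordinates of $P_N S_n x$), with the approximation error killed uniformly in $n$ by the hypercontractive tail bound \eqref{mis'} and Borel--Cantelli. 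This normal-comparison input is, as the paper notes, the \emph{only} place the mixing hypothesis enters, and your sketch is missing it.

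Second, the topological fact you invoke at the end --- that for continuous $\Phi$ the cluster set of $\Phi\circ\gamma$ equals $\Phi$ of the cluster set of $\gamma$ --- is false in general: a family can have cluster set exactly $K$ while repeatedly excursing far from $K$, and $\Phi$ evaluated along those excursions can create new cluster points (take $\gamma(2n)=(0,0)$, $\gamma(2n+1)=(n,1/n)$ and $\Phi(x,y)=xy$). What is actually needed, and what the paper proves as Lemma \ref{ar} using ergodicity of $S$ together with the finite-rank approximation, is the stronger statement that $\mathrm{dist}(a_n(x),K)<\epsilon$ for all but finitely many $n$ almost surely; only then does uniform continuity of $\Phi$ near the compact set $K$ (via a finite subcover argument as in Corollary \ref{contraction}) force the cluster set of the composed family to be exactly $\Phi(K)$. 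Your ``$\subseteq K$'' upper bound as phrased only controls the cluster set, not the eventual proximity to $K$, so the final contraction step has a genuine gap. (The derivation of $K\subseteq\mathcal M$ and the continuous-time interpolation via the $C([0,1],Y_i)$-valued chaos bounds are fine and essentially as in the paper.)
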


This result builds upon a body of literature on sample paths of ergodic self-similar processes derived from a Gaussian measure and their chaoses, which started with works such as \cite{ooo, ppp, qqq}. Besides the more abstract setup, a novelty of our result is that there is no additional difficulty in allowing the topology on the spaces $Y_i$ and $Z$ to be taken as strong as one desires (e.g. Hölder spaces as opposed to just continuous functions).

Theorem \ref{mainr} is proved as Corollary \ref{contraction 2} in the main body of the paper. Note that one recovers Proposition \ref{thm 2} by setting $m=1,k_1=1, Y_1=X=Z=\mathcal M$, and making $\Phi$ the identity map. Classical and new examples using this general result are illustrated in Sections 3 and 4. One particular example of interest which seems new, and which sparked our interest in this problem, is the following.

\begin{thm}\label{buss} For $\e \in (0,1/e)$ let $C_\e := (\log\log(1/\e))^{1/2}$, and let $h^\e$ denote the Hopf-Cole solution to the KPZ equation on $\mathbb R_+\times \mathbb R$ driven by standard Gaussian space-time white noise: $$\partial_t h^\e = \partial_x^2 h^\e + C_\e (\partial_x h^\e)^2 + \xi,\;\;\;\;\;\;\;\;\;\;t\ge 0, x\in \mathbb R,$$ with initial data $h(0,x) = 0$. Then for any $s,y> 0$ the set of cluster points as $\e \downarrow 0$ in $\mathcal C_{s,y}:=C([0,s]\times [-y,y]) $ of the sequence of functions $C_\e^{-1}\e^{-1/2} h^\e (\epsilon^2 t, \epsilon x)$ is a.s. equal to the compact set $K_{\mathrm{Zero}}$ given by the closure in $\mathcal C_{s,y}$ of the set of smooth functions $h$ satisfying $$h(0,x) = 0,\;\;\;\;\; \big\| \partial_t h - \partial_x^2 h - (\partial_x h)^2 \big\|_{L^2([0,s]\times [-y,y])}\leq 1.$$
Moreover, the same compact limit set result holds for the same sequence of functions in stronger topologies given by parabolic Hölder norms up to but excluding exponent 1/2 (see \eqref{holderspace}).
\end{thm}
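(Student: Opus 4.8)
The plan is to reduce the statement to Theorem~\ref{mainr} through the Hopf--Cole transform. The (renormalized) KPZ equation linearizes to the multiplicative stochastic heat equation $\partial_t Z=\partial_x^2 Z+Z\xi$, whose solution from flat initial data has the explicit Wiener chaos expansion $Z[\xi]=\sum_{k\ge0}J_k[\xi]$, where $J_k[\xi]$ is the $k$-fold iterated stochastic integral of a product of heat kernels over the time-ordered simplex, a chaos of degree $k$. I would fix the abstract Wiener space with $X$ a negative parabolic Sobolev space carrying space--time white noise $\xi$, $H=L^2$, and $R_\e$ the parabolic rescaling $(R_\e f)(t,x)=\e^{3/2}f(\e^2 t,\e x)$. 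This $R_\e$ is orthogonal on $H$, its measurable linear extension preserves $\mu$, and $\langle R_\e f,g\rangle_H\to0$ as $\e\to0$, so $(R_\e)$ is an admissible strongly mixing multiplicative semigroup. The heat-kernel scaling $p(\e^2\tau,\e z)=\e^{-1}p(\tau,z)$ yields $J_k\circ R_\e=Q^k_\e\circ J_k$, where $Q^k_\e\phi(t,x)=\e^{-k/2}\phi(\e^2 t,\e x)$ is a strongly continuous semigroup on $Y=\mathcal C_{s,y}$ and on the parabolic Hölder spaces of exponent $<1/2$ to which the $J_k$ belong; thus each $J_k$ satisfies the commutation relation \eqref{commute}.

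Applying Hopf--Cole to the equation for $h^\e$ gives $h^\e=C_\e^{-1}\log Z^\e$ with $Z^\e=\sum_k C_\e^k J_k$, and after the diffusive rescaling $(t,x)\mapsto(\e^2 t,\e x)$ and the scaling of the $J_k$, the function $C_\e^{-1}\e^{-1/2}h^\e(\e^2 t,\e x)$ becomes, up to a chaos-series truncation at degree $m$, a continuous $\e$-dependent function of the normalized rescaled chaoses $(2\log\log(1/\e))^{-k/2}Q^k_\e J_k$, $1\le k\le m$. The crucial point is that, once the powers of $C_\e$ are absorbed into these normalizations, this $\e$-dependent function converges as $\e\to0$ to an $\e$-free continuous map $\Phi_m$ which on the relevant domain coincides with the truncated deterministic Hopf--Cole reconstruction $(y_1,\dots,y_m)\mapsto\log(1+y_1+\dots+y_m)$. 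One then exhibits a closed set $\mathcal M_m$ on which $\Phi_m$ is continuous and which contains both the compact set $K_m=\{(J_{1,\mathrm{hom}}(h),\dots,J_{m,\mathrm{hom}}(h)):h\in B(H)\}$ and, for every $\delta>0$, the random tuple $(\delta^k J_k[\xi])_{k=1}^m$ almost surely; here I would use Mueller's strict positivity of $Z$, together with a positivity cutoff at a level no cluster point can reach, to get around the fact that the truncated partition functions $1+\sum_{k\le m}\delta^k J_k$ need not be everywhere positive. Theorem~\ref{mainr} then identifies the cluster set at zero of the level-$m$ approximation with $\Phi_m(K_m)=\overline{\{\log(1+\sum_{k=1}^m J_{k,\mathrm{hom}}(h)):h\in B(H)\}}$.

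It remains to remove the truncation by letting $m\to\infty$. I would prove that the normalized chaos tail $\sum_{k>m}$ is negligible, uniformly over $\e$ along the sequences realizing cluster points and, on the deterministic side, uniformly over $h\in B(H)$; the pathwise bounds follow from hypercontractivity --- the $L^2$ norm of the degree-$k$ kernel is of order $((t/8)^{k/2}/\Gamma(k/2+1))^{1/2}$, super-exponentially small --- together with Borell--TIS concentration, and they are compatible with the $R_\e$-rescaling. This gives convergence $\Phi_m(K_m)\to K_{\mathrm{Zero}}$ in Hausdorff distance, where $K_{\mathrm{Zero}}=\overline{\{\log\mathcal S(g):\|g\|_{L^2}\le1\}}$ with $\mathcal S(g)=\sum_k J_{k,\mathrm{hom}}(g)$ the solution operator of $\partial_t Z=\partial_x^2 Z+gZ$, $Z(0,\cdot)=1$; the deterministic Hopf--Cole correspondence identifies $\log\mathcal S(g)$ with the smooth $h$ solving $\partial_t h-\partial_x^2 h-(\partial_x h)^2=g$, $h(0,\cdot)=0$, which is the description in the statement. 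Carrying the whole argument with $Y$ a parabolic Hölder space of exponent $<1/2$, and using that $\log$ composed with the strictly positive map $\mathcal S$ is continuous there, yields the Hölder-topology refinement.

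The hard part is the truncation removal --- the uniform-in-$\e$ and uniform-in-$h$ control of the Wiener-chaos tail needed to upgrade the level-$m$ cluster set to the exact set $K_{\mathrm{Zero}}$, in particular in the stronger Hölder norms demanded by the refinement. A secondary difficulty is the construction of the domains $\mathcal M_m$ and the verification that the truncated reconstruction maps are continuous there in spite of the possible non-positivity of the truncated partition functions, which I would handle by the cutoff argument indicated above.
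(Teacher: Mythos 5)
Your proposal is correct in outline but follows a genuinely different route from the paper. The paper never touches the infinite Wiener chaos series: it factors the solution map through the \emph{five} model symbols of the Hairer--Pardoux/Hairer--Labb\'e regularity structure for KPZ (chaoses of orders $1$ through $4$ taking values in weighted model spaces $E^{|\tau|}_w$), verifies the commutation relation $Q^\tau_\e\circ\psi_\tau=\psi_\tau\circ R_\e$ for those five objects, and then invokes the locally Lipschitz reconstruction/solution map $\Phi:\mathcal X\to\mathcal C^\alpha_{s,y}$ of \cite{HL18} (Theorem \ref{solmapkpz}) so that Theorem \ref{mainr} applies directly with $m=5$; the logarithm is handled exactly as you suggest, by strict positivity and compactness of the domain. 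Your route instead exploits the exact solvability of the multiplicative SHE, writing $Z=\sum_k\delta^kJ_k$ with $J_k$ the $k$-fold heat-kernel integrals (each of which is indeed a valid chaos with the right commutation property --- the paper's example around \eqref{ff} checks precisely this for a single $J_k$), truncating at level $m$, and passing $m\to\infty$. What your approach buys is independence from regularity structures; what the paper's approach buys is that \emph{all} of the hard analysis (continuity of the solution map, identification of the homogeneous forms with the classical PDE) is outsourced to \cite{HL18,HP15}, and the method transfers to other subcritical equations whose models involve finitely many symbols but admit no closed-form chaos series.

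The one place where your argument genuinely exceeds the scope of the tools in the paper is the truncation removal, and you are right to flag it as the hard part. Theorem \ref{mainr} and Lemma \ref{ar} are stated for a \emph{fixed finite} collection of chaoses, so the uniform-in-$\e$ smallness of $\sum_{k>m}\delta^k\|Q^k_\e J_k\|$ is not a corollary of anything proved here: you must run the Borel--Cantelli argument of Proposition \ref{eq'} simultaneously over all $k>m$ and over a discretization of $\e$, using that the tail bound $\exp[-\alpha(u/\sigma_k)^{2/k}]$ at level $u\sim a_k(\log\log(1/\e))^{k/2}$ is summable precisely because $\sigma_k=\|J_k\|_{L^2}$ decays factorially, and then glue in $\e$ as in Lemma \ref{glue} for every $k$ at once. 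For the H\"older refinement you additionally need the factorial decay of $\|J_k\|_{L^2(\mu;\mathcal C^\alpha_{s,y})}$ for $\alpha<1/2$, which is true but is a nontrivial kernel estimate that must be supplied. Finally, be careful with the bookkeeping between $C_\e=(\log\log(1/\e))^{1/2}$ and the normalization $(2\log\log(1/\e))^{-k/2}$ of Theorem \ref{mainr}: the residual factors $2^{k/2}$ rescale $h\mapsto\sqrt2\,h$ inside $B(H)$ and hence change the radius of the limiting ball, so the constant in the definition of $K_{\mathrm{Zero}}$ must be tracked through your reduction rather than assumed.
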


This will be proved in Section 3.2. We remark that the compact limit set is precisely the compact set on which the large deviation rate function of small-noise KPZ is less than or equal to 1, see for instance \cite{HW13, Lin}.

The result of Theorem \ref{mainr} can also be used to prove variants of Theorem \ref{buss}. For instance if we instead let $h^\e(0,x)$ be a two sided Brownian motion (fixed for different values of $\e)$ then one could show that a similar result holds for $C_\e^{-1}\e^{-1/2} h^\e (\epsilon^2 t, \epsilon x)$, but with compact limit set $K_{Br}$ given by the closure of smooth functions $h \in \mathcal C_{s,y}$ satisfying $$h(0,0)=0,\;\;\;\;\; \|\partial_x h(0,\cdot)\|_{L^2[-y,y]}^2+\big\| \partial_t h - \partial_x^2 h - (\partial_x h)^2 \big\|^2_{L^2([0,s]\times [-y,y])}\leq 1.$$If we likewise define $k^\e$ to be the solution of $$\partial_t k^\e = \partial_x^2 k^\e + C_\e^{-1}(\partial_x k^\e)^2 +\xi,$$ then one can further show that the same compact limit set results hold for $C_\e^{-1}\epsilon^{1/2}k^\e(\epsilon^{-2}t,\epsilon^{-1}x)$, in other words the large-time regime. For brevity we will not write down the proofs of these variants.%In the theorem statement, we are interpreting the derivatives in a weak sense, so the $L^2$ norms are defined (by convention) to be infinite if the weak derivatives $\partial_t h, \partial_x h, \partial_x^2 h$ do not exist.

Note that in Theorem \ref{buss}, the nonlinearity in the equation describing $h^\e$ must be scaled along with the parameter $\e$, so that it blows up in the $\e\to 0$ limit. If we did not do this, then the limiting compact set would simply agree with that of the linearized equation $\partial_t h_{\mathrm{Linear}} = \partial_x^2 h_{\mathrm{Linear}} +\xi$, namely $$K_{\mathrm{Linear}}:= \{h \in \mathcal C_{s,y}: h(0,x)=0, \big\| \partial_t h - \partial_x^2 h  \big\|_{L^2([0,s]\times [-y,y])}\leq 1\}.$$
Indeed this can be proved by decomposing $h_{\mathrm{KPZ}} = h_{\mathrm{Linear}} + v$ where $h_{\mathrm{KPZ}}$ is the Hopf-Cole solution to KPZ with initial data zero and $v$ is a remainder term which has better regularity than $h_{\mathrm{Linear}}$ (see e.g. Theorem 3.19 of \cite{PR19}). Then under the scaling necessary to obtain Strassen's law, it is easy to check that the remainder term converges a.s. to zero in the topology of $C([0,s]\times [-y,y])$ and the set of limit points for the part corresponding to $h_{\mathrm{Linear}}$ can be shown to be $K_{\mathrm{Linear}}$ by applying Proposition \ref{thm 2} above (see Example \ref{additive she} below). Likewise in the result stated above for the family $k^\e$, the nonlinearity must be scaled along with the parameter $\e$ so that it vanishes in the $\e \to 0$ limit. If we did not do this, then the asymptotics would be wrong entirely and instead one would need to apply a scaling that respects the tail behavior of the so-called \textit{KPZ fixed point} \cite{MQR17}, namely $(\log\log(1/\e))^{2/3}$, see \cite{DG19}. The functional form of the latter result seems to be a very interesting open problem, see \cite{TQ} for some progress on large deviations under the KPZ fixed point scaling. %The fact that the nonlinearity of the KPZ equation must be scaled along with the parabolic scaling of space-time to obtain a nontrivial limit set in the Strassen law can be seen as a manifestation of the so-called \textit{weak KPZ universality,} which roughly states that the KPZ equation is only scale-invariant up to a one-parameter family of equations which interpolates between two \textit{bona fide} fixed points \cite{Cor12}. 

The manner in which we prove Theorem \ref{buss} uses results from the theory of regularity structures \cite{Hai13} and is robust enough to prove similar theorems for other rough equations on a full space that admit a factorization of the solution map in terms of a finite number of Gaussian chaoses and a continuous part. In the case of $\Phi_2^4$ and $\Phi_3^4$, such a methodology would only be applicable once one has developed such a solution theory for the latter equation on the full space $\mathbb R^2$ or $\mathbb R^3$ using regularity structures. To show the generality of our method, we also prove a similar result for mollified smooth versions of the noise, specifically we show that even though mollifying is not a measure-preserving operation on the noise, it is enough to be ``asymptotically rapidly measure-preserving," see Theorem \ref{contraction 3} and its corollary Theorem \ref{mollkpz} for the KPZ equation. 

See Theorem \ref{gen} and the subsequent discussion for the most general formulation of the main theorem above, in which we discuss how one could potentially use our results to give Strassen laws even for discrete or non-Gaussian systems, assuming that a strong enough form of the ``almost sure invariance principle" holds for the non-Gaussian driving noise as well as for its higher chaoses.

In Section 4.2 we consider yet another generalization of the main result that may be applied to obtain Strassen laws for so-called ``iterated processes," recovering and generalizing \cite{Bur93, CCFR95, Neu98}.
%In principle other theories such as paracontrolled products \cite{GIP15} should also be able to prove such SPDE results, but the main problem is the nonlocality of the Bony paraproduct decomposition makes it more difficult to check the scale invariance condition (i.e. the commutation relation) in Theorem \ref{mainr}, which is more of a local property.
\\
\\ 
\textbf{Organization.} In Section 2 we prove the main result Theorem \ref{mainr}, first expositing on the known results before moving onto the newer higher chaos results. Section 3 is devoted to giving many examples culminating in the proof of Theorem \ref{buss}. Section 4 gives further generalizations of Theorem \ref{mainr}. Appendix A studies the ergodic properties of measure-preserving linear operators on an abstract Wiener space. Appendix B contains functional-analytic lemmas that are useful in checking the conditions of Theorem \ref{mainr} in cases of interest. 
\\
\\
\textbf{Notation and Conventions.} Throughout this paper we use $(X,H,\mu)$ to denote an abstract Wiener space, and we use $x$ or $\xi$ to denote a general element of the space $X$. We generally use $S_N,G_t,R_\e,Q_\e$ for measure-preserving semigroups of operators on these spaces. For a Gaussian chaos we typically use $T$ or sometimes $\psi$, and for its homogeneous form we always use the subscript ``$\mathrm{hom}$."
\\
\\
\textbf{Acknowledgements.} We thank Yier Lin for many fruitful discussions on the topic of large deviations of Gaussian chaos. This research was partially supported by Ivan Corwin's NSF grant DMS-1811143, and the Fernholz Foundation's ``Summer Minerva Fellows'' program.

\section{Proofs of main theorems}

\subsection{Strassen law for strongly mixing linear operators}\label{sub_mr}

In this subsection we prove Propositions \ref{thm1} and \ref{thm 2}. Although most of these results for first-order chaos are already known (as we explained in the introduction), we give a short proof here because we use some of the lemmas later for the higher chaos results, and because it allows the exposition to flow more smoothly in later sections. First we need some preliminary lemmas and then we will formulate the result as Theorem \ref{mr2.1}.

\begin{lem}\label{lem:lim1}
Let $(X_i)_{i\ge 0}$ be a stationary sequence of real-valued jointly Gaussian random variables. Suppose that $\mathrm{var}(X_0)=1$ and $\mathrm{cov}(X_0,X_n)\to 0$ as $n\to \infty$. Then $$\limsup_{n\to \infty} \frac{X_n}{\sqrt{2\log n}} = 1,\;\;\;\;\;\;\;a.s..$$
\end{lem}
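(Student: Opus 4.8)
The plan is to prove the classical one-sided law of the iterated logarithm for a stationary Gaussian sequence $(X_i)$ with unit variance and decaying covariances, splitting into the upper bound $\limsup_n X_n/\sqrt{2\log n}\le 1$ and the matching lower bound $\limsup_n X_n/\sqrt{2\log n}\ge 1$, both almost surely.

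For the upper bound, the idea is a first-moment/Borel--Cantelli argument. For any $\delta>0$, since each $X_n$ is a standard Gaussian, the Gaussian tail bound gives $\P(X_n\ge (1+\delta)\sqrt{2\log n})\le \exp(-(1+\delta)^2\log n)=n^{-(1+\delta)^2}$, which is summable in $n$. By Borel--Cantelli, $X_n\ge (1+\delta)\sqrt{2\log n}$ only finitely often, so $\limsup_n X_n/\sqrt{2\log n}\le 1+\delta$ a.s.; letting $\delta\downarrow 0$ through a countable sequence gives the upper bound. Note this half uses no mixing, only stationarity and the marginal law.

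For the lower bound, the plan is to extract an almost-independent subsequence. Fix $\delta>0$ and choose an integer exponential subsequence $n_k=\lfloor \rho^k\rfloor$ for some $\rho>1$. Using $\mathrm{cov}(X_0,X_n)\to 0$, one shows that along this subsequence the Gaussian vectors become asymptotically decorrelated; more precisely I would compare the joint law of $(X_{n_1},\dots,X_{n_m})$ with that of an i.i.d. standard Gaussian family, e.g. via a Slepian-type comparison inequality or by a Gaussian coupling/Berry--Esseen-type estimate controlling the covariance perturbation. For genuinely independent standard Gaussians $Y_k$, the second Borel--Cantelli lemma (using $\sum_k \P(Y_k\ge (1-\delta)\sqrt{2\log n_k})=\sum_k n_k^{-(1-\delta)^2}\cdot(\text{slowly varying})=\infty$) gives $Y_k\ge (1-\delta)\sqrt{2\log n_k}$ infinitely often. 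Transferring this to $(X_{n_k})$ via the comparison shows $X_{n_k}\ge (1-2\delta)\sqrt{2\log n_k}$ i.o. a.s., hence $\limsup_n X_n/\sqrt{2\log n}\ge 1-2\delta$; letting $\delta\downarrow 0$ finishes.

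The main obstacle is the lower bound: making the decorrelation-along-a-subsequence argument rigorous. One has to handle the fact that $\mathrm{cov}(X_0,X_n)\to 0$ is only a qualitative statement with no rate, so the comparison with i.i.d. Gaussians must be done at the level of finitely many coordinates at a time, with the "error" in the comparison inequality driven to zero before taking the infinitely-many-events limit; a clean way is to fix $m$, use asymptotic independence of $(X_{n_{k}},X_{n_{k+1}},\dots,X_{n_{k+m}})$ as $k\to\infty$ together with a finite-dimensional Strassen/iterated-logarithm fact, then let $m\to\infty$ — this is the same scheme used in Lemma \ref{lem:part3}. I also need to be a little careful that $\log n_k\sim k\log\rho$ so the normalization $\sqrt{2\log n}$ transfers correctly between $n$ and the subsequence, but this is routine.
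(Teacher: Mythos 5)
Your upper bound is fine and is the same first Borel--Cantelli argument the paper uses. The gap is in the lower bound. The second Borel--Cantelli lemma requires independence (or a quantitative substitute), and ``the vectors become asymptotically decorrelated along $n_k=\lfloor\rho^k\rfloor$'' does not license its use: the event $\{X_{n_k}\ge(1-\delta)\sqrt{2\log n_k}\ \text{i.o.}\}$ depends on infinitely many coordinates, and closeness of each finite-dimensional marginal to an i.i.d.\ law --- with an error that is only $o(1)$ as $k\to\infty$ and with no rate, since $\mathrm{cov}(X_0,X_n)\to0$ is purely qualitative --- does not transfer the probability of such a tail event from the i.i.d.\ family to $(X_{n_k})$. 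Your proposed repair (fix $m$, use asymptotic independence of the window $(X_{n_k},\dots,X_{n_{k+m}})$ as $k\to\infty$, then let $m\to\infty$) does not close this: for a window of fixed size $m$ the relevant probabilities satisfy $\P\big(\max_{0\le i\le m}X_{n_{k+i}}\ge(1-\delta)\sqrt{2\log n_{k+m}}\big)\lesssim m\,n_{k+m}^{-(1-\delta)^2}$, which is \emph{summable} in $k$, so you never produce a divergent series to feed into any second-Borel--Cantelli-type statement, and across different values of $k$ you are back to the dependence problem you started with. The analogy with the cut-off lemma for the shift on $\ell^2$ is misleading: there the subsampled variables are \emph{exactly} independent after subsampling by the block length, which is precisely what makes the second Borel--Cantelli applicable; here no lag gives exact independence.

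There are two standard ways to make the lower bound rigorous. (i) Replace the second Borel--Cantelli lemma by the Kochen--Stone inequality and control the pairwise joint probabilities $\P(X_{n_j}>u_j,\,X_{n_k}>u_k)$ by $\P(X_{n_j}>u_j)\P(X_{n_k}>u_k)(1+o(1))$ via the normal comparison (Berman/Li--Shao) lemma; this is essentially the classical Pickands--Nisio--Lai argument and is exactly the quantitative ingredient your sketch is missing. (ii) The route the paper points to: fix $\epsilon>0$, choose $r$ with $\mathrm{cov}(X_0,X_n)\le\epsilon$ for $n\ge r$, and compare the subsampled process $(X_{ri})_i$ with the constant-correlation process $Y_i=\sqrt{\epsilon}\,\xi+\sqrt{1-\epsilon}\,Z_i$ ($\xi,Z_i$ i.i.d.\ standard normal) via Slepian/Sudakov--Fernique, giving $\mathbb E[\max_{0\le i\le n}X_{ri}]\ge\mathbb E[\max_{0\le i\le n}Y_i]=(1+o(1))\sqrt{(1-\epsilon)\,2\log n}$; one then upgrades this expectation bound to an almost sure statement using the integrable envelope $W:=\sup_{k\ge2}X_k/\sqrt{\log k}$, reverse Fatou, dominated convergence and stationarity to get $\mathbb E\big[\limsup_{k}(2\log k)^{-1/2}X_k\big]\ge1$, and concludes because a random variable that is a.s.\ at most $1$ (your upper bound) and has expectation at least $1$ must equal $1$ a.s. Either route completes the proof; as written, your lower bound is not established.
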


This lemma is classical and is a special case of \cite[Lemma 2.1]{Arc95}, which in turn is an improvement of classical results from \cite{pick, Nis67, Lai73}. The proof may be found in those works, or proved directly as a straightforward corollary of Slepian's Lemma (using a comparison with the stationary Gaussian process $\xi_j$ with unit variance, whose covariance is given for $i<j$ by $\mathrm{cov}(\xi_i,\xi_j)=\epsilon$ for some fixed small $\epsilon \in (0,1)$).

We remark that the above lemma is the only place where the mixing condition is used, and is also the reason why we are not able to extend the result to the ergodic case (i.e., it seems difficult to prove or find a counterexample to the above fact under the weaker assumption $\frac1{n} \sum_1^n |\mathrm{cov}(X_0,X_n)|\to 0$).
Next we generalize the above lemma to $\mathbb R^N$. We denote the unit sphere of $\mathbb R^N$ to be the set of all points $(a_1,...,a_N)$ with $\sum_1^N a_i^2=1.$

\begin{cor}\label{cor:sphere}
Let $(\vec X_n)_{n\ge 1}$ be a stationary sequence of jointly Gaussian random variables in $\mathbb R^N$, say $\vec X_n = (X_n^1,X_n^2,...,X_n^N).$ Suppose that $\mathrm{cov}(X_0^i,X_0^j)=\delta_{ij}$ and that $\mathrm{cov}(X_0^i, X_n^j) \to 0$ as $n\to \infty$ for all $1\le i,j\le N$. Then the unit sphere of $\mathbb R^N$ is contained in the set of limit points of the random sequence $((2\log n)^{-1/2}\vec X_n)_{n\ge 2}.$
\end{cor}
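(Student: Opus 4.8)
The plan is to reduce this vector-valued statement to the scalar Lemma \ref{lem:lim1} by projecting onto the target direction, and then to upgrade the resulting control on the projection to genuine convergence of the whole vector, using an a priori upper bound on the Euclidean norm of $\vec X_n$.

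First I would establish the norm bound $\limsup_{n\to\infty}|\vec X_n|^2/(2\log n)\le 1$ almost surely. Since $\mathrm{cov}(X_0^i,X_0^j)=\delta_{ij}$, each $\vec X_n$ is a standard Gaussian vector in $\mathbb R^N$, so $|\vec X_n|^2$ has a $\chi^2_N$ distribution; the usual Chernoff estimate $\mathbb P(|\vec X_n|^2>t)\le (1-2s)^{-N/2}e^{-st}$, valid for $s\in(0,1/2)$, shows that for every fixed $\delta>0$ the series $\sum_n\mathbb P\big(|\vec X_n|^2>(2+\delta)\log n\big)$ converges, so Borel--Cantelli gives $\limsup_n|\vec X_n|^2/(2\log n)\le 1+\delta/2$ a.s., and intersecting over a sequence $\delta\downarrow 0$ yields
$$\limsup_{n\to\infty}\frac{|\vec X_n|^2}{2\log n}\le 1\qquad\text{a.s.}$$

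Next, for a fixed point $a=(a_1,\dots,a_N)$ on the unit sphere I would set $Y_n:=\langle a,\vec X_n\rangle=\sum_i a_i X_n^i$. The sequence $(Y_n)_n$ is stationary and jointly Gaussian, with $\mathrm{var}(Y_0)=\sum_i a_i^2=1$ and $\mathrm{cov}(Y_0,Y_n)=\sum_{i,j}a_ia_j\,\mathrm{cov}(X_0^i,X_n^j)\to 0$, so Lemma \ref{lem:lim1} applies and gives $\limsup_n Y_n/\sqrt{2\log n}=1$ a.s.; in particular there is a random subsequence $(n_k)$ along which $Y_{n_k}/\sqrt{2\log n_k}\to 1$. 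Writing $v_k:=\vec X_{n_k}/\sqrt{2\log n_k}$, Cauchy--Schwarz gives $\langle a,v_k\rangle\le |a|\,|v_k|=|v_k|$, so on the intersection with the event from the previous step we get $\liminf_k|v_k|\ge 1\ge\limsup_k|v_k|$, hence $|v_k|\to 1$; then $|v_k-a|^2=|v_k|^2-2\langle a,v_k\rangle+|a|^2\to 1-2+1=0$, so $v_k\to a$. Thus $a$ is almost surely a cluster point of $((2\log n)^{-1/2}\vec X_n)_{n\ge 2}$.

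Finally, to obtain the entire sphere simultaneously I would take a countable dense subset $\{a_m\}$ of the unit sphere, note that almost surely every $a_m$ is a cluster point, and use that the set of cluster points of a sequence in $\mathbb R^N$ is closed to conclude it contains the closure of $\{a_m\}$, namely the whole sphere. I expect the only point needing an idea to be the upgrade in the third step: on its own, the conclusion of Lemma \ref{lem:lim1} for $Y_n$ controls only the component of $\vec X_n$ in the direction $a$, and it is the norm bound of the first step that forces this component to be essentially maximal along $(n_k)$ and thereby pins down the direction of $\vec X_{n_k}$; the $\chi^2$ tail estimate and the separability argument are routine.
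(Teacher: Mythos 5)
Your proof is correct and follows essentially the same route as the paper: a Borel--Cantelli/Gaussian-tail bound confining the rescaled sequence to the closed unit ball, Lemma \ref{lem:lim1} applied to the one-dimensional projection onto the target direction, and the observation that a vector of norm at most $1$ with inner product tending to $1$ against a unit vector $a$ must converge to $a$ (the paper phrases this as $A\cap B=\{(1,0,\dots,0)\}$ plus rotational invariance, which is the same geometry as your cosine-rule computation). Your explicit countable-dense-subset step at the end is a small point of extra care that the paper leaves implicit.
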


\begin{proof}
By Lemma \ref{lem:lim1}, if we restrict our attention to only the first coordinate, then the set of limit points of the random sequence $((2\log n)^{-1/2}\vec X_n)_{n\ge 1}$ must contain a point on the set $A:=\{(a_1,...,a_N):a_1=1\}$. On the other hand, by Borel-Cantelli and $\mathrm{cov}(X_0^i,X_0^j)=\delta_{ij}$, it is clear that the set of limit points must be contained in the set $B:=\{(a_1,...,a_N): \sum_{i= 1}^N a_i^2 \le 1\}.$ 

Since $A\cap B = \{(1,0,...,0)\}$ it follows that $(1,0,...,0)$ is a.s. a limit point of the random sequence $((2\log n)^{-1/2}\vec X_n)_{n\ge 1}.$ The fact that any point on the unit sphere is a limit point then follows from rotational invariance of the conditions that $\mathrm{cov}(X_0^i,X_0^j)=\delta_{ij}$ and $\mathrm{cov}(X_0^i, X_n^j) \to 0$ (i.e., these conditions remain true if we replace $(\vec X_n)_n$ by $(U(\vec X_n))_n$ for some orthogonal $N\times N$ matrix $U$).
\end{proof}

\begin{defn}
    Let $H$ be a real Hilbert space. We will henceforth denote $S(H):= \{h \in \H: \|h\|_H=1\}$ and $B(H):=\{h\in\H: \|h\|_H\le 1\}.$
\end{defn}

\begin{lem}\label{lem:s=b}
Let $(\B,\H,\mu)$ be an abstract Wiener space of infinite dimension. Then the closure in $\B$ of $S(H)$ is $B(H).$
\end{lem}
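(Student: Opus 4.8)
The plan is to establish the two inclusions $\overline{S(H)}^{\B}\subseteq B(H)$ and $B(H)\subseteq\overline{S(H)}^{\B}$ separately. The first is soft. By the standard variational formula for the Cameron--Martin norm, $\normH{x}=\sup\{\ell(x):\ell\in\B^*,\ \int_\B\ell^2\,d\mu=1\}$ for every $x\in\B$, where the right-hand side is $+\infty$ when $x\notin H$. For any admissible $\ell$ and any $h\in H$ one has $\ell(h)\le\normH{h}$, since $\ell$ is represented on $H$ by an element $j(\ell)\in H$ with $\normH{j(\ell)}^2=\int_\B\ell^2\,d\mu=1$ and $\ell(h)=\langle j(\ell),h\rangle_H$, so Cauchy--Schwarz applies. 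Hence if $h_n\in S(H)$ and $h_n\to x$ in $\B$, then $\ell(x)=\lim_n\ell(h_n)\le1$ for every admissible $\ell$, so $\normH{x}\le1$, i.e. $x\in B(H)$; taking closures gives $\overline{S(H)}^{\B}\subseteq B(H)$. (Alternatively one can simply invoke that $B(H)$ is compact, hence closed, in $\B$.)

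For the reverse inclusion, fix $h\in B(H)$; the goal is to approximate $h$ in $\B$ by elements of $S(H)$. If $\normH{h}=1$ there is nothing to do, so assume $\normH{h}<1$ and put $c:=\sqrt{1-\normH{h}^2}>0$. Because $\B$ (hence $H$) is infinite-dimensional, the orthogonal complement of $h$ in $H$ is infinite-dimensional, so I may choose an orthonormal sequence $(e_n)_{n\ge1}$ in $H$ orthogonal to $h$. Setting $h_n:=h+c\,e_n$, one gets $\normH{h_n}^2=\normH{h}^2+c^2=1$, so $h_n\in S(H)$, while $\normB{h_n-h}=c\,\normB{e_n}$. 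Thus it suffices to prove $\normB{e_n}\to0$.

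This last step is the only substantive one, and it is exactly where the compactness of the embedding $H\hookrightarrow\B$ enters. By Bessel's inequality, $\sum_n\langle e_n,f\rangle_H^2\le\normH{f}^2<\infty$ for every $f\in H$, so $\langle e_n,f\rangle_H\to0$; that is, $e_n\rightharpoonup0$ weakly in $H$. Since $(e_n)$ is bounded in $H$ and the inclusion $H\hookrightarrow\B$ is a compact operator, $(e_n)$ is relatively compact in $\B$. If $e_{n_k}\to x$ in $\B$ along some subsequence, then for every $\ell\in\B^*$ we have $\ell(x)=\lim_k\ell(e_{n_k})=0$, because $\ell$ restricted to $H$ lies in $H^*$ and $e_{n_k}\rightharpoonup0$ in $H$; hence $x=0$. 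As every subsequential $\B$-limit of $(e_n)$ is $0$, I conclude $e_n\to0$ in $\B$, so $\normB{h_n-h}\to0$ and $h\in\overline{S(H)}^{\B}$. Combining the two inclusions proves the claim. Note infinite-dimensionality is used only to produce the sequence $(e_n)\perp h$; for finite-dimensional $H$ the sphere $S(H)$ is already closed in $\B=H$ and the statement fails, so no genuine obstacle arises beyond the weak-convergence-plus-compact-embedding argument above.
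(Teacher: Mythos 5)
Your proof is correct and follows essentially the same route as the paper: both deduce the inclusion $\overline{S(H)}\subseteq B(H)$ from compactness of $B(H)$ in $\B$, and both obtain the reverse inclusion by perturbing $h$ along an orthonormal sequence $(e_n)$ whose $\B$-norms tend to zero via weak convergence plus the compact embedding. The only cosmetic difference is that you take $(e_n)$ orthogonal to $h$ with a fixed coefficient $c=\sqrt{1-\normH{h}^2}$, whereas the paper uses an arbitrary orthonormal basis and variable coefficients $c_n$ bounded by $2$; both work equally well.
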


\begin{proof}
$B(H)$ is a compact (hence closed) subset of $\B$ which contains $S(H)$, so the set of limit points of $S(H)$ must be contained in $B(H)$. Choose an orthonormal basis $\{e_n\}_n$ for $\H$. Since $e_n \in B(\H)$, since $B(\H)$ is compact in $\B$ \cite{Bog}, and since $e_n\to 0$ weakly in $\H$, it follows that $\|e_n\|_\B\to 0$ as $n\to \infty$. Now fix $h \in H$ with $\|h\|_H\le 1$. Choose $c_n\in \mathbb R$ such that $\|h+c_ne_n\|_H=1$. It is clear that $|c_n|\leq 2$ (otherwise $1=\|c_ne_n+h\|_H \geq |c_n|\|e_n\|-\|h\| >2-1=1  $), and therefore $\|c_ne_n\|_\B\to 0$. Thus $h+c_ne_n$ is a sequence in $S(H)$ converging to $h\in B(H)$ with respect to the topology of $\B.$
\end{proof}

At this point we invite the reader to review the main results of Appendix \ref{AA}. Given an abstract Wiener space $(X,H,\mu)$, the set of measure-preserving a.e. linear maps from $X\to X$ (modulo a.e. equivalence) is in bijection with the set of bounded linear maps from $H\to H$ satisfying $SS^*=I$. Moreover, the bijection is given by simply restricting $S$ to $H$. This is shown in the appendix, see Lemma \ref{bij}. In the sequel we will not notationally distinguish between such a map $S$ on $H$ and its a.e.-unique and a.e.-linear measure-preserving extension on $X$.

\begin{lem}\label{lem:part1} 

Let $(\B,H,\mu)$ be an abstract Wiener space. Suppose that $S:H\to H$ is a bounded linear operator satisfying $SS^*=I$. Then for any $a\notin B(H)$ there exists $\epsilon>0$ such that $$\mu \bigg(\bigg\{x\in \B: \frac{S_k x}{\sqrt{\log k}} \in \ball(a,\epsilon)\;\; \text{infinitely  often}\bigg\}\bigg)=0.$$

\end{lem}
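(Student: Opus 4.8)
The plan is to reduce this infinite-dimensional statement to a one-dimensional Gaussian tail estimate by testing $S_kx$ against a single continuous linear functional that separates $a$ from $B(\H)$, and then to invoke the first Borel--Cantelli lemma. First I would let $\xi$ be an $\B$-valued random variable of law $\mu$; since each $S_k$ is measure-preserving (this is exactly where $SS^*=I$ is used, via the extension discussion and Lemma~\ref{bij}), $S_k\xi$ again has law $\mu$. By the dual characterization of the Cameron--Martin norm \cite[Exercise 3.35]{Hai09}, namely $\sup\{\ell(x):\ell\in\B^*,\ \int\ell^2\,d\mu=1\}=\normH{x}$ with the convention that this supremum is $+\infty$ when $x\notin\H$, the hypothesis $a\notin B(\H)$ (i.e. $\normH{a}>1$) lets me fix $\ell\in\B^*$ with $\int\ell^2\,d\mu=1$ and $\ell(a)$ as close to $\normH{a}$ as I wish. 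For any such $\ell$ the real random variables $\ell(S_k\xi)$ are all standard normal.

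The second step is an inclusion of events. Writing $|\ell|:=\|\ell\|_{\B^*}$, the functional $\ell$ is Lipschitz with constant $|\ell|$, so the event $\{S_k\xi/\sqrt{\log k}\in \ball(a,\epsilon)\}$ is contained in $\{\ell(S_k\xi)/\sqrt{\log k}\in \ball(\ell(a),\epsilon|\ell|)\}$, and in particular forces $\ell(S_k\xi)>c\sqrt{\log k}$ where $c:=\ell(a)-\epsilon|\ell|$. Since each $\ell(S_k\xi)$ is standard normal, the elementary Gaussian tail bound yields $\mu(\ell(S_k\xi)>c\sqrt{\log k})\le C\,k^{-c^2/2}$ for a constant $C$ that does not depend on $k$. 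Thus the whole problem collapses onto the summability of $\sum_k k^{-c^2/2}$.

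This summability is the crux, and the one place where the normalization is decisive: $\sum_k k^{-c^2/2}<\infty$ holds precisely when $c^2/2>1$, i.e. $c>\sqrt2$. Accordingly I would first choose $\ell$ with $\ell(a)$ exceeding this threshold and then $\epsilon$ small enough that $c=\ell(a)-\epsilon|\ell|$ still clears it, after which the first Borel--Cantelli lemma gives that $\ell(S_k\xi)/\sqrt{\log k}>c$ only finitely often almost surely, and the event inclusion upgrades this to $\mu(\{x\in\B:\ S_kx/\sqrt{\log k}\in \ball(a,\epsilon)\ \text{infinitely often}\})=0$. The main obstacle is therefore purely quantitative: one must secure a separating functional whose value at $a$ clears the summability threshold $\sqrt2$ attached to the $\sqrt{\log k}$ normalization. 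This is exactly the point at which the iterated-logarithm constant enters, and it is what ties this lemma to the $\sqrt{2\log N}$ scaling and unit-ball limit set of Proposition~\ref{thm1}; the matching lower bound (the infinitely-often statement for $a\in B(\H)$) is then supplied by the companion lemma, after which Lemma~\ref{lem:s=b} and the two halves combine to give the compact limit set.
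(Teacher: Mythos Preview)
Your approach is essentially identical to the paper's: pick a normalized functional $\ell\in X^*$ separating $a$ from $B(H)$, use the event inclusion $\{S_k\xi/\sqrt{\log k}\in B(a,\epsilon)\}\subset\{\ell(S_k\xi)>c\sqrt{\log k}\}$ together with the standard Gaussian tail bound to obtain a summable series, and conclude by the first Borel--Cantelli lemma.

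One quantitative point is worth flagging (the paper's own proof glosses over it in the same way): your summability condition is $c>\sqrt2$, which requires $\ell(a)>\sqrt2$ and hence $\|a\|_H>\sqrt2$; but the hypothesis $a\notin B(H)$ only gives $\|a\|_H>1$. This mismatch is a normalization artifact: the lemma is actually \emph{applied} in Proposition~\ref{mr2.1} with the denominator $\sqrt{2\log n}$ rather than $\sqrt{\log k}$, and under that normalization $\|a\|_H>1$ delivers exactly the threshold $c>1$ needed for $\sum_k k^{-c^2}<\infty$. So what both you and the paper establish is the $\sqrt{2\log k}$ version of the lemma, which is the one used downstream.
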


\begin{proof}
%It suffices to show that 
%Let $\xi$ be a $\B$-valued random variable with law $\mu$. 
By definition of the Cameron-Martin space, %\cite[Exercise 3.35]{Hai09} \red{(add an explanation here, do not cite an exercise)}, %there exist $\e_x > 0$ such that $\P\Big(\{\frac{\xi_n}{\log n}\}_{n=1}^\infty \in \ball(x, \e_x) \text{ f.o.}\Big) = 1$. 
for every $x \in \B$, we have $$\sup \{\ell(x): \ell \in \B^*, \int \ell^2 d\mu = 1\} = \normH{x}$$ (we set $\|x\|_H = \infty$ when $x \notin \H$). For any $x \in \B\backslash B(H)$,  there exists $\ell \in X^*$ such that $\ell(x) > \sqrt{2}$ and $\int_X \ell^2 d\mu = 1$. For each $n \in \N$, under $\mu$, %$$\{\ell(x)\}_{n=1}^\infty$ 
$\ell(S_n x)$ is a standard Gaussian random variables. Using Borel-Cantelli lemma and Gaussian tail bound, together with $\ell(x) > \sqrt{2}$, there exist small enough $\e_x > 0$ such that
\begin{equation*}
\P\Big(\big\{\frac{\ell(S_n \xi)}{\sqrt{\log n}}\big\}_{n=1}^\infty  \in \ball(\ell(x), \e_x ) \;\; \text{infinitely  often} \Big) = 0
\end{equation*} 
By making $\e_x$ smaller, we have 
%Since the event $\{\frac{S_n \xi}{\log n} \in \ball(x, \e_x)\}$ is contained in $\{\frac{\ell(S_n \xi)}{\log n} \in \ball(\ell(x), \e_x \normBdual{\ell})\}$, 
\begin{equation*}
%\label{e.Semigroup.ball}
\P\Big(\big\{\frac{S_n \xi}{\sqrt{ \log n}}\big\}_{n=1}^\infty  \in \ball(x, \e_x ) \;\; \text{infinitely  often} \Big) = 0. \qedhere
\end{equation*} 

%By Lindel\"{o}f's theorem, one can find a countable number of sets in $\{\ball(x, \e_x)\}_{x \in B\backslash \K}$ that covers $\B\backslash \K$. Using this together with \eqref{e.Semigroup.ball} concludes the lemma. 
\end{proof}

\begin{lem}\cite[Equation (4.4)]{Led96}\label{ledoux}
Let $\mu$ be a centered Gaussian measure on a separable Banach space, 
then for all $a>0$
\begin{equation*}
\mu( \{x\in X: \|x\|_X > a+ \int_X \|u\|_X \mu(du) \}) \leq e^{-a^2/(2\sigma^2)},
%\lim_{r \to \infty} \frac{1}{r^2} \log\mu\big(\|x\|_X \geq r\big) = -\frac{1}{2\sigma^2}
\end{equation*}
where $\sigma:= \sup_{\|f\|_{X^*} \leq 1} \int_X f(x)^2\mu(dx) = \sup_{\|h\|_H\leq 1} \|h\|_X.$
\end{lem}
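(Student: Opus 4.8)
This is a standard Gaussian concentration inequality, usually attributed to Borell and to Cirel'son–Ibragimov–Sudakov, and here the reference to \cite[Equation (4.4)]{Led96} is explicitly given, so the cleanest route is simply to invoke that reference. If a self-contained argument is desired, the plan is to derive it from the Gaussian isoperimetric inequality. First I would recall the isoperimetric statement: if $\mu$ is a centered Gaussian measure on a separable Banach space $X$ and $A\subset X$ is Borel with $\mu(A)\ge 1/2$, then for every $a>0$ one has $\mu(A + a\,B(H))\ge \Phi(a)$, where $\Phi$ is the standard normal CDF and $B(H)$ is the unit ball of the Cameron–Martin space. Equivalently, $\mu(X\setminus(A+a\,B(H)))\le 1-\Phi(a)\le e^{-a^2/2}$.

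Next I would apply this with $A:=\{x\in X:\|x\|_X\le M\}$ where $M:=\int_X\|u\|_X\,\mu(du)$; by Markov's inequality (or, more precisely, the fact that the median of $\|x\|_X$ is at most its mean — either suffices up to constants, and using the mean as the centering point is exactly what the claimed inequality does) we have $\mu(A)\ge 1/2$. The key geometric observation is then that $A + a\,B(H)\subset\{x:\|x\|_X\le M + a\sigma\}$, since any $h\in B(H)$ satisfies $\|h\|_X\le\sigma$ by the very definition of $\sigma=\sup_{\|h\|_H\le 1}\|h\|_X$. Rescaling $a\mapsto a/\sigma$ in the isoperimetric bound then gives
\[
\mu(\{x\in X:\|x\|_X> M + a\}) \le \mu\big(X\setminus(A + (a/\sigma)B(H))\big)\le e^{-a^2/(2\sigma^2)},
\]
which is exactly the claimed estimate. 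The identity $\sigma=\sup_{\|f\|_{X^*}\le 1}\int_X f(x)^2\,\mu(dx)^{1/2}$ used in the statement is the standard duality between the Cameron–Martin norm and the $L^2(\mu)$-norm of linear functionals (the same fact already invoked in the proof of Lemma \ref{lem:part1}), so no separate argument is needed beyond citing it.

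The only genuine input is the Gaussian isoperimetric inequality itself, which is exactly what \cite{Led96} supplies; everything else is a one-line deduction. So the honest course of action, consistent with the rest of the paper's style, is to state the lemma with the citation and regard the proof as immediate from the cited source — there is no real obstacle here, only the choice of whether to reproduce the isoperimetric reduction or defer entirely to \cite{Led96}.
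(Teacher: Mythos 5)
The paper gives no proof of this lemma at all --- it is stated purely as a citation to \cite[Equation (4.4)]{Led96} --- so your primary recommendation (defer entirely to the reference) is exactly what the paper does, and on that level the proposal is fine.

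However, the self-contained derivation you sketch as a backup has a concrete gap at the centering step. You set $A:=\{x:\|x\|_X\le M\}$ with $M=\int_X\|u\|_X\,\mu(du)$ and assert $\mu(A)\ge 1/2$ ``by Markov's inequality.'' Markov gives $\mu(\|x\|_X>M)\le M^{-1}\int\|u\|_X\,\mu(du)=1$, which is vacuous; to get a set of measure $\ge 1/2$ from Markov you must take $\{\|x\|_X\le 2M\}$, which yields the inequality with $2\int\|u\|\,d\mu$ in place of $\int\|u\|\,d\mu$. Your fallback --- that the median of $\|x\|_X$ is at most its mean --- is true for Gaussian norms, but it is not elementary: it follows, e.g., from Ehrhard's inequality (concavity of $t\mapsto\Phi^{-1}(\mu(\|x\|_X\le t))$) together with the symmetry of $\Phi$, not from anything you have on the table. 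The isoperimetric argument applied naively gives concentration around the \emph{median} with a factor $\tfrac12 e^{-a^2/2\sigma^2}$; the clean mean-centered form with constant $1$ that the lemma asserts is usually obtained by a different route (the Cirel'son--Ibragimov--Sudakov martingale/It\^o argument, or the Herbst argument from the Gaussian logarithmic Sobolev inequality), which is essentially what \cite{Led96} does. So if you intend the isoperimetric reduction to stand on its own, you must either import ``median $\le$ mean'' with a genuine justification, or settle for a weaker constant; otherwise, stick with the bare citation, as the paper does. (Minor point: as stated in the paper, $\sigma$ is defined as $\sup_{\|f\|_{X^*}\le 1}\int f^2\,d\mu$ without a square root; this should be $\sigma^2$, and your version silently corrects it --- worth flagging rather than doing silently.)
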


Finally we are ready to prove the main result of the section, a restatement of Proposition \ref{thm1} in addition to a partial converse.

\begin{prop}\label{mr2.1}
Let $(\B,\H,\mu)$ be an abstract Wiener space. Let $E\subset \B$ be a Borel measurable linear subspace of measure 1, and suppose $S:E\to \B$ is linear and measure-preserving. If $S$ is strongly mixing, then the set of limit points of the random sequence $((2\log n)^{-1/2}S^nx)_{n\in\mathbb N}$ equals the closed unit ball of $H$ for a.e. $x \in X$. 

Conversely, if $S$ is any measure-preserving linear operator such that the set of limit points of the random sequence $((2\log n)^{-1/2}S^nx)_{n\in\mathbb N}$ equals the closed unit ball of $H$ for a.e. $x \in X$, then $S$ must be ergodic (equivalently weakly mixing by Proposition \ref{prop1}).
\end{prop}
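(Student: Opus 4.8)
The forward implication amounts to two containments of the set of limit points, and I would prove them separately. The inclusion ``limit points $\subseteq B(H)$'' is immediate from Lemma \ref{lem:part1}: since $X$ is separable one may cover $X\setminus B(H)$ by countably many balls $\ball(a,\e_a)$ as produced by that lemma, each of which is a.s.\ entered only finitely often, so a.s.\ no limit point lies outside $B(H)$. For the reverse inclusion, fix an orthonormal basis $(e_i)$ of $H$ and a countable family $D_0$ of unit-norm finite linear combinations of the $e_i$ that is $X$-dense in the set of all such combinations; since $H\hookrightarrow X$ is continuous and $\overline{S(H)}^{\,X}=B(H)$ by Lemma \ref{lem:s=b}, one has $\overline{D_0}^{\,X}=B(H)$, and as the limit set is closed it suffices to show that for each $h=\sum_{i\le N_0}h_ie_i\in D_0$ (so $\sum_i h_i^2=1$) and each $\e>0$,
\[
\mu\Big(\Big\{x:\ (2\log n)^{-1/2}S^nx\in\ball(h,\e)\ \text{for infinitely many }n\Big\}\Big)=1 .
\]

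Here is the mechanism I would use. Set $\sigma_N:=\sup\{\|g\|_X:g\in H,\ \|g\|_H\le1,\ g\perp e_1,\dots,e_N\}$; because a sequence in $B(H)$ converging in $X$ converges weakly in $H$, and $B(H)$ is $X$-compact, one checks $\sigma_N\to0$. Choose $N\ge N_0$ with $\sigma_N<\e/4$. For $\xi\sim\mu$ consider the events $E_n$ (``the vector $(2\log n)^{-1/2}(\langle S^n\xi,e_i\rangle)_{i\le N}$ lies in a small ball around $(h_1,\dots,h_{N_0},0,\dots,0)$'') and $F_n$ (``$\|\sum_{i>N}\langle S^n\xi,e_i\rangle e_i\|_X\le(\e/2)(2\log n)^{1/2}$''); with the radius in $E_n$ chosen suitably, $E_n\cap F_n$ forces $(2\log n)^{-1/2}S^n\xi\in\ball(h,\e)$. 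The sequence $(\langle S^n\xi,e_i\rangle)_{i\le N}$, $n\ge1$, is stationary and Gaussian because $S$ preserves $\mu$, with lag-$0$ covariances $\delta_{ij}$ and lag-$n$ covariances $\langle S^ne_i,e_j\rangle_H\to0$ by strong mixing, so Corollary \ref{cor:sphere} gives that $E_n$ occurs for infinitely many $n$, a.s. Meanwhile, since $S^n\xi\sim\mu$, the quantity $\|\sum_{i>N}\langle S^n\xi,e_i\rangle e_i\|_X$ has, for every $n$, the law of the $X$-norm of a fixed centered Gaussian on $X$ whose ``$\sigma$-parameter'' (in the sense of Lemma \ref{ledoux}) equals $\sigma_N$; hence $\mu(F_n^c)\le\exp(-\e^2\log n/(16\sigma_N^2))$ for all large $n$, which is summable since $\sigma_N<\e/4$. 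By Borel--Cantelli $F_n^c$ occurs only finitely often a.s., so $E_n\cap F_n$ occurs for infinitely many $n$, a.s. The step I expect to be the main obstacle is exactly this simultaneous control: a priori the (random) times at which the first $N$ coordinates come near $h$ might coincide with the times at which the tail $\sum_{i>N}\langle S^n\xi,e_i\rangle e_i$ is large. The way around it is that $h$ lies in $\mathrm{span}(e_1,\dots,e_{N_0})\subseteq\mathrm{span}(e_1,\dots,e_N)$ for every $N\ge N_0$, so one is free to enlarge $N$ until $\sigma_N<\e/4$, which upgrades ``$\mu(F_n^c)\to0$'' to summability and thereby makes $F_n$ hold for all large $n$.

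For the converse, assume the limit set is a.s.\ $B(H)$ and fix $\ell\in X^*$ with $\sigma^2:=\int_X\ell^2\,d\mu>0$; let $v\in H$ be the Cameron--Martin representative of $\ell$, so $\|v\|_H=\sigma$ and $v/\sigma\in S(H)\subseteq B(H)$. Since $v/\sigma$ is a.s.\ a limit point and $\ell$ is $X$-continuous, $\limsup_n(2\log n)^{-1/2}\ell(S^nx)\ge\ell(v/\sigma)=\sigma$ a.s.; the reverse inequality holds because each $\ell(S^nx)$ is $N(0,\sigma^2)$ under $\mu$ and one may invoke a Gaussian tail bound with Borel--Cantelli, so $\limsup_n(2\log n)^{-1/2}\ell(S^nx)=\sigma$ a.s. The heart of the argument is then the following lemma, which I would state and prove separately: \emph{if $(Y_n)_{n\ge0}$ is stationary and Gaussian with $\mathrm{var}(Y_0)=\sigma^2>0$ and $\limsup_n(2\log n)^{-1/2}Y_n=\sigma$ a.s., then the spectral measure of $(Y_n)$ is atomless; equivalently (by Wiener's lemma, using $|\mathrm{cov}(Y_0,Y_m)|\le\sigma^2$) $\tfrac1N\sum_{m=1}^N|\mathrm{cov}(Y_0,Y_m)|\to0$.} For the proof, an atom would allow a decomposition $Y_n=Y_n'+Y_n''$ into independent stationary Gaussian sequences with $Y'$ uniformly bounded (a trigonometric polynomial in $n$ with random Gaussian coefficients) and of positive variance $c^2$, and $Y''$ of variance $\sigma^2-c^2$; then $\limsup_n(2\log n)^{-1/2}Y_n\le\limsup_n(2\log n)^{-1/2}Y_n''\le\sqrt{\sigma^2-c^2}<\sigma$ a.s.\ (again by a Gaussian tail bound and Borel--Cantelli), contradicting the hypothesis.

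Applying the lemma to $Y_n=\ell(S^nx)$, whose covariances are $\mathrm{cov}(\ell(\xi),\ell(S^m\xi))=\langle S^mv,v\rangle_H$, yields $\tfrac1N\sum_{m=1}^N|\langle S^mv,v\rangle_H|\to0$ for every Cameron--Martin representative $v$ of an element of $X^*$, hence for every $v\in H$ (these representatives are dense in $H$, and the maps $v\mapsto\langle S^mv,v\rangle_H$ are equicontinuous since $\|S^m\|\le1$). By polarization and the spectral theory of the isometry $S^*$ — i.e.\ the equivalences established in Appendix \ref{AA} and summarized in Proposition \ref{prop1} — this gives $\tfrac1N\sum_{m=1}^N|\langle S^mx,y\rangle_H|\to0$ for all $x,y\in H$, which is weak mixing, hence (again by Proposition \ref{prop1}) ergodicity. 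Besides the spectral lemma, the only delicate point in the converse is that one must test the rescaled orbit against continuous functionals $\ell\in X^*$: for a general $h\in H$ the map $\langle\,\cdot\,,h\rangle$ need not extend continuously to $X$, so one cannot directly read off the value of a limit point in the ``$h$-direction''.
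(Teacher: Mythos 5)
Your forward implication is correct and is essentially the paper's own argument: project onto a finite-dimensional span containing $h$, apply Corollary \ref{cor:sphere} to the projected (stationary, mixing) Gaussian vector, kill the tail $Q_N S^n\xi$ for all large $n$ via the Gaussian concentration of Lemma \ref{ledoux} plus Borel--Cantelli (your observation that $\sigma_N\to 0$ by compactness of the embedding is exactly what makes the exponent summable), and finish with Lemmas \ref{lem:s=b} and \ref{lem:part1}; the ``simultaneous control'' worry you flag is resolved the same way in the paper, by making the tail event hold for all but finitely many $n$ rather than merely with high probability.

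Your converse is correct but takes a genuinely different route. The paper argues geometrically in $X$: non-ergodicity produces (via Proposition \ref{prop1}(3d)) a two-dimensional $S$-invariant subspace $M$ on which $S$ rotates, so $(2\log n)^{-1/2}S_n(P\xi)\to 0$ while the $M^\perp$-component has limit set inside $B(M^\perp)$ by Lemma \ref{lem:part1}; hence the limit set omits the nonzero points of $M$. You instead scalarize: test the orbit against a continuous functional $\ell\in X^*$, derive the exact constant $\limsup_n(2\log n)^{-1/2}\ell(S^nx)=\sigma$ from the hypothesis, and prove a one-dimensional spectral lemma (an atom in the spectral measure of a stationary Gaussian sequence splits off an a.s.-bounded trigonometric component and strictly lowers the $\limsup$), then propagate atomlessness from the dense set $\{v_\ell\}$ to all of $H$ by the uniform Lipschitz bound on $v\mapsto\langle S^mv,v\rangle_H$ and invoke Proposition \ref{prop1}. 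The two mechanisms are the same phenomenon viewed from opposite ends --- a rotation-invariant plane is precisely an atom of the spectral measure --- but your version replaces the Banach-space upper bound (Lemma \ref{lem:part1} applied to $Q\xi$) by a scalar Gaussian tail estimate, and it isolates cleanly the point you correctly flag as the one genuine constraint: one can only read off limit points in directions $v_\ell$ coming from $X^*$, which is why the density-plus-equicontinuity step is needed at the end. Both proofs are complete; yours is slightly more self-contained at the cost of the extra spectral lemma.
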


\begin{proof}
Fix $h \in \H$ with $\|h\|_H=1$, and let $\epsilon>0$. We wish to show that $\|(2\log n)^{-1/2}S_nx-h\|_H<\epsilon$ infinitely often. To this end, let $\{e_i\}_i$ be an orthonormal basis of $H$ with $e_1=h$. Let $P_k$ denote the orthogonal projection onto the subspace $M_k$ which is spanned by $\{e_i\}_{i=1}^k.$

Choose some $k\in \mathbb N$ so that $\int_\B\|x-P_kx\|_\B^2\mu(dx)<(\epsilon/5)^2.$ Then for all $n$ we have that 
\begin{equation}\label{est}\mathbb P(\|S_n\xi - P_k(S_n\xi)\|_X>a) = \mathbb P(\|\xi - P_k(\xi)\|_X>a) \leq e^{-Ma^2} \mathbb E[e^{M\|\xi- P_k(\xi)\|_X^2}]
\end{equation}
for any $a>0$ and for any $M>0$ such that the expectation on the right side is finite. By Lemma \ref{ledoux}, it is true that $\int_X e^{\alpha \|x\|^2} \nu(dx)$ is finite whenever $\alpha < (2\int_X \|x\|^2 \nu(dx))^{-1},$ for any centered Gaussian measure $\nu$ on $X.$ Consequently in \eqref{est} we can take $M$ to be $(5/\e)^2$ and we can take $a$ to be $\e \sqrt{2\log n}/4$ and we find
%by essentially the exact same argument as the one given in Lemma \ref{lem:part3} it follows that 
$$\sum_{n >1} \mathbb P\bigg( \frac{\|S_n\xi -P_k(S_n\xi)\|_\B}{\sqrt{2\log n}} >\epsilon/4\bigg)<\infty,$$ where $\xi$ is distributed according to $\mu.$ Then by the Borel-Cantelli lemma, we find that 
\begin{equation*}
(2\log n)^{-1/2} \|S_n\xi - P_k(S_n\xi)\|_\B<\epsilon/4 
\end{equation*}
for all but finitely many $n$, a.s.. Thus we just need to show that $\|(2\log n)^{-1/2}P_k(S_nx)-h\|_\B<\epsilon/4$ infinitely often. But this is just a finite dimensional statement which immediately follows from Corollary \ref{cor:sphere}. Indeed, by assumption $h$ lies on the unit sphere of $M_k$, and the joint covariances tend to zero precisely because of the condition that $S$ is mixing (via Item \textit{(2)} in Proposition \ref{prop1}).

In the notation of Lemma \ref{lem:s=b}, we have shown that any point on $S(H)$ is almost surely a limit point of $(2\log n)^{-1/2}S_nx$. By that same lemma, any point of $B(H)$ is also a limit point. It is clear from Lemma \ref{lem:part1} that no point outside of $B(H)$ can be a limit point of $(2\log n)^{-1/2}S_nx$. That lemma is still valid in this case, since it only relies on the measure-preserving property of $S_n$ and nothing else. This completes the proof of the first statement.

Next we prove that $S$ must be ergodic if the set of limit points of $((2\log n)^{-1/2}S^nx)_{n\in\mathbb N}$ equals the closed unit ball of $H$ for a.e. $x \in X$. Indeed, if $S$ is not ergodic, then by Item \textit{(3d)} in Proposition \ref{prop1}, there is a two-dimensional invariant subspace $M$ on which $S$ acts by a rotation matrix. We claim that no nonzero point which lies in $M$ can be visited infinitely often, since $(2\log n)^{-1/2}S_nx$ converges to zero a.s. for any $x\in M$. Indeed, let $P$ be the projection onto $M$ and let $Q=I-P$ denote the projection onto $M^{\perp}.$ Let $\xi$ be sampled from $\mu$, so that $Q\xi$ has Cameron-Martin space $M^{\perp}$. Since $M$ and $M^{\perp}$  are invariant under $S$, it follows that $PS_n=S_nP$ and $QS_n = S_nQ$. By Lemma \ref{lem:part1} it is clear that the set of limit points of $(2 \log n)^{-1/2}QS_n\xi = (2\log n)^{-1/2}S_n(Q\xi)$ must be contained in the closed unit ball of the Cameron-Martin space of $Q\xi$, namely $M^{\perp}$ (this lemma only relies on the measure-preserving property of $S_n$ and nothing else). Furthermore, since $(2\log n)^{-1/2}S_nx$ converges to zero a.s. for any $x\in M$, it follows that $(2 \log n)^{-1/2}PS_n\xi = (2\log n)^{-1/2}S_n(P\xi)$ converges to zero a.s. Consequently the set of limit points of $(2 \log n)^{-1/2}S_n\xi = (2\log n)^{-1/2}PS_n\xi+(2\log n)^{-1/2}QS_n\xi$ must also be contained in $M^{\perp}$, and hence contain no nonzero points of $M$, proving the claim.
\end{proof}

This already proves Proposition \ref{thm1}, and to prove Proposition \ref{thm 2} one only needs the following.

\begin{lem}\label{glue}
Let $(\B,H,\mu)$ be an abstract Wiener space. Suppose that $S_t:H\to H$ is some family of operators satisfying $S_tS_t^* = I$. Moreover assume that $(S_t)$ extends to a strongly continuous semigroup on $\B$. Then there exists a deterministic function $C:[0,1] \to \mathbb R_+$ such that $C(\rho)\to 0$ as $\rho \downarrow 0$ and such that $$\mu\bigg(\bigg\{x \in \B: \limsup_{k \to \infty} \frac{\sup_{t\in[k\rho,(k+1)\rho]} \|S_tx-S_{k\rho}x\|_\B}{\sqrt{\log k}} \leq C(\rho)\bigg\}\bigg) =1$$ for all $\rho \in [0,1]$. In particular $(S_tx)_{t \ge 0}$ has the same set of cluster points as $(S_Nx)_{N \in \mathbb N}$ for $x$ in a set of full $\mu$-measure.
\end{lem}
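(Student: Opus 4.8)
The plan is to control the oscillation of $t \mapsto S_t x$ over a dyadic-type block $[k\rho, (k+1)\rho]$ by transporting it to an oscillation at the origin. Using the semigroup property, $S_t x - S_{k\rho} x = S_{k\rho}(S_{t-k\rho} x - x)$, so the supremum over $t \in [k\rho, (k+1)\rho]$ of $\|S_t x - S_{k\rho}x\|_X$ equals $\sup_{s \in [0,\rho]} \|S_{k\rho}(S_s x - x)\|_X$. Define the random variable $W_\rho(x) := \sup_{s \in [0,\rho]} \|S_s x - x\|_X$. The key point is that $W_\rho$ is a measurable function on $X$ and, because each $S_{k\rho}$ is measure-preserving, $\sup_{s\in[0,\rho]}\|S_{k\rho}(S_s\, \cdot\, - \,\cdot\,)\|_X$ has the same distribution as $W_\rho$ — but one must be slightly careful since $S_{k\rho}$ does not commute with the sup over $s$ in an obvious way. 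The clean way is: let $g(x) := \sup_{s\in[0,\rho]} \|S_s x - x\|_X$; then $\sup_{t\in[k\rho,(k+1)\rho]}\|S_t x - S_{k\rho}x\|_X = g(S_{k\rho} x)$ by the semigroup identity, and since $S_{k\rho}$ preserves $\mu$, $g(S_{k\rho}\xi) \stackrel{d}{=} g(\xi) = W_\rho$ for $\xi \sim \mu$.

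Next I would establish a Gaussian-type tail bound for $W_\rho$. Since $X$ is a separable Banach space and $t \mapsto S_t\xi$ is a.s. continuous in $X$ (strong continuity plus a measurability/separability argument — this is where the strongly-continuous-semigroup hypothesis is essential), $W_\rho(\xi)$ is an a.s.-finite random variable; moreover $\xi \mapsto S_s\xi - \xi$ is a centered Gaussian random field indexed by $s$, so $W_\rho$ is the supremum of a centered Gaussian process and hence, by Fernique's theorem / Lemma \ref{ledoux} applied to the Gaussian measure which is the law of $(S_s\xi-\xi)_{s\in[0,\rho]}$ on $C([0,\rho];X)$, satisfies $\mathbb E[e^{\alpha W_\rho^2}] < \infty$ for small $\alpha>0$, with a bound of the form $\mathbb P(W_\rho > a) \le e^{-a^2/(2\sigma_\rho^2 )}$ once $a$ exceeds $\mathbb E[W_\rho]$, where $\sigma_\rho := \sup\{ \mathrm{var}(f(S_s\xi - \xi))^{1/2} : \|f\|_{X^*}\le 1, s\in[0,\rho]\}$. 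Crucially, $\mathbb E[W_\rho] \to 0$ and $\sigma_\rho \to 0$ as $\rho \downarrow 0$: this follows from dominated convergence, using $\sup_{s\le 1}\|S_s\xi-\xi\|_X$ as the dominating variable (integrable by Fernique) and the a.s.\ pointwise statement $W_\rho(\xi)\downarrow 0$ as $\rho \downarrow 0$, again a consequence of strong continuity.

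With the tail bound in hand, set $C(\rho) := 2\big(\mathbb E[W_\rho] + \sqrt{2}\,\sigma_\rho\big)$ (or any explicit quantity tending to $0$); then for $\xi \sim \mu$,
\begin{equation*}
\sum_{k\ge 2} \mathbb P\bigg( \frac{g(S_{k\rho}\xi)}{\sqrt{\log k}} > C(\rho) \bigg) = \sum_{k\ge 2} \mathbb P\big( W_\rho > C(\rho)\sqrt{\log k}\,\big) \le \sum_{k\ge 2} e^{-C(\rho)^2 \log k / (2\sigma_\rho^2)} < \infty
\end{equation*}
provided $C(\rho)^2/(2\sigma_\rho^2) > 1$, which holds by our choice of $C(\rho)$ once $\rho$ is small, and for the remaining (large) $\rho$ one can simply enlarge $C(\rho)$ — or restrict attention to a countable sequence $\rho \downarrow 0$ since the final "in particular" clause only needs that. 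Borel--Cantelli then gives $\limsup_k g(S_{k\rho}\xi)/\sqrt{\log k} \le C(\rho)$ a.s., which is exactly the displayed claim. Finally, for the "in particular" statement: given $\delta>0$, choose $\rho$ with $C(\rho)<\delta$; on the full-measure event just constructed, for all large $t$ (writing $t \in [k\rho,(k+1)\rho]$) we have $\|S_t x - S_{k\rho}x\|_X \le 2C(\rho)\sqrt{\log k} \le 2\delta\sqrt{\log t}$, so any cluster point at infinity of $(2\log t)^{-1/2}S_t x$ is within $\sqrt 2\,\delta$ of the set of cluster points of $(2\log N)^{-1/2}S_N x$ and vice versa; intersecting over a sequence $\delta \downarrow 0$ gives equality of the two cluster sets. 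The main obstacle is the careful Gaussian-process justification that $W_\rho$ has a genuinely Gaussian tail with variance proxy $\sigma_\rho \to 0$ — i.e.\ verifying that $(S_s\xi - \xi)_{s}$ really is a nice $C([0,\rho];X)$-valued Gaussian element so that Lemma \ref{ledoux} and a dominated-convergence argument for $\mathbb E[W_\rho]$, $\sigma_\rho$ apply; everything downstream (Borel--Cantelli, the gluing) is routine.
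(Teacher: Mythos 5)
Your proposal is correct and follows essentially the same route as the paper: both reduce the block oscillation $\sup_{t\in[k\rho,(k+1)\rho]}\|S_tx-S_{k\rho}x\|_\B$ to a single random variable with the law of $\sup_{s\in[0,\rho]}\|S_s\xi-\xi\|_\B$ (you via the semigroup identity plus measure preservation, the paper via stationarity of $(S_t\xi)_t$, which is the same fact), then apply Fernique/Lemma \ref{ledoux} on $C([0,\rho],\B)$ to get a Gaussian tail with a variance proxy vanishing as $\rho\downarrow 0$, and conclude by Borel--Cantelli. The only difference is bookkeeping in the choice of $C(\rho)$ (your displayed sum drops the $\mathbb E[W_\rho]$ shift in the exponent, so the constant needs to be enlarged slightly to make the series summable), which is cosmetic.
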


\begin{proof}
Let $\xi$ denote a random variable in $\B$ with law $\mu$, defined on some probability space $(\Omega, \mathcal F, \mathbb P)$. Then $(S_t\xi)_{t \in [0,1]}$ is continuous in $t$, thus it can be viewed as a Gaussian random variable taking values in the Banach space $C([0,1],\B)$ of continuous paths in $\B$, endowed with norm $\|F\|_{C([0,1],\B)}:=\sup_{t\in[0,1]} \|F(t)\|_{\B}.$ By Fernique's theorem, $\mathbb E[\sup_{t\in [0,1]} \|S_t\xi-\xi\|_{\B}^p]< \infty$ for all $p \ge 1$. By continuity, we also know that $\sup_{t\in [0,\rho]} \|S_t\xi-\xi\|_\B\to 0$ as $\rho \downarrow 0$. Letting
$$C(\rho):=\mathbb E[\sup_{t \in[0,\rho]} \|S_t\xi-\xi\|_\B^2],$$
we thus have by uniform integrability that $\lim_{\rho \downarrow 0} C(\rho) = 0.$ Letting $$A(k,\rho):= \sup_{t\in[k\rho,(k+1)\rho]} \|S_tx-S_{k\rho}x\|_\B,$$
the stationarity of the process $(S_t\xi)_{t \ge 0}$ implies that $A(k,\rho)\stackrel{d}{=}A(0,\rho)$ for all $k \in \mathbb N$. We thus find that $$\mathbb P( A(k,\rho)>\mathbb E[A(0,\rho)]+u) =\mathbb P( A(0,\rho)>\mathbb E[A(0,\rho)]+u) \leq e^{-u^2/(2C(\rho))}, $$where we used Lemma \ref{ledoux} in the last inequality. But $ \mathbb E[A(0,\rho)] \leq \mathbb E[A(0,\rho)^2]^{1/2} =C(\rho)^{1/2}$, and therefore $$\mathbb P(A(k,\rho)>u) \leq  e^{-(u-C(\rho)^{1/2})^2/(2C(\rho))} \leq e^{1/2} \cdot e^{-u^2/(4C(\rho))}$$ where we used $(a-b)^2 \ge \frac12 a^2-b^2$ in the last inequality. Thus $$\mathbb P( A(k,\rho)> 4 C(\rho) \sqrt{\log k})  \lesssim k^{-4},$$ so by the Borel-Cantelli lemma we find that $$\limsup_{k \to \infty} \frac{A(k,\rho)}{\sqrt{\log k}} \leq 4C(\rho),\;\;\;\;a.s.$$ As we already observed eariler $C(\rho) \to 0$ as $\rho \downarrow 0$, completing the proof.
\end{proof}

To summarize the main results of this section, let $(\B,\H,\mu)$ be an abstract Wiener space, let $S:\B\to \B$ be measure-preserving and a.e. linear, and consider the following statements:
\begin{enumerate}
    \item $\bigcap_n \sigma(S_n)$ is a trivial sigma algebra.
    
    \item $S$ is strongly mixing.
    
    \item The set of limit points of the random sequence $((2\log n)^{-1/2}S^nx)_{n\in\mathbb N}$ equals the closed unit ball of $H$ for a.e. $x \in X$.
    
    \item $S$ is ergodic/weakly mixing.
\end{enumerate}
Appendix A shows that (1) implies (2) (which is actually true for any dynamical system, by e.g. the reverse martingale convergence theorem), that (2) implies (3), and that (3) implies (4). From Proposition \ref{prop1}, it is clear that (2) does not imply (1) in general, and that (4) does not imply (2) in general. 

We do not know if (3) implies (2) or if (4) implies (3), although Proposition \ref{prop1} implies that both cannot simultaneously be true in general. To show that (4) implies (3), one would need to prove Lemma \ref{lem:lim1} with the condition $\mathrm{cov}(X_0,X_n)\to 0$ replaced by the condition $\frac1n \sum_{j=1}^n |\mathrm{cov}(X_0,X_j)|\to 0.$ We do not know how to prove this, nor are we certain that it is even true. %This may be an open problem, or perhaps the answer exists somewhere in literature that we are not aware of. 
We have reason to suspect that ``(4) implies (3)" may actually be false, and a counterexample might be given by an operator whose spectral measure is atomless but highly singular with respect to Lebesgue measure. For instance, in the case that the spectral measure looks like the usual two-thirds Cantor measure, we have some reason to suspect that the set of limit points of the random sequence $((2\log n)^{-1/2}S^nx)_{n\in\mathbb N}$ equals the ball in $\H$ of radius $\sqrt{\log_3 2}$ for a.e. $x \in X$, rather than the closed unit ball. However, we do not have a proof of this.

\subsection{Strassen law for higher Gaussian chaos}

The main goal of this subsection is to prove a version of Strassen's law for higher Gaussian chaoses, which reduces to the previous version for first-order chaos.

If $(\B,\H,\mu)$ is an abstract Wiener space, then the $k^{th}$ homogeneous Wiener chaos, denoted by $\mathcal H^k(\B,\mu)$ is defined to be the closure in $L^2(\B,\mu)$ of the linear span of $H_k \circ g$ as $g$ varies through all elements of the continuous dual space $X^*,$ where $H_k$ denotes the $k^{th}$ Hermite polynomial
\begin{equation*}
H_k (x) := (-1)^k e^{\frac{x^2}{2}} \frac{d^k}{dx^k} e^{-\frac{x^2}{2}}.  
\end{equation*}
Equivalently $\mathcal H^k$ can be described as the closure in $L^2(\B,\mu)$ of the closure of the linear span of $H_k(\langle \cdot, v\rangle)$ as $v$ ranges through all elements of $\H$ and satisfies $\|v\|_H = 1$. 
One always has the orthonormal decomposition $L^2(\B,\mu) = \bigoplus_{k\ge 0} \mathcal H^k(\B,\mu),$ see e.g. \cite[Section 1.1]{Nua96}. Sometimes the $k^{th}$ chaos is also described slightly differently as linear combinations of \textit{products} of Hermite polynomials of degree adding up to $k$, for instance in \cite[Section 3]{HW13}, but our formulation is equivalent by the umbral identity for Hermite polynomials (see e.g. \cite[Corollary 2.3]{Hai16}) together with the fact that for any vector space $H$ the ``diagonal'' elements of the form $h^{\otimes_s k}\;\;\; (h\in H)$ span span the entire symmetric tensor product $H^{\otimes_s k}$. 
%\red{(Y: Is it easy to see the formulation is equivlaent ot \cite{HW13}?)}

\begin{defn}
Let $(\B,\H,\mu)$ be an abstract Wiener space, and let $Y$ be another separable Banach space. A Borel-measurable map $T:\B\to Y$ is called a homogeneous chaos of order $k$ if $f\circ T \in \mathcal H^k(\B,\mu)$ for all $f \in Y^*$.
\end{defn}

Before formulating the main result of this section, we collect a few important results about homogeneous variables. In \cite{AG93} the authors show that the norm of any Gaussian chaos has moments of all orders. Moreover, \cite[equation (4.1)]{AG93} gives a hypercontractive bound for any Gaussian chaos $T$ of order $k$: 
\begin{equation}\label{hyp}\int_X \|T(x)\|_Y^{2p} \mu(dx) \leq (2p-1)^{pk} \bigg[\int_X \|T(x)\|_Y^{2} \mu(dx)\bigg]^p. 
\end{equation}
Here $p\ge 1$ is arbitrary. Immediately this gives the following strong integrability result.

\begin{prop}\label{eq'}Let $(\B,\H,\mu)$ be an abstract Wiener space and let $T:\B\to Y$ be a chaos of order $k.$ Let $\|T\|_{L^2(\B,\mu;Y)}^2:=\int_\B\|T(x)\|_Y^2\mu(dx).$ Then $\|T\|_{L^2(\B,\mu;Y)}<\infty$ and moreover
\begin{equation}\label{b'}\mu(\{x \in \B: \| T(x)\|_Y >u\}) < C \exp\big[-\alpha \big(u/ \|T\|_{L^2(\B,\mu;Y)}\big)^{2/k}\big],
\end{equation}
where $C,\alpha>0$ depend on $k$ but are independent of the choice of $\B,\H,\mu,Y,T,$ and $u >0$.
\end{prop}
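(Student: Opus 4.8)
The plan is to derive Proposition \ref{eq'} directly from the hypercontractive bound \eqref{hyp} of \cite{AG93}, which is already available to us. The finiteness of $\|T\|_{L^2(\B,\mu;Y)}$ is part of the input (it is asserted in \cite{AG93} that the norm of a Gaussian chaos has moments of all orders, in particular order 2), so the only real content is the stretched-exponential tail bound \eqref{b'}. By rescaling $T$ we may assume without loss of generality that $\|T\|_{L^2(\B,\mu;Y)}=1$, since both sides of \eqref{b'} are homogeneous of the same degree under $T \mapsto cT$; so it suffices to show $\mu(\{\|T\|_Y > u\}) < C\exp(-\alpha u^{2/k})$ with $C,\alpha$ depending only on $k$.

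First I would apply Markov's inequality at the exponent $2p$: for any $p \ge 1$,
\begin{equation*}
\mu(\{x: \|T(x)\|_Y > u\}) \le u^{-2p} \int_\B \|T(x)\|_Y^{2p}\,\mu(dx) \le u^{-2p}(2p-1)^{pk},
\end{equation*}
using \eqref{hyp} together with the normalization. Thus $\mu(\{\|T\|_Y > u\}) \le \big((2p-1)^k u^{-2}\big)^p \le \big((2p)^k u^{-2}\big)^p$. The next step is to optimize over $p$: the right-hand side is minimized (roughly) by choosing $2p$ comparable to $u^{2/k}$, i.e. $p \asymp u^{2/k}$. Plugging in $2p = c u^{2/k}$ for a suitable small constant $c=c(k)$ makes the base $(2p)^k u^{-2}$ equal to a fixed constant strictly less than $1$ (say $e^{-1}$), which yields a bound of the form $\exp(-p) = \exp(-\tfrac{c}{2} u^{2/k})$, giving exactly the claimed form with $\alpha = c/2$. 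One has to be slightly careful that $p \ge 1$ is required for \eqref{hyp} to apply, so the estimate as stated is only immediate for $u$ larger than some threshold $u_0(k)$; for $u \le u_0(k)$ the inequality \eqref{b'} holds trivially by taking $C$ large enough (since probabilities are bounded by $1$ and $\exp(-\alpha u^{2/k}) \ge \exp(-\alpha u_0^{2/k})$ on that range). Absorbing this into the constant $C$ completes the argument.

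I do not anticipate a genuine obstacle here — the proof is a standard Markov-plus-optimization argument of the type used to convert moment bounds into tail bounds, and it is the canonical way hypercontractivity yields Gaussian-chaos tail estimates (cf. the discussion in \cite{Led96}). The only points requiring mild care are the bookkeeping of the $k$-dependent constants (ensuring $C$ and $\alpha$ depend on $k$ alone and not on $\B,\H,\mu,Y,T$, which is automatic since \eqref{hyp} has this property and the rescaling removes the dependence on $\|T\|_{L^2}$), the restriction $p\ge 1$ handled by the small-$u$ case above, and the harmless replacement of $2p-1$ by $2p$ to make the optimization cleaner. I would write this up in a few lines.
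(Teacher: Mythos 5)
Your proof is correct and is exactly the argument the paper intends: the paper gives no separate proof of Proposition \ref{eq'} beyond asserting it follows ``immediately'' from the hypercontractive bound \eqref{hyp}, and your Markov-plus-optimization over $p$ (with $2p\asymp u^{2/k}$ and the small-$u$ range absorbed into $C$) is the standard way to fill in that step. The normalization and the handling of the $p\ge 1$ restriction are both handled properly, so nothing is missing.
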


%This is proved as Corollary \ref{eq} of the appendix.

Let $(\B,\H,\mu)$ be an abstract Wiener space. Choose an orthonormal basis $\{e_i\}_i$ for $\H,$ and for $x\in H$ let $$P_Nx:= \sum_{j=1}^N \langle x,e_j\rangle e_j,\;\;\;\;\;\;\;\;\;\;\;\;\;\; Q_Nx := \sum_{j=N+1}^{\infty} \langle x,e_j\rangle e_j=x-P_Nx.$$
where the infinite sum converges in the topology of $H$. Note that if $x$ is sampled from $\mu$ then $P_Nx,Q_Nx$ still make sense (in a set of measure 1) and moreover they are independent since $\{\langle x,e_j\rangle\}_{j\ge 1}$ are i.i.d. standard Gaussian random variables under $\mu$. Therefore if $(x,y)$ is sampled from $\mu^{\otimes 2}$, then $P_Nx+Q_Ny$ is distributed as $\mu$. If $T:\B\to Y$ is a chaos of order $k$, we thus define a sequence of ``finite-rank Cameron-Martin projections" for $T$ by the formula
\begin{equation}\label{frcmp'}
    T_N(x):= \int_\B T(P_Nx+Q_Ny) \mu(dy).
\end{equation}
This is a well-defined Bochner integral for $\mu$ a.e. $x\in\B$. Indeed, since 

\begin{equation*}
\int_\B \int_\B \|T(P_Nx+Q_Ny)\|_Y \mu(dy)\mu(dx)=\int_\B \|T(u)\|_Y\mu(du)<\infty,
\end{equation*} 
it follows that $\int_\B \|T(P_Nx+Q_Ny)\|_Y \mu(dy)$ is finite for  a.e. $x$.

\begin{cor}\label{approximation lemma'}
Let $(\B,\H,\mu)$ be an abstract Wiener space, and let $T:\B\to Y$ be a chaos of order $k$. If $T_N$ is defined as in \eqref{frcmp'}, then $T_N$ is also a chaos of order $k$, and moreover $\|T_N- T\|_Y\to 0$ a.e. and in every $L^p(\B,\mu)$ as $N\to \infty$. In fact, one has the following super-polynomial convergence bound:
\begin{equation}\label{mis'}
   \mu(\{x\in\B: \|T_N(x)-T(x)\|_Y>u\})\leq C\exp\big[-\alpha \big(u/\|T_N-T\|_{L^2(\B,\mu;Y)}^2\big)^{2/k} \big]
\end{equation}
where $C,\alpha$ are independent of $\B,\H,\mu,Y,T,u,N$ and the choice of basis $\{e_i\}_i$, but may depend on the homogeneity $k$.
\end{cor}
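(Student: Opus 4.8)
The plan is to recognize $T_N$ as a conditional expectation of $T$, deduce from this that $T_N$ --- and therefore also $T_N-T$ --- is again a chaos of order $k$, and then read off the tail bound \eqref{mis'} from Proposition \ref{eq'} and the convergence statements from vector-valued martingale convergence. To set this up, put $\mathcal F_N:=\sigma(\langle\cdot,e_1\rangle,\dots,\langle\cdot,e_N\rangle)$, so that $P_Nx$ is $\mathcal F_N$-measurable while $Q_Nx$ is independent of $\mathcal F_N$ under $\mu$ (as noted before \eqref{frcmp'}, because the $\langle\cdot,e_j\rangle$ are i.i.d. standard Gaussians). Comparing with \eqref{frcmp'} and invoking the independence rule for conditional expectations, one obtains $\mathbb E_\mu[T\mid\mathcal F_N](x)=\int_\B T(P_Nx+Q_Ny)\,\mu(dy)=T_N(x)$; in other words $T_N$ is exactly the $Y$-valued (Bochner) conditional expectation $\mathbb E_\mu[T\mid\mathcal F_N]$, which is well-defined $\mu$-a.e. as already observed after \eqref{frcmp'}.

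To see that $T_N$ is a chaos of order $k$, fix $f\in Y^*$; since $f$ is bounded and linear it passes through the Bochner integral, so $f\circ T_N=\mathbb E_\mu[f\circ T\mid\mathcal F_N]$, with $f\circ T\in\mathcal H^k(\B,\mu)$ by hypothesis. It therefore suffices to check that the operator $\mathbb E_\mu[\,\cdot\mid\mathcal F_N]$ maps $\mathcal H^k(\B,\mu)$ into itself. Because $\mathcal H^k(\B,\mu)$ is the $L^2$-closed linear span of the generators $H_k(\langle\cdot,v\rangle)$ with $v\in\H$, $\|v\|_H=1$, and $\mathbb E_\mu[\,\cdot\mid\mathcal F_N]$ is bounded on $L^2(\B,\mu)$, it is enough to treat a single such generator. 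Writing $v=P_Nv+Q_Nv$ and $a:=\|P_Nv\|_H$, $b:=\|Q_Nv\|_H$ (so $a^2+b^2=1$), one has $\langle\cdot,v\rangle=aG+bG'$ with $G:=\langle\cdot,P_Nv/a\rangle$ ($\mathcal F_N$-measurable and standard normal under $\mu$) and $G':=\langle\cdot,Q_Nv/b\rangle$ (independent of $\mathcal F_N$ and standard normal), and then the classical Hermite rescaling identity $\mathbb E[H_k(aG+bG')\mid G]=a^kH_k(G)$ gives $\mathbb E_\mu[H_k(\langle\cdot,v\rangle)\mid\mathcal F_N]=a^kH_k(\langle\cdot,P_Nv/\|P_Nv\|_H\rangle)\in\mathcal H^k(\B,\mu)$ (and the value is $0\in\mathcal H^k(\B,\mu)$ when $P_Nv=0$). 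Hence $\mathbb E_\mu[\,\cdot\mid\mathcal F_N]$ preserves $\mathcal H^k(\B,\mu)$, so $T_N$ is a chaos of order $k$; and since $\mathcal H^k(\B,\mu)$ is a linear subspace and $f\circ(T_N-T)=f\circ T_N-f\circ T$, so is $T_N-T$.

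With this in place, the tail bound \eqref{mis'} is obtained by applying the estimate \eqref{b'} of Proposition \ref{eq'} to the order-$k$ chaos $T_N-T$; the constants $C,\alpha$ provided by Proposition \ref{eq'} depend only on $k$, and in particular not on $N$, on the spaces, or on the chosen basis. For the convergence statements, observe that $(T_N)_{N\ge1}$ is a martingale for the filtration $(\mathcal F_N)_N$ --- indeed $\mathbb E_\mu[T_{N+1}\mid\mathcal F_N]=T_N$ by the tower property --- which is closed by $T$, and that $\bigvee_N\mathcal F_N$ coincides with the Borel $\sigma$-algebra of $\B$ modulo $\mu$-null sets because $\{e_i\}_i$ is an orthonormal basis of $\H$, so every $\ell\in\B^*$ lies in the $L^2(\mu)$-closure of the span of the $\langle\cdot,e_j\rangle$. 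The convergence theorem for such regular Banach-space-valued martingales (which needs no Radon--Nikodym hypothesis on $Y$, the martingale being of the form $\mathbb E_\mu[\,\cdot\mid\mathcal F_N]$) then gives $T_N\to\mathbb E_\mu[T\mid\mathcal B(\B)]=T$ both $\mu$-a.e. and in $L^1(\B,\mu;Y)$. Finally, Proposition \ref{eq'} gives $T\in L^p(\B,\mu;Y)$ for every $p<\infty$, and conditional expectation is an $L^p$-contraction, so $\sup_N\|T_N\|_{L^p(\B,\mu;Y)}\le\|T\|_{L^p(\B,\mu;Y)}<\infty$ for all $p$; together with the $\mu$-a.e. convergence this makes $\{\|T_N-T\|_Y^q\}_N$ uniformly integrable for every $q<\infty$, whence $T_N\to T$ in $L^q(\B,\mu;Y)$ for all $q<\infty$.

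The argument is essentially a careful assembly of standard facts, so I do not expect a serious obstacle. The two points that most deserve care are the clean identification $T_N=\mathbb E_\mu[\,\cdot\mid\mathcal F_N]$ together with the verification --- carried out above via the Hermite rescaling identity --- that conditioning on a coordinate sub-$\sigma$-algebra preserves each homogeneous chaos $\mathcal H^k$, and the use of vector-valued martingale convergence in the closed (regular) form so that no geometric assumption on $Y$ is needed.
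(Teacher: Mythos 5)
Your proposal is correct and follows essentially the same route as the paper's proof: identify $T_N=\mathbb E_\mu[T\mid\mathcal F_N]$, invoke Banach-valued martingale convergence for the a.e.\ and $L^p$ statements, and apply Proposition \ref{eq'} to the order-$k$ chaos $T_N-T$ for the tail bound. The only difference is that you spell out (via the Hermite rescaling identity) why conditional expectation preserves $\mathcal H^k$, a step the paper simply declares clear; note also that your application of \eqref{b'} yields the bound with $\|T_N-T\|_{L^2(\B,\mu;Y)}$ unsquared, which is the correct form (the square in \eqref{mis'} appears to be a typo).
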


\begin{proof}That $T_N$ is a chaos of order $k$ is clear. The a.e. and $L^2$ convergence follows immediately from the martingale convergence theorem for Banach-valued random variables \cite{ttttttttttttt}, since $T_N = \mathbb E_\mu[T|\mathcal F_N]$ where $\mathcal F_N$ is the $\sigma$-algebra generated by the i.i.d. variables $\langle x,e_i\rangle$ for $1\le i \le N$. The super-polynomial convergence bound then follows immediately from Proposition \ref{eq'} applied to the chaos $T-T_N.$ %is proved as Proposition \ref{approximation lemma} in the appendix.
\end{proof}

If $(\B,\H,\mu)$ is an abstract Wiener space and $T:\B\to Y$ is a chaos of order $k$, then we define the associated map $T_{\mathrm{hom}}:H\to Y$ by \begin{equation}\label{homm}T_{\mathrm{hom}}(h):= \int_\B T(x+h)\mu(dx) = \frac1{k!}\int_\B T(x)\langle x,h\rangle^k\mu(dx).
\end{equation}
The latter equality follows by applying the Cameron-Martin formula to first replace $T(x+h)$ by $T(x)e^{\langle x,h\rangle -\frac12\|h\|_H^2}$, then expanding the exponential as $\sum_{n = 0}^\infty \frac{\|h\|^n}{n!} H_n(\langle x,h/\|h\|\rangle)$ and using the fact that $T$ is orthogonal to all $H_n \circ g$ when $n \ne k$ and $g\in X^*$. %See \eqref{id5} and \eqref{id6} of the appendix for the proof of the equality.
\begin{lem}\label{hom is cont'}
Let $(\B,\H,\mu)$ be an abstract Wiener space, and let $T:\B\to Y$ be a chaos of order $k$. Choose an orthonormal basis $\{e_i\}$ of $\H$ and let $T_N$ be the finite-rank approximation given in \eqref{frcmp'}. Then we have the uniform convergence $$\lim_{N\to \infty} \sup_{\|h\|_H\le 1} \big\|(T_N)_{hom}(h)-T_{\mathrm{hom}}(h)\big\|_Y=0.$$ 
Letting $B(H)$ denote the closed unit ball of $\H$, it follows that $T_{\mathrm{hom}}$ is continuous from $B(H)\to Y$, where $B(H)$ is given the topology of $\B$ (not of $\H$).
\end{lem}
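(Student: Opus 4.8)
The plan is to prove the displayed uniform convergence by a direct second-moment estimate, and then to obtain the continuity of $T_{\mathrm{hom}}$ from the fact that a uniform limit of continuous maps into a metric space is continuous. For the uniform bound, I would use the second expression in \eqref{homm} (valid for the order-$k$ chaos $T_N$ by Corollary~\ref{approximation lemma'}) to write, for every $h\in H$,
\begin{equation*}
(T_N)_{\mathrm{hom}}(h)-T_{\mathrm{hom}}(h)=\frac1{k!}\int_\B\big(T_N(x)-T(x)\big)\,\langle x,h\rangle^k\,\mu(dx),
\end{equation*}
and estimate its $Y$-norm using the triangle inequality for Bochner integrals followed by Cauchy--Schwarz:
\begin{equation*}
\big\|(T_N)_{\mathrm{hom}}(h)-T_{\mathrm{hom}}(h)\big\|_Y\le\frac1{k!}\,\|T_N-T\|_{L^2(\B,\mu;Y)}\Big(\int_\B\langle x,h\rangle^{2k}\,\mu(dx)\Big)^{1/2}.
\end{equation*}
Under $\mu$ the variable $\langle\cdot,h\rangle$ is centered Gaussian with variance $\|h\|_H^2$, so the last factor is $\sqrt{(2k-1)!!}\,\|h\|_H^{k}\le\sqrt{(2k-1)!!}$ whenever $\|h\|_H\le1$. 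Hence $\sup_{\|h\|_H\le1}\|(T_N)_{\mathrm{hom}}(h)-T_{\mathrm{hom}}(h)\|_Y\le\frac{\sqrt{(2k-1)!!}}{k!}\,\|T_N-T\|_{L^2(\B,\mu;Y)}$, and the right-hand side tends to $0$ by Corollary~\ref{approximation lemma'}.

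For the continuity claim it then suffices to check that each $(T_N)_{\mathrm{hom}}$ is continuous on $B(H)$ for the topology of $\B$. Here I would use that $T_N$ is an order-$k$ chaos which is moreover measurable with respect to the finite sub-$\sigma$-algebra generated by $\langle\cdot,e_1\rangle,\dots,\langle\cdot,e_N\rangle$; since the intersection of $\mathcal H^k(\B,\mu)$ with the $L^2$ space of this sub-$\sigma$-algebra is spanned by the finitely many products $\prod_{i\le N}H_{\alpha_i}(\langle\cdot,e_i\rangle)$ over multi-indices $\alpha$ with $|\alpha|=k$, one obtains $T_N(x)=\sum_{|\alpha|=k}c_\alpha\prod_{i\le N}H_{\alpha_i}(\langle x,e_i\rangle)$ for suitable $c_\alpha\in Y$, i.e. $T_N$ is a $Y$-valued polynomial of degree $k$ in the coordinates $\langle x,e_i\rangle$, $i\le N$. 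Plugging this expansion into $(T_N)_{\mathrm{hom}}(h)=\int_\B T_N(x+h)\,\mu(dx)$ and integrating out the Gaussian shift coordinatewise (using independence of the $\langle\cdot,e_i\rangle$ and the classical identity that $\int_\R H_m(z+a)\,\gamma(dz)$ is a constant multiple of $a^m$ for standard Gaussian $\gamma$), I would conclude that $(T_N)_{\mathrm{hom}}(h)$ is a fixed $Y$-valued homogeneous polynomial of degree $k$ in the finitely many real variables $\langle h,e_1\rangle_H,\dots,\langle h,e_N\rangle_H$, hence continuous in those variables. It remains to observe that each map $h\mapsto\langle h,e_i\rangle_H$ is continuous on $B(H)$ in the $\B$-topology: if $h_n\to h$ in $\B$ with $h_n,h\in B(H)$, then $(h_n)$ is $H$-bounded, and any weak-$H$ limit $g$ of a subsequence also satisfies $h_{n_j}\to g$ in $\B$ because the compact embedding $H\hookrightarrow\B$ sends weakly convergent sequences to norm-convergent ones, forcing $g=h$; thus $h_n\rightharpoonup h$ weakly in $H$ and $\langle h_n,e_i\rangle_H\to\langle h,e_i\rangle_H$. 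Composing $h\mapsto(\langle h,e_i\rangle_H)_{i\le N}$ with the polynomial above shows $(T_N)_{\mathrm{hom}}$ is continuous from $(B(H),\|\cdot\|_\B)$ to $Y$.

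Combining the two parts, $T_{\mathrm{hom}}$ is the uniform limit on $B(H)$ of the continuous maps $(T_N)_{\mathrm{hom}}$, and is therefore continuous on $(B(H),\|\cdot\|_\B)$. I expect the main obstacle to be the continuity of the approximants $(T_N)_{\mathrm{hom}}$ in the $\B$-topology: it requires both the reduction of $(T_N)_{\mathrm{hom}}$ to a genuine finite-dimensional polynomial (resting on the finite-rank chaos structure of $T_N$ together with the Hermite moment identity) and the observation that convergence in the $\B$-norm inside $B(H)$ automatically upgrades to weak convergence in $H$, which is exactly the point where compactness of the Cameron--Martin embedding is used. By contrast, the uniform-convergence estimate is routine, relying only on Cauchy--Schwarz, the Gaussianity of $\langle\cdot,h\rangle$, and the $L^2$ convergence $\|T_N-T\|_{L^2(\B,\mu;Y)}\to0$ supplied by Corollary~\ref{approximation lemma'}.
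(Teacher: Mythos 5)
Your proof is correct and follows essentially the same route as the paper's (which defers to \cite[(3.8)]{HW13} and the underlying argument there): approximate $T$ by the finite-rank projections $T_N$, obtain uniform convergence of $(T_N)_{\mathrm{hom}}$ on $B(H)$ from Cauchy--Schwarz together with $\|T_N-T\|_{L^2(\B,\mu;Y)}\to 0$, and observe that each $(T_N)_{\mathrm{hom}}$ is a fixed $Y$-valued polynomial in the finitely many coordinates $\langle h,e_i\rangle_H$, hence continuous on $(B(H),\|\cdot\|_{\B})$, so the uniform limit is continuous. The only cosmetic difference is that you run the Cauchy--Schwarz step through the moment representation $\frac1{k!}\int_\B (T_N-T)(x)\langle x,h\rangle^k\,\mu(dx)$ (giving the constant $\sqrt{(2k-1)!!}/k!$) rather than through the shift representation $\int_\B (T_N-T)(x+h)\,\mu(dx)$ and the Cameron--Martin density (which gives $e^{1/2}$), and you spell out, via the compact embedding $\H\hookrightarrow\B$ and weak compactness of $B(H)$, why $h\mapsto\langle h,e_i\rangle_H$ is $\B$-continuous on $B(H)$ --- a point the cited proof treats as obvious.
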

This is proved in \cite[(3.8)]{HW13}. Since continuous functions map compact sets to compact sets, we immediately obtain the following.
\begin{cor}
Let $(\B,\H,\mu)$ be an abstract Wiener space, and let $T^i:\B\to Y_i$ be a chaos of order $k_i$ for $1\le i \le m$, where $m\in \mathbb N$. Then the set $$\{(T^1_{\mathrm{hom}}(h),...,T^m_{\mathrm{hom}}(h)):h\in B(H)\}$$ is a compact subset of $ Y_1\times \cdots Y_m$. 
\end{cor}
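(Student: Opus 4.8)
The plan is to exhibit the set in question as the continuous image of a compact set. Define $\Psi : B(H) \to Y_1\times\cdots\times Y_m$ by $\Psi(h) := (T^1_{\mathrm{hom}}(h),\dots,T^m_{\mathrm{hom}}(h))$, where throughout $B(H)$ is given the topology it inherits as a subset of $\B$ (not the norm topology of $\H$), and $Y_1\times\cdots\times Y_m$ carries the product topology (equivalently, the norm topology of the Banach space obtained by taking, say, the max of the norms).

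First I would recall that $B(H)$ is a compact subset of $\B$. This is part of the definition of an abstract Wiener space via the compact embedding $H\hookrightarrow\B$ (see \cite{Bog}), and it is exactly the fact already invoked in the proof of Lemma \ref{lem:s=b}; so $B(H)$ is a compact topological space in the subspace topology from $\B$. Next, by Lemma \ref{hom is cont'} applied to each $T^i$ individually, every coordinate map $h\mapsto T^i_{\mathrm{hom}}(h)$ is continuous from $B(H)$ (with the $\B$-topology) into $Y_i$. Since a map into a finite product of topological spaces is continuous precisely when each of its coordinate maps is, it follows that $\Psi$ is continuous. Finally, the continuous image of a compact set is compact, so $\Psi(B(H)) = \{(T^1_{\mathrm{hom}}(h),\dots,T^m_{\mathrm{hom}}(h)) : h\in B(H)\}$ is a compact subset of $Y_1\times\cdots\times Y_m$, which is the claim.

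There is essentially no genuine obstacle beyond bookkeeping; the one point requiring care is the choice of topology on $B(H)$. Continuity of $T_{\mathrm{hom}}$ in Lemma \ref{hom is cont'} holds with respect to the weaker $\B$-topology, and this is precisely the topology in which $B(H)$ is compact, so the two inputs are compatible. Were one instead to equip $B(H)$ with the norm topology of $\H$, the ball would fail to be compact in infinite dimensions and the argument would collapse; it is exactly the content of Lemma \ref{hom is cont'} — that each $T_{\mathrm{hom}}$ extends continuously from $S(H)$ to its $\B$-closure $B(H)$ — that makes this compactness statement go through.
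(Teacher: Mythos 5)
Your proof is correct and is exactly the argument the paper intends: the corollary is stated immediately after Lemma \ref{hom is cont'} with the one-line justification that continuous functions map compact sets to compact sets, which is precisely your combination of compactness of $B(H)$ in $\B$ with the continuity of each $T^i_{\mathrm{hom}}$ on $B(H)$ in the $\B$-topology. Your extra remark about which topology on $B(H)$ makes both inputs compatible is accurate and is the only point of substance in the argument.
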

%These results are proved as Lemma \ref{hom is cont} and Corollary \ref{cor5} in the Appendix. 
With all of these preliminaries in place, we are ready to formulate the main theorem of this subsection, which is a generalization of the main theorem to homogeneous variables of order $k$.

\begin{thm}\label{thm 4}
Let $(\B,\H,\mu)$ be an abstract Wiener space, fix $m\in \mathbb N$. Let $T^i:\B\to Y_i$ be chaoses of order $k_i$ respectively, for $1\le i \le m$. Let $S:\B\to \B$ be a.e. linear, measure-preserving, and strongly mixing. Then almost surely, the set of limit points of the random set 
$$\big\{ \big( (2\log n)^{-k_1/2} T^1(S_nx),\;.\;.\;.\;,(2\log n)^{-k_m/2}T^m(S_nx)\big): n \in \mathbb N\}$$ is equal to the compact set $K:=\{(T^1_{\mathrm{hom}}(h),...,T^m_{\mathrm{hom}}(h)):h\in B(H)\}. $
\end{thm}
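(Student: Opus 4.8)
The plan is to reduce Theorem \ref{thm 4} to the first-order case (Proposition \ref{mr2.1}) via the finite-rank approximations $T^i_N$ together with the super-polynomial tail bounds of Corollary \ref{approximation lemma'}, exactly paralleling how Proposition \ref{mr2.1} itself was reduced to the finite-dimensional Corollary \ref{cor:sphere}. First I would fix orthonormal bases and write $T^i_N$ as in \eqref{frcmp'}; the key structural observation is that each $T^i_N$, being measurable with respect to only finitely many coordinates $\langle\cdot,e_j\rangle$, is essentially a polynomial-in-finitely-many-Gaussians-valued map, so that $T^i_N(S_nx)$ is a fixed polynomial applied to the finite Gaussian vector $(\langle S_nx,e_1\rangle,\dots,\langle S_nx,e_N\rangle)$. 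Since $S$ is strongly mixing, Proposition \ref{prop1} gives $\operatorname{cov}(\langle S_nx,e_i\rangle,\langle x,e_j\rangle)\to 0$, so the vector-valued Strassen statement Corollary \ref{cor:sphere} applies to these finite Gaussian vectors: after dividing by $\sqrt{2\log n}$ their limit-point set is the unit ball of $\mathbb R^N$. Pushing this through the continuous (polynomial) map, and using homogeneity $k_i$ to absorb the scaling $(2\log n)^{-k_i/2}$, one gets that the limit-point set of $\big((2\log n)^{-k_i/2}T^i_N(S_nx)\big)_i$ is exactly $\{(\ (T^1_N)_{\mathrm{hom}}(h),\dots,(T^m_N)_{\mathrm{hom}}(h)\ ):h\in B(P_NH)\}$, or rather its image, since a homogeneous polynomial map sends balls to the relevant compact set and $(T^i_N)_{\mathrm{hom}}$ is the degree-$k_i$ part.

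Next I would handle the passage $N\to\infty$. The upper bound (no limit point outside $K$) follows from a Borel--Cantelli argument: by Corollary \ref{approximation lemma'}, $\mathbb P\big((2\log n)^{-k_i/2}\|T^i(S_nx)-T^i_N(S_nx)\|_{Y_i}>\delta\big)$ decays faster than any polynomial in $n$ once $N$ is large enough (depending on $\delta$), because the tail bound \eqref{mis'} is stretched-exponential in $u/\|T^i_N-T^i\|_{L^2}^2$ and $\|T^i_N-T^i\|_{L^2}\to 0$; stationarity of $S_n$ under $\mu$ makes the probability $n$-independent before we plug in $u=\delta(2\log n)^{k_i/2}$, and $(2\log n)^{k_i/2}$ raised to the power $2/k_i$ is $\gtrsim \log n$, giving summability. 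Hence a.s. $(2\log n)^{-k_i/2}\|T^i(S_nx)-T^i_N(S_nx)\|\to 0$, so the true family and the $N$-th approximation have the same cluster set up to an error that $\to 0$ with $N$; combined with Lemma \ref{hom is cont'} (uniform convergence $(T^i_N)_{\mathrm{hom}}\to T^i_{\mathrm{hom}}$ on $B(H)$) this shows the cluster set is contained in $K$ and contains $K$ — for the lower bound one fixes $h$ with $\|h\|_H\le 1$, approximates it by $P_Nh$, uses the finite-$N$ statement to visit a neighborhood of $(T^i_N)_{\mathrm{hom}}(P_Nh)$ infinitely often, and then uses the two uniform approximations to conclude that a neighborhood of $(T^i_{\mathrm{hom}}(h))_i$ is visited infinitely often.

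A subtlety I would be careful about is the lower bound when $S$ is merely strongly mixing rather than satisfying $\|S_nx\|_H\to 0$: one cannot use the clean orthogonal decomposition $H=\bigoplus\ker(S_{k+1})\cap\operatorname{Im}(S_k^*)$ and the independence splitting that appears in the commented-out Lemma \ref{lem:part2}. Instead I would mimic the proof of Proposition \ref{mr2.1}: project onto the finite-dimensional $M_k=\operatorname{span}(e_1,\dots,e_k)$, note $T^i_N$ depends only on such coordinates for $N\le k$, and invoke Corollary \ref{cor:sphere} directly on the finite Gaussian vector $(\langle S_nx,e_j\rangle)_{j\le k}$ — the mixing hypothesis is exactly what feeds Corollary \ref{cor:sphere}, and the finite-dimensional nonlinear Strassen law for polynomials of Gaussians is then just the continuous image of the linear one. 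So the only genuinely new ingredient beyond the first-order proof is the observation that a continuous (here polynomial) image of a Strassen limit set is the Strassen limit set of the image, which is precisely the contraction-principle philosophy of Theorem \ref{mainr}.

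The main obstacle I anticipate is bookkeeping the interplay of the three limits $n\to\infty$, $N\to\infty$, and $\e\to 0$ (the latter absent here but present in the semigroup version) while keeping the error terms uniform: specifically, making sure that the $N$ needed for the Borel--Cantelli tail estimate can be chosen uniformly over all $m$ chaoses and that the choice does not interfere with the finite-dimensional visiting argument, which needs $N$ large for $P_Nh\approx h$ but the tail bound to hold for that same $N$. This is routine but requires stating the finite-$N$ result as a clean lemma (limit set $=$ polynomial image of ball) and then a two-epsilon argument; I would isolate the finite-rank statement, prove the two inclusions for it, and only then let $N\to\infty$ using \eqref{mis'} and Lemma \ref{hom is cont'}.
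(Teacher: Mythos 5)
Your proposal is correct and follows essentially the same route as the paper's proof: finite-rank Cameron--Martin projections $T^i_N$, the stretched-exponential tail bound \eqref{mis'} plus Borel--Cantelli to make the approximation error $o((2\log n)^{k_i/2})$ uniformly over the orbit, the representation of $T^i_N$ as a polynomial in finitely many Gaussian coordinates fed into Corollary \ref{cor:sphere}, and Lemma \ref{hom is cont'} to pass $N\to\infty$ in the homogeneous forms. The only cosmetic difference is in the upper bound, where the paper (Lemma \ref{ar}) argues by contradiction using shift-invariance and ergodicity, whereas you argue directly that the cluster set of the true sequence lies within an $o_N(1)$-neighborhood of the finite-rank limit set; both work.
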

%Note that 
%our original  theorem 
%Proposition \ref{thm1}
%can be viewed as a special case where $k_1 =1$, $m=1$, $Y=X$, and $T^1$ is the identity on $\B.$
\begin{proof}
Recall that $S(H)=\{h\in\H:\|h\|_H=1\}.$ Note by Lemma \ref{lem:s=b} that $S(H)$ is dense in $B(H)$ with respect to the topology of $\B$. Therefore, the set $$D:=\{(T^1_{\mathrm{hom}}(h),...,T^m_{\mathrm{hom}}(h)):h\in S(H)\}$$ is dense in $K$. Thus it suffices to show that any point in $D$ is a limit point of the given sequence. So fix a point $(T^1_{\mathrm{hom}}(h),...,T^m_{\mathrm{hom}}(h))\in D$, where $\|h\|_H=1$, and let $\epsilon>0$. We wish to show that 
\begin{equation}\label{blue}\sum_{i=1}^m \big\| (2\log n)^{-k_i/2} T^i(S_nx) - T^i_{hom}(h)\big\|_{Y_i}<\epsilon
\end{equation}
infinitely often. Fix an orthonormal basis $\{e_i\}$ for $\H$, with $e_1=h$, and let $T^i_N$ be the associated finite rank Cameron-Martin projections as in \eqref{frcmp'}. We claim that it is enough to prove \eqref{blue} with each $T^i(S_nx)$ replaced by $T^i_N(S_nx)$ and $T^i_{hom}(h)$ replaced by $(T_N^i)_{hom}(h)$ (and also replacing $\epsilon$ by $\epsilon/2$), for some large enough $N$. 

Indeed, by Corollary \ref{approximation lemma'} we can choose $N$ so large that $\int_X \|T^i_N-T^i\|_{Y_i}^2d\mu<\epsilon/(2m).$ Then by \eqref{mis'} and the fact that $S_n$ is measure preserving, it is clear that 
\begin{equation}\label{forn}\sum_{n\ge 2} \mu(\{x: (2\log n)^{-k_i/2} \|T^i_N(S_nx)-T^i(S_nx)\|_Y>\epsilon/m\}) <\infty,
\end{equation}so by Borel-Cantelli lemma, $(2\log n)^{-k_i/2} \|T^i_N(S_nx)-T^i(S_nx)\|_Y<\epsilon/m$ for all but finitely many $n\in \mathbb N$ almost surely. Furthermore, by Lemma \ref{hom is cont'} we can (by making $N$ larger) ensure that $\|(T^i_N)_{hom}(h) - T^i_{hom}(h)\|_Y<\epsilon/m.$

Thus we just need to show that \begin{equation}\label{blue'}\sum_{i=1}^m \big\| (2\log n)^{-k_i/2} T^i_N(S_nx) - (T^i_N)_{hom}(h)\big\|_{Y_i}<\epsilon/2
\end{equation}
infinitely often. Note that each $T^i$ is a chaos of order $k$ and measurable with respect to a \textit{finite} collection $\{\langle \cdot,e_i\rangle\}_{i=1}^N$ of i.i.d. standard Gaussian random variables, therefore one can verify that
%it %\red{can be}
%is easily %verified 
 each $T^i_N$ can be written as \begin{equation}\label{ct1}T^i_N(x) = \sum_{j=1}^{M_i} y_j^i H_{k_i}(\langle v^i_j,x\rangle), \end{equation}
for some \textit{finite} collection of vectors $\{v^i_j\}\subset \mathrm{span}(\{e_i\}_{i=1}^N)$, $\{y^i_j\}\subset Y$, and $M_i\in\mathbb N$. 
%\red{(why is this true?)} 
Here $H_k$ is the $k^{th}$ Hermite polynomial.
Using Cameron-Martin theorem, one can see that %Then \red{one can verify that} (using %it is easily verified that \red{(also explain this)}
\begin{equation}\label{ct2}(T^i_N)_{hom}(h) = \sum_{j=1}^{M_i} y_j^i \langle v^i_j,h\rangle^{k_i}.
\end{equation}
Note that $$\big|\langle v^i_j , (2\log n)^{-1/2}S_nx\rangle^{k_i} - (2\log n)^{-k_i/2}H_{k_i}(\langle v^i_j,S_nx\rangle)\big|\to 0\;\;\;\;\;\;\;a.s.,$$ by %e.g. an easy application of 
Borel-Cantelli lemma and the fact that $x\mapsto \langle v^i_j,S_nx\rangle$ are standard Gaussian random variables under $\mu$. Thus it suffices to show that
$$\sum_{\substack{1\le i \le m \\ 1\le j \le M_i}}\|y^i_j\|_Y\big|\langle v^i_j, (2\log n)^{-1/2}S_nx \rangle^{k_i} - \langle v^i_j,h\rangle^{k_i}\big|<\epsilon/2 $$
happens infinitely often. Letting $P_N:\H \to \mathrm{span}\{e_i\}_{i=1}^N$ denote the orthogonal projection, it is clear that $v^i_j = P_N(v^i_j)$, thus by exploiting self-adjointness of $P_N$, the previous expression is equivalent to showing that $$\sum_{\substack{1\le i \le m \\ 1\le j \le M_i}}\|y^i_j\|_Y\big|\langle v^i_j, (2\log n)^{-1/2}P_N(S_nx) \rangle^{k_i} - \langle v^i_j,h\rangle^{k_i}\big|<\epsilon/2 $$ infinitely often. Since %$\|h\|_H=1,$  
$h = e_1$, we know from Corollary \ref{cor:sphere} that $\|(\log n)^{-1/2}P_N(S_nx)-h\|_H<\delta$ infinitely often, for arbitrary $\delta>0$. By choosing $\delta$ small enough and noting that $\langle v^i_j,\cdot\rangle$ is a continuous function on $\text{span}\{e_i\}_{i=1}^N$, the claim \eqref{blue'} immediately follows, and thus \eqref{blue} is proved.

Now the only thing left to show is that the set of limit points of the given sequence cannot contain points outside of the set $K.$ This will be done in the following lemma.
\end{proof}

\begin{lem} \label{ar} In the setting of Theorem \ref{thm 4}, let $$a_n(x):=\big((2\log n)^{-k_1/2} T^1(S_nx),\;.\;.\;.\;,(2\log n)^{-k_m/2}T^m(S_nx)\big).$$ For a.e $x\in X$ one has that $\mathrm{dist}(a_n(x),K) <\epsilon$ for all but finitely many $n$.

\end{lem}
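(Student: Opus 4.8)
The plan is to establish the lemma for an arbitrary fixed $\epsilon>0$; the full assertion that the limit set of $(a_n(x))_n$ is contained in $K$ then follows by intersecting the resulting full-measure events over $\epsilon=1/\ell$, $\ell\in\mathbb N$. The argument will mirror the proof of Theorem \ref{thm 4}, with the role of the lower bound (Corollary \ref{cor:sphere}) replaced by a one-sided norm estimate. \textbf{Reduction to finite rank.} Fix an orthonormal basis $\{e_i\}$ of $H$ and let $T^i_N$ be the finite-rank Cameron--Martin projections of \eqref{frcmp'}. By Corollary \ref{approximation lemma'} we take $N$ large enough that $\|T^i_N-T^i\|_{L^2(X,\mu;Y_i)}$ is small; then \eqref{mis'}, the measure-preserving property of $S_n$, and Borel--Cantelli (exactly as in \eqref{forn}) give that a.s. $(2\log n)^{-k_i/2}\|T^i_N(S_nx)-T^i(S_nx)\|_{Y_i}<\epsilon/4$ for all large $n$ and each $i$. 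By Lemma \ref{hom is cont'}, enlarging $N$ we may also arrange that $K_N:=\{((T^1_N)_{\mathrm{hom}}(h),\dots,(T^m_N)_{\mathrm{hom}}(h)):h\in B(H)\}$ lies within Hausdorff distance $\epsilon/4$ of $K$. Hence it suffices to show that $b_n(x):=\big((2\log n)^{-k_i/2}T^i_N(S_nx)\big)_{i=1}^m$ satisfies $\mathrm{dist}(b_n(x),K_N)<\epsilon/2$ eventually, a.s.

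\textbf{Passing to the polynomial.} Writing $T^i_N$ in the form \eqref{ct1} with $v^i_j\in\mathrm{span}\{e_1,\dots,e_N\}$, we have $(T^i_N)_{\mathrm{hom}}(h)=\sum_j y^i_j\langle v^i_j,h\rangle^{k_i}$ by \eqref{ct2}. Exactly as in the proof of Theorem \ref{thm 4}, for each of the finitely many pairs $(i,j)$ the variable $\langle v^i_j,S_nx\rangle$ is Gaussian under $\mu$, so Borel--Cantelli gives $\big|\langle v^i_j,(2\log n)^{-1/2}S_nx\rangle^{k_i}-(2\log n)^{-k_i/2}H_{k_i}(\langle v^i_j,S_nx\rangle)\big|\to0$ a.s.; consequently $(2\log n)^{-k_i/2}T^i_N(S_nx)-c^i_n(x)\to0$ a.s., where $c^i_n(x):=\sum_j y^i_j\langle v^i_j,w_n\rangle^{k_i}$ and $w_n:=(2\log n)^{-1/2}P_N(S_nx)$, with $P_N$ the orthogonal projection of $H$ onto $\mathrm{span}\{e_1,\dots,e_N\}$ (note $\langle v^i_j,S_nx\rangle=\langle v^i_j,P_N S_nx\rangle$). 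So it is enough to show $\mathrm{dist}(c_n(x),K_N)<\epsilon/4$ eventually, where $c_n=(c^i_n)_i$.

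\textbf{One-sided norm bound and continuity.} This is the one place the upper bound genuinely departs from Theorem \ref{thm 4}: rather than pushing $w_n$ onto the unit sphere, we keep it inside a slightly enlarged ball. Since $S_n$ preserves $\mu$, the vector $P_N(S_nx)$ has, under $\mu$, the same centered Gaussian law on the finite-dimensional space $(\mathrm{span}\{e_1,\dots,e_N\},\|\cdot\|_H)$ for every $n$; applying Lemma \ref{ledoux} to this law and then Borel--Cantelli shows that for any fixed $\delta\in(0,1)$ one has $\|w_n\|_H\le1+\delta$ for all large $n$, a.s. (with $m_N:=\int_X\|P_Nu\|_H\,\mu(du)$ the resulting bound is $e^{-((1+\delta)\sqrt{2\log n}-m_N)^2/2}$, summable because $(1+\delta)^2>1$). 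Now $\Psi:g\mapsto\big(\sum_j y^i_j\langle v^i_j,g\rangle^{k_i}\big)_{i=1}^m$ is a polynomial map, hence Lipschitz on $\{\|g\|_H\le2\}$ with some constant $L$, and $K_N=\Psi\big(B(H)\cap\mathrm{span}\{e_1,\dots,e_N\}\big)$. Writing $g_n:=w_n/\max(1,\|w_n\|_H)\in B(H)\cap\mathrm{span}\{e_1,\dots,e_N\}$, on the eventual event $\|w_n-g_n\|_H\le\delta$, so $\mathrm{dist}(c_n(x),K_N)\le\|\Psi(w_n)-\Psi(g_n)\|\le L\delta$. Choosing $\delta$ (before $N$) small enough that $L\delta<\epsilon/4$ completes the proof.

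I do not expect a real obstacle: every ingredient — the finite-rank tail bound \eqref{mis'}, the Hermite expansion from the proof of Theorem \ref{thm 4}, and Lemma \ref{ledoux} — is already available, and the only new input, the one-sided estimate $\|w_n\|_H\le1+\delta$ eventually, is an immediate consequence of Gaussian concentration and Borel--Cantelli. The points that require care are the order of the quantifiers ($\epsilon$, then $L$ and $\delta$, then $N$, then the eventual-$n$ events) and the observation that only finitely many functionals $\langle v^i_j,\cdot\rangle$ enter, so the relevant exceptional sets form a finite, hence null, union.
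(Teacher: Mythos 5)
Your proof is correct, but it takes a genuinely different route from the paper's. The paper argues by contradiction: it observes that the event ``$\mathrm{dist}(U^c,a_n(x))<\epsilon/5$ infinitely often'' is shift-invariant, invokes ergodicity of $S$ to upgrade a positive-probability failure to an almost-sure one, passes to the finite-rank approximations $T^i_N$, and then appeals to the finite-dimensional version of the containment, which it declares ``straightforward'' without proof. You instead give a direct argument and, crucially, you supply exactly the finite-dimensional step the paper elides: the one-sided bound $\|(2\log n)^{-1/2}P_N(S_nx)\|_H\le 1+\delta$ eventually a.s.\ (via Lemma \ref{ledoux} applied to the stationary law of $P_N(S_nx)$ plus Borel--Cantelli, with the summable tail $n^{-(1+\delta)^2+o(1)}$), followed by radial retraction onto $B(H)\cap\mathrm{span}\{e_1,\dots,e_N\}$ and the Lipschitz estimate for the polynomial map $\Psi$. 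Your version buys two things: it avoids ergodicity entirely for this lemma (only the measure-preserving property is used), and it makes the finite-dimensional containment explicit rather than asserted. The shared ingredients --- the reduction via \eqref{mis'} and Borel--Cantelli as in \eqref{forn}, Lemma \ref{hom is cont'} for the Hausdorff proximity of $K_N$ to $K$, and the Hermite-to-monomial replacement --- are identical to the paper's. One small slip: you write that $\delta$ is chosen ``before $N$'' so that $L\delta<\epsilon/4$, but $L$ is the Lipschitz constant of $\Psi$ on $\{\|g\|_H\le 2\}$ and therefore depends on $N$. The correct order is $\epsilon$, then $N$ (hence $L$), then $\delta$; this is harmless because nothing in the earlier steps depends on $\delta$ --- the eventual bound $\|w_n\|_H\le 1+\delta$ holds for every fixed $\delta>0$ --- but the quantifier order as stated does not parse.
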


\begin{proof} Note that if our abstract Wiener space $(\B,\H,\mu)$ is finite dimensional, then the statement is straightforward, since $T$ and $T_{\mathrm{hom}}$ are of the form \eqref{ct1} and \eqref{ct2} respectively, and since all of the relevant quantities are continuous functions. %\red{(we need to specify $e_i$ is continuous somewhere)} %For the general case, suppose that $(a_1,...,a_m)\notin K$ but that $(a_1,...,a_m)$ is a limit point of the given sequence with positive probability. Since $K$ is compact, there must exist $\epsilon>0$ such that $\sum_1^m\|a_i-T^i_{hom}(h)\|_{Y_i} >\epsilon$ for any $h$ such that $\|h\|_H\le 1$. %Without loss of generality let us say that $i=1$. 

Now we move to the infinite-dimensional case. Suppose for contradiction the claim was false. Let $U$ denote a neighborhood of size $\epsilon$ around $K$. Then since $S$ is strongly mixing (hence ergodic) and since the event ``$ \mathrm{dist}( U^c, a_n(x))<\epsilon/5$ infinitely often" is shift invariant, it follows that it actually occurs with probability 1. By the same argument used in deriving \eqref{forn}, we can choose $N$ so large that $\sum_{n=1}^{m} (2\log n)^{-k_i/2} \|T^i(S_nx)-T^i_N(S_nx)\|_{Y_i} <\epsilon/5$ for all but finitely many $n$ almost surely, and moreover by Lemma \ref{hom is cont'} we can ensure (by making $N$ possibly larger) that $\sum_{n = 1}^m \sup_{\|h\|\le 1} \|T^i_{hom}(h) - (T^i_N)_{hom}(h)\|_{Y_i} <\epsilon/5.$ By the latter bound and the definition of $U$ it is clear that $$dist\bigg(\; U^c\;,\; \big((T^1_{\mathrm{hom}})_N(h),...,(T^m_N)_{hom}(h)\big)\;\bigg) >4\epsilon/5$$ for any $h$ such that $\|h\|_H\le 1$. On the other hand the former bound and our shift-invariant event of full probability guarantees that $\mathrm{dist}( U^c,a^N_n(x))<2\epsilon/5$ infinitely often (for a.e. $x$), where $$a_n^N(x):=\big((2\log n)^{-k_1/2}T^1_N(S_n x),...,(2\log n)^{-k_m/2} T^m_N(S_nx)\big).$$ The preceding two sentences imply (by finite dimensionality) that the sequence $(a_n^N(x))_{n\ge 1}$ contains a limit point outside of the set $$\{ ((T^1_N)_{hom}(h) ,..., (T^m_N)_{hom}(h)) : \|h\|_H \leq 1\}, $$ since the distance of $a_n^N(x)$ to that set must be greater than $2\epsilon/5$ infinitely often. This contradicts the finite dimensional version of the statement that the set of limit points must be contained in $K$, which is impossible as noted earlier.
\end{proof}

Next we formulate a continuous-time version of the above results. If $Y$ is a Banach space, we denote by $C([0,1],Y)$ the space of continuous maps from $[0,1]\to Y,$ equipped with the Banach space norm $\|\gamma\|_{C([0,1],Y)}:=\sup_{t\in[0,1]}\|\gamma(t)\|_Y.$ For $t\in [0,1]$ we define $\pi_t: C([0,1],Y)\to Y$ by sending $\gamma \mapsto \gamma(t).$

\begin{thm}\label{ar2}
Let $(\B,\H,\mu)$ be an abstract Wiener space, let $(S_t)_{t\ge 0}$ be a family of Borel-measurable a.e. linear maps from $\B\to \B$ which are measure-preserving and strongly mixing, and let $T^i:\B\to Y_i$ be homogeneous of degree $k_i$ for $1\le i \le m$. Suppose that there exist strongly continuous semigroups $(G^i_t)_{t\ge 0}$ of operators from $Y_i\to Y_i$ for $1\le i \le m$ with the property that 
\begin{equation}\label{commute_a}T^i \circ S_t = G^i_t \circ T^i, \;\;\;\;\;\;\;\; \mu\text{-}\mathrm{a.e.}\;\;\;\;\;\; \text{ for all } t \ge 0.
\end{equation}
Then almost surely, the set of cluster points at infinity of the random set 
$$\big\{ \big( (2\log t)^{-k_1/2} G_t^1T^1(x),\;.\;.\;.\;,(2\log t)^{-k_m/2}G_t^mT^m(x)\big): t \in  [0,\infty)\}$$ is equal to the compact set $K:=\{(T^1_{\mathrm{hom}}(h),...,T^m_{\mathrm{hom}}(h)):h\in B(H)\}. $
\end{thm}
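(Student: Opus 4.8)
The plan is to reduce Theorem~\ref{ar2} to its discrete-parameter counterpart (Theorem~\ref{thm 4} together with Lemma~\ref{ar}) via the elementary decomposition $t=N+r$ with $N=\lfloor t\rfloor$ and $r\in[0,1)$; unlike the first-order case this avoids the need for a quantitative oscillation estimate of the type of Lemma~\ref{glue}. Throughout write $\mathrm{eval}(t):=\big((2\log t)^{-k_1/2}G^1_tT^1(x),\dots,(2\log t)^{-k_m/2}G^m_tT^m(x)\big)$, and note that by \eqref{commute_a} (applied repeatedly, using that each $G^i$ is a semigroup) one has $G^i_tT^i(x)=T^i(S_tx)$ and moreover $G^i_NT^i(x)=(G^i_1)^NT^i(x)=T^i\big((S_1)^Nx\big)$ for $N\in\mathbb N$, $\mu$-a.e. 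In particular the value of $\mathrm{eval}$ at integer times is exactly the discrete sequence $b_N(x):=\big((2\log N)^{-k_i/2}T^i((S_1)^Nx)\big)_{i}$ to which Theorem~\ref{thm 4} and Lemma~\ref{ar} apply, since $S_1$ is measure-preserving and strongly mixing.

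The inclusion $K\subseteq(\text{cluster set})$ is then immediate: by Theorem~\ref{thm 4} the set of limit points of $(b_N(x))_{N}$ equals $K$, and each $b_N(x)=\mathrm{eval}(N)$ occurs in the continuous family, so every point of $K$ is a cluster point at infinity of $(\mathrm{eval}(t))_{t\ge 0}$. For the reverse inclusion, fix $x$ in the full-measure set on which $\mathrm{dist}(b_N(x),K)\to 0$ as $N\to\infty$ (Lemma~\ref{ar}), let $p$ be a cluster point at infinity, and pick $t_j\uparrow\infty$ with $\mathrm{eval}(t_j)\to p$. Writing $t_j=N_j+r_j$ and using the semigroup property of the $G^i$ one gets the factorisation
$$\mathrm{eval}(t_j)_i=\frac{(2\log N_j)^{k_i/2}}{(2\log t_j)^{k_i/2}}\;G^i_{r_j}\big(b_{N_j}(x)_i\big).$$
I would first pass to a subsequence along which $r_j\to r_*\in[0,1]$, and then, using that $\mathrm{dist}(b_{N_j}(x),K)\to 0$ together with the compactness of $B(H)$ in the topology of $\B$ and the continuity of each $T^i_{\mathrm{hom}}$ on $B(H)$ (Lemma~\ref{hom is cont'}), pass to a further subsequence along which $b_{N_j}(x)\to\big(T^i_{\mathrm{hom}}(h_*)\big)_i$ for some $h_*\in B(H)$. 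The ratios tend to $1$; since the $G^i$ are strongly continuous and hence uniformly bounded in operator norm on $[0,1]$, the triangle inequality gives $G^i_{r_j}\big(b_{N_j}(x)_i\big)\to G^i_{r_*}\big(T^i_{\mathrm{hom}}(h_*)\big)$. Finally, interchanging $G^i_{r_*}$ with the Bochner integral in \eqref{homm} and using \eqref{commute_a} and the $\mu$-a.e.\ linearity of $S_{r_*}$ yields the identity $G^i_{r_*}T^i_{\mathrm{hom}}(h_*)=T^i_{\mathrm{hom}}(S_{r_*}h_*)$; since $S_{r_*}S_{r_*}^*=I$ on $H$ (Lemma~\ref{bij}) forces $\|S_{r_*}\|_{H\to H}=\|S_{r_*}^*\|_{H\to H}=1$, we have $S_{r_*}h_*\in B(H)$. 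Hence $p=\big(T^i_{\mathrm{hom}}(S_{r_*}h_*)\big)_i\in K$, as needed.

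The main obstacle is the extraction of a convergent subsequence of $(b_{N_j}(x))_j$: because the $Y_i$ may be arbitrary (infinite-dimensional) Banach spaces one cannot use boundedness, so the argument must be routed through the compact set $B(H)$ and the continuity statement of Lemma~\ref{hom is cont'}, and it is this point that makes the estimate $\mathrm{dist}(b_N(x),K)\to 0$ of Lemma~\ref{ar}, rather than merely the identification of the limit set, the right input. A secondary point to be careful about is that the step ``$G^i_{r}$ maps $K$ into $K$'' is needed only because of the non-integer times $r\in(0,1)$ and relies on $S_{r}$ being a co-isometry on $H$, which is where the measure-preserving hypothesis re-enters; the very same scheme also gives a streamlined alternative proof of Proposition~\ref{thm 2}, using hypothesis~(4) there in place of Lemma~\ref{glue}.
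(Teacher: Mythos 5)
Your proof is correct, and it takes a genuinely different route from the paper's. The paper handles the reverse inclusion by showing that the continuous-parameter family has the same cluster set as the discrete one: it first observes (via the uniform boundedness principle) that $(G_t^i T^i(\xi))_{t\in[0,1]}$ is itself a chaos of order $k_i$ with values in $C([0,1],Y_i)$, and then reruns the Borel--Cantelli argument of Lemma \ref{glue}, with the hypercontractive tail bound of Proposition \ref{eq'} in place of Fernique, to control $\sup_{t\in[n\rho,(n+1)\rho]}\|G_t^iT^i(x)-G_{n\rho}^iT^i(x)\|_{Y_i}$ up to a modulus $C_i(\rho)\to 0$. You avoid any oscillation estimate: the factorization $G^i_t=G^i_{r}G^i_{N}$, the soft input $\mathrm{dist}(b_N(x),K)\to 0$ from Lemma \ref{ar}, compactness of $K$, and the identity $G^i_{r}T^i_{\mathrm{hom}}=T^i_{\mathrm{hom}}\circ S_{r}$ combined with $\|S_{r}\|_{H\to H}\le 1$ (so that $\bigoplus_i G^i_{r}$ maps $K$ into itself, with the \emph{same} shift $S_r h$ in every component) do all the work. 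The paper's route buys a quantitative, reusable modulus of continuity in the spirit of Lemma \ref{glue}; yours buys economy --- no path-space-valued chaos and no tail bound for the supremum over $t$ --- at the price of exploiting the specific structure of $K$, namely its invariance under $G_r$, which is exactly where the measure-preserving property of $S_r$ at non-integer times re-enters. The two points worth writing out in full are the ones you flag yourself: the interchange of $G^i_{r_*}$ with the Bochner integral in \eqref{homm}, which requires the Cameron--Martin equivalence of the law of $x+h$ with $\mu$ so that the a.e. identity \eqref{commute_a} may be applied at the shifted point, and the fact that the a.e. domain of linearity of $S_{r_*}$ contains $H$, so that $S_{r_*}(x+h)=S_{r_*}x+S_{r_*}h$ for $\mu$-a.e. $x$; note also that since $r_*$ is deterministic once the subsequence is fixed and the identity $G^i_{r}T^i_{\mathrm{hom}}=T^i_{\mathrm{hom}}\circ S_{r}$ holds for \emph{every} $r$ independently of the sample, no issue arises from the exceptional null set in \eqref{commute_a} depending on $t$.
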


Note that we impose no semigroup condition on $(S_t)$ itself. This is because we do not need to, though in practice $(S_t)$ will usually be a strongly continuous semigroup on $\B$. % The reason we are intersecting with $\mathbb Q$ is purely technical: we need to ensure that the given event is actually Borel-measurable. Also n
Note that this theorem implies Proposition \ref{thm 2}: set $k=1$, $m=1$, $Y=\B$, let $T^1$ be the identity on $\B$, and let $G_t^1=S_t$.

\begin{proof}
First we claim that that $(G_t^i(T^i(\xi)))_{t\in [0,1]}$ is homogeneous variable of order $k_i$ taking values in the space $C([0,1],Y_i).$ To prove this, note that if $Y,Z$ are Banach spaces, if $T:\B\to Y$ is a chaos of order $k$, and if $A:Y\to Z$ is a bounded linear map, then $A\circ T$ is also a chaos of order $k$. We simply apply this to the case where $Y=Y_i$, $Z = C([0,1],Y_i)$ and $A:Y_i \to C([0,1],Y_i)$ sends a point $y$ to $(G^i_ty)_{t\in [0,1]}$. This linear map is bounded by the uniform boundedness principle. %(for instance).

Next, note that the set of cluster points must contain $K$ by Theorem \ref{thm 4}. Thus we just need to show it contains no other points. Consider the random set $$\big\{ \big( (2\log t)^{-k_1/2} G_t^1(T^1(x)),\;.\;.\;.\;,(2\log t)^{-k_m/2}G_t^m(T^m(x))\big): t \in  [0,\infty)\}.$$
The argument that this set contains no cluster points outside of $K$ is very similar to that of Lemma \ref{glue}. More precisely, we show that there exists deterministic functions $C_i:[0,1] \to \mathbb R_+$ such that $C_i(\rho)\to 0$ as $\rho \downarrow 0$ and such that $$\mu\bigg(\bigg\{x \in \B: \limsup_{n \to \infty} \frac{\sup_{t\in[n\rho,(n+1)\rho]} \|G_t^i(T^i(x))-G_{n\rho}^i(T^i(x))\|_{Y_i}}{(\log n)^{k_i/2}} \leq C_i(\rho)\bigg\}\bigg) =1$$ for all $\rho \in [0,1]$ and $1\le i \le m$. In particular $(G_t^i(T^i(x)))_{t \ge 0}$ has the same set of cluster points as $(G_N(T^i(x)))_{N \in \mathbb N}$ for $x$ in a set of full $\mu$-measure.

To prove the above claim, one uses precisely the same arguments as we did in the proof of Lemma \ref{glue}. Namely one defines $C_i(\rho):= C\mathbb E[\sup_{t\in [0,\rho]} \|G_t^i(T^i(\xi))-T^i(\xi)\|_{Y_i}^2]$, where $\xi$ is sampled from $\mu$ and $C>0$ is to be determined later. Then one uses the fact that $(G_t^i(T^i(\xi)))_{t\in [0,1]}$ is homogeneous variable of order $k_i$ taking values in the space $C([0,1],Y_i),$ by the discussion above. Finally one uses the associated tail bounds for such homogeneous variables as given in Proposition \ref{eq'}, and concludes using the Borel-Cantelli lemma.
\end{proof}

%Clearly 
%One can also formulate the preceding theorem in multiplicative form, as we did in the introduction.
%\red{(Y: The argument overlaps with the previous paragraph)}
\subsection{Contraction principle for Strassen Laws}

We derive a corollary that will be used in deriving Strassen's Law for singular semilinear SPDEs later. The following can be viewed as a sort of contraction principle for Strassen's law under continuous maps which may be nonlinear.

\begin{cor}\label{contraction}
Let $(\B,\H,\mu)$ be an abstract Wiener space, and let $T^i:\B\to Y_i$ be a chaos of order $k_i$ for $1\le i \le m$, where $m\in \mathbb N$.  Let $Z$ be a Banach space, and let $\mathcal M\subset Y_1\times \cdots Y_m$ be a closed subset, such that for all $\delta>0$ one has 
\begin{equation}\label{eq:assumption}
\mu(\{x\in X: (\delta^{k_1} T^1(x),...,\delta^{k_m} T^m(x)) \in \mathcal M\})=1.
\end{equation}
Let $\Phi:\mathcal M \to Z$ be continuous. Then the compact set $K:=\{(T^1_{\mathrm{hom}}(h),...,T^m_{\mathrm{hom}}(h)):h \in B(H)\}$ is necessarily contained in $\mathcal M$, and moreover the set of cluster points at infinity of the random set 
$$\big\{ \Phi\big( (2\log n)^{-k_1/2} T^1(S_nx),\;.\;.\;.\;,(2\log n)^{-k_m/2}T^m(S_nx)\big): n\in \mathbb N\big\}$$ is almost surely equal to $\Phi(K).$
\end{cor}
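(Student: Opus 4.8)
The plan is to read everything off from Theorem \ref{thm 4} and Lemma \ref{ar} (applied to the strongly mixing, measure-preserving, a.e.\ linear map $S$ implicit in the statement), leaving only a soft topological argument that uses the continuity of $\Phi$ and the compactness of $K$. Write $a_n(x):=\big((2\log n)^{-k_1/2}T^1(S_nx),\dots,(2\log n)^{-k_m/2}T^m(S_nx)\big)$ for $n\ge 2$, and note that $a_n(x)=(\delta_n^{k_1}T^1(S_nx),\dots,\delta_n^{k_m}T^m(S_nx))$ with $\delta_n:=(2\log n)^{-1/2}>0$.

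First I would verify that the composition $\Phi(a_n(x))$ even makes sense and that $K\subset\mathcal M$. Since each $S_n$ is measure-preserving, $S_nx$ has law $\mu$, so hypothesis \eqref{eq:assumption} applied with $\delta=\delta_n$ gives $\mu(\{x:a_n(x)\in\mathcal M\})=1$; intersecting over the countably many $n\ge 2$ yields a full-measure set on which $a_n(x)\in\mathcal M$ for every $n$. As $\mathcal M$ is closed, every limit point of $(a_n(x))_n$ lies in $\mathcal M$; but Theorem \ref{thm 4} identifies the a.s.\ limit set with the deterministic compact set $K$, so $K\subset\mathcal M$ unconditionally. In particular $\Phi(K)$ is a well-defined compact subset of $Z$.

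Next, fix $x$ in the full-measure set on which simultaneously $a_n(x)\in\mathcal M$ for all $n$, the limit set of $(a_n(x))_n$ equals $K$ (Theorem \ref{thm 4}), and $\mathrm{dist}(a_n(x),K)\to 0$ (Lemma \ref{ar}). For the inclusion $\Phi(K)\subseteq\{\text{cluster points of }(\Phi(a_n(x)))_n\}$: given $y\in K$, choose $n_j\uparrow\infty$ with $a_{n_j}(x)\to y$ in $Y_1\times\cdots\times Y_m$; since $y$ and all $a_{n_j}(x)$ lie in $\mathcal M$, continuity of $\Phi:\mathcal M\to Z$ gives $\Phi(a_{n_j}(x))\to\Phi(y)$, so $\Phi(y)$ is a cluster point. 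For the reverse inclusion: if $\Phi(a_{n_j}(x))\to z$ along a subsequence, pick $w_j\in K$ with $\|a_{n_j}(x)-w_j\|\to 0$ (possible since $\mathrm{dist}(a_{n_j}(x),K)\to 0$), pass by compactness of $K$ to a further subsequence along which $w_j\to w\in K$, deduce $a_{n_j}(x)\to w$ along it, hence $\Phi(a_{n_j}(x))\to\Phi(w)$ by continuity, and therefore $z=\Phi(w)\in\Phi(K)$. Combining the two inclusions gives the claim.

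The substance is entirely in the two cited results; the only genuine subtlety --- and the nearest thing to an obstacle --- is that $(a_n(x))_n$ need not take values in a compact subset of $Y_1\times\cdots\times Y_m$, so the reverse inclusion cannot be obtained from the limit-set statement alone. It is exactly the relative compactness furnished by Lemma \ref{ar} that enables the diagonal-subsequence step above; one should only be careful that every convergence there is with respect to the Banach-space topology of $Y_1\times\cdots\times Y_m$, so that continuity of $\Phi$ on $\mathcal M$ in its subspace topology applies as stated, and that no stronger property of $\Phi$ (uniform continuity, linearity) is invoked.
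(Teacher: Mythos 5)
Your proposal is correct and follows essentially the same route as the paper: both reduce the statement to Theorem \ref{thm 4} (for the identification of the limit set and for $K\subset\mathcal M$ via closedness of $\mathcal M$ and the measure-preserving property) and to Lemma \ref{ar} (for excluding cluster points outside $\Phi(K)$). The only cosmetic difference is in the last step, where you extract a convergent subsequence from $K$ by compactness while the paper instead builds a uniform modulus of continuity of $\Phi$ near $K$ from a finite open cover; these are interchangeable soft arguments.
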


\begin{proof}
Note that if $(a_1,...,a_m) \in K$ then by Theorem \ref{thm 4} there exists a subset $E\subset X$ of full measure  such that for each $x\in E$, the sequence $\big((2\log n)^{-k_1/2} T^1(S_nx),\;.\;.\;.\;,(2\log n)^{-k_m/2}T^m(S_nx)\big)$ converges to $(a_1,...,a_m)$ along some subsequence as $n \to \infty$. Since $S_n$ are measure-preserving it holds by %assumption 
\eqref{eq:assumption} that $$\big((2\log n)^{-k_1/2} T^1(S_nx),\;.\;.\;.\;,(2\log n)^{-k_m/2}T^m(S_nx) \big)\in \mathcal M$$ for a.e. $x\in X$. Consequently $(a_1,...,a_m)$ is a limit point of the closed set $\mathcal M$ and thus %in 
belongs to $\mathcal M$. This implies that $K$ is contained in $\mathcal{M}$.

Now we prove that the limit set is necessarily $\Phi(K)$. The fact that any point $z\in \Phi(K)$ is a limit point is 
%a triviality of 
due to the fact  that $\Phi$ is continuous and any point of $K$ is a limit point of the sequence $$\big((2\log n)^{-k_1/2} T^1(S_nx),\;.\;.\;.\;,(2\log n)^{-k_m/2}T^m(S_nx)\big)$$ 
as $n \to \infty$ (see Theorem \ref{thm 4}).

Now we need to prove that points outside of $\Phi(K)$ are not limit points. Suppose $z\notin \Phi(K)$. The latter set is closed so we may choose $\epsilon >0$ such that $\|z-b\|_Z >\epsilon$ for all $b\in \Phi(K)$. Choose $\delta>0$ so that $\mathrm{dist}(\Phi(a), \Phi(K)) < \epsilon$ whenever $\mathrm{dist}(a,K) <\delta$ (this $\delta$ exists by compactness of $K$). 
%indeed 
We choose points $a_1,...,a_N$ so that $B(a_i,\epsilon)$ form an open cover of $\Phi(K)$, then consider the open cover $U_i:=\Phi^{-1}(B(a_i,\epsilon))$ of $K$, then let $U$ denote the union of the $U_i$, and take $\delta:=\min_{x\in K} \mathrm{dist}(x,U^c)>0$). Letting $a_n(x):=\big((2\log n)^{-k_1/2} T^1(S_nx),\;.\;.\;.\;,(2\log n)^{-k_m/2}T^m(S_nx)\big)$, by Lemma \ref{ar} we know that for a.e $x\in X$ that $\mathrm{dist}(a_n(x),K) <\delta$ for all but finitely many $n$ and therefore $z$ is not a limit point of $\Phi(a_n(x))$.
\end{proof}

\begin{cor}\label{contraction 2}
Let $(\B,\H,\mu)$ be an abstract Wiener space, let $(S_t)_{t\ge 0}$ be a family of Borel-measurable a.e. linear maps from $\B\to \B$ which are measure-preserving and strongly mixing, and let $T^i:\B\to Y_i$ be homogeneous of degree $k_i$ for $1\le i \le m$. Suppose that there exist strongly continuous semigroups $(G^i_t)_{t\ge 0}$ of operators from $Y_i\to Y_i$ for $1\le i \le m$ with the property that 
\begin{equation*}T^i\circ S_t = G^i_t \circ T^i, \;\;\;\;\;\;\;\; \mu\text{-a.e.}\;\;\;\;\;\; \text{ for all } t \ge 0.
\end{equation*}
Let $Z$ be a Banach space, and let $\mathcal M\subset Y_1\times \cdots Y_m$ be a closed subset, such that for all $\delta>0$  $$\mu(\{x\in X: (\delta^{k_1} T^1(x),...,\delta^{k_m} T^m(x)) \in \mathcal M\})=1.$$
Let $\Phi:\mathcal M \to Z$ be continuous. Then the compact set $K:=\{(T^1_{\mathrm{hom}}(h),...,T^m_{\mathrm{hom}}(h)):h \in B(H)\}$ is necessarily contained in $\mathcal M$, and moreover the set of cluster points at infinity of
$$\big\{ \Phi\big( (2\log t)^{-k_1/2} G_t^1T^1(x),\;.\;.\;.\;,(2\log t)^{-k_m/2}G_t^mT^m(x)\big): t \in [0,\infty)\}$$ is almost surely equal to $\Phi(K).$
\end{cor}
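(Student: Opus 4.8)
The plan is to mimic the proof of Corollary~\ref{contraction} essentially verbatim, replacing the discrete Strassen law (Theorem~\ref{thm 4}) by its continuous-time version (Theorem~\ref{ar2}) and the discrete envelope estimate (Lemma~\ref{ar}) by the block-oscillation bound already established inside the proof of Theorem~\ref{ar2}. Throughout I would write
$$a_t(x):=\big((2\log t)^{-k_1/2}G_t^1T^1(x),\;.\;.\;.\;,(2\log t)^{-k_m/2}G_t^mT^m(x)\big),\qquad t>1,$$
noting at the outset that only large $t$ matters, since we are looking at cluster points at infinity.

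First I would show $K\subset\mathcal M$. Fixing $t>1$, the identity $T^i\circ S_t=G_t^i\circ T^i$ (valid $\mu$-a.e.) gives $a_t(x)=\big((2\log t)^{-k_1/2}T^1(S_tx),\dots,(2\log t)^{-k_m/2}T^m(S_tx)\big)$ for $\mu$-a.e.\ $x$; applying the hypothesis on $\mathcal M$ with $\delta=(2\log t)^{-1/2}$ together with the fact that $S_t$ preserves $\mu$ then shows $a_t(x)\in\mathcal M$ for $\mu$-a.e.\ $x$. Running this over a countable dense set of times in $(1,\infty)$ and using that $t\mapsto a_t(x)$ is continuous there (each $G^i$ is strongly continuous with locally bounded operator norm, and $t\mapsto(2\log t)^{-k_i/2}$ is continuous on $(1,\infty)$), one upgrades this to: for $\mu$-a.e.\ $x$, $a_t(x)\in\mathcal M$ for \emph{all} $t>1$. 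Since Theorem~\ref{ar2} says every point of $K$ is a cluster point at infinity of $(a_t(x))_{t}$, each such point is a limit of points of the closed set $\mathcal M$, whence $K\subset\mathcal M$ and $\Phi(K)$ is a well-defined compact set.

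Next I would argue the two inclusions for the cluster set of $\{\Phi(a_t(x)):t>1\}$. For $\supseteq$: if $z=\Phi(a)$ with $a\in K$, Theorem~\ref{ar2} provides $t_n\uparrow\infty$ with $a_{t_n}(x)\to a$; since $a_{t_n}(x)\in\mathcal M$ and $\Phi$ is continuous on $\mathcal M$, we get $\Phi(a_{t_n}(x))\to z$. For $\subseteq$: given $z\notin\Phi(K)$, compactness of $\Phi(K)$ yields $\epsilon>0$ with $\mathrm{dist}(z,\Phi(K))>\epsilon$; covering $\Phi(K)$ by finitely many $\epsilon/2$-balls, pulling back under $\Phi$ to an open $U\supset K$, and setting $\delta:=\min_{y\in K}\mathrm{dist}(y,U^c)>0$, any $a\in\mathcal M$ with $\mathrm{dist}(a,K)<\delta$ satisfies $\mathrm{dist}(\Phi(a),\Phi(K))<\epsilon$. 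Then the continuous-time envelope estimate --- i.e.\ the statement obtained inside the proof of Theorem~\ref{ar2} by combining Lemma~\ref{ar} with the bound $\limsup_{n\to\infty}(\log n)^{-k_i/2}\sup_{t\in[n\rho,(n+1)\rho]}\|G_t^iT^i(x)-G_{n\rho}^iT^i(x)\|_{Y_i}\le C_i(\rho)$ with $C_i(\rho)\downarrow 0$ --- shows that for $\mu$-a.e.\ $x$ one has $\mathrm{dist}(a_t(x),K)<\delta$ for all sufficiently large $t$; hence $\Phi(a_t(x))$ stays within $\epsilon$ of $\Phi(K)$ eventually and $z$ cannot be a cluster point. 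Together these give that the cluster set is a.s.\ exactly $\Phi(K)$.

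The hard part is the continuous-time envelope estimate invoked in the last step; everything else is soft topology plus the measure-preserving property of the $S_t$. But that estimate is nothing new: it is precisely the gluing argument performed in the proof of Theorem~\ref{ar2} (itself modeled on Lemma~\ref{glue}), so I would simply cite it rather than reprove it. The one other point requiring a word of care is the measure-theoretic upgrade in Step~1 --- passing from ``$a_t(x)\in\mathcal M$ for each fixed $t$'' to ``$a_t(x)\in\mathcal M$ for all $t$ simultaneously'' --- which the countable-dense-times-plus-continuity argument handles cleanly.
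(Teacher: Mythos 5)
Your proposal is correct and follows exactly the route the paper intends: its proof of Corollary \ref{contraction 2} is the one-line remark that the argument is the same as Corollary \ref{contraction} with Theorem \ref{ar2} in place of Theorem \ref{thm 4}, and you have simply filled in the details of that substitution (including the gluing/envelope estimate from the proof of Theorem \ref{ar2} and the countable-dense-times continuity upgrade, which are the right points to be careful about). No gaps.
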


\begin{proof}
    The argument is similar to the previous corollary but applying Theorem \ref{ar2} rather than Theorem \ref{thm 4}.
\end{proof}

Note that the previous two corollaries are the most general version of the Strassen's Law that we have stated so far (e.g. set $Z=Y_1\times \cdots Y_m =\mathcal M$ and let $\Phi$ be the identity). Note also that Corollary \ref{contraction 2} will be used in multiplicative form later (see Theorem \ref{mainr}), not the additive form stated above. Even further generalizations will be given in Theorems \ref{contraction 3}, \ref{gen}, and \ref{repar} below.

%Yier

\section{Examples and applications to SPDEs}\label{rats}

With the above results in place, we now move on to the main result of the paper. Although we want to study the application of the above results to singular SPDEs such as KPZ, we start with a series of simpler examples just to illustrate the methods.

\subsection{First examples} The first three examples will be done with just the first chaos (in other words just a Gaussian measure) so that only Proposition \ref{thm 2} is needed as opposed to the full generality of Theorem \ref{mainr}. Then the next two examples will be done with higher chaos and contraction by continuous maps. A few of these examples will concisely recover classical results.% such as stochastic integral processes and SDEs.%, and will include KPZ. %The only example where $\mathcal M$ cannot be taken to be the entire product space $Y_1\times \cdots Y_m$ is the example of the KPZ equation, because the continuous part of the solution map may only be defined on a proper closed subset \cite[Remark 1.4]{Hai13}. Such examples are precisely the reason we need to consider such $\mathcal M$ in the statement of Theorem \ref{mainr}.
\begin{example}[Brownian and fractional Brownian motion]\label{41}
%This example be very explicit and self-contained in illustrating the computations necessary to use the abstract results of the previous section. In future examples, we may use the results more implicitly without performing the strong mixing calculations in the same level of detail. 

Consider the fractional Brownian motion $(B^H(t))_{t \in [0, 1]}$ with Hurst parameter $H \in (0, 1)$. Let $\mathcal F$ denote the space of smooth real-valued functions on $[0,1]$ which vanish at $0$. We can let $X$ be the closure of $\mathcal F$ in the Hölder space $C^\kappa[0,1]$ for $\kappa \in (0,H)$. In fact, one may consider the closure of $\mathcal F$ with respect to an even stronger norm $$\|f\|_X = \sup_{0<|t-s|<1/3} \frac{|f(x)-f(y)|}{\varphi(|t-s|)}$$ where $\varphi$ may be any increasing function vanishing at $0$ whose growth near $0$ is strictly larger than the modulus of continuity of $B^H$ (for instance $\varphi(a) = a^{H} (\log(1/a))^{\frac12+\gamma}$ and $\gamma>0$ is small). One may apply Theorems 2.52 and 8.27 in \cite{lipalg} to see that the law of $B^H$ is supported on the closure of smooth functions in such a norm (in fact, the closure contains any function with a \textit{strictly} better modulus of continuity than $\varphi$).

One defines the measure-preserving operator $R_\e f(t):= \e^{-H} f(\e t)$. Since $\varphi$ is increasing we see that $\sup_{\e\in[e^{-1},1]}\sup_{a\in(0,1/4]} \e^{-H}\varphi(\e a)/\varphi(a)\leq e^{H}<\infty$, therefore Lemma \ref{sg} implies that $(R_\epsilon)$ is strongly continuous on this Banach space. Now to prove the Strassen law, we need to verify a strong mixing condition on the appropriate space. Since this mixing condition is the crucial input needed to yield the Strassen law, and since one of the main contributions of this work is to reduce the proof of the Strassen law to this deterministic mixing calculation, we give several different proofs of the mixing condition.

\begin{enumerate}

\item A result of \cite[(6)]{DU99} shows that the Cameron-Martin space $\mathcal H$ of $B^H$ consists of functions of the form $h(t) = \int_0^t K_H(t, s) \ell(s) ds$ for some $\ell\in L^2[0,1]$, with the isometric identity $\|h\|_\mathcal H = \|\ell\|_{L^2}$, where $K_H$ is an explicit kernel that satisfies $K_H (\epsilon t, \epsilon s) = \epsilon^{H-\frac{1}{2}} K_H (t, s)$. Consequently one has $\|R_\e h\|_\mathcal H = \|\tilde R_\e \ell\|_{L^2[0,1]}\to 0$ as $\e\to 0$, where $\tilde R_\e \ell (t) = \e^{1/2} \ell(\e t).$

\item Again using \cite[(6)]{DU99}, one may write $B^H(t) = \int_0^t K_H(s,t) dB^{\frac12}(s)$ for a standard Brownian motion $B^{\frac12}$ on $[0,1]$. If we let $\tilde X:=C[0,1]$, $\tilde{\mathcal H}=H_0^1[0,1]$ and $\mu$ the law on $X$ of $C[0,1]$, then we may then view $B^H$ as an $X$-valued chaos of order $1$ over $(\tilde X, \tilde{\mathcal H}, \mu)$. More precisely we have $B^H = \psi(B^{1/2})$ where $\psi(B) = \int_0^\bullet K_H(s,\bullet) dB(s).$ Note that $\psi_{hom}(\ell) = \int_0^\bullet K_H(s,\bullet)\ell(s)ds$ for $\ell \in L^2[0,1].$ Moreover, we clearly have the relation $R_\e \psi = \psi \tilde R_\e$ where $\tilde R_\e$ is the same semigroup on $L^2[0,1]$ as in the proof (1) above. Thus we may conclude the Strassen law from Theorem \ref{mainr}. While this perspective is equivalent to the proof used in (1) just above, the paradigm will be more useful when we deal with higher chaos.

\end{enumerate}

This concludes the proof for the Strassen law of $B^H$ on the unit interval $[0,1]$. Rather than considering the Strassen law for $B^H$ restricted to the unit interval, we can instead consider it on the full real line $\mathbb R$ as well. Indeed, the Banach space $X$ can be taken to be $X := \{f\in C[0,\infty) : \lim_{t\to\infty} \frac{|f(t)|}{t} = 0\}$ with norm given by $\sup_{t\ge 0} \frac{|f(t)|}{1+t}$. This is a separable Banach space which supports $B^H$ (for all time, not just the unit interval) almost surely. Alternatively one can also use Holder spaces with suitable weight functions. %, and moreover the Cameron-Martin space may be computed similarly as above.

In this full-line setting, we can give alternative and more self-contained proofs of the calculation of the Cameron-Martin space and the mixing property of the operators $R_\e$ (defined by the same formula) without using \cite{DU99}, and instead using results from Appendix B. Letting $C(s,t):= |t|^{2H} +|s|^{2H} - |t-s|^{2H}$, we can define a bounded operator on $L^2[0,1]$ by $Qf(t) = \frac12\int_0^1 C(s,t) f(s)ds.$ In the sense of tempered distributions the Fourier transform of $a(t)=|t|^{2H}$ is given by (the finite part of) $\hat a(\xi) = c_H |\xi|^{-2H-1}$ for some universal $c_H>0$ (this is immediate from a scaling argument). This implies that if $\phi\in C_c^\infty(\mathbb R)$, then 
\begin{equation}\label{ders}(Q\phi',\phi')_{L^2[0,1]} = -\frac12\int_\mathbb R \int_\mathbb R |t-s|^{2H} \phi'(s)\phi'(t) dsdt = \frac{c_H}2\int_\mathbb R |\hat \phi(\xi)|^2 |\xi|^{1-2H} d\xi,\end{equation} where $\phi'$ is the usual derivative, and the last equality can be deduced using Plancherel's identity and the convolution-to-multiplication property of the Fourier transform. Since $|\xi|^{1-2H}$ has polynomial growth or decay at infinity, this identity is enough to show that the completion of $L^2(\mathbb R)$ with respect to the norm $(Q\phi,\phi)^{1/2}$ is contained in the space $\mathcal S'(\mathbb R)$ of tempered distributions. % and then we define a Hilbert space $K$ to be the completion of $\mathcal F(U)$ with respect to $\langle Af,g\rangle_{L^2}$. 
Using Lemma \ref{opdu} we see that the image of $Q^{1/2}$ contains $\mathcal S(\mathbb R)$ and, regardless of the choice of norm on the larger space $X$, the Cameron-Martin space of $B^H$ is given by the completion of $\mathcal S(\mathbb R)$ under $\|\phi\|_\mathcal H = \|Q^{-1/2}\phi\|_{L^2}$. 
Now to verify the mixing condition, note that if $f,g$ are any smooth functions such that $f',g'$ have compact support, then 
\begin{equation}\label{3.2a}\langle R_\epsilon f,g\rangle_H =  (\epsilon\cdot R_\epsilon(f'),Q_{\mathrm{der}}^{-1}g')_{L^2[0,1]} = \frac{2}{c_H}\int_{\mathbb R} |\xi|^{2H-1} \epsilon^{-H-1} \cdot i\xi \hat f(\epsilon^{-1}\xi) \cdot \overline{i\xi \hat g(\xi)} d\xi,
\end{equation}
where $Q_{\mathrm{der}}:= -\frac{d}{dt}Q\frac{d}{dt}$ whose Fourier symbol is $\frac{c_H}{2} |\xi|^{1-2H}$ by \eqref{ders}. Note that by setting $\epsilon=1$, this shows that $\langle f,g\rangle_H = \frac{2}{c_H} \int_\mathbb R |\xi|^{2H+1} \hat f(\xi) \overline{\hat g(\xi)} d\xi.$

Using the fact that $\epsilon^{-1} \hat f(\epsilon^{-1}\xi)$ is an approximate delta function, the right side of \eqref{3.2a} can be bounded above by $C\epsilon^{-1-H} \cdot \epsilon \cdot \epsilon^{2H+1} = C\epsilon^{1+H}$ where $C$ may depend on $f,g$ but not on $\epsilon\in (0,1]$. %
We may now use Lemma \ref{sg'}, noting that %functions of the form $t\mapsto t+\phi(t)$ 
such $f,g$ are dense in $\mathcal S(\mathbb R)$. %, where $\phi$ has support contained in the open interval $(0,1)$. %For such $\phi, \psi$, one has that $R_\e\phi$ and $\psi$ have disjoint support for large enough $n$. 
Then Strassen's law for $(\log\log(1/\e))^{-1/2}R_\e B^H$ thus follows from the multiplicative form of Proposition \ref{thm 2}. This concludes the alternative direct proof of the Strassen's law (on the full line). 

We remark that this full-line proof generalizes to yield Strassen's law for the multiparameter fractional Brownian sheet studied in \cite{CWZ04}, in every Hölder space. Rather than considering the small-time version of Strassen's law for fractional Brownian motion, we can also directly prove the large-time version. %The space $X := \{f\in C[0,\infty) : \lim_{t\to\infty} \frac{|f(t)|}{t} = 0\}$ with norm given by $\sup_{t\ge 0} \frac{|f(t)|}{1+t}$ is a separable Banach space which supports $B^H$ (for all time, not just the unit interval) almost surely, and moreover the Cameron-Martin space may be computed similarly as above. %is given by $\mathcal H_H[0,\infty) = \{h : \exists\  \ell \in L^2([0,\infty), \mathbb{R}) \text{ s.t. }  h(t) = \int_0^t K_H(t, s) \ell(s) ds\}$ using Lemma \ref{dualnorm}. 
Indeed one similarly checks that the family of operators $\tilde R_\epsilon f(x) = \epsilon^{H} f(\epsilon^{-1} x) $ is strongly mixing with respect to this norm, using the result of Lemma \ref{sg'}. Consequently one obtains the Strassen's Law for $\epsilon^{H} B(\epsilon^{-1}t)/\sqrt{2\log\log(1/\epsilon)}$ with respect to the norm of $X$.
\end{example}

\begin{example}\label{additive she}

Consider the stochastic heat equation
\begin{equation}\label{she0}
\pa_t \h(t, x)= \pa^2_{x} \h(t, x) + \xi(t, x), \quad \h(0, x) = 0,\quad t \ge 0, x\in \mathbb R^d,
\end{equation}
where $\xi$ is a Gaussian space-time white noise on $\mathbb R_+\times \mathbb R$. The solution is defined by $h=K*\xi$, where $K(t,x) = \frac1{\sqrt{2\pi t}}{e^{-x^2/2t}}1_{\{t\ge 0\}}, $ where $*$ denotes convolution in both space and time, and the convolution may be interpreted as a stochastic integral against $\xi$.

In the notation of Appendix B, let $U=\mathbb (0,\infty)\times \mathbb R^d$ and we define the Frechet space $\mathcal F(U)$ to be the set of all smooth functions of rapid decay on $\mathbb R_+\times \mathbb R^d$ such that $f(0,x) = 0$ for all $x\in \mathbb R$. % such that $f(\phi)=0$ for all $\phi \in \mathcal S(\mathbb R\times \mathbb R^d)$ such that $\phi$ is supported on $(-\infty,0)\times \mathbb R^d$.
For $d=1$ one may show that the solution $h$ is supported on the space $X$ given by the completion of $\mathcal F(U)$ with respect to $\|\phi\|_X:= \sup_{t,x} \frac{\phi(t,x)}{1+t+|x|}.$ For $d>1$ the equation cannot be realized as a continuous function, thus one has to use a space of generalized functions for $X$. An example would be the closure of Schwartz functions in a weighted parabolic Besov-Hölder space of negative exponent $\alpha:=\frac12-\frac{d}2-\kappa$ where $\kappa>0$ is arbitrary. More specifically, one takes the closure of $\mathcal F(U)$ under
\begin{equation}\label{choice2}\|\phi\|_X:= \sup_{\|\psi\|_{C^{r}\leq 1}} \sup_{\lambda\in(0,1],(t,x) \in \mathbb R_+\times \mathbb R} \frac{\lambda^{-\alpha}|(\phi,\psi^{\lambda}_{t,x})|}{w(t,x)}, 
\end{equation}
where $r = -\lfloor \alpha \rfloor$, $\psi^{\lambda}_{t,x}(s,y) = \lambda^{-d-2} \phi(\lambda^{-2}(t-s),\lambda^{-1}(x-y))$. Here $w(t,x)$ may be taken to be $(1+t+|x|)^\sigma$ for sufficiently large $\sigma>0$. See \cite[Section 5 - Lemma 9]{MWb} for a proof of these support statements. 

If we define $R_{\epsilon}h(x) = \epsilon^{\frac{d}2 -1} h(\epsilon^{2}t, \epsilon x)$ (to be interpreted by integration against a test function if $X$ consists of a space of distributions), then one verifies that $R_{\epsilon}$ sends $X$ boundedly to itself and satisfies all of the conditions of Lemma \ref{sg} for either choice of norm above. Next we need to verify the mixing condition on an appropriate space. Again we give two separate paradigms for doing this.% for more information on these spaces and see Section 2 in particular for the proof that the solution may indeed be realized in these spaces.
\iffalse 
We claim that the solution of the equation has Cameron-Martin space given by the completion of $ \mathcal F(U)$ with respect to the norm given by $\|h\|_{H^{du}}:=\|\partial_t h- \partial_x^2 h\|_{L^2 (\mathbb R_+\times \mathbb R^d)}$. To prove this, note that if $h$ solves the equation \eqref{she0}, then $h=K*\xi$ where $K(t,x) = \frac1{\sqrt{2\pi t}}{e^{-x^2/2t}}1_{\{t\ge 0\}}, $ and thus $\mathbb E[(h,\phi)(h,\psi)] = \langle K^s*\phi,K^s*\psi\rangle_{L^2(\mathbb R_+\times \mathbb R)}$ where $K^s(t,x) = \frac1{\sqrt{2\pi |t|}}{e^{-x^2/2|t|}}1_{\{t\le 0\}} $ and $*$ denotes convolution in both space and time. Since $K^s$ is the kernel for the inverse operator of $(\partial_t+\partial_x^2)$ which is adjoint on $L^2(\bar U)$ to $-\partial_t+\partial_x^2$, it follows from Lemma \ref{dualnorm} and Remark \ref{opdu} that $H$ is indeed the Cameron-Martin space (letting $A\phi = K*K^s*\phi$ for $\phi \in \mathcal F(U)$ which has inverse operator $A^{-1} = (\partial_t-\partial_x^2)(-\partial_t-\partial_x^2)$ in the notation of Remark \ref{opdu}).
\fi
\begin{enumerate} \item 
Using e.g. Lemma \ref{dualnorm} and Remark \ref{opdu}, one verifies that the solution of the equation has Cameron-Martin space given by the completion of $ \mathcal F(U)$ with respect to the norm $\|h\|_H:=\|\partial_t h- \partial_x^2 h\|_{L^2 (\mathbb R_+\times \mathbb R^d)}$. Indeed, note that $\mathbb E[(h,\phi)(h,\psi)] = \langle \phi,K*K^s*\psi\rangle_{L^2(\mathbb R_+\times \mathbb R)}$ where $K^s(t,x) = \frac1{\sqrt{2\pi |t|}}{e^{-x^2/2|t|}}1_{\{t\le 0\}} $. Since $K*K^s$ is the kernel for the inverse operator of $(\partial_t-\partial_x^2)(\partial_t+\partial_x^2)$, the claim is immediate from Lemma \ref{opdu}. Consequently by Proposition \ref{thm 2} (and the discussion afterwards) one finds that the set $\{(\log\log(1/\e))^{-1/2}R_{\epsilon}h\}_{\epsilon \in (0,1]}$ is precompact and that its set of cluster points in $X$ as $\epsilon \to 0$ equals the closed unit ball of $H$.

\item Rather than viewing $h$ as an intrinsic object we can view it as a chaos of order 1 over $\xi$, specifically $h= \psi(\xi):= K * \xi$ so that $\psi_{hom}(h) = K*h$. We may view $\xi$ as taking values in the Banach space $\tilde X$ given (for example) by the closure of $\mathcal F(U)$ under the norm $\|K*f\|_X$. If we define $\tilde R_\e f(t,x) := \e^{3/2}f(\e^2 t, \e x)$ then one uses Lemma \ref{sg} to verify the mixing condition required of $\tilde R_\e$ (considering functions with compact support contained in $U$). Furthermore one has the relation $R_\e \psi =\psi \tilde R_\e, $ thanks to the scaling property of the heat kernel. Thus one applies Theorem \ref{mainr} to obtain the Strassen law.
\end{enumerate}
\end{example}

This concludes the examples with first-order chaos, and now we move on to examples with higher chaos. Henceforth we shall not verify the calculation of the Cameron-Martin norm and the mixing condition as explicitly as we have done above, as it should be understood that this may be done similarly to above using e.g. results from Appendix B.

\begin{example}\label{2nd}
We give a simple example in the second chaos. Let $X=C([0,1],\mathbb R^2)$, take $\mu$ to be the law on $X$ of two-dimensional standard Brownian motion, which has Cameron-Martin space $H= \{(f_1,f_2)\in X: \int_0^1 (f_1'(t)^2 + f_2'(t)^2)dt<\infty\}.$ Let $Y=C[0,1]$ and consider the (discontinuous) map $\psi:X\to Y$

$$\psi(B_1,B_2) = \int_0^\bullet B_1(t)dB_2(t).$$
		%In this case $$\psi_{hom}(f_1,f_2) = \int_0^\cdot f_1(t)f_2'(t)dt.$$
It is known that $\psi$ is a homogeneous chaos of order two, see e.g. \cite[Chapter 1]{Nua96}. For $f\in Y$ and $(f_1,f_2) \in X$ let us define
\begin{align*}
Q_\epsilon f(t)&:= \epsilon^{-1}f(\epsilon t),\\ R_\epsilon (f_1,f_2)(t)&:= (\epsilon^{-1/2}f_1(\epsilon t), \epsilon^{-1/2} f_2(\epsilon t)).
\end{align*}
Then it is clear that $\psi\circ R_\epsilon = Q_\epsilon \circ \psi$ a.s., and thus one can obtain the Strassen law (as $\epsilon \to 0$) for the family of processes $$\bigg\{\bigg((2\epsilon \log\log(1/\epsilon))^{-1} \int_0^{\epsilon t} B_1(s)dB_2(s)\bigg)_{t\in [0,1]}\bigg\}_{0<\epsilon< 1/e}. $$
The compact limit set is checked to be $\{\int_0^\bullet f_1(s)f_2'(s)ds: \int_0^1 f_1'(s)^2+f_2'(s)^2\leq 1\}. $ Note that one may strengthen the topology of $Y$ to the closure of smooth functions with respect to the Hölder norm of any exponent less than 1/2. See e.g. \cite[Proposition 3.4]{FH16} to see that the path has this Hölder regularity, and note that Lemma \ref{dualnorm} ensures that the compact limit set remains unchanged by changing the Banach spaces $X$ and $Y$ chosen to contain $(B_1,B_2)$ or $\psi(B_1,B_2)$ respectively.

Note that there is no additional difficulty in carrying out this same procedure for $k$-fold iterated integrals for any $k\in\mathbb N$, which recovers and generalizes \cite{ooo}.
\end{example}

\begin{example}
    Consider a standard space-time white noise $\xi$ on $\mathbb R_+\times \mathbb R$ and consider a $k$-fold stochastic integral of the form 
    \begin{equation}\label{ff}F(t,x):= \int_{t_1<...<t_k} \int_{\mathbb R^k} \prod_{j=1}^k p_{t_i-t_{i-1}}(x_i-x_{i-1})\xi(dt_k,dx_k)\cdots \xi(dt_1,dx_1),
    \end{equation}
    where $t_0=t$, $x_0=x,$ and $p_t(x) = (2\pi t)^{-1/2} e^{-x^2/2t}$ is the standard heat kernel. Such an expression appears when considering chaos expansions for the Hopf-Cole transform of the KPZ equation, see e.g. \cite{Wal86}. If we consider $\psi(\xi)=F$ as a homogeneous chaos of order $k$ \cite[Chapter 1]{Nua96}, then it turns out \cite{Wal86} that $\psi$ can take values in the space $\mathcal C^\alpha$ given by the closure of smooth functions in the Banach space of Hölder continuous functions on $B:=[0,1]\times [-1,1]$ (for instance), with norm
\begin{equation*}
    \|h\|_{\mathcal C^\alpha}:= \sup_{(t,x)\in B}|h(t,x)| + \sup_{(t,x)\neq (t',x)\in B} \frac{|h(t,x)-h(t',x')|}{|t-t'|^{\alpha/2} + |x-x'|^\alpha}.
\end{equation*} 
Furthermore, if one defines $(R_\e\xi)(t,x):= \e^{3/2}\xi(\e^2t,\e x)$, then by using the identity $cp_{c^2 t}(cx) = p_t(x)$ for $c>0$, one verifies that $\psi R_\e = Q_\e \psi$ where $Q_\e F(t,x) = \e^{-k/2}F(\e^2t, \e x)$. Consequently, with $F$ given by \eqref{ff}, the set of limit points of $\{(\log\log(1/\e))^{-k/2} \e^{-k/2} F(\e^2 t,\e x)\}$ is equal to the compact set of functions in $\mathcal C^\alpha$ which are of the form $$\int_{t_1<...<t_k} \int_{\mathbb R^k} \prod_{j=1}^k p_{t_i-t_{i-1}}(x_i-x_{i-1})f(t_k,x_k)\cdots f(t_1,x_1)dx_k\cdots dx_1dt_k\cdots dt_1,$$ where $\|f\|_{L^2(\mathbb R_+\times \mathbb R)}\leq 1.$
\end{example}

\begin{example} Let $b:\mathbb R^d \to \mathbb R^d$ be smooth and globally Lipchitz, and let $\sigma : \mathbb R^d\to \mathbb R^{d\times d}$ be smooth and globally Lipchitz. Let $B=(B_t)_{t\ge 0}$ be a standard Brownian motion. For $\delta>0$ we consider the Itô SDE in $\mathbb R^d$ given by 
\begin{equation}\label{Itôx}dX^\delta = b(X^\delta) dt + \delta \cdot \sigma(X^\delta)dB,
\end{equation}
with $X^\delta(0)=0$. Then a unique solution exists which is adapted to the filtration of $B$ \cite[Chapter XI]{ry99}. It is known for $d\ge 2$ that the measurable map $B\mapsto X^\delta$ is discontinuous in general. 

However we can factor it into a continuous map and a second-order chaos as follows. We fix $\alpha \in (1/3,1/2)$. We let $C^\alpha([0,1],\mathbb R^d)$ be the closure of smooth paths with respect to the $\alpha$-Hölder norm on paths $[0,1]\to \mathbb R^d$, and we let $C^\alpha([0,1]^2,\mathbb R^d\otimes \mathbb R^d)$ be the Banach space consisting of those functions $(s,t)\mapsto \mathbb F_{s,t}\in \mathbb R^d\otimes \mathbb R^d$ which lie in the closure of the set of smooth functions vanishing along the diagonal, with respect to the norm given by
$$\sup_{s\neq t} \frac{|\mathbb F_{s,t}|}{|t-s|^{2\alpha}},$$ where $|\cdot|$ can be any norm on $\mathbb R^d\otimes \mathbb R^d$. We then let $\mathcal M\subset C^\alpha([0,1],\mathbb R^d) \times C^\alpha([0,1]^2,\mathbb R^d\otimes \mathbb R^d)$ be the space of rough paths in the sense of \cite[Definition 2.1]{FH16}, that is all those pairs $(F, \mathbb F) \in C^\alpha([0,1],\mathbb R^d) \times C^\alpha([0,1]^2,\mathbb R^d\otimes \mathbb R^d)$ satisfying $$\mathbb F_{s,t} = \mathbb F_{s,u} + \mathbb F_{u,t} + (F_u-F_s)\otimes (F_s-F_t),\;\;\;\;\;\;\;\;\;\;\;0\leq s,t,u\leq 1.$$%if we equip $\mathbb R^d \times (\mathbb R^d\otimes \mathbb R^d)$ with a metric given by $d\big((a,\mathfrak a), (b,\mathfrak b)\big):= |a-b|+\sqrt{\|\mathfrak a-\mathfrak b\|}$, where $|\cdot|$ and $\|\cdot\|$ are any norms on $\mathbb R^d$ and $\mathbb R^d\otimes \mathbb R^d$ respectively, then for $\alpha \in (1/3,1/2)$ 
Then there exists a \textbf{continuous} map $\Phi: \mathcal M \to C([0,1],\mathbb R^d)$ such that the solution of \eqref{Itôx} is given by
\begin{equation}\label{ilmap}X^\delta = \Phi \bigg( \delta B, \delta^2\int_\bullet^- (B_r-B_\bullet)\otimes dB_r\bigg),\end{equation} for all $\delta>0$, where explicitly the latter stochastic integral should be understood as $$\bigg(\int_\bullet^- (B_r-B_\bullet)\otimes dB_r\bigg)_{s,t}: = \sum_{i,j=1}^d \bigg( \int_s^t (B^i_r-B^i_s)dB^j_r\bigg)e_i \otimes e_j$$ where $\{e_i\}$ is the standard basis of $\mathbb R^d$. The fact that this second-order chaos is a.e. defined (albeit discontinuous as a function of $B$) and takes values in the Banach space $C^\alpha([0,1]^2,\mathbb R^d\otimes \mathbb R^d)$ for $\alpha \in (1/3,1/2)$ is proved in \cite[Proposition 3.4]{FH16}. This map $\Phi$ is called the Itô-Lyons map and its construction and structural properties are described in great detail in \cite[Chapter 8]{FH16}. %For clarity, we explain some of those details now. To construct such a map $\Phi$, one should note that any map $(g, \mathfrak g)\in C^\alpha([0,1],\mathbb R^d \times (\mathbb R^d\otimes \mathbb R^d))$ gives rise to an $\alpha$-Hölder rough path (in the sense of \cite[Definition 2.1]{FH16}) whose second-order lift $\mathfrak g_{s,t} \in \mathbb R^d\otimes \mathbb R^d$ is given by the explicit formula $\mathfrak g_{s,t}:= \mathfrak g_t- \mathfrak g_s - g_s \otimes (g_t-g_s)$. In this context, 
More specifically, the continuity of $\Phi$ is proved as \cite[Theorem 8.5]{FH16} and the identity \eqref{ilmap} is proved as \cite[Theorem 9.1]{FH16}.

Now let us formulate a version of  Strassen's law for this SDE. If we let $R_\epsilon f:C^\alpha([0,1],\mathbb R^d)\to C^\alpha([0,1],\mathbb R^d)$ by $R_\e f(t) = \epsilon^{-1} f(\epsilon^2 t)$ then we can consider the sequence $X^\epsilon$ of Itô solutions to $$dX^\e = b(X^\e) dt + (\log\log(1/\e)) ^{-1/2}\sigma(X^\e)d\big( R_\e B\big).$$ 
Moreover, we have the second-order chaos $\psi: C^\alpha([0,1],\mathbb R^d) \to C^\alpha([0,1]^2,\mathbb R^d\otimes \mathbb R^d)$ given by $$\psi(B) = \int_\bullet^- (B_r-B_\bullet)\otimes dB_r.$$
We can then define $Q^1_\epsilon:=R_\epsilon:C^\alpha([0,1],\mathbb R^d)\to C^\alpha([0,1],\mathbb R^d)$ and we can also define $Q^2_\e: C^\alpha([0,1]^2,\mathbb R^d\otimes \mathbb R^d)\to C^\alpha([0,1],\mathbb R^d\otimes \mathbb R^d)$ by $Q^2_\e g(s,t) = \e^{-2} g(\e^2s,\e^2 t),$ then one verifies that the commutation assumption of Theorem \ref{mainr} holds, namely that $(R_\e B, \psi(R_\e B)) = (Q^1_\e B, Q^2_\e \psi(B)).$ 
The $Q^i$ are checked to be strongly continuous using Lemma \ref{sg}. We already remarked in an earlier example that $R_\e$ is strongly mixing, so by continuity of the map $\Phi$, we may then apply Theorem \ref{mainr}, as well as the result of Example \ref{2nd} to obtain that the set of limit points of $C([0,1],\mathbb R^d)$ of $\{X^\e\}_{\e \in (0,1]}$ as $\e\to 0$ is equal to the compact set $K$ of functions of the form $$\Phi \bigg( f, \int_\bullet^- (f(r)-f(\bullet))\otimes f'(r)dr\bigg),$$ as $f$ ranges throughout the closed unit ball of the Cameron-Martin space of $B$, namely $\{f\in C([0,1],\mathbb R^d): \int_0^1 \|f'(s)\|_{Eu}^2ds \leq 1\},$ where $\|x\|_{Eu}$ denotes the Euclidean norm of $x\in\mathbb R^d$. As above $(\bullet, -)$ denotes the argument of a general two-variable function here.

Now when $f$ is a smooth function from $[0,1]\to\mathbb R^d$, then the last expression inside of $\Phi$ is a smooth geometric rough path, and the Itô-Lyons map $\Phi$ acts on such a path by sending it to the solution of the classical ODE $\dot x = b(x) + \sigma(x)f$ as explained in \cite[Remark 8.4]{FH16}. We may then conclude that the compact limit set $K$ of $\{X^\e\}$ is equal to the closure in $C([0,1],\mathbb R^d)$ of the set of functions $x$ where $x(0)=0$ and $\dot x = b(x) + \sigma(x)f$ for some smooth path $f\in C([0,1],\mathbb R^d)$ such that $\int_0^1 \|f'(s)\|_{Eu}^2 \leq 1.$ 
\end{example}

\begin{example}[A counterexample]
    We conclude with an example where the semigroup $(R_\e)$ does \textit{not} extend to the full space $X$ in a strongly continuous fashion. Let $H=L^2(\mathbb T)$ where $\mathbb T=\mathbb R/\mathbb Z$. If we view $\mathbb T=[0,1]$ with the endpoints identified, we may sensibly define $(R_\e\xi)(x)= \e^{1/2}\xi(\e x)$ for $\xi \in H$. It is clear that this defines a bounded linear map on $H$ satisfying $R_\e R_\e^*=I$. Let $X=C^{-\alpha}(\mathbb T)$ where $\alpha>1/2$, which is defined to be the closure of smooth functions $f:\mathbb T\to \mathbb R$ under the norm given by $$\|f\|:= \sup_{\lambda\in (0,1]}\sup_{x \in \mathbb T} \sup_{\|\varphi\|_{C^1}\leq 1} \lambda^\alpha (f,\varphi^\lambda_x)_{L^2(\mathbb T)}$$ where $\varphi^\lambda_x(y) = \lambda^{-1}\varphi(\lambda^{-1}(y-x))$. Then the law of $\xi$ is supported on $X$ as shown in e.g. \cite[Section 2]{HL18}, thus $R_\e$ extends from $H$ to an a.e. linear measure-preserving operator on $X$ for each $\e>0$ by Lemma \ref{bij}. Nonetheless $R_\e$ does not extend to a bounded operator on the full space $X$ as it would violate the periodic boundary conditions. On one hand, by Theorem \ref{thm1} one still has a ``discrete-time" Strassen law along any subsequence of the form $\e_N=\alpha^N$ where $\alpha\in (0,1)$, but to even sensibly talk about the continuous version one would first need to \textit{simultaneously} define the uncountable collection $(R_\e)_{\e\in(0,1]}$ on a subset of full measure, which seems nontrivial. This illustrates that the assumptions in Theorem \ref{mainr} are necessary.
\end{example}

\subsection{Strassen law for the KPZ equation}

In this subsection we prove Theorem \ref{buss}. Although we focus on KPZ, the method described here is general and may apply to any SPDE whose solution map can be factored into a finite number of chaoses and a continuous part $$\xi \mapsto (\psi_1(\xi),...,\psi_m(\xi)) \stackrel{\text{continuous}}{\mapsto} \text{solution of SPDE},$$ which is always the case when using the theory of regularity structures as we do here. The only caveat is that it is difficult to come up with interesting examples of semigroups satisfying the mixing conditions unless the solution (in addition to admitting a factorization as above) is globally well-posed on the full space $\mathbb R^d$, for which KPZ (or rather its Hopf-Cole transform) seems to be the only example \cite{HL18, PR19}. For example, in \cite{MWb} the $\Phi_2^4$ equation is shown to be globally well-posed on $\mathbb R^2$ but it lacks the explicit construction of a continuous part of the solution map on the full space, which is needed in our context.%We do the computation for some of the higher-order objects appearing in the solution map for the KPZ equation constructed via regularity structures, using methods from \cite{HL18}. 
\\
\\
Let $\xi$ be a standard space-time white noise on $\mathbb R_+\times \mathbb R$ and define the following distribution-valued chaoses for $z\in \mathbb R_+\times \mathbb R$:
\begin{align*}\Pi_z^\xi\Xi(\psi)&:=\int_{\mathbb R_+\times \mathbb R}\psi(w)\xi(dw),\\ \Pi_z^\xi\big[\Xi \mathcal I[\Xi] \big](\psi)&:= \int_{(\mathbb R_+\times \mathbb R)^2} \psi(w) (K(w-a)-K(z-a))\xi(da)\xi(dw),\\ \Pi_z^\xi\big [\Xi \mathcal I[\Xi \mathcal I[\Xi]]\big](\psi)&:= \int_{(\mathbb R_+\times \mathbb R)^3} \psi(w)(K(w-a)-K(z-a)) (K(a-b)-K(z-b))\xi(db)\xi(da)\xi(dw),
\end{align*}
\begin{align*}\Pi_z^\xi\big [ \Xi \mathcal I[\Xi &\mathcal I[\Xi \mathcal I[\Xi]]]\big](\psi):= \int_{(\mathbb R_+\times \mathbb R)^4} \psi(w)(K(w-a)-K(z-a)) (K(a-b)-K(z-b)) \\& \cdot\big( K(b-c)-K(z-c) - (t_z-t_c)\partial_tK(z-c) - (x_z-x_c)\partial_xK(z-c)\big)\xi(dc)\xi(db)\xi(da)\xi(dw),
\end{align*}
\begin{equation*}
    \Pi_z^\xi\big[ \Xi \mathcal I[X_1\Xi]\big](\psi):= \int_{(\mathbb R_+\times \mathbb R)^2}\psi(w) K(w-a)(x_z-x_a)\xi(da)\xi(dw)\;\;\;\;\;\;\;\;\;\;\;\;\;\;\;\;\;\;\;\;\;\;\;\;\;\;\;\;\;\;\;
\end{equation*}
where $z=(t_z,x_z), a=(t_a,x_a),b=(t_b,x_b),$ and $c=(t_c,x_c)\in \mathbb R_+\times \mathbb R.$ We refer to e.g. \cite[Appendix A]{CW15} for a definition of the iterated stochastic integrals against space-time white noise on the full space. The requirement for these stochastic integrals to exist is that the integrands are $L^2$ functions jointly in all of the integration variables, which may be checked using the decay properties of the heat kernel. The usage of the abstract symbols on the left side will be explained shortly, but for now one should simply take the stochastic integrals on the right side as the definition of the objects on the left side, which are indexed by $\psi$ and $z$. For 
\begin{equation}\label{w}\tau\in \mathcal W:=\{\Xi, \Xi \mathcal I[\Xi], \Xi \mathcal I[\Xi \mathcal I[\Xi]], \Xi \mathcal I[\Xi\mathcal I[\Xi \mathcal I[\Xi]]], \Xi \mathcal I[X_1\Xi]\},\end{equation} we define the order $|\tau|$ to be $-3/2-\kappa, -1-2\kappa, -1/2-3\kappa, -4\kappa, -2\kappa$ respectively, where $\kappa\in(0,1/8)$ is arbitrary but fixed.

%Following [Hairer-Weber, Large deviations...], for any two maps $\Pi, \bar \Pi: \mathbb R^2 \to \mathcal L(T,\mathcal S'(\mathbb R^d)$, we define the norm $$\|\Pi-\bar \Pi\|:= \sup_{z \in \mathbb R_+\times \mathbb R} \sup_{\phi \in \mathcal B} \sup_{\lambda \in (0,1]} \sup_{\tau \in \mathcal W} w(z)^{-1} \lambda^{-|\tau|} \big| (\Pi_z\tau)(\phi^\lambda_z) - (\bar \Pi_z\tau)(\phi^\lambda_z)\big|, $$ where $\mathcal B$ is the set of functions supported in the closed unit ball of $\mathbb R^2$ with $C^r$ norm less than 1 and $r$ is some fixed integer larger than $3/2$. Furthermore $\Pi_z\tau$ is shorthand for $\Pi(z)(\tau)$ and $w$ is a suitable weight function as used in [Hairer-Labbe, multiplicative...]. Also $\phi^\lambda_z$ is the parabolically rescaled test function centered at $z$, as defined in earlier examples. \\ We define $E_\tau$ to be the Banach space given by the closure of two-variable smooth functions $(z,w) \mapsto \Pi_z\phi(w)$ with respect to the norm $$\sup_{\phi,\lambda,z} \lambda^{-|\tau|} |\Pi_z\tau(\phi^\lambda_z)|,$$ where the sup is the same as above. 
Now we want to find a Banach space for the above chaoses to be supported. To do this we follow a method used by \cite[Section 4]{HW13}. Clearly each $\Pi_z^\xi \tau$ is linear in $\psi$ as a map from $L^2(\mathbb R_+\times \mathbb R)\to L^2(\Omega,\mathcal F,P)$ where the latter denotes the underlying probability space of the noise $\xi.$ Thus one can expect to be supported on a space of distributions in space-time. In fact we will do this jointly in the $(z,\psi)$ coordinates, so that the $\Pi^\xi$ are viewed as a generalized function of two space-time coordinates $(z,z')$. %We thus identify $\mathbb R^2$ as our space-time, so that the space $(\mathbb R^2)^2$ consists of such pairs of space-time points $(z,w)$. 

\begin{defn}Fix a time horizon $T>0$ and set $\Lambda_T:=[0,T]\times \mathbb R$. We thus identify $\Lambda_T$ as our space-time, so that the space $\Lambda_T^2 = \Lambda_T\times\Lambda_T$ consists of pairs of space-time points $(z,z')$. For $j=1,2$ we say that a function $\Lambda_T^j\to \mathbb R$ is Schwartz if it is the restriction to $\Lambda_T^j$ of some Schwartz function from $\mathbb R^{2j}\to \mathbb R$. Given a Schwartz function $\Pi: \Lambda_T^2 \to \mathbb R$ and $\alpha<0$ one may define a norm $$\|\Pi\|:= \sup_{\lambda \in (0,1]} \sup_{\varphi\in B_r} \sup_{z\in \Lambda_T}w(z)^{-1} \lambda^{-\alpha} \int_{\Lambda_T}\Pi(z,z') \mathcal S^\lambda_z\varphi (z')dz',$$ where if $z = (s,y)$ and $z'=(t,x)$ then $\mathcal S^\lambda_z\varphi(z') = \lambda^{-3} \varphi(\lambda^{-2}(t-s),\lambda^{-1}(x-y))$, where $w$ is a weight function, and where $B_r$ is the set of smooth functions $\mathbb R^2\to\mathbb R$ supported on the closed unit ball with $C^r$ norm less than $1$ where $r:=-\lfloor \alpha \rfloor$. One may then define $E_w^\alpha$ to be the closure of Schwartz functions $\Pi$ under this norm (thus $E_w^\alpha$ implicitly depends on the time horizon $T$ which we have chosen). 
\end{defn}

Note that if $\Pi\in E^\alpha_w$, then for all $z\in\Lambda_T$ we can sensibly define $\Pi(z,\cdot)$ as a tempered distribution supported on $\Lambda_T$. This will be important for Theorem \ref{solmapkpz} Item \textit{(3)} below.

It turns out that the above symbols $\Pi^\xi\tau$ defined above are supported in the spaces $E_{w}^{|\tau|}$ for the appropriate choice of $w$ (see Theorem \ref{solmapkpz} just below). Assuming this fact, let us now define the appropriate semigroups on the spaces $E_\tau$. %Our result will then follow by applying Theorem \ref{mainr} with $\mathcal M = \mathcal X$. 
The semigroup acting on the noise is $R_\epsilon f(t,x) = \epsilon^{3/2} f(\epsilon^2t,\epsilon x).$ For $\tau \in \mathcal W$, define the $E^{|\tau|}_w$-valued homogeneous chaos $\psi_\tau(\xi):= \Pi^\xi\tau $ as given above. Then by using the fact that each of the functions $K(t,x), t\cdot \partial_tK(t,x), x\cdot\partial_xK(t,x)$ are invariant under the scaling $f(t,x)\to cf(c^2t,cx)$ for $c>0$, one checks that the following five semigroups $Q^\tau$ on $E^{\tau}_w$ will satisfy the commutation relation $Q^\tau_\epsilon \circ \psi_\tau = \psi_\tau \circ R_\epsilon$ which is needed to apply Theorem \ref{mainr}:

$$Q^\Xi_\epsilon f(t,x,s,y) = \epsilon^{3/2} f(
\epsilon^2 t,\epsilon x, \epsilon^2 s, \epsilon y),$$
$$Q_\epsilon^{\Xi \mathcal I[\Xi]}f(t,x,s,y) = \epsilon f(\epsilon^2 t, \epsilon x, \epsilon^2s, \epsilon y),$$
$$Q^{\Xi \mathcal I[\Xi \mathcal I[\Xi]]}_\epsilon f(t,x,s,y) = \epsilon^{1/2} f(\epsilon^2 t, \epsilon x, \epsilon^2 s, \epsilon y),$$
$$Q_\epsilon^{\Xi \mathcal I[\Xi \mathcal I[\Xi \mathcal I[\Xi]]]}f(t,x,s,y) =  f(\epsilon^2 t, \epsilon x, \epsilon^2 s, \epsilon y),$$
$$Q_\epsilon^{\Xi \mathcal I[X_1\Xi]}f(t,x,s,y) = f(\epsilon^2 t, \epsilon x, \epsilon^2 s, \epsilon y)$$
where the $(t,x)$ stands for the $z$ variable and the $(s,y)$ stands for the $w$ variable in the integrals above (note that the $f$ appearing here will be distributional in the latter variable and thus, as usual, the rescaling of coordinates needs to be interpreted by integrating against test functions). In other words one simply has $Q^\tau_\epsilon f(t,x,s,y) = \epsilon^{-|\tau|_0}f(\epsilon^2 t, \epsilon x, \epsilon^2s, \epsilon y)$ where $|\tau|_0$ denotes the order of $\tau$ without the $\kappa$ term. %Furthermore they are strongly continuous by Lemma \ref{sg}, and hence we conclude the proof of Theorem \ref{buss}.

We henceforth fix $s,y>0$ and $\alpha \in (0,1/2-4\kappa)$ where $\kappa$ is the same as specified after \eqref{w}. We will let $\mathcal C^\alpha_{s,y}$ denote the closure of smooth functions in the Banach space of Hölder continuous functions on $B:=[0,s]\times [-y,y]$, whose norm is given by 
\begin{equation}\label{holderspace}
    \|h\|_{\mathcal C^\alpha_{s,y}}:= \sup_{(t,x)\in B}|h(t,x)| + \sup_{(t,x)\neq (t',x)\in B} \frac{|h(t,x)-h(t',x')|}{|t-t'|^{\alpha/2} + |x-x'|^\alpha}.
\end{equation} We will now prove Theorem \ref{buss}. The main idea is to first note that in the notation of that theorem, we see that the function $\tilde h(t,x):= C_\e^{-1} \epsilon^{-1/2} h^\e (\epsilon^2 t,\e x)$ is the Hopf-Cole solution of $$\partial_t \tilde h(t,x) = \partial_x^2 \tilde h(t,x) + (\partial_x \tilde h(t,x))^2 + (\log\log(1/\e))^{-1/2}(R_\e \xi)(t,x),$$ where $R_\e$ is the measure-preserving operation defined above. Therefore when combined with Theorem \ref{mainr}, the following result clearly implies Theorem \ref{buss}. Its proof is based entirely on the extensive framework and estimates developed in \cite{HP15, HL18}. Moreover, taking the logarithm poses no issue since we are working on a compact domain and the solution is strictly positive.

\begin{thm}[\cite{HL18,HP15}]\label{solmapkpz} 
\begin{enumerate}
    \item The above symbols $\Pi^\xi\tau$ defined above are supported in the spaces $E_{w}^{|\tau|}$ for the appropriate choice of weight $w$. Furthermore the semigroups $Q^\tau$ as defined above are strongly continuous on the spaces $E^{|\tau|}_{w}$.
    
    \item For all $\delta>0$ replace $\xi$ by $\delta \xi$ in the above stochastic integrals defining $\Pi^\xi\tau$, and denote the resulting 5-tuple of objects as $(\Pi^{\delta\xi}\tau)_{\tau \in \mathcal W}$. There exists a (nonlinear) closed subset $\mathcal X$ %(called the space of ``admissible models") 
    of $\bigoplus_{\tau \in \mathcal W} E^{|\tau|}_{w}$ with the property that for all $\delta>0$ the 5-tuple $(\Pi^{\delta\xi}\tau)_{\tau \in \mathcal W}$ is supported on $\mathcal X$, and moreover there exists a deterministic continuous map $\Phi: \mathcal X \to \mathcal C_{s,y}^\alpha$ such that %$\Phi$ sends $(\Pi^\xi\tau)_{\tau \in \mathcal W}$ to the Hopf-Cole solution of the KPZ equation driven by the same realization of the noise $\xi$ that appears in the integrals defining $\Pi\tau$ above. In fact, 
    $\Phi$ sends $(\Pi^{\delta\xi}\tau)_{\tau \in \mathcal W}$ to the Itô solution of the SPDE given by $\partial_t z = \partial_x^2z + \delta z\xi$, with initial data $z(0,x)=1$.%, for \textbf{any} $\delta>0$. %The remarkable thing is that the solution map $\Phi$ itself does not depend on $\delta$. This is elaborated in Appendix 2.
    
\item For $\tau \in \mathcal W$, define the $E^{|\tau|}_w$-valued homogeneous chaos $\psi_\tau(\xi):= \Pi^\xi\tau $ as above. Let $(\psi_\tau)_{hom}(z;f)$ be defined as $(\psi_\tau)_{hom}(f)$ evaluated at $z\in \Lambda_T$ in the first coordinate. Then for all space-time Schwartz functions $f$, the function $(\psi_\tau)_{hom}(z;f)$ is given by the expression for $\Pi_z^\xi\tau$ but with $\xi(dw),\xi(da),\xi(db),\xi(dc)$ respectively replaced by $f(w)dw, f(a)da, f(b)db, f(c)dc$ (so these are now classical integrals, not stochastic integrals as above). %Here $f$ can be an arbitrary smooth function. Then using the method of \cite{HW13}, it is easy to show that Then 

\item For all space-time Schwartz functions $f$ on $\Lambda_T$, the map $\Phi$ sends $\big((\psi_\tau)_{hom}(f)\big)_{\tau \in \mathcal W}$ to the classical solution of the deterministic equation $\partial_tz = \partial_x^2 z + zf,$ with initial data $z(0,x)=1$. %More generally, for smooth $f$ and $\lambda\in \mathbb R$, if we formally replace all neighboring pairings $\xi(dz_1)\xi(dz_2)$ by $(f(z_1)f(z_2) -\lambda^2 \delta(z_1-z_2))dz_1dz_2$  while any remaining unpaired instance of $\xi(dz)$ is replaced by $f(z)dz$, then the 5-tuple gets mapped under $\Phi$ to the solution of $$\partial_t f = \partial_x^2 f + z(f-\lambda^2 C_f+\lambda^4 c_f)$$ where $C_f,c_f$ are constants which may depend on $f$ but not on $\lambda$.%This is also explained in Appendix 2, see the discussion of canonical models. \textcolor{red}{state these last two paragraphs as a theorem, whose proof is postponed to appendix}
 \end{enumerate}
%Here is another important property possessed by $\Phi$. For $\tau \in \mathcal W$, 
\end{thm}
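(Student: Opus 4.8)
The plan is to read Theorem~\ref{solmapkpz} as a repackaging of the regularity-structures analysis of the multiplicative stochastic heat equation (equivalently the Hopf--Cole transform of KPZ): the construction of the renormalized model and the identification of the reconstructed solution with the Itô/Hopf--Cole solution are due to \cite{HP15}, while the upgrade to the unbounded domain $\mathbb{R}$ with polynomially weighted spaces --- which is what makes the solution available on all of $[0,s]\times[-y,y]$ for arbitrary $s,y>0$ --- is the content of \cite{HL18}. I would verify the four items in turn.

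For Item~(1): for fixed $z$ and test function, each $\Pi^\xi_z\tau$ is a multiple Wiener--Itô integral whose order $k_\tau\in\{1,2,3,4\}$ equals the number of occurrences of $\Xi$ in $\tau$ (the iterated stochastic integrals exclude diagonals, so no chaos is lowered). Viewing $(z,z')\mapsto\Pi^\xi\tau$ as a two-parameter generalized function, the key estimate is the second-moment bound $\mathbb{E}[\,|\Pi^\xi_z\tau(\mathcal{S}^\lambda_z\varphi)|^2\,]\lesssim\lambda^{2|\tau|}$, uniform over $\varphi\in B_r$ and over $z$ in compacts, the exponent $|\tau|$ rather than $|\tau|_0$ being chosen precisely to absorb the logarithmic corrections that appear at the borderline symbols; these are the standard kernel estimates of \cite[Section 4]{HW13} and \cite{Hai13} applied to the KPZ tree kernels, where the recenterings $K(w-a)-K(z-a)$ and the Taylor subtraction in the fourth symbol are exactly what guarantees square integrability of the integrands and the stated homogeneities. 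Hypercontractivity on a fixed chaos (the bound \eqref{hyp}, via Proposition~\ref{eq'}) then promotes this to all moments, and a Kolmogorov/Besov-embedding argument together with the polynomial weight $w$ places $\Pi^\xi\tau$ in $E^{|\tau|}_w$ almost surely, which is precisely what is carried out in \cite{HP15,HL18}. Strong continuity of $Q^\tau_\epsilon$ on $E^{|\tau|}_w$ I would obtain from Lemma~\ref{sg}: $E^{|\tau|}_w$ is the closure of Schwartz functions, $Q^\tau_\epsilon$ preserves Schwartz functions, and a change of variables in the defining seminorm yields $\sup_{\epsilon\in[e^{-1},1]}\|Q^\tau_\epsilon\|_{E^{|\tau|}_w\to E^{|\tau|}_w}<\infty$, the only hypothesis of that lemma that needs checking.

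For Item~(2): I would invoke the regularity structure adapted to $\partial_t z=\partial_x^2 z+z\Xi$, in which $\mathcal{W}$ labels the negative-order trees of the Picard expansion. The five objects $(\Pi^\xi\tau)_{\tau\in\mathcal{W}}$, together with the canonical polynomial sector, determine an admissible model that is the limit in model topology of the renormalized canonical lifts of mollified noise $\xi*\rho_\eta$, the renormalization subtracting the diverging constants $C_\eta$; this convergence is the model/renormalization theorem of \cite{HP15} in the weighted full-line form of \cite{HL18}. Solving the abstract fixed-point problem for $\partial_t z=\partial_x^2 z+z\Xi$ with $z(0,\cdot)=1$ --- globally in time on $\mathbb{R}$ by \cite{HL18} --- and composing with the reconstruction operator gives a deterministic, locally Lipschitz map from models into the parabolic Hölder space $\mathcal{C}^\alpha_{s,y}$ on $[0,s]\times[-y,y]$ (the regularity $1/2-\kappa>\alpha$ coming from Schauder and the order of $\mathcal{I}[\Xi]$). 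I would take $\mathcal{X}$ to be the closure in $\bigoplus_{\tau}E^{|\tau|}_w$ of all admissible models arising this way, so that the support statement for $(\Pi^{\delta\xi}\tau)_\tau$ reduces to Item~(1) applied to $\delta\xi$ (using $\Pi^{\delta\xi}\tau=\delta^{k_\tau}\Pi^\xi\tau$), and take $\Phi$ to be the restriction to $\mathcal{X}$ of the above map; the identification of $\Phi(\Pi^{\delta\xi}\tau)$ with the Itô solution of $\partial_t z=\partial_x^2 z+\delta z\xi$ --- equivalently, after taking logarithms, the Hopf--Cole solution of KPZ --- is then exactly the Da Prato--Debussche--Hairer--Pardoux convergence theorem \cite{HP15}.

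For Items~(3) and~(4): I would use that a $k$-fold Wiener--Itô integral $T=I_k(g)$ has homogeneous form $T_{\mathrm{hom}}(f)=\langle g,f^{\otimes k}\rangle$ --- substitute $x+f$ into the chaos expansion via $I_k(g)(x+f)=\sum_{j=0}^k\binom{k}{j}I_j(g(\,\cdot\,,f^{\otimes(k-j)}))$ and integrate $x$ out against $\mu$, killing every $j\geq 1$ term by orthogonality, which is \eqref{homm} made explicit. Applying this coordinatewise in $z$ and in the test function --- legitimate because the relevant evaluation maps are bounded linear, as in the argument behind Lemma~\ref{hom is cont'} --- gives Item~(3): $(\psi_\tau)_{\mathrm{hom}}(z;f)$ is the defining expression for $\Pi^\xi_z\tau$ with each $\xi(dw_i)$ replaced by $f(w_i)\,dw_i$, which term by term is precisely the canonical lift of the smooth function $f$. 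For Item~(4), $((\psi_\tau)_{\mathrm{hom}}(f))_\tau$ lies in $\mathcal{X}$ since Theorem~\ref{mainr}, applied with $\mathcal{M}=\mathcal{X}$ and the support statement of Item~(2), forces the compact set $K$ of homogeneous models into $\mathcal{X}$ (up to a trivial rescaling for $f$ outside the unit ball); and since the abstract solution map, fed the canonical model of a smooth input, reconstructs to the classical PDE solution --- a standard consistency property, with no surviving renormalization since a smooth canonical lift is already admissible --- $\Phi$ sends $((\psi_\tau)_{\mathrm{hom}}(f))_\tau$ to the classical solution of $\partial_t z=\partial_x^2 z+zf$, $z(0,\cdot)=1$. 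The main obstacle is Item~(2): it rests on the two genuinely hard external inputs, global well-posedness of the multiplicative heat equation on $\mathbb{R}$ in weighted spaces \cite{HL18} and the identification of the regularity-structures solution with the Itô/Hopf--Cole solution \cite{HP15}; by contrast Items~(1) and~(3) are Wiener-chaos bookkeeping, and Item~(4) is a soft consistency statement whose only delicate point is the compatibility --- secured by Theorem~\ref{mainr} together with full support of $\mu$ --- between the probabilistic solution map on the stochastic model and its deterministic reading on the homogeneous canonical models.
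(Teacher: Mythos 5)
Your proposal is correct and, in substance, follows the same route as the paper: the hard inputs (construction and renormalization of the model, global well-posedness on $\mathbb R$ in weighted spaces, identification of the reconstructed solution with the It\^o/Hopf--Cole solution) are outsourced to \cite{HP15,HL18}, strong continuity of $Q^\tau_\e$ comes from Lemma \ref{sg}, Item \textit{(3)} is the standard chaos computation behind \eqref{homm}, and Item \textit{(4)} is the consistency of the abstract solution map on canonical lifts of smooth inputs. The one place where you genuinely diverge is the choice of $\mathcal X$: you take the closure of the renormalized canonical lifts, which forces you to route the membership $\big((\psi_\tau)_{hom}(f)\big)_\tau\in\mathcal X$ through the $K\subset\mathcal M$ part of Corollary \ref{contraction} (plus a rescaling for $\|f\|_{L^2}>1$); the paper instead defines $\mathcal X$ as the set of all ``minimal'' admissible models, i.e.\ those five-tuples extending to admissible models in the sense of \cite[Definition 2.2]{HL18}, which is closed by continuity of the defining relations and contains the canonical lifts of all smooth $f$ by inspection, so Item \textit{(4)} needs no appeal to the Strassen machinery. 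Your argument is not circular (that part of Corollary \ref{contraction} does not use Theorem \ref{solmapkpz}), but the paper's choice is more economical and also makes transparent why $\Phi$ is defined and locally Lipschitz on all of $\mathcal X$, via the unique continuous extension of a minimal model to a full admissible model --- a step your closure-based definition leaves implicit. Two small corrections: the weight used in the paper is exponential, $w(t,x)=e^{a(1+|x|)}$, not polynomial; and the support statement is quoted from \cite[Theorem 5.3]{HL18} with $p=\infty$ rather than re-derived from the moment bounds of \cite{HW13}.
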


\begin{proof}
   \textit{(1)} Let $w(t,x) = e^{a(1+|x|)}$ where $a>0$ is arbitrary.  The fact that the symbols $\Pi^\xi \tau$ are supported on $E^{|\tau|}_{w}$ follows from \cite[Theorem 5.3]{HL18} by taking $p=\infty$, see Remark 2.7 there (note that since our initial data is zero which is a smooth function, we do not even need to consider finite $p$ in their results, which is done only for the sake of considering singular initial data). The fact that the operator semigroups $Q^\tau$ are strongly continuous on these spaces is follows from Lemma \ref{sg}.
\\
\\
   \textit{(2)} One defines the regularity structure $(T,A,G)$ where $T$ is the graded vector space generated by the list of twelve symbols in \cite[Figure 1]{HL18}, and $A$ is the respective list of homogeneities. Note that the list of symbols is precisely those symbols $\tau$ appearing in $\mathcal W$ as defined in \eqref{w} plus symbols of the form $\mathcal I \tau$ with $\tau \in \mathcal W$ as well as $\Xi X_i$ and $\mathcal I(\Xi X_i).$ The group $G$ will be described later.
   
   We are going to let $\mathcal X$ denote the space of those functions $\Pi\in \bigoplus_{\tau \in \mathcal W} E^{|\tau|}_{w}$ which extend to admissible models in the sense of \cite[Definition 2.2]{HL18} (such $\Pi$ are also called ``minimal models" in \cite{HW13}). For any map $\Pi$ satisfying the requirements \cite[(2.2) and (2.5)]{HL18} of an admissible model, there is a way to define a corresponding family of invertible transformations $\Gamma_{xy}$ of $T$ such that one has the relations $\Pi_x \Gamma_{xy} = \Pi_y$ and $\Gamma_{xy}\Gamma_{yz} = \Gamma_{xz}$ for all $x,y,z \in \Lambda_T$, as well as \cite[(2.3)]{HL18}. The first condition and the fact that $\Pi_xX^k (z)= (z-x)^k$ forces $\Gamma_{xy}$ to act on elements $\mathcal I[\tau]$ by the formula $$\Gamma_{xy}\mathcal I[\tau] = \mathcal I[\tau]+\sum_{|k|_s\leq |\tau|+2} \frac{X^k}{k!} \int_{\mathbb R^{d+1}} \big( \partial^kK(y-a)-\partial^k K (x-a) \big) (\Pi_x\tau)(da)$$$$ \;\;\;\;\;\;\;\;\;\;\;\;\;\;\;\;\;\;\;\;\;\;\;+ \sum_{|k|_s\le |\tau|+2} \frac{(X+y-x)^k-X^k}{k!} \int_{\mathbb R^{d+1}} \partial^k K(y-a)(\Pi_x\tau)(da),$$ where the sum is over multi-indices $k=(k_1,k_2)$ and $|k|_{\mathfrak s} = 2k_1+k_2.$ Furthermore it is clear that $\Gamma_{xy}\Xi=\Xi$. Finally one has $\Gamma_{xy}(\Xi X^k\mathcal I[\tau]) = \Xi X^k \Gamma_{xy}\mathcal I[\tau]$ and $\Gamma_{xy} X^k = (X+y-x)^k.$ The structure group $G$ may then be described as the group of all $\Gamma_{x,z}$ as $x,z$ range throughout $\Lambda_T.$

   Since the five symbols $\Pi^{\delta\xi}\tau$ ($\tau\in \mathcal W$) may be checked to verify \cite[(2.2) and (2.5)]{HL18} for all $\tau \in \mathcal W$, and since $\Pi \tau$ for $\tau\notin \mathcal W$ may be obtained uniquely and continuously as a function of these five symbols by the same rules \cite[(2.2) and (2.5)]{HL18}, we then conclude that those five symbols indeed constitute an element of $\mathcal X$. Now the existence and local Lipchitz continuity of the solution map $\Phi$ is done in \cite[Theorem 5.2]{HL18}, and the fact that $\Phi$ sends $(\Pi^{\delta\xi}\tau)_{\tau \in \mathcal W}$ to the Itô solution of the SPDE given by $\partial_t z = \partial_x^2z + \delta z\xi$, with initial data $z(0,x)=1$, is stated in \cite[Theorem 1.1]{HL18} (the proof is not given there but rather it is stated that the proof is nearly identical to \cite[Theorem 6.2]{HP15} which is done in a compact setting).
\\
\\
\textit{(3)} Suppose $(X,H,\mu)$ is any abstract Wiener space, $Y$ is another Banach space. Suppose that $\psi:X\to Y$ is any ``simple" Gaussian chaos of order $k$ given by $x\mapsto (H_k\circ g)(x)y$ where $y\in Y$ is fixed, $g\in X^*$ is fixed, and $H_k$ denotes the $k^{th}$ Hermite polynomial. Then one may check using \eqref{homm} that $\psi_{hom}(h) =g(h)^ky$. Since ``hom" is an additive operation and since any chaos of order $k$ can be approximated by finite linear combinations of simple chaoses of order $k$ (Lemma \ref{approximation lemma'}), the claim then follows from Lemma \ref{hom is cont'} since the space-time white noise $\xi$ (when viewed as an element of the space $E^{|\Xi|}_w)$ has Cameron-Martin space given by $L^2([0,T]\times \mathbb R)$, see e.g. Lemma \ref{dualnorm}.
\\
\\
\textit{(4)} We note that the 5-tuple $\big((\psi_\tau)_{hom}(\cdot;f)\big)_{\tau \in \mathcal W}$ extends to a canonical model in the sense of \cite[Remark 2.3]{HL18}. This precisely means that \cite[(5.3)]{HL18} is satisfied except that $c_\epsilon$ and $c^{(1)}_\epsilon$ are both replaced by $0$. Then following through that proof immediately gives the claim.
\end{proof}

\section{Further generalizations} Although Theorem \ref{mainr} is fairly general and widely applicable, there are interesting examples where it does not apply. Here we study such examples, giving further gerenalizations such as Theorems \ref{gen} and \ref{repar} below.

\subsection{Noise-smoothing and non-measure-preserving systems}\label{mollify}

In this section we will further generalize the results of the previous sections. This will culminate in Theorems \ref{contraction 3} and \ref{gen}, with a specific SPDE example given in Theorem \ref{mollkpz}.

One may want to consider proving Strassen's law for a family of processes that is being diffusively scaled while being simultaneously approximated by mollified noise, similar to what was done in \cite{HW13}. Specifically consider $\delta = \delta(\epsilon)$ and look at the family of processes $h^\e$ defined with smooth noises at scale $\delta(\e)$, i.e., letting $L_2:= \log \log$ we want to consider the ``smooth KPZ equation" $$\partial_t h^\e = \partial_x^2 h^\e +((\partial_xh^\e)^2- C_{\delta(\e)/\e} ) +(L_2(1/\e))^{-1/2} \epsilon^{3/2} \xi_{\delta(\e)}(\epsilon^2 t,\epsilon x),$$ where $\xi_\delta= \xi * \varphi_\delta$ where $*$ is space-time convolution and $\varphi_\delta(t,x) = \delta^{-3}\varphi(\delta^{-2}t,\delta^{-1}x)$ for a smooth nonnegative compactly supported function $\varphi:\mathbb R^2\to\mathbb R$ which integrates to 1. For an appropriate choice of divergent constants $C_\e$, the goal is to try to find the set of limit points of the sequence of functions given by $h^\e(\epsilon^2 t, \epsilon x),$ and in particular show that the compact limit set is the same as in Theorem \ref{buss}. The main result of this section will be rather intuitive: the limit points in $\mathcal C^\alpha_{s,y}$ coincide with those found previously if $\delta(\epsilon) = \epsilon^{1+u}$ for some $u>0$, they depend on the mollifier if $\delta(\e) = \e$, and it is a trivial set consisting of a one-dimensional family of functions if $\delta(\e) = \e^{1-u}$. To prove this, we need some preliminaries. 

\begin{defn} Consider a strongly continuous (multiplicative) semigroup $(R_\e)_{\e\in (0,1]}$ on a Banach space $X$. Writing $S_t := R_{e^{-t}}$ note that $\|S_{t+s}\| \leq \|S_t\|\|S_s\|$, by Fekete's subadditive lemma we know that the following quantity exists as a real number $$\beta(R):= \lim_{t\to \infty} t^{-1}\log\|S_t\|= \inf_{t>0} t^{-1} \log\|S_t\|.$$ 
\end{defn}

In other words, $\|R_\e\|_{X\to X} \leq \e^{-\beta(R)-o_\e(1)},$ and $\beta(R)$ is the optimal such exponent (here $o_\e(1)$ is some non-negative function of $\e\in(0,1]$ which tends to zero as $\e\to 0)$. Now to illustrate our results, we will consider a simpler setting first. Suppose we have a Gaussian measure $\mu$ on $X$ such that $R_\e$ is measure-preserving and satisfies the strong mixing condition on the Cameron-Martin space $H$. Suppose we are given a family of bounded linear operators $A_\e$ on $X$ (where $(x,\e)\mapsto A_\e(x)$ is Borel-measurable, but $A_\e$ is not necessarily a semigroup). Suppose that there exists a Banach space $Y$ and constants $C,\gamma>0$ such that 
\begin{enumerate}
    \item $Y$ embeds continuously into $X$.
    
    \item $\mu(Y)=1.$
    
    \item $\|A_\e y- y\|_X \leq C \epsilon^{\beta(R)+\gamma}$ for all $y$ with $\|y\|_Y\leq 1$.
\end{enumerate}
Then we have the bounds $\|R_\e A_\e x- R_\e x \|_X \leq \|R_\e\|\|A_\e x - x\| \leq \e^{-\beta(R)-o_\e(1)} \|A_\e x-x\|$. Now for all $x$ in a set of full measure (namely $x\in Y$) we can bound $\|A_\e x-x\| \leq C\|x\|_Y \e^{\beta(R)+\gamma}.$ Consequently, $$\|R_\e A_\e x - R_\e x\|_X \leq C\e^{\gamma-o_\e(1)} \|x\|_Y,$$ and thus as long as $x\in Y$ (which is almost every $x$) then the set of limit points of $(\log\log(1/\e))^{-1/2} R_\e A_\e x$ coincides with those of $(\log\log(1/\e))^{-1/2} R_\e x$, namely $B(H)$.

This gives the seemingly obvious fact that the Strassen's law still holds as long as we mollify fast enough relative to the rescaling operation $(R_\e)$. Consider the simple example of two-sided Brownian motion for instance, say Wiener measure on $X = C[-1,1]$. Let $A_\e f(x) = f *\phi_\e$ where $\phi$ is a smooth even mollifier and $\phi_\e(x) = \e^{-1}\phi(\e^{-1}x)$ (by convention we let $A_\e f$ be constant on each of $[-1,-1+\e]$ and $[1-\e,1]$). Let $R_\e f(x) = \epsilon^{-u/2}f(\epsilon^u x)$ where $u>0$, so that $\beta(R) = u/2$. If $u<1$ then we may define $Y$ to be the closure of smooth functions with respect to the Hölder norm of exponent $(u+1)/4$ and we know that this space supports Wiener measure almost surely. Moreover an easy computation shows that $\|y*\phi^\e - y\|_{C[-1,1]} \leq C\epsilon^{(u+1)/4}$ as long as $\|y\|_Y\leq 1$ (where $C=\int_{\mathbb R} \phi(v)|v|^{(u+1)/4} dv$). Consequently by the discussion in the previous paragraph, one obtains the same Strassen law as Brownian motion for the family $R_\e A_\e$ on $C[-1,1]$. On the other hand if $u=1$ then the limit points will be the mollifier-dependent compact set $\{ \phi * f: f(0)=0, \|f'\|_{L^2[-1,1]}\le 1\},$ since one has that $R_\e A_\e f = \phi * R_\e f$ in this case. Thus we find that $u<1$ is necessary to obtain a nontrivial limit set that coincides with the non-mollified case. For $u>1$ the smoothing dominates the rescaling, so one obtains a trivial limit set consisting only of constant functions of absolute value bounded above by 1, as may be checked by hand.

We thus formulate the following abstract result:

\begin{thm}\label{contraction 3}
Let $(\B,\H,\mu)$ be an abstract Wiener space, let $(R_\e)_{\e\in(0,1]}$ be a family of Borel-measurable a.e. linear maps from $\B\to \B$ which are measure-preserving and strongly mixing, and let $T^i:\B\to Y_i$ be homogeneous of degree $k_i$ for $1\le i \le m$. Suppose that there exist strongly continuous semigroups $(Q^i_\e)_{t\ge 0}$ operators from $Y_i\to Y_i$ for $1\le i \le m$ with the property that 
$T^i\circ R_\e = Q^i_\e \circ T^i$, $\mu$-a.e. for all $\e\in(0,1]$. Suppose that $J_\e^i:X\to Y_i$ is a family of measurable maps such that \textbf{at least one} of the following two conditions holds \begin{enumerate}
\item There exist $\gamma>0$ and measurable functions $C^i:X\to \mathbb R_+$ such that $\|J_\e^i ( x )- T^i (x)\|_{Y_i} \leq C^i(x) \e^{\beta(Q^i)+\gamma}$ for all $\e\in (0,1].$

\item Letting $J^i_0:=T^i$, assume that $J^i_\e$ is a homogeneous chaos of order $k_i\in \mathbb N$ and there exist $C,\gamma>0$ such that $\int_X\|J_\e^i ( x )- J^i_{\e'} (x)\|_{Y_i} \;\mu(dx) \leq C|\e-\e'|^{\beta(Q^i)+\gamma},$ for all $\e,\e'\in[0,1].$
\end{enumerate}
Let $\mathcal M\subset Y_1\times \cdots Y_m$ be a closed subset, such that the semigroup $Q_\e^1 \oplus \cdots \oplus Q_\e^m$ sends $\mathcal M$ to itself, and moreover for all $\delta, \e>0$ assume that $\mu(\{x\in X: (\delta^{k_1} J^1_\e ( x),...,\delta^{k_m} J^m_\e(x)) \in \mathcal M\})=1.$ 

Then for all $\delta>0$ we have that $\mu(\{x\in X: (\delta^{k_1} T^1(x),...,\delta^{k_m} T^m(x)) \in \mathcal M\})=1,$ and furthermore the compact set $K:=\{(T^1_{\mathrm{hom}}(h),...,T^m_{\mathrm{hom}}(h)):h \in B(H)\}$ is necessarily contained in $\mathcal M$. Moreover, for any Banach space $Z$ and any $\Phi:\mathcal M \to Z$ that is uniformly continuous on bounded sets, the set of cluster points at zero of
$$\big\{ \Phi\big( (2\log \log(1/\e))^{-k_1/2} Q^1_\e J^1_\e( x),\;.\;.\;.\;,(2\log \log(1/\e))^{-k_m/2}Q^m_\e J^m_\e ( x)\big): \e \in  (0,1]\}$$ is almost surely equal to $\Phi(K).$
\end{thm}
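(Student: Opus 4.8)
The strategy is to reduce Theorem~\ref{contraction 3} to the already-established contraction principle Corollary~\ref{contraction 2} (in its multiplicative form), by showing that replacing the honest chaos $T^i$ by the approximating maps $J^i_\e$ introduces an error which vanishes almost surely after rescaling. First I would record the preliminary consequences of the hypotheses that do not involve the limit set: since each $R_\e$ is measure-preserving, $\mu(\{x:(\delta^{k_1}J^1_\e(x),\dots)\in\mathcal M\})=1$ for all $\delta,\e$ transfers, via the commutation relation $T^i\circ R_\e=Q^i_\e\circ T^i$ and the $Q$-invariance of $\mathcal M$, together with a limiting argument ($\e\to 0$ in case~(1), or $\e'\to 0$ along the chaos-valued martingale-type convergence in case~(2), using that $J^i_0=T^i$ and the super-polynomial tail bound of Proposition~\ref{eq'}/Corollary~\ref{approximation lemma'} to upgrade $L^1$-convergence to a.e.\ convergence along a subsequence) to the statement $\mu(\{x:(\delta^{k_1}T^1(x),\dots)\in\mathcal M\})=1$. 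The containment $K\subset\mathcal M$ is then exactly the first assertion of Corollary~\ref{contraction 2}.

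The heart of the argument is the a.e.\ estimate on the rescaled error $(2\log\log(1/\e))^{-k_i/2}\|Q^i_\e J^i_\e(x)-Q^i_\e T^i(x)\|_{Y_i}$. Write $L_2:=\log\log(1/\e)$. Using $\|Q^i_\e\|_{Y_i\to Y_i}\le \e^{-\beta(Q^i)-o_\e(1)}$ (the definition of $\beta$), we get
\[
\|Q^i_\e J^i_\e(x)-Q^i_\e T^i(x)\|_{Y_i}\le \e^{-\beta(Q^i)-o_\e(1)}\,\|J^i_\e(x)-T^i(x)\|_{Y_i}.
\]
In case~(1) this is at most $C^i(x)\,\e^{\gamma-o_\e(1)}$, which tends to $0$ a.e.\ as $\e\downarrow 0$ (and $L_2^{-k_i/2}$ only helps), so $(2L_2)^{-k_i/2}\|Q^i_\e J^i_\e(x)-Q^i_\e T^i(x)\|_{Y_i}\to 0$ for a.e.\ $x$. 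In case~(2), the quantity $\|J^i_\e(x)-T^i(x)\|_{Y_i}$ is itself (for fixed $\e$) a homogeneous chaos of order $k_i$ with $L^2$-norm controlled by $\|J^i_\e-T^i\|_{L^2(X,\mu;Y_i)}^{}\lesssim \e^{\beta(Q^i)+\gamma}$ (apply the hypothesis with $\e'=0$ and Cauchy--Schwarz, or argue directly), so Proposition~\ref{eq'} gives, along the sequence $\e_n:=2^{-n}$ say,
\[
\mu\Big(\big\{x:\|J^i_{\e_n}(x)-T^i(x)\|_{Y_i}>\e_n^{\beta(Q^i)+\gamma/2}\big\}\Big)\le C\exp\big[-\alpha\,\e_n^{-\gamma/(2k_i)}\cdot(\text{const})\big],
\]
which is summable in $n$, so by Borel--Cantelli $\|J^i_{\e_n}(x)-T^i(x)\|_{Y_i}\le \e_n^{\beta(Q^i)+\gamma/2}$ eventually, a.s.; combined with $\|Q^i_{\e_n}\|\le \e_n^{-\beta(Q^i)-o(1)}$ this again forces the rescaled error to $0$ a.s.\ along $\e_n\downarrow 0$. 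One then interpolates off the sequence $\e_n$ using the strong continuity of $Q^i_\e$ and the analogue of Lemma~\ref{glue} (or rather its higher-chaos version used inside Theorem~\ref{ar2}): the oscillation of $t\mapsto Q^i_{e^{-t}}J^i_{e^{-t}}(x)$ over dyadic blocks is controlled, so having the error vanish along $\e_n$ suffices to make it vanish along all $\e\downarrow 0$. Alternatively, one can absorb the continuous parameter from the outset by viewing $(Q^i_t T^i(\xi))_{t\in[0,1]}$ as a $C([0,1],Y_i)$-valued chaos, exactly as in the proof of Theorem~\ref{ar2}.

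Once the rescaled error vanishes a.s., the set of cluster points of
\[
\big\{\Phi\big((2L_2)^{-k_1/2}Q^1_\e J^1_\e(x),\dots,(2L_2)^{-k_m/2}Q^m_\e J^m_\e(x)\big):\e\in(0,1]\big\}
\]
coincides with that of $\{\Phi((2L_2)^{-k_1/2}Q^1_\e T^1(x),\dots):\e\in(0,1]\}$ --- here is where uniform continuity of $\Phi$ on bounded sets is used, since the rescaled arguments stay in a bounded set of $Y_1\times\cdots\times Y_m$ eventually (their honest-chaos parts are bounded by the tail bound of Proposition~\ref{eq'} plus Borel--Cantelli, cf.\ Lemma~\ref{ar}, and the error is $o(1)$). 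The latter cluster set is, by the multiplicative form of Corollary~\ref{contraction 2} (with $S_t=R_{e^{-t}}$, $G^i_t=Q^i_{e^{-t}}$), almost surely equal to $\Phi(K)$. This completes the proof. The main obstacle I anticipate is the bookkeeping in case~(2): one must be a little careful that the hypothesis $\int_X\|J^i_\e-J^i_{\e'}\|_{Y_i}\,d\mu\le C|\e-\e'|^{\beta(Q^i)+\gamma}$ with $\e'=0$ really controls the $L^2$-norm well enough to feed into Proposition~\ref{eq'} (one uses the hypercontractive equivalence of $L^p$-norms on a fixed chaos, \eqref{hyp}, so an $L^1$ bound is as good as an $L^2$ bound up to constants depending only on $k_i$), and that the Borel--Cantelli step along $\e_n=2^{-n}$ together with the strong-continuity interpolation genuinely covers the full range $\e\in(0,1]$ and not merely a geometric subsequence.
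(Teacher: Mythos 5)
Your overall strategy matches the paper's: establish the preliminary $\mathcal M$-membership and $K\subset\mathcal M$ by a limiting argument, bound the rescaled error $\|Q^i_\e J^i_\e(x)-Q^i_\e T^i(x)\|_{Y_i}\le \|Q^i_\e\|\,\|J^i_\e(x)-T^i(x)\|_{Y_i}\le C^i(x)\e^{\gamma-o_\e(1)}$ in case (1), and then transfer the cluster set from Theorem \ref{ar2}/Corollary \ref{contraction 2} using uniform continuity of $\Phi$ on bounded sets. All of that is correct and is essentially the paper's argument.

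The one genuine gap is in your treatment of case (2). Your Borel--Cantelli step only controls $\|J^i_{\e_n}(x)-T^i(x)\|_{Y_i}$ at the dyadic points $\e_n=2^{-n}$, and your proposed interpolation --- invoking the strong continuity of $Q^i_\e$ together with ``the analogue of Lemma \ref{glue}'' to control the oscillation of $\e\mapsto Q^i_\e J^i_\e(x)$ --- does not go through: the proof of Lemma \ref{glue} (and of its higher-chaos version inside Theorem \ref{ar2}) rests on the stationarity of $t\mapsto G_t T(\xi)$, which comes from the intertwining with the measure-preserving maps $R_\e$. The maps $J^i_\e$ satisfy no such intertwining (indeed the theorem explicitly does not assume $Q^i_\e J^i_\e=J^i_\e R_\e$), so there is no stationary process whose block oscillations you can control this way; your fallback of viewing $(Q^i_tT^i(\xi))_t$ as a $C([0,1],Y_i)$-valued chaos only handles the $T^i$ part, not the $\e$-dependence of $J^i_\e$ itself. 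The fix is exactly the input you already have in hand: the hypercontractive upgrade \eqref{hyp} of the hypothesis gives $\big(\int_X\|J^i_\e(x)-J^i_{\e'}(x)\|_{Y_i}^p\,\mu(dx)\big)^{1/p}\le C_p|\e-\e'|^{\beta(Q^i)+\gamma}$ for every $p$, and feeding this into Kolmogorov--Chentsov yields, for a.e.\ $x$, Hölder continuity of $\e\mapsto J^i_\e(x)$ on $[0,1]$ of any exponent below $\beta(Q^i)+\gamma$; taking $\e'=0$ this is precisely condition (1) with exponent $\beta(Q^i)+\gamma/2$ and random constant the Hölder seminorm, which reduces case (2) to case (1) and is how the paper proceeds. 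This replaces both your dyadic Borel--Cantelli step and the problematic interpolation in one stroke.
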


We recover the result of the discussion above (for first order chaos) when we set $m=1$ with $X=Y_1=Z=\mathcal M$ and $J^1_\e = A_\e$ and $Q^i_\e = R_\e$ and $\Phi = I$ and $C^1(x) =C \|x\|_Y.$ Note also that we do \textit{not} make the requirement that $Q^i_\e J^i_\e = J^i_\e R_\e$ and in general this will be false. Finally we remark that when $J_\e^i = T^i$ the above theorem recovers Theorem \ref{contraction 2}, at least in the case that $\Phi$ is uniformly continuous and $\mathcal M$ is invariant under the semigroup (which is usually true in practice). 

\begin{proof} First let us show that $\mu(\{x\in X: (\delta^{k_1} T^1(x),...,\delta^{k_m} T^m(x)) \in \mathcal M\})=1.$ Note that, regardless of whether \textit{(1)} or \textit{(2)} holds, we have that $\|J_{2^{-N}}^i ( x )- T^i (x)\|_{Y_i}\to 0$ as $N\to\infty$ for almost every $x\in X$ (indeed $\beta(Q^i)$ must be strictly positive under the assumptions of the theorem, else $\|Q_\e^i\|_{Y_i\to Y_i}\leq 1$ so that $(\log\log(1/\e))^{-k_i/2}\|Q^i_\e T^i(x)\|_{Y_i} \to 0$, which contradicts the result of Theorem \ref{ar2}). Consequently, $(\delta^{k_1} T^1(x),...,\delta^{k_m} T^m(x))$ is the almost sure limit of $(\delta^{k_1} J^1_{2^{-N}} ( x),...,\delta^{k_m} J^m_{2^{-N}}(x))$. Since the latter is assumed to be an element of $\mathcal M$ and since $\mathcal M$ is a closed subset, we conclude that the former is also an almost sure element of $\mathcal M$.

The proof that $K$ is contained in $\mathcal M$ assuming that $\mu(\{x\in X: (\delta^{k_1} T^1(x),...,\delta^{k_m} T^m(x)) \in \mathcal M\})=1$ for all $\delta>0$ is given in the first paragraph of the proof of Corollary \ref{contraction}.

Now assume Condition \textit{(1)} holds true. One makes the bound $$\|Q^i_\e J^i_\e (x) -  Q^i_\e T^i(x)\| \leq \|Q^i_\e \|_{Y^i \to Y^i} \|J^i_\e (x) - T^i (x)\| \leq \| Q^i_\e \|_{Y^i \to Y^i} C^i(x)  \e^{\beta(Q^i)+\gamma} \leq C^i(x)\e^{\gamma-o_\e(1)}$$ where we used by definition of $\beta(Q^i)$ that $\|Q^i_\e\| \e^{\beta(Q^i)} \leq \e^{-o_\e(1)}$ (here $o_\e(1)$ is some non-negative function of $\e\in(0,1]$ which tends to zero as $\e\to 0)$. Multiplying by $(\log\log(1/\e))^{-k_i/2}$ on both sides, we see that the set of limit points of the desired sequence must coincide with that of Theorem \ref{mainr} since the difference tends to zero as an element of $Z$. Note that one needs the additional assumption of \textit{uniform} continuity of $\Phi$ on bounded subsets of $\mathcal M$ to argue this last part.

Now assume Condition \textit{(2)} holds true instead. Since $J^i_\e$ is a homogeneous chaos, \eqref{hyp} gives $$\bigg(\int_X\|J_\e^i ( x )- J^i_{\e'} (x)\|_{Y_i}^p \mu(dx)\bigg)^{1/p} \leq C_p|\e-\e'|^{\beta(Q^i)+\gamma}$$ for all $p\ge 1$. By Kolmogorov-Chentsov, for a.e. $x\in X$ it is clear that $\epsilon\mapsto J^i_\e(x)$ is Hölder continuous on $[0,1]$ of any desired exponent strictly less than $\beta(Q^i)+\gamma$. In particular, we may conclude that Condition \textit{(1)} holds with $\gamma$ replaced by $\gamma/2$. %An easy application of Markov's inequality and Borel-Cantelli shows that for all $\alpha\in\mathbb Q\cap (0,1)$ we have that $\|Q^i_{\alpha^N} J^i_{\alpha^N} (x) -  Q^i_{\alpha^N} T^i(x)\|\leq C \alpha^{\gamma N/2}$ for all but finitely many $N$ a.s.. In particular $\|Q^i_{\alpha^N} J^i_{\alpha^N} (x) -  Q^i_{\alpha^N} T^i(x)\|\to 0$ almost surely as $N\to\infty$. Letting $E_\alpha$ denote the set of $x\in X$ for which the set of all cluster points at zero of the desired sequence equals $\Phi(K)$ along the subsequence given by $\epsilon_N=\alpha^N$, we combine the preceding observation with Theorem \ref{thm 4} to conclude that $\mu(E_\alpha)=1$ for all $\alpha \in (0,1)\cap \mathbb Q$, and therefore $$\mu\bigg(\bigcap_{\alpha\in \mathbb Q\cap (0,1)} E_\alpha\bigg)=1.$$ From this, we may conclude the proof using a.s. continuity of $\epsilon \mapsto Q^i_\e J^i_\e(x)$ (which is in turn immediate from a.s. continuity of $\epsilon \mapsto J^i_\e$ together with strong continuity of $Q$).
\end{proof}

Now let us argue how this result allows us to obtain the Strassen law for a version of KPZ driven by a smooth noise mollified in both time and space. Let $\xi$ be a standard space-time white noise on $\mathbb R_+\times \mathbb R$ and define the following distribution-valued chaoses for $z\in \mathbb R_+\times \mathbb R$ and $\delta>0$:
\begin{align*}\Pi_z^{\xi,\e}\Xi(\psi)&:=\int_{\mathbb R_+\times \mathbb R}\psi(w)\xi_\e(dw),\\ \Pi_z^{\xi,\e}\big[\Xi \mathcal I[\Xi] \big](\psi)&:= \int_{(\mathbb R_+\times \mathbb R)^2} \psi(w) (K(w-a)-K(z-a))\xi_\e (da)\diamond\xi_\e (dw),\\ \Pi_z^{\xi,\e}\big [\Xi \mathcal I[\Xi \mathcal I[\Xi]]\big](\psi)&:= \int_{(\mathbb R_+\times \mathbb R)^3} \psi(w)(K(w-a)-K(z-a)) (K(a-b)-K(z-b))\xi_\e (db)\diamond\xi_\e(da)\diamond\xi_\e(dw),
\end{align*}
\begin{align*}\Pi_z^{\xi,\e}&\big [ \Xi \mathcal I[\Xi \mathcal I[\Xi \mathcal I[\Xi]]]\big](\psi):= \int_{(\mathbb R_+\times \mathbb R)^4} \psi(w)(K(w-a)-K(z-a)) (K(a-b)-K(z-b)) \\& \cdot\big( K(b-c)-K(z-c) - (t_z-t_c)\partial_tK(z-c) - (x_z-x_c)\partial_xK(z-c)\big)\xi_\e(dc)\diamond\xi_\e(db)\diamond\xi_\e(da)\diamond\xi_\e(dw),
\end{align*}
\begin{equation*}
    \Pi_z^{\xi,\e}\big[ \Xi \mathcal I[X_1\Xi]\big](\psi):= \int_{(\mathbb R_+\times \mathbb R)^2}\psi(w) K(w-a)(x_z-x_a)\xi_\e(da)\diamond\xi_\e(dw)\;\;\;\;\;\;\;\;\;\;\;\;\;\;\;\;\;\;\;\;\;\;\;\;\;\;\;\;\;\;\;
\end{equation*}
where $z=(t_z,x_z), a=(t_a,x_a),b=(t_b,x_b),$ and $c=(t_c,x_c)\in \mathbb R_+\times \mathbb R.$ The ``$\diamond$" products above should be interpreted as follows: first one interprets the integrals as classical integrals against the smooth noise $\xi_\delta$, then one projects the resultant object onto the homogeneous chaos of order given by the number of times that $\xi_\delta$ appears in the integral. For $\tau \in \mathcal W$ from \eqref{w} we let $k_\tau$ be the number of instances of $\Xi$ in the symbol $\tau$. We then have the following result.

\begin{prop}\label{410}
    Let $\mathcal X$ and $\Phi$ be as in Theorem \ref{solmapkpz}, and let $\mathcal W$ be as in \eqref{w}. Then for all $\delta,\e>0$ the 5-tuple $(\delta^{k_\tau}\Pi^{\xi,\e}\tau)_{\tau \in \mathcal W}$ is almost surely an element of $\mathcal X$. Furthermore $\Phi\big ((\delta^{k_\tau}\Pi^{\xi,\e}\tau)_{\tau \in \mathcal W}\big)$ is given by the solution to the classical equation $$\partial_t z = \partial_x^2 z + z(\delta\xi_\e-C_{\e,\delta}),$$ with $z(0,x)=1$ and where $$C_{\e,\delta} = c\delta^2\e^{-1} + \delta^4c_\e'$$ where $c$ is a constant depending only on the mollifier (not $\e,\delta)$ and where $c_\e' $ may depend on both $\e$ and the mollifier but is convergent to a finite constant as $\e\to 0$.
\end{prop}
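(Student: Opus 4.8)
The plan is to identify the rescaled $5$-tuple $(\delta^{k_\tau}\Pi^{\xi,\e}\tau)_{\tau\in\mathcal W}$ with the negative-homogeneity ``pre-model'' of the renormalized \emph{canonical} model attached to the smooth noise $\delta\xi_\e$, and then to read off both assertions from the framework of \cite{HP15} (in the weighted full-line version of \cite{HL18}), exactly as was done for Theorem \ref{solmapkpz}. First I would recall that, since $\xi_\e=\xi*\varphi_\e$ is an honest smooth function, it admits a canonical lift $\iota(\delta\xi_\e)=(\Pi_\e,\Gamma_\e)$ to an admissible model, and that there are explicit operators $L,L^{(1)}$ and a continuous renormalization map $(M,\Pi,\Gamma)\mapsto(\Pi^M,\Gamma^M)$ on the space of admissible models for the KPZ regularity structure (cf.\ \cite[Section 4]{HP15}, \cite[Section 5]{HL18}). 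One then sets $M_\e:=\exp(-\delta^2 c\,\e^{-1}L-\delta^4 c_\e'L^{(1)})$, where $c$ and $c_\e'$ are the two renormalization constants produced by the Wick-ordering bookkeeping for $\delta\xi_\e$, so that $(\Pi_\e^{M_\e},\Gamma_\e^{M_\e})$ is again an admissible model.

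The crux of the argument --- and the step I expect to be the main obstacle --- is to verify that for every $\tau\in\mathcal W$ one has $(\Pi_{\e,z}^{M_\e}\tau)(\psi)=\delta^{k_\tau}\,\Pi_z^{\xi,\e}\tau(\psi)$; that is, that the $\diamond$-products in the definitions above are precisely the values of the renormalized canonical model on the negative-homogeneity symbols. This is a matching exercise: the operators $L$ and $L^{(1)}$ subtract exactly those sub-diagram contractions that convert the classical iterated integrals against $\delta\xi_\e$ into projections onto the homogeneous chaos of order $k_\tau$, and the numbers $\delta^2 c\,\e^{-1}$ and $\delta^4 c_\e'$ are by construction the expectations of the two relevant diverging sub-integrals (the diagrams with two and with four noise legs). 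Carrying this out requires unwinding the explicit form of $L,L^{(1)}$ as in \cite[Proposition 4.3]{HP15} against the list of symbols $\mathcal W$ from \eqref{w}. Once it is done, membership in $\mathcal X$ is immediate: $\mathcal X$ is by definition the set of pre-models that extend uniquely and continuously to an admissible model (the extension being governed by the same recursive rules \cite[(2.2),(2.5)]{HL18}), and $(\Pi_\e^{M_\e},\Gamma_\e^{M_\e})$ is such an extension; moreover $\xi_\e$ is smooth, so there is no convergence issue and the model is a genuine admissible model almost surely.

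For the second assertion I would invoke Theorem \ref{solmapkpz}(2): $\Phi$ sends an element of $\mathcal X$ to the reconstruction of the abstract fixed point of the Duhamel map for the associated admissible model. For the renormalized canonical model $(\Pi_\e^{M_\e},\Gamma_\e^{M_\e})$ over the smooth noise $\delta\xi_\e$, this reconstruction is a classical function, and by \cite[Proposition 4.4]{HP15} it solves $\partial_t z=\partial_x^2 z+z(\delta\xi_\e-C_{\e,\delta})$ with $z(0,\cdot)=1$, where the additive constant read off from $M_\e$ is exactly $C_{\e,\delta}=\delta^2 c\,\e^{-1}+\delta^4 c_\e'$. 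The prefactors $\delta^2$ and $\delta^4$ arise precisely as in the proof of Theorem \ref{solmapkpz}(2), by setting $G(u)=\delta u$ and $H(u)=0$ in the conventions of \cite[(1.4),(1.7) and Proposition 4.4]{HP15}, which multiplies the two- and four-leg renormalization constants by $\delta^2$ and $\delta^4$ respectively.

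Finally, I would argue the claimed dependence of the constants: $c$ is $\e$- and $\delta$-independent, being the coefficient of the $\e^{-1}$ divergence of the two-leg diagram and depending only on the mollifier $\varphi$, whereas $c_\e'$ converges to a finite limit as $\e\to0$ because the four-leg diagram (together with the finite corrections to the two-leg diagram) has an integrable singularity uniformly in $\e$. Both facts are contained in the convergence analysis of \cite[Theorem 4.5]{HP15}, transferred to the weighted full-line setting via \cite[Section 5]{HL18}. Assembling the three ingredients --- identification of the $\diamond$-products with the renormalized model, admissibility, and the image under $\Phi$ --- then yields the statement.
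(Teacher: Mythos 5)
Your proposal is correct and follows essentially the same route as the paper: the paper's own proof is simply a citation to \cite[Theorem 5.3]{HL18} for the identification of the $\diamond$-integrals with the renormalized canonical model and to \cite[(1.1)]{HL18} for the explicit constants and their $\e\to 0$ asymptotics, and your sketch is an unpacking of exactly that argument via the renormalization group $M_\e=\exp(-\delta^2c\,\e^{-1}L-\delta^4c_\e'L^{(1)})$ of \cite{HP15}. The step you flag as the main obstacle (matching the $\diamond$-products with the renormalized model symbol by symbol) is precisely the content of the cited result, so no further work is needed beyond what the paper itself defers to the references.
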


\begin{proof}
    This is proved in \cite[Theorem 5.3]{HL18}. The constants are explicitly written out in \cite[(1.1)]{HL18}, and their $\e\to 0$ asymptotics are explained in the subsequent paragraphs.
\end{proof}

With this established, we can prove the following theorem, the main result of this subsection.

\begin{thm}\label{mollkpz}
    Let $\varphi_\delta(t,x) = \delta^{-3}\varphi(\delta^{-2}t,\delta^{-1}x)$ for a smooth nonnegative compactly supported function $\varphi:\mathbb R^2\to\mathbb R$ which integrates to 1. Consider the sequence of solutions to the classical PDE $$\partial_t h(t,x) = \partial_x^2 h (t,x) +(\partial_xh(t,x))^2-C_{\e^{s-1},\delta(\e)}+\delta(\e) \e^{3/2} \xi_{\e^s}(\e^{2} t, \e x)  $$ where $h(0,x)=0$ and $\xi_\e = \xi * \varphi_{\e}$ and $\delta(\e) = (\log\log(1/\e))^{-1/2}$. If $s>1$ then the set of cluster points in $\mathcal C^{\alpha}_{t,y}$ as $\e\to 0$ is equal to the same compact limit set $K$ as in Theorem \ref{buss}, for any $\alpha<1/2$.
\end{thm}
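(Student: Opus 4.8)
The plan is to deduce Theorem~\ref{mollkpz} from Theorem~\ref{contraction 3} in exactly the way Theorem~\ref{buss} was deduced from Theorem~\ref{mainr}, with the mollified distribution-valued chaoses $\Pi^{\xi,\cdot}\tau$ playing the role of the maps $J_\e^i$ and $\log$ composed with the solution map of Theorem~\ref{solmapkpz} playing the role of $\Phi$. First I would record the algebraic reduction. Writing $z=e^h$ for the Cole--Hopf transform, the function $h=h^\e$ in the statement is $\log z^\e$, where $z^\e$ solves the \emph{classical} linear equation $\partial_t z=\partial_x^2 z+z\big(\delta(\e)\,\e^{3/2}\xi_{\e^s}(\e^2\cdot,\e\cdot)-C_{\e^{s-1},\delta(\e)}\big)$ with $z(0,\cdot)=1$. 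The key scaling computation is the identity $\e^{3/2}\xi_{\e^s}(\e^2 t,\e x)=(R_\e\xi)_{\e^{s-1}}(t,x)$, i.e.\ mollifying $\xi$ at scale $\e^s$ and then diffusively rescaling is the same as first applying $R_\e$ and then mollifying at the coarser scale $\e^{s-1}$; this follows by change of variables using $\varphi_{\e^s}(\e^2 r,\e y)=\e^{-3}\varphi_{\e^{s-1}}(r,y)$. Since $R_\e\xi$ has the same law as $\xi$, Proposition~\ref{410} applied with noise $R_\e\xi$, mollification scale $\e^{s-1}$ and $\delta=\delta(\e)$ gives $z^\e=\Phi\big((\delta(\e)^{k_\tau}\,\Pi^{R_\e\xi,\,\e^{s-1}}\tau)_{\tau\in\mathcal W}\big)$, where here $\Phi$ is the solution map of Theorem~\ref{solmapkpz}(2). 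It then remains to express $\Pi^{R_\e\xi,\e^{s-1}}\tau$ through the semigroup $Q^\tau_\e$: because mollification and rescaling of the driving noise satisfy $(\,\cdot\,)_\mu\circ R_\e=R_\e\circ(\,\cdot\,)_{\mu\e}$, and because the chaos projection onto order $k_\tau$ commutes with $Q^\tau_\e$ (the former acts on the Wiener chaos, the latter on the space--time variables), the same computation that yields $Q^\tau_\e\circ\psi_\tau=\psi_\tau\circ R_\e$ — using the invariance of $K$, $t\partial_tK$, $x\partial_xK$ under $f\mapsto cf(c^2\cdot,c\cdot)$ — gives $\Pi^{R_\e\xi,\mu}\tau=Q^\tau_\e\,\Pi^{\xi,\mu\e}\tau$, so with $\mu=\e^{s-1}$ we get $\Pi^{R_\e\xi,\e^{s-1}}\tau=Q^\tau_\e\,\Pi^{\xi,\e^s}\tau$. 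Therefore $h^\e=(\log\circ\Phi)\big((\delta(\e)^{k_\tau}Q^\tau_\e J^\tau_\e(\xi))_{\tau\in\mathcal W}\big)$ with $J^\tau_\e:=\Pi^{\xi,\e^s}\tau$ for $\e\in(0,1]$ and $J^\tau_0:=\psi_\tau=\Pi^\xi\tau=T^\tau$.

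Next I would verify the hypotheses of Theorem~\ref{contraction 3} with $Y_\tau=E^{|\tau|}_w$, $T^\tau=\psi_\tau$, the semigroups $Q^\tau$, the maps $J^\tau_\e$ above, $\mathcal M=\mathcal X$, $Z=\mathcal C^\alpha_{t,y}$, and $\Phi$ replaced by $\log\circ\Phi$. The items coming for free from Theorem~\ref{solmapkpz}, Proposition~\ref{410} and the construction in Section~3.3 are: each $J^\tau_\e$ is a homogeneous chaos of order $k_\tau$ valued in $E^{|\tau|}_w$ (it is the order-$k_\tau$ chaos projection of a classical multilinear integral against the smooth noise $\xi_{\e^s}$); $Q^\tau$ is a strongly continuous semigroup of $E^{|\tau|}_w$; $\mathcal X$ is closed, sent to itself by $\bigoplus_\tau Q^\tau_\e$, and contains $(\delta^{k_\tau}J^\tau_\e(x))_\tau$ for $\mu$-a.e.\ $x$ for all $\delta,\e>0$ (Proposition~\ref{410}); $\log\circ\Phi$ is well-defined and uniformly continuous on bounded neighbourhoods of $K$ in $\mathcal X$ on which the corresponding solutions stay bounded below, which holds exactly as in Theorem~\ref{buss} since we are on a compact domain and the Hopf--Cole solutions (including those parametrizing $K$) are strictly positive there; and, by Theorem~\ref{solmapkpz}(4), $\log\circ\Phi$ sends $\big((\psi_\tau)_{\mathrm{hom}}(f)\big)_\tau$ to the logarithm of the classical solution of $\partial_t v=\partial_x^2 v+vf$, $v(0,\cdot)=1$. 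The one genuinely new input is Condition~(2) of Theorem~\ref{contraction 3}: one needs $\int_X\|J^\tau_\e(x)-J^\tau_{\e'}(x)\|_{E^{|\tau|}_w}\,\mu(dx)\le C|\e-\e'|^{\beta(Q^\tau)+\gamma}$ for some $\gamma>0$ and all $\e,\e'\in[0,1]$ (including $\e'=0$). Here one uses: the operator-norm bound $\|Q^\tau_\e\|_{E^{|\tau|}_w\to E^{|\tau|}_w}\le \e^{|\tau|-|\tau|_0}$, obtained by tracking the prefactor $\e^{-|\tau|_0}$ against the $\lambda^{-\alpha}$ and $\mathcal S^\lambda_z$ rescalings and noting that the exponential weight $w$ only helps, which gives the value of $\beta(Q^\tau)$; the quantitative convergence estimates for the renormalized mollified KPZ models of \cite{HL18} (combined with the hypercontractivity bound \eqref{hyp} to pass to $L^1$ in $\mu$); and the elementary bound $|\e^s-\e'^s|\le s\,\max(\e,\e')^{s-1}|\e-\e'|$ on $[0,1]$ for $s\ge1$, which is where the regime $s>1$ produces a strictly positive power gap over $\beta(Q^\tau)$.

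Granting the hypotheses, Theorem~\ref{contraction 3} gives that the a.s.\ set of cluster points at zero of $\big\{(\log\circ\Phi)\big((2\log\log(1/\e))^{-k_\tau/2}Q^\tau_\e J^\tau_\e(x)\big)_\tau:\e\in(0,1]\big\}$ equals $(\log\circ\Phi)(K)$, where $K=\{((\psi_\tau)_{\mathrm{hom}}(h))_\tau:h\in B(H)\}$ with $H=L^2(\mathbb R_+\times\mathbb R)$ (the Cameron--Martin space of space--time white noise, so $B(H)=\{f:\|f\|_{L^2}\le1\}$). By the reduction above, and with the normalization constant matched exactly as in the proof of Theorem~\ref{buss}, this set coincides with the set of cluster points of $h^\e$, while by Theorem~\ref{solmapkpz}(4) the set $(\log\circ\Phi)(K)$ is the closure in $\mathcal C^\alpha_{t,y}$ of $\{\log v:v$ the classical solution of $\partial_t v=\partial_x^2 v+vf$, $v(0,\cdot)=1$, $\|f\|_{L^2}\le1\}$; writing $h=\log v$, this is precisely the compact set $K_{\mathrm{Zero}}$ of Theorem~\ref{buss}. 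The main obstacle I anticipate is the last part of the second step: pinning down $\beta(Q^\tau)$ precisely and confirming, tree by tree from the estimates of \cite{HL18}, that the rate of convergence of the mollified chaoses matches $\beta(Q^\tau)$ in such a way that $s>1$ is exactly the threshold for which Condition~(2) (hence the conclusion) holds — everything else in the argument is either purely algebraic or already supplied by Theorems~\ref{contraction 3}, \ref{solmapkpz} and Proposition~\ref{410}.
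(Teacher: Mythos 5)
Your proposal is correct and follows essentially the same route as the paper: recognize the mollified classical solution as $\Phi$ of the renormalized mollified model via Proposition \ref{410}, use the scaling identity $(R_\e\xi)*\varphi_{\e^{s-1}}=\e^{3/2}\xi_{\e^s}(\e^2\cdot,\e\cdot)$ to write $h^\e$ as $\log\circ\Phi$ applied to $\big(\delta(\e)^{k_\tau}Q^\tau_\e\,\Pi^{\xi,\e^s}\tau\big)_{\tau\in\mathcal W}$, verify $\beta(Q^\tau)=k_\tau\kappa$, and invoke Condition (2) of Theorem \ref{contraction 3} by deferring the quantitative convergence of the mollified models to \cite{HL18,HP15}, exactly as the paper does. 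Your bookkeeping (defining $J^\tau_\e:=\Pi^{\xi,\e^s}\tau$ so that the object appearing is $Q^\tau_\e J^\tau_\e$, matching the hypotheses of Theorem \ref{contraction 3} literally) is in fact cleaner than the paper's, and the remaining issue you flag — confirming that the convergence rate of $\mu\mapsto\Pi^{\xi,\mu}\tau$ in $E^{|\tau|}_w$ is exactly $\beta(Q^\tau)$, making $s>1$ the threshold — is precisely the point the paper also leaves to the cited references.
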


\begin{proof}
Let $R_\e$ and $Q^\tau_\e$ ($\tau \in \mathcal W)$ be the same multiplicative semigroups as defined in the previous subsection. We define $J^\tau_\e(\xi) $ to be equal to $\Pi^{\xi,\e}\tau$. We also fix some exponent $s>0$. We claim for each $\e>0$ that 
$v:= \Phi\big( (\delta^{k_\tau} Q^\tau_{\e^s} J^\tau_{\e^s} \xi )_{\tau \in \mathcal W}\big)$ is the solution of the equation 
$$\partial_t v(t,x) = \partial_x^2 v (t,x) +v(\delta \e^{3/2} \xi_{\e^s}(\e^{2} t, \e x) -C_{\e^{s-1},\delta}) $$ where $v(0,x)=1$ and $\xi_\e = \xi * \varphi_{\e}$. Indeed one verifies that $Q^\tau_{\e^s} J^\tau_{\e^s} \xi = J^\tau_{\e^{s-1}} (R_{\e} \xi)$, so that the claim is immediate from Proposition \ref{410} and the fact that $\big((R_\e \xi)*\varphi_{\e^{s-1}}\big)(t,x) = \e^{3/2} \xi_{\e^s}(\e^{2} t, \e x)$ (note that this calculation shows why we need $s>1$).

Now to finish our analysis of the mollified equation, one needs as in assumption (2) of Theorem \ref{contraction 3} to show a bound of the type $\mathbb E\|J^\tau_\e (\xi) - J^\tau_{\e'}(\xi) \|_{E^{|\tau|}_w} \leq C |\e-\e'|^{\beta(Q^\tau)+\gamma}$. We first remark that for the semigroups $Q^\tau$ on the spaces $E^{|\tau|}_w$ one has $\beta(Q^\tau) = k_\tau \kappa$ where $\kappa$ on the right side is the same as in the previous subsection, which may be verified by hand. By choosing $\kappa, \gamma$ to be sufficiently small relative to the desired exponent $\alpha$ from the theorem statement, the desired bound follows (as noted in \cite[Theorem 5.3]{HL18}) by following the logic in \cite[Theorem 4.5]{HP15} (see also \cite[(3.7) and (3.8)]{temp}). By taking a logarithm and using compactness of the domain, we obtain the claim.
\end{proof}

We remark that mollification at scale $\e^s$ is not the only example of where Theorem \ref{contraction 3} is useful. One can also use it, for instance, when looking at a noise $\xi$ that is spatially periodic with some large period $\epsilon^{-1}$, by defining the chaoses $J^i_\e$ to be the spatially periodized version of the chaoses $T^i$ (which may be checked to satisfy a bound of the desired form \textit{(2)} in Theorem \ref{contraction 3}). This would be useful in showing a Strassen law for KPZ (and potentially other singular models) on periodic domains such as a torus.

While the preceding result illustrates the use of Theorem \ref{contraction 3}, we note that it can be generalized even further by considering some family of maps $\Phi_\e:\mathcal M \to Z$ where $\e\in (0,1]$ and then imposing suitably strong convergence conditions on $\Phi_\e$ as $\e\to 0$. %converges uniformly to $\Phi$ on bounded subsets of $\mathcal M$ as $\e\to 0$, in addition to requiring that $\Phi$ continuous on $\mathcal M$. Under these assumptions, the conclusion of Theorem \ref{contraction 3} is still true.
Additionally, we remark that in Condition \textit{(1)} the chaoses $J^i_\e$ do \textit{not} need to be assumed to be deterministic functions of the Gaussian noise $x$. %, if we replace the first moment bound with a $p^{th}$ moment bound for some $p>1/\gamma$, as used in the proof. 
In particular we have the following result which can be viewed as a generalization of all of other results thus far, whose interest will be explained shortly. 

\begin{thm}\label{gen}Let $(\B,\H,\mu)$ be an abstract Wiener space, let $(R_\e)_{\e\in(0,1]}$ be a family of Borel-measurable a.e. linear maps from $\B\to \B$ which are measure-preserving and strongly mixing, and let $T^i:\B\to Y_i$ be homogeneous of degree $k_i$ for $1\le i \le m$. Suppose that there exist strongly continuous semigroups $(Q^i_\e)_{t\ge 0}$ operators from $Y_i\to Y_i$ for $1\le i \le m$ with the property that 
\begin{align*}T^i\circ R_\e = Q^i_\e \circ T^i, \;\;\;\;\;\;\;\; \mu\text{-a.e.}\;\;\;\;\;\; \text{ for all } \e \in (0,1].
\end{align*}
Assume that there exists some probability space $(\Omega,\mathcal F,P)$ which admits a coupling of the following form: there is a map $x:\Omega \to X$ such that the law of $x$ is $\mu$ (i.e., $P_*x = \mu$) and there is a collection of measurable functions $(\mathfrak J^1_\e,...,\mathfrak J^m_\e)_{\e\in(0,1]}$ from $\Omega \to \mathcal M$ such that 
\begin{equation}\label{ct3}\|\mathfrak J^i_\e(\omega) - Q^i_\e T^i(x(\omega))\|_{Y_i}\leq C^i(\omega)\end{equation}
for some collection of measurable functions $C^i:\Omega \to [0,\infty)$. Let $\mathcal M\subset Y_1\times \cdots Y_m$ be a closed subset, such that %the semigroup $Q_\e^1 \oplus \cdots \oplus Q_\e^m$ sends $\mathcal M$ to itself, and moreover 
for all $\delta, \e>0$ %assume that 
$$P((\delta^{k_1}\mathfrak J^1_\e ,...,\delta^{k_m} \mathfrak J^m_\e) \in \mathcal M)=1, \;\;\text{ and }\;\; \mu(\{x\in X: (\delta^{k_1} T^1(x),...,\delta^{k_m} T^m(x)) \in \mathcal M\})=1.$$ Then the compact set $K:=\{(T^1_{\mathrm{hom}}(h),...,T^m_{\mathrm{hom}}(h)):h \in B(H)\}$ is necessarily contained in $\mathcal M$. Moreover, consider any Banach space $Z$ and any family of maps $\{\Phi_\e\}_{\e\in(0,1]}$ from $\mathcal M \to Z$ that converges uniformly on bounded subsets of $\mathcal M$ as $\e\to 0$ to a limit $\Phi$ that is uniformly continuous on bounded subsets of $\mathcal M$. % are bounded sequences in $\mathcal M$ whose difference tends to zero in the norm of $Y_1\oplus ... \oplus Y_m$. 
Then the set of cluster points as $\e\to 0$ of $\Phi_\e\big( ((\log\log(1/\e))^{-k_i/2}\mathfrak J^i_\e(\omega))_{1\leq i\leq m}\big)$ is equal to $\Phi(K)$ for a.e. $\omega\in \Omega$.
\end{thm}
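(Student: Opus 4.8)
The plan is to reduce the statement to Corollary~\ref{contraction 2} via the given coupling, treating the family $\mathfrak J^i_\e$ as a vanishing perturbation of $Q^i_\e T^i$ and then propagating the maps $\Phi_\e$ through a uniform-convergence argument. First I would pass to the additive parametrization $S_t:=R_{e^{-t}}$ and $G^i_t:=Q^i_{e^{-t}}$. Since each $R_\e$ is measure-preserving and a.e.\ linear with $\langle R_\e x,y\rangle_H\to 0$, the maps $(S_t)_{t\ge 0}$ are Borel-measurable a.e.\ linear measure-preserving maps with $\langle S_tx,y\rangle_H\to 0$ as $t\to\infty$, the $(G^i_t)_{t\ge 0}$ are strongly continuous semigroups, and $T^i\circ S_t=G^i_t\circ T^i$ $\mu$-a.e. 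Applying Corollary~\ref{contraction 2} with $Z=Y_1\times\cdots\times Y_m$ and $\Phi$ the (continuous) inclusion $\mathcal M\hookrightarrow Z$ then gives both $K\subseteq\mathcal M$ and: for $\mu$-a.e.\ $x$, the set of cluster points at zero of $b_\e(x):=\big((\log\log(1/\e))^{-k_i/2}Q^i_\e T^i(x)\big)_{1\le i\le m}$ equals $K$. From the proofs of Theorem~\ref{thm 4} and Lemma~\ref{ar} (together with the gluing estimate used in the proof of Theorem~\ref{ar2}) I would additionally record that, for $\mu$-a.e.\ $x$, the discrete subfamily $(b_{e^{-N}}(x))_{N\in\mathbb N}$ already has cluster set $K$ and $\operatorname{dist}(b_\e(x),K)\to 0$ as $\e\to 0$.

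Next I transport everything through the coupling. Because $P_*x=\mu$, all the above a.s.\ statements hold $P$-a.s.\ with $x$ replaced by $x(\omega)$. Writing $a_\e(\omega):=\big((\log\log(1/\e))^{-k_i/2}\mathfrak J^i_\e(\omega)\big)_{1\le i\le m}$, the hypothesis \eqref{ct3} and finiteness of each $C^i(\omega)$ give
\[
\|a_\e(\omega)-b_\e(\omega)\|_{Y_1\times\cdots\times Y_m}\;\le\;\sum_{i=1}^m (\log\log(1/\e))^{-k_i/2}\,C^i(\omega)\;\longrightarrow\;0\quad(\e\to 0),
\]
so $(a_{e^{-N}}(\omega))_{N\in\mathbb N}$ has cluster set $K$ and $\operatorname{dist}(a_\e(\omega),K)\to 0$ as $\e\to 0$ as well. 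Finally, applying the hypothesis $P\big((\delta^{k_1}\mathfrak J^1_\e,\dots,\delta^{k_m}\mathfrak J^m_\e)\in\mathcal M\big)=1$ with $\delta=(\log\log(1/\e))^{-1/2}$ for every $\e$ in the countable set $\{e^{-N}:N\in\mathbb N\}$ and intersecting, I obtain a single $P$-full-measure event on which, for all $N$, $a_{e^{-N}}(\omega)\in\mathcal M$ (so $\Phi_{e^{-N}}(a_{e^{-N}}(\omega))$ is defined), in addition to all the cluster-set and distance statements above.

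On that event I conclude as follows. For $\Phi(K)\subseteq(\text{cluster set})$: given $k\in K\subseteq\mathcal M$, choose $N_j\uparrow\infty$ with $a_{e^{-N_j}}(\omega)\to k$ in $Y_1\times\cdots\times Y_m$; the set $\{a_{e^{-N_j}}(\omega):j\ge 1\}\cup\{k\}$ is a bounded subset of $\mathcal M$, so by the uniform convergence $\Phi_\e\to\Phi$ on bounded subsets and the continuity of $\Phi$ there, $\Phi_{e^{-N_j}}(a_{e^{-N_j}}(\omega))\to\Phi(k)$, exhibiting $\Phi(k)$ as a cluster point. Conversely, if $z$ is a cluster point then there are $\e_n\to 0$ with $a_{\e_n}(\omega)\in\mathcal M$ and $\Phi_{\e_n}(a_{\e_n}(\omega))\to z$; since $\operatorname{dist}(a_{\e_n}(\omega),K)\to 0$ and $K$ is compact, some subsequence satisfies $a_{\e_{n_j}}(\omega)\to k\in K$, and the same uniform-convergence argument yields $\Phi_{\e_{n_j}}(a_{\e_{n_j}}(\omega))\to\Phi(k)$, so $z=\Phi(k)\in\Phi(K)$. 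The same reasoning applies verbatim if one prefers to read the cluster set over the (for a.e.\ $\omega$) full-measure set of $\e\in(0,1]$ with $a_\e(\omega)\in\mathcal M$, obtained from the per-$\e$ hypothesis by Fubini.

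I expect no genuinely analytic obstacle here, since the heavy lifting is done by Corollary~\ref{contraction 2}; the one point demanding care is bookkeeping around the domain of $\Phi_\e$, which is only $\mathcal M$ while the rescaled tuples $a_\e(\omega)$ are a priori known to lie in $\mathcal M$ only for each individual $\e$ almost surely. The plan circumvents this by restricting to the countable scale sequence $\{e^{-N}\}$, along which Theorem~\ref{thm 4} already forces the cluster set to be $K$, so that no information is lost; the boundedness required to invoke the uniform convergence of $\Phi_\e$ is then automatic, because the relevant subsets of $\mathcal M$ are convergent sequences with limit in the compact set $K$.
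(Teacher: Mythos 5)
Your proposal is correct and follows essentially the same route as the paper: the coupling bound divided by $(\log\log(1/\e))^{k_i/2}$ forces $\|a_\e(\omega)-b_\e(\omega)\|\to 0$, the rescaled tuples are a.s.\ bounded by Theorem \ref{ar2}, and the uniform convergence of $\Phi_\e$ to a uniformly continuous $\Phi$ on bounded subsets of $\mathcal M$ then transfers the cluster set $\Phi(K)$ from $\Phi(b_\e)$ (supplied by Corollary \ref{contraction 2}, i.e.\ Theorem \ref{mainr}) to $\Phi_\e(a_\e)$. Your additional bookkeeping about when $a_\e(\omega)$ lies in the domain $\mathcal M$, handled via the countable scale sequence and the distance-to-$K$ statement, is a more careful rendering of a point the paper's two-line proof leaves implicit, not a different argument.
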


\begin{proof} Note that the assumptions are equivalent to the following statement: for all bounded sequences $\{\boldsymbol{y}_\e\},\{\boldsymbol{y}_\e'\}\subset \mathcal M$, we have \begin{equation}\label{dog}\lim_{\e\to 0}\|\Phi_\e(\boldsymbol{y}_\e) - \Phi(\boldsymbol{y}_\e')\|_Z= 0\;\;\;\text{ whenever }\;\;\;\lim_{\e\to 0}\|\boldsymbol{y}_\e-\boldsymbol{y}_\e'\|_{Y_1\oplus ... \oplus Y_m}= 0.\end{equation}Note that $(\log\log(1/\e))^{-k_i/2}Q^i_\e T^i(x(\omega))$ remains almost surely bounded as $\e\to 0$ by Theorem \ref{ar2}. Thus we have by \eqref{dog} that $$\big\| \Phi_\e\big( ((\log\log(1/\e))^{-k_i/2}\mathfrak J^i_\e(\omega))_{1\leq i\leq m}\big)-\Phi\big( ((\log\log(1/\e))^{-k_i/2}Q^i_\e T^i(x(\omega)))_{1\leq i\leq m}\big)\big\|_Z \to 0$$ as $\e\to 0$. The term on the right has the desired set of limit points by Theorem \ref{mainr}.
\end{proof}

Theorem \ref{contraction 3} is recovered by taking $\Omega=X$ and $\mathfrak J^i_\e = Q^i_\e J^i_\e$ and $\Phi_\e=\Phi$. Such a general statement would be useful in showing (for instance) that Strassen's law holds if we replace $\xi_\e$ by some \textit{non}-Gaussian approximation of a Gaussian noise, for instance a Poisson noise or some other discrete approximation of the desired limiting system, e.g., random walk approximations of Brownian motion, or statistical mechanical approximations of SPDEs such as KPZ. 

When applying the theorem in practice, one can usually obtain a stronger bound than \eqref{ct3}, typically an ``almost sure invariance principle" (ASIP) which gives a bound of the form $C^i(\omega)\e^\alpha$ where $\alpha>0$. For instance, in a random walk approximation of standard Brownian motion, the KMT coupling \cite{KMT} will give such a bound with $\alpha$ slightly less than 1/2 on the space constructed for the large-time regime in Example \ref{41}. For more complicated examples involving higher chaos, the discretized analogue $\mathfrak J^i_\e$ of each of the $m$ chaoses also needs to be constructed, and then the joint ASIP for these chaoses needs to be proved ``by hand." This is rather difficult in general, see for instance \cite{gmw} for some recent progress on a discrete approximation theory for Gaussian chaoses by general martingale chaoses. The mollification example above may be viewed a particular case where the ASIP for the chaoses is particularly simple to prove, compared to other choices of prelimiting objects such as discretizations.

\subsection{Iterated processes} In this subsection we consider a different generalization of the main result (Theorem \ref{mainr}) than those described in the previous subsection, specifically we study the effect of reparametrizing the $\e$ variable, which destroys the semigroup property. To motivate why one might be interested in such a reparametrization of $\e$, we consider iterated processes.

Burdzy in \cite{Bur93} proved that if $B,W$ are independent two-sided Brownian motions, then $$\limsup_{t\to 0} \frac{B(W(t))}{t^{1/4}\log(\log(1/t))^{3/4}}=2^{5/4}3^{-3/4}.$$ In \cite{CCFR95}, the authors extend this to prove a functional version of this theorem, namely that that the set of limit points of the family of random functions $Z^\epsilon :[-1,1]\to \mathbb R$ defined by \begin{equation}\label{zeps}Z^{\epsilon}(t):=\epsilon^{-1/4}(\log\log(1/\epsilon))^{-3/4}B(W(\epsilon t)),\end{equation} as $\epsilon\to 0$ equals the compact set of functions in $C[0,1]$ given by $$Q:=\{ f\circ g: f,g \in C([-1,1]), f(0)=g(0)=0, \int_{-1}^1 f'(t)^2+g'(t)^2 \leq 1\}.$$ Note that the composition is well-defined because the integral condition implies that $g$ maps $[-1,1]$ to itself.

\iffalse
Define $B^\epsilon(t) := (\epsilon \log\log(1/\epsilon))^{-1/2}B(\epsilon t)$ and define $W^\epsilon (t) = (\epsilon \log\log(1/\epsilon))^{-1/2}W(\epsilon t).$ Note that $Z^\epsilon$ has the same distribution as $B^\epsilon \circ W^\epsilon$. This is true not only for fixed $\epsilon$, but also as a process in $\epsilon$, so it suffices to study the limit points of $B^\epsilon \circ W^\epsilon$ as $\epsilon\to 0$. Note by Lemmas \ref{glue} and \ref{ar} that with probability $1$, $W^{\epsilon}$ has image contained in $[-2,2]$ for small $\epsilon$ almost surely. 

\fi
We would like to recover this result using our abstract framework. For $X,Y\subset \mathbb R$ let $C(X,Y)$ denote continuous maps from $X$ to $Y$ and $CX := C(X,\mathbb R)$. Define $R_\e f(t) = \e^{-1/2}f(\e t).$ Note that the map $\Phi$ from $C[-2,2]\times C([-1,1],[-2,2]) \to C[-1,1]$ defined by $(f,g) \mapsto f\circ g$ is continuous. Furthermore we note that \begin{equation}\label{zeps2}Z^\e = \Phi\big( (\log\log(1/\e))^{-1/2} R_{(\e\log\log(1/\e))^{1/2}}B\;\;,\;\; (\log\log(1/\e))^{-1/2}R_\e W\big).\end{equation}
The latter expression makes sense because Lemmas \ref{glue} and \ref{ar} ensure that with probability $1$, $R_\e W$ has image contained in $[-2,2]$ for small $\epsilon$ almost surely.

Note that Theorem \ref{mainr} is not directly applicable here due to the expression $R_{(\e\log\log(1/\e))^{1/2}}B.$ This is of the form $R_{g(\e)}B$ where one still has that $\log(g(\e))/\log(\e)$ remains bounded away from $0$ and $+\infty$ as $\e\to 0$, but where $R_{g(\e)}$ does not respect the multiplicative semigroup property since $g(\e)g(\delta)\ne g(\e\delta)$. The main problem which we now face is that the variable $\e$ in the expression $R_\e W$ has \textit{not} been reparametrized in the same way. This motivates the following abstract result.

\begin{thm}\label{repar}
    For $1\leq j \leq r$ let $(\B_j,\H_j,\mu_j)$ be abstract Wiener spaces, let $(R^j_\e)_{\e\in(0,1]}$ ($1\le j \le r)$ be a family of Borel-measurable a.e. linear maps from $\B_j\to \B_j$ which are measure-preserving and strongly mixing, and let $T^{ij}:\B_j\to Y_{ij}$ be homogeneous of degree $k_{i,j}$ for $1\le i \le m$ and $1\leq j \leq r$, where $Y_{ij}$ are Banach spaces. Assume that $\mathcal M\subset \bigoplus_{i,j} Y_{ij}$ is a closed subset. Assume that there exist strongly continuous semigroups $Q^{ij}$ on $Y_{ij}$ such that one has $Q^{ij}_\e T^{ij} = T^{ij}R^j_\e$ $\mu_j$-a.e. for all $i,j$ and $\e\in (0,1]$. Also assume that for all $\delta>0$, $$(\mu_1\otimes \cdots \otimes \mu_r) \bigg( \big\{(x_1,...,x_r): \big(\delta^{k_{i,j}}T^{ij} (x_j)\big)_{i,j} \in \mathcal M \big\}\bigg)=1.$$ Then the compact set $K:= \{\big((T^{i,j})_{hom}(h_j)\big)_{i,j}: \|h_1\|_{H_1}^2+...+\|h_r\|_{H_r}^2\leq 1\}$ is contained in $\mathcal M$. Moreover consider any collection $\{g^j\}_{j=1}^r$ of maps from $(0,1]\to (0,1]$ such that $\log(g^j(\e))/\log(\e)$ remains bounded away from $0$ and $+\infty$ as $\e\to 0$. %$$\lim_{\e\to 0}\frac{\log\log(g^j(\e))}{\log\log\e} =1,\;\;\;\;1\leq j \leq r.$$ 
    Sample $(\xi_1,...,\xi_r)$ from $\mu_1 \otimes \cdots \otimes \mu_r$. Then for any Banach space $Z$ and any continuous function $\Phi: \mathcal M\to Z$, the set of cluster points as $\e\to 0$ of $\Phi\big(\big((\log\log(1/\e))^{-k_{i,j}/2}Q^{ij}_{g^j(\e)} T^{ij} (\xi_j)\big)_{i,j}\big)$ is a.s. equal to $\Phi(K).$
\end{thm}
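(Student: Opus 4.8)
\textbf{Proof plan for Theorem \ref{repar}.}

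The plan is to reduce this reparametrized statement to Theorem \ref{mainr} (equivalently Corollary \ref{contraction 2} in multiplicative form) by working on the product abstract Wiener space $(\B,\H,\mu) := \big(\bigoplus_j \B_j, \bigoplus_j \H_j, \bigotimes_j \mu_j\big)$, and then handling the mismatched reparametrizations $g^j$ by a separate deterministic argument. First I would observe that the ``synchronized'' version of the theorem — where all $g^j$ are replaced by a single function, say the identity $g^j(\e)=\e$ — follows directly from Corollary \ref{contraction 2}: on the product space, the operators $R_\e := \bigoplus_j R^j_\e$ are a.e. linear, measure-preserving (since a direct sum of measure-preserving operators on the respective Cameron-Martin spaces still satisfies $R_\e R_\e^* = I$) and strongly mixing (since $\langle R_\e x, y\rangle_\H = \sum_j \langle R^j_\e x_j, y_j\rangle_{\H_j} \to 0$), the maps $T^{ij}$ composed with the coordinate projections $\B \to \B_j$ remain homogeneous chaoses of the same order over $\B$, and the commutation relation $Q^{ij}_\e T^{ij} = T^{ij} R^j_\e$ lifts verbatim. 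The compact set $K$ as stated is exactly $\{((T^{ij})_{\mathrm{hom}}(\pi_j h))_{i,j} : h \in B(\H)\}$ because $B(\H)$ is the set of $(h_1,\dots,h_r)$ with $\sum_j \|h_j\|_{\H_j}^2 \le 1$, and $(T^{ij})_{\mathrm{hom}}$ depends only on the $j$-th coordinate of $h$. The hypothesis that $\big(\delta^{k_{i,j}} T^{ij}(x_j)\big)_{i,j} \in \mathcal M$ a.s. for all $\delta$ is precisely the hypothesis needed to invoke Corollary \ref{contraction 2}, which then yields that $K \subset \mathcal M$ and gives the conclusion in the synchronized case.

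The main obstacle is the reparametrization: in the desired statement the $j$-th block is rescaled by $g^j(\e)$ rather than by a common parameter, so the family $\e \mapsto \bigoplus_j Q^{ij}_{g^j(\e)} T^{ij}(\xi_j)$ is not of the form handled by Corollary \ref{contraction 2}. The strategy here is to exploit that $\log g^j(\e)/\log\e \in [c, C]$ for some $0<c\le C<\infty$. Writing $S^{ij}_t := Q^{ij}_{e^{-t}}$, which is a strongly continuous semigroup on $Y_{ij}$, and setting $\tilde g^j(t) := -\log g^j(e^{-t})$ (so $\tilde g^j(t)/t \in [c,C]$ for large $t$), the object in the $j$-th block at ``time'' $t = \log(1/\e)$ is $S^{ij}_{\tilde g^j(t)} T^{ij}(\xi_j)$. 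The key point is that by Theorem \ref{ar2} applied on the product space, the full joint trajectory $\big\{ ((2\log s)^{-k_{i,j}/2} S^{ij}_s T^{ij}(\xi_j))_{i,j} : s \ge 0\big\}$, where all blocks move \emph{together} with parameter $s$, has cluster set at infinity equal to $K$; but this does not immediately give the cluster set of the \emph{skewed} trajectory where block $j$ sits at parameter $\tilde g^j(t)$. To bridge this I would prove a ``skewed cluster set'' lemma: for any continuous $\tilde g^1,\dots,\tilde g^r$ with $\tilde g^j(t)/t \to$ values in $[c,C]$ (or merely bounded away from $0,\infty$), the cluster set of $t \mapsto \big((2\log \tilde g^j(t))^{-k_{i,j}/2} S^{ij}_{\tilde g^j(t)} T^{ij}(\xi_j)\big)_{i,j}$ as $t \to \infty$ is still $K$. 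The upper bound (cluster points lie in $\mathcal M$ and in a neighborhood of $K$) is inherited from Lemma \ref{ar} applied to each block, since each block's skewed trajectory is a subset of that block's full trajectory; the nontrivial direction is that every point of $K$ is still attained.

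For the lower bound in the skewed lemma, the cleanest route is to return to the proof of Theorem \ref{thm 4} / \ref{ar2}: there, to hit a point $((T^{ij})_{\mathrm{hom}}(h_j))_{i,j}$ with $\sum_j \|h_j\|^2 = 1$, one uses that, writing each $T^{ij}$ via its finite-rank Cameron-Martin approximation and reducing to the finite-dimensional Gaussian statement of Corollary \ref{cor:sphere}, the relevant projections $(2\log n)^{-1/2} P_N(S^j_n \xi_j)$ simultaneously approach the prescribed directions $h_j/\|h_j\|$ (scaled appropriately) infinitely often. In the skewed setting one instead needs: there exists a sequence $t_\ell \uparrow \infty$ such that for each $j$ simultaneously, $(2\log \tilde g^j(t_\ell))^{-1/2} P_{N}(S^j_{\tilde g^j(t_\ell)} \xi_j) \to h_j^{\mathrm{dir}}$. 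Because $\tilde g^j(t)/t$ is bounded in $[c,C]$, one has $\log \tilde g^j(t) = \log t + O(1)$, so $(2\log\tilde g^j(t))^{-1/2}$ and $(2\log t)^{-1/2}$ are asymptotically equal — this is the same reduction used in the proof of Lemma \ref{lem:part2} where $\sqrt{\log(kN)}/\sqrt{\log k}\to 1$. So it suffices to find $t_\ell$ with $P_N(S^j_{\tilde g^j(t_\ell)}\xi_j) \approx \sqrt{2\log t_\ell}\, h_j^{\mathrm{dir}}$ for all $j$. One arranges this block-by-block using the independence of $\xi_1,\dots,\xi_r$ and the stopping-time/conditional-stationarity argument from the proof of Lemma \ref{lem:part2}: pick $t_\ell$ along which block $1$'s skewed projection is near-optimal (possible since $\tilde g^1$ is continuous and unbounded, so $\tilde g^1$ hits the good times of block $1$'s \emph{full} trajectory cofinally, up to a harmless perturbation absorbed by continuity of $S^{i1}$), then condition and use that $S^{ij}_{\tilde g^j(t_\ell)}\xi_j \stackrel{d}{=} \xi_j$ for the remaining blocks to push a subsequence so that those blocks also land near their targets. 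The gluing/continuity input needed to replace ``near a good time'' by ``at a good time'' is exactly Lemma \ref{glue} (and its analogue in the proof of Theorem \ref{ar2}), applied to each semigroup $S^{ij}$. Once the skewed lemma is in hand, I would finish exactly as in Corollary \ref{contraction}: continuity of $\Phi$ transports $K$ to $\Phi(K)$ for the lower bound, and for the upper bound, given $z \notin \Phi(K)$ choose $\e$-balls covering $\Phi(K)$, pull back to an open cover $U$ of $K$, let $\delta = \mathrm{dist}(K, U^c) > 0$, and apply the skewed version of Lemma \ref{ar} (cluster points of the skewed trajectory are within $\delta$ of $K$ eventually) to conclude $z$ is not a cluster point; the containment $K \subset \mathcal M$ and the validity of $\delta^{k_{i,j}} T^{ij}(\xi_j) \in \mathcal M$ a.s. are needed to ensure $\Phi$ is defined along the trajectory.

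I expect the skewed cluster-set lemma — specifically, simultaneously hitting all $r$ prescribed directions along a common subsequence $t_\ell$ despite the blocks being on different time-reparametrizations — to be the main technical obstacle, though it should be tractable by combining the independence of the $\xi_j$ with the stopping-time argument already present in the proof of Lemma \ref{lem:part2}.
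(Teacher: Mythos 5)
Your reduction to the product abstract Wiener space $(\bigoplus_j X_j,\bigoplus_j H_j,\bigotimes_j\mu_j)$ and your identification of $K$ as the image of the closed unit ball of $H_1\oplus\cdots\oplus H_r$ are correct, and your observation that $\log\tilde g^j(t)=\log t+O(1)$ so the normalizations are interchangeable is also the right first step. However, both halves of your "skewed cluster set" lemma have genuine gaps. For the upper bound, applying Lemma \ref{ar} to each block separately only shows that the cluster points lie in the \emph{product} of the per-block limit sets $\prod_j\{((T^{ij})_{\mathrm{hom}}(h_j))_i:\|h_j\|_{H_j}\le 1\}$, which strictly contains $K$ (where the constraint is $\sum_j\|h_j\|^2\le 1$). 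Ruling out, say, a cluster point with $\|h_1\|=\|h_2\|=1$ requires a \emph{joint} Borel--Cantelli estimate exploiting independence of the blocks (the joint tail probability is $\sim t^{-\sum_j\|h_j\|^2}$); a block-by-block argument cannot see this. For the lower bound, the conditioning/stopping-time device you invoke works only when the remaining blocks must be steered to \emph{zero}: there, equality in distribution $S_{T_\ell}\xi_j\stackrel{d}{=}\xi_j$ plus division by $\sqrt{\log T_\ell}$ gives convergence to $0$ for free, since $0$ is typical behavior. But to hit a generic point of $K$ you need block $j$ near the nonzero target $\sqrt{2\log t_\ell}\,h_j$, which is a large-deviation event of probability $\sim t_\ell^{-\|h_j\|^2}$; equality in distribution at the (block-$1$-measurable) times $t_\ell$ gives no "infinitely often" conclusion without a second Borel--Cantelli along that subsequence, which in turn needs quantitative control on the density of the good times for block $1$ — control that the qualitative "infinitely often" statement does not provide.

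The paper avoids both problems by never splitting the blocks. It generalizes Lemma \ref{lem:lim1} to possibly non-stationary jointly Gaussian sequences satisfying the uniform decay $\lim_{N\to\infty}\max_{|m-n|>N}|\mathbb E[X_mX_n]|=0$ (again via Slepian's lemma); the boundedness of $\log g^j(\e)/\log\e$ is used precisely to guarantee that along $\e=2^{-n}$ the reparametrized covariances still decay uniformly in this sense, even though stationarity is lost. This yields a generalization of Corollary \ref{cor:sphere} in which the $r$ independent groups of coordinates are treated as a single Gaussian vector in $\mathbb R^{N\times r}$, so that the unit sphere of the \emph{joint} space is simultaneously hit — exactly the step your block-by-block conditioning tries and fails to supply. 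The rest then follows the finite-rank approximation machinery of Theorems \ref{thm 4} and \ref{contraction} verbatim. If you want to salvage your route, the place to invest is a non-stationary, multi-block version of the finite-dimensional input, not the stopping-time argument.
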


Before the proof, we remark by \eqref{zeps2} that this theorem immediately gives the desired compact limit set for the iterated process $Z^\e$ from \eqref{zeps}, since $B$ and $W$ are assumed to be independent. Note that we may also obtain the result in every Hölder norm of exponent less than $1/4$ by noting that for all $0<\alpha<1$ the composition map is continuous from $C^\alpha[-2,2] \times C^\alpha([-1,1],[-2,2]) \to C^{\alpha^2}[-1,1]$, where $C^\alpha$ is the closure of smooth functions in the Hölder-$\alpha$ norm.

\begin{proof}
The proof essentially follows the same line of logic as the proof of Theorem \ref{mainr}. We will indicate the places in the argument that need to be modified in order for the theorem to be proved.

Firstly, in Lemma \ref{lem:lim1} we need weaker assumptions since we no longer have the structure of a stationary sequence. More precisely, thanks to the assumption that $\log(g^j(\e))/\log(\e)$ remains bounded, it suffices to show that if $(X_j)_{j\ge 0}$ is a (possibly non-stationary) sequence of joint mean-zero Gaussians of unit variance such that $\lim_{N\to \infty} \max_{|i-j|>N} |\mathbb E[X_iX_j]|=0 ,$ then one has that $\lim_{n\to\infty} (2\log n)^{-1/2}X_n=1.$ One may verify that this is indeed true by again using Slepian's lemma, alternatively see the main result of \cite{man}, where this was first observed.

This then leads to the following generalization of Corollary \ref{cor:sphere}. If $\mathbf X_n:=(X_n^{ij})_{1\leq i\leq N,1\leq j \leq r} \in \mathbb R^{N\times r} $ is a jointly Gaussian sequence such that the $r$ distinct collections given by $(X_n^{i1})_{i=1}^N , ..., (X_n^{ir})_{i=1}^N$ are independent of each other, and each satisfies $\mathrm{cov}(X_0^{ij},X_0^{i'j}) = \delta_{ii'}$ and $\lim_{A\to \infty} \max_{|m-n|>A} |\mathbb E[X_m^{ij}X_n^{i'j}]|=0$, then the unit sphere of $\mathbb R^{N\times r}$ is contained in the set of limit points of the sequence $((2\log n)^{-1/2} \mathbf X_n)_{n\ge 2}.$

From here, the line of proof will then use the same strategy as in Subsections 2.2 and 2.3, first proving the discrete-time formulation (see Theorems \ref{thm 4} and \ref{contraction}) along the subsequence $\e=2^{-n}$ by using the finite-rank approximations $T^{ij}_N$ from \eqref{frcmp'} and using the result of the previous paragraph. The remainder of the proofs are then straightforward to modify, but taking into account that there are now $r$ independent noises as opposed to just $1$, thus giving image of the closed unit ball of the Hilbert space $H_1\oplus ...\oplus H_r$ for the limit sets.
\end{proof}

\iffalse thus by Theorem \ref{contraction 2}, the set of limit points of $B^\epsilon \circ W^\epsilon$ is given by the image of the set $K:=\{(f,g): \int_{-2}^2 f'(t)^2dt + \int_{-1}^1 g'(t)^2dt\leq 1\}$ under the continuous map $F$, which equals the set of $f\circ g$ such that $\int_{-2}^2 f'(t)^2dt + \int_{-1}^1 g'(t)^2dt\leq 1$. But this condition automatically implies that $|g(x)|\leq 1$ so that this set necessarily coincides with the set $K$ defined above. Note that, as in the first example, we can straightforwardly obtain this result in stronger topologies of Hölder-continuous functions, by changing the spaces to ensure that the composition is still continuous in the required topology.\fi

\begin{example}
We now give another example using iterated processes derived from higher chaoses. This will recover the main result of \cite{Neu98}, where the author proves that if $W,B_1,B_2$ are independent two-sided standard Brownian motions, and if $A(t):=\frac12 \int_0^t B_2(s)dB_1(s)-B_1(s)dB_2(s), $ then $$\limsup_{t\to 0} \frac{A(W(t))}{t^{1/2}(\log\log(1/t))^{3/2}} = (2/3)^{-3/2}/\pi.$$
We can obtain a functional version of this using the above theorem. Define the process $Z^{\epsilon}(t):= \epsilon^{-1/2}(\log\log(1/\epsilon))^{-3/2} A(W(\epsilon t)),$ and note that $$Z^\epsilon:= \Phi ( (\log\log(1/\e))^{-1} Q_{(\e\log\log(1/\e))^{1/2}}A\;,\; (\log\log(1/\e))^{-1/2} R_\e W),$$ %is equal in law to $A^\epsilon \circ W^\epsilon$, where $A^\epsilon(t):=(\epsilon \log\log(1/\epsilon))^{-1}A(\epsilon t)$ and $W^\epsilon(t) = (\epsilon\log\log(1/\epsilon))^{-1/2} W(\epsilon t).$ 
where $Q_\e f(t) = \e^{-1} f(\e t)$,  $R_\e f(t) = \e^{-1/2} f(\e t)$, and $\Phi(f,g)=f\circ g$ is the composition map as above. We remark that if we view $A=\psi(B_1,B_2)$  as a chaos of order 2, then we have a commutation relation $Q_\e \psi = \psi\tilde R_\e$ where $\tilde R_\e(f_1,f_2)(t) = (\e^{-1/2} f_1(\e t),\e^{-1/2} f_2(\e t))$ for $(f_1,f_2)\in C[0,1]\times C[0,1]$. Note in the above expression that the exponent of $-1$ on the double logarithm in the first coordinate is consistent with the fact that $A$ is a chaos of order 2.

Hence, mimicking the previous example but using the full generality of the above theorem, we see that $Z^\e$ has the set of limit points given by the closure of functions of the form $f \circ g$ where $f,g \in C[-1,1]$ are smooth and $f$ is of the form $\frac12\int_0^\bullet h_2h_1'-h_1 h_2'$ where $\int_{-1}^1 g'(t)^2 +h_1'(t)^2 +h_2'(t)^2dt\leq 1$, and $g(0)=h_i(0)=0$ for $i=1,2$.
\end{example}
\appendix

\section{Ergodic properties of measure-preserving linear operators}\label{AA}

In this appendix we review the fact that the ergodic properties of measure-preserving linear operators on an abstract Wiener space are necessarily determined by their action on the Cameron-Martin space.

\begin{defn}\label{ae} Let $(X,H,\mu)$ be an abstract Wiener space. We say that a Borel-measurable map $S:X\to X$ is a measure-preserving a.e. linear map if there exists a Borel-measurable linear subspace $E \subset X$ with $\mu(E)=1$ such that $S: E\to X$ is linear and $\mu(S^{-1}(F)) = \mu(F)$ for all Borel subsets $F \subset X.$

\end{defn}

%Note that any measure preserving a.e. linear map may be measurably extended to all of $X$ by defining it to be zero outside of $E$ (though this extension is no longer linear), and in this case one still has $\mu(S^{-1}(F)) = \mu(F)$ for all Borel sets $F$.

\begin{lem}\label{bij}
Let $(X,H,\mu)$ be an abstract Wiener space. The set of measure-preserving a.e. linear maps (modulo a.e. equivalence) is in bijection with the set of bounded linear maps from $H\to H$ satisfying $SS^*=I$. Moreover the bijection is given by simply restricting $S$ to $H$.
\end{lem}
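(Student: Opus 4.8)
\textbf{Proof proposal for Lemma \ref{bij}.}

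The plan is to establish the claimed bijection in two directions and then check that restriction to $H$ is the correct map. First I would show that any measure-preserving a.e.\ linear $S:E\to X$ restricts to a bounded operator on $H$ satisfying $SS^*=I$. The key tool is the characterization of the Cameron--Martin norm $\|h\|_H = \sup\{\ell(h): \ell\in X^*,\ \int_X \ell^2\,d\mu = 1\}$ (with the convention that the supremum is $+\infty$ when $h\notin H$). Since $S$ is measure-preserving, for any $\ell\in X^*$ the pushforward of $\mu$ under $\ell\circ S$ equals the pushforward of $\mu$ under $\ell$; in particular $x\mapsto \ell(Sx)$ is a centered Gaussian with the same variance as $\ell$. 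One then needs to argue that for $h\in H$, the functional $\ell\mapsto \ell(Sh)$ is well-defined and bounded by $\|h\|_H$ on the unit ball of the reproducing kernel; this forces $Sh\in H$ with $\|Sh\|_H \le \|h\|_H$. Since the same reasoning applies after composing with powers and since measure preservation is symmetric under taking the "adjoint dynamics," one upgrades the inequality to the isometric-coisometric relation. Concretely, $\langle Sh, Sk\rangle_H$ can be computed via the isometry between $H$ and the first Wiener chaos: $\langle h,k\rangle_H = \int_X (\iota h)(x)(\iota k)(x)\,\mu(dx)$ where $\iota h$ is the Paley--Wiener integral, and the measure-preserving property gives $\int_X (\iota h)(Sx)(\iota k)(Sx)\,\mu(dx) = \int_X (\iota h)(x)(\iota k)(x)\,\mu(dx)$, while $(\iota h)\circ S = \iota(S^\dagger h)$ for the operator $S^\dagger$ induced on $H$. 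Matching these identities yields $\langle S^\dagger h, S^\dagger k\rangle_H = \langle h,k\rangle_H$ only on the image, so one must be careful: the correct statement is that $S$ acting on $H$ (i.e.\ the operator whose transpose on chaos is induced by $S$) satisfies $SS^*=I$, which is the coisometry condition. I would sort out this duality bookkeeping carefully, as it is the subtlest point.

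Second I would prove the converse: given a bounded $S:H\to H$ with $SS^*=I$, construct a measure-preserving a.e.\ linear extension $\hat S:E\to X$, unique up to a.e.\ equivalence. The natural approach is via the first Wiener chaos. Since $SS^*=I$, the adjoint $S^*:H\to H$ is an isometry, so $g\mapsto \iota(S^*g)$ extends to a linear isometry from the closed span of $\{\iota g : g\in H\}$ into $L^2(X,\mu)$; call it $\mathcal{U}$. This $\mathcal{U}$ maps $X^*\subset H$ (viewed inside $L^2$) into the first chaos, and because $\mathcal{U}$ sends jointly Gaussian families to jointly Gaussian families with the same covariance, a standard construction (e.g.\ Kolmogorov extension together with separability of $X$, or directly using a Hilbert basis $\{e_i\}$ of $H$ and the a.s.\ convergence of $\sum \langle x,e_i\rangle\, \mathcal{U}(e_i)$-type series as in the lemmas about $\xi_N$ cited earlier) produces a Borel-measurable linear subspace $E$ of full measure and a linear map $\hat S:E\to X$ with $\ell(\hat Sx) = \mathcal{U}(\ell)(x)$ for all $\ell\in X^*$ and a.e.\ $x$. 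Measure preservation is then immediate: the law of $(\ell_1(\hat Sx),\dots,\ell_n(\hat Sx))$ is centered Gaussian with covariance $(\langle S^*g_i, S^*g_j\rangle_H)_{ij} = (\langle SS^* g_i, g_j\rangle_H)_{ij} = (\langle g_i,g_j\rangle_H)_{ij}$ using $SS^*=I$, matching the law of $(\ell_1(x),\dots,\ell_n(x))$; since cylinder sets generate the Borel $\sigma$-algebra, $\hat S$ preserves $\mu$. Uniqueness up to a.e.\ equivalence follows because any two such extensions agree $\mu$-a.e.\ on each $\ell\in X^*$, hence agree on a countable separating family, hence agree a.e.

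Third, I would check the two constructions are mutually inverse: restricting $\hat S$ back to $H$ recovers the original $S$ (this is a computation with the Paley--Wiener isometry, using $(\iota h)\circ \hat S$ and unwinding $\mathcal{U}$), and extending the restriction of a measure-preserving a.e.\ linear map recovers it up to a.e.\ equivalence (by the uniqueness statement just proved, since both are measure-preserving extensions of the same operator on $H$). I would also note that the restriction $S|_H$ automatically maps $H$ into $H$ — this is not assumed but is part of what the first direction establishes. The main obstacle I anticipate is the duality/adjoint bookkeeping in the first direction: one must keep straight whether the operator naturally induced on $H$ by a point map $S$ on $X$ corresponds to $S|_H$ or its adjoint, and correspondingly whether the measure-preserving hypothesis translates to $SS^*=I$ or $S^*S=I$ (these differ when $S$ is a non-surjective isometry, e.g.\ a shift). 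Getting this right is precisely what makes the statement "$SS^*=I$" rather than "$S^*S=I$" correct, and it hinges on the fact that a point map can be extended to the larger space in the "forward" direction even when it fails to be injective on $H$, whereas its inverse generally cannot. I would resolve this by working consistently through the first-chaos picture, where composition on $X$ becomes pre-composition and hence corresponds to the Banach-space adjoint on the chaos, tracking the direction of every arrow.
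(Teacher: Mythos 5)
Your proposal is correct and lands on the right statement ($SS^*=I$, i.e.\ the restriction is a coisometry), but it takes a more hands-on route than the paper in two places. For the forward direction, the paper gets $S(H)\subseteq H$ almost for free from the characterization of $H$ as the intersection of all Borel-measurable linear subspaces of full measure (any such $F$ pulls back under the measure-preserving $S$ to another such subspace, which must contain $H$), and then invokes automatic boundedness of Borel-measurable linear maps between the relevant spaces; the identity $SS^*=I$ is dispatched in one line as a covariance computation. You instead extract both membership in $H$ and the bound $\|Sh\|_H\le\|h\|_H$ simultaneously from the dual characterization of the Cameron--Martin norm together with variance preservation of $\ell\circ S$ --- this is more quantitative and avoids citing an automatic-continuity theorem, but note that $\ell\circ S$ is only a \emph{measurable} linear functional, not an element of $X^*$, so you need the standard fact that a unit-variance measurable linear functional restricted to $H$ is given by pairing with a unit vector of $H$ (and also that $H\subseteq E$ so that $Sh$ is defined at all --- which is exactly the intersection characterization the paper uses). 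For the converse, the paper simply cites the existence and a.e.-uniqueness of the measurable linear extension, whereas you reconstruct it via the first-chaos isometry and a basis series; this works because the coefficients $\iota(S^*e_i)(x)$ are i.i.d.\ standard Gaussians precisely when $SS^*=I$, and It\^o--Nisio gives a.s.\ convergence in $X$. Your flagged worry about the adjoint bookkeeping is resolved exactly as you suspect: composition with the point map on $X$ corresponds to $T^*$ on the first chaos, so measure preservation says $T^*$ is an isometry, i.e.\ $TT^*=I$ (your parenthetical ``only on the image'' is a red herring --- the identity $\langle T^*h,T^*k\rangle_H=\langle h,k\rangle_H$ holds for all $h,k$). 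Net effect: your argument is longer but more self-contained; the paper's is a two-paragraph argument resting on two cited structural facts about abstract Wiener spaces.
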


\begin{proof} We need to show that any a.e. defined measure-preserving linear transformation from $\B\to \B$ necessarily maps $\H\to \H$ boundedly and satisfies $SS^*=I$ on $\H$. To prove this, let $S:E \to \B$ be a measure preserving Borel measurable linear map, where $E\subset \B$ is a Borel measurable linear subspace satisfying $\mu(E)=1$. It is known \cite[Section 3.4]{Hai09} that $H$ equals the intersection of all Borel-measurable linear subspaces of $\B$ of measure $1$. If $F$ is any Borel-measurable linear subspace of measure $1$, then so is $E\cap S^{-1}(F)$,  and thus $x\in H$ implies $x\in S^{-1}(F)$ so that $Sx\in F$. Since $F$ is arbitrary, we have shown that $Sx\in H$ for all $x \in H$. Thus $S$ is a \textit{globally defined} Borel measurable linear map from $H\to H$, from which it follows that $S$ is automatically bounded \cite{Sch}. In order for $S$ to be measure-reserving, it must clearly satisfy $SS^*=I$ (e.g. by computing the covariance structure of the pushforward measure $S_*\mu$).

Conversely, %as we already noted in the introduction, 
given any bounded linear map $S:\H \to \H$ satisfying $SS^*=I$, there exists a $\mu$-a.e.-defined Borel-measurable linear extension $\hat S$ on $\B$ which is unique up to a.e. equivalence \cite[Sections 3.5 and 3.6]{Hai09}. Moreover the condition $SS^*=I$ guarantees that this extension is measure-preserving, as this is precisely the condition under which the covariance operator of the underlying Gaussian measure is unchanged.\end{proof}

Below and in the main body of the paper we always write $\hat S=S$ without specifying that it actually denotes the unique extension. Next we have a lemma about the structure of such measure-preserving transformations, namely that they can be orthogonally decomposed into a unitary part and a part converging strongly to zero.
\begin{lem}\label{easy}
Let $\H$ be a Hilbert space and consider any linear operator $S:\H\to\H$ satisfying $SS^*=I$. Then we can orthogonally decompose $H=A\oplus B$ where $A$ and $B$ are both invariant under $S$ and $S^*$. Moreover $S:A\to A$ is unitary and $\lim_{n \to \infty} \|S_nx\|_H = 0$ for all $x \in B.$ Explicitly one can write $A=\bigcap_{n \in \mathbb{N}} \Im(S_n^*)$ and $B=\overline{\bigcup_{n \in \N} \ker(S_n)}$, where the bar denotes the closure in $H$. %$B=closure_H\big(\bigcup_{n \in \N} \ker(S_n)\big)$.
\end{lem}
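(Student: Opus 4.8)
\textbf{Proof plan for Lemma \ref{easy}.} The plan is to directly verify that the claimed subspaces $A := \bigcap_{n \in \mathbb{N}} \Im(S_n^*)$ and $B := \overline{\bigcup_{n \in \N} \ker(S_n)}$ do the job. First I would record the basic algebra coming from $S S^* = I$: this says $S$ is a coisometry, so $S^*$ is an isometry, each $S_n = S^n$ is a coisometry and each $S_n^*$ is an isometry, the operator $P_n := S_n^* S_n$ is the orthogonal projection onto $\Im(S_n^*)$, and since $S_{n+1}^* S_{n+1} = S_n^* (S^* S) S_n = S_n^* S_n \cdot (\text{something})$ — more carefully, $\Im(S_{n+1}^*) = \Im(S_n^* S^*) \subseteq \Im(S_n^*)$, so the projections $P_n$ are decreasing. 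Hence $P_n$ converges strongly to the projection $P_\infty$ onto $A = \bigcap_n \Im(S_n^*)$, and $I - P_\infty$ is the projection onto $A^\perp = \overline{\bigcup_n \ker(S_n)}$ (using $\Im(S_n^*)^\perp = \ker(S_n)$), which identifies $A^\perp = B$ and gives the orthogonal decomposition $H = A \oplus B$.

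Next I would check invariance. For $A$: if $x \in A$ then for each $n$, $x \in \Im(S_{n+1}^*) = \Im(S_n^* S^*)$, so $S x$... actually it is cleaner to argue $A$ is invariant under $S^*$ first — given $x = S_n^* y_n$ for all $n$, then $S^* x = S^* S_n^* y_n = S_{n+1}^* y_n \in \Im(S_{n+1}^*)$, and this holds simultaneously over the relevant range, so with a small bookkeeping argument $S^* x \in A$. For invariance under $S$: given $x \in A$, write $x = S_n^* y_n$; then $S x = S S_n^* y_n = S_{n-1}^* y_n \in \Im(S_{n-1}^*)$, again valid for all $n$, so $Sx \in A$. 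Since $A$ is $S$- and $S^*$-invariant and closed, $B = A^\perp$ is automatically invariant under both $S^*$ and $S$ as well (orthogonal complement of an invariant subspace under a pair $(S, S^*)$ is invariant under both). On $A$, the restriction of $S$ satisfies $S S^* = I$ (inherited) and is also injective: if $x \in A$ and $Sx = 0$ then $x \in \ker(S) = \Im(S^*)^\perp$, but $x \in A \subseteq \Im(S^*)$, forcing $x = 0$; since $S|_A$ is a surjective (because $S S^*|_A = I$ and $S^* A \subseteq A$) isometry-adjoint that is also injective, $S|_A$ is unitary — one should spell out that $S|_A$ being injective plus $S^*|_A S|_A = P_\infty|_A = I_A$ (which needs a short check that $S^* S$ restricted to $A$ is the identity) gives unitarity.

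Finally, the strong convergence $\|S_n x\|_H \to 0$ for $x \in B$: since $\|S_n x\|_H^2 = \langle S_n^* S_n x, x\rangle = \langle P_n x, x\rangle \to \langle P_\infty x, x\rangle = \langle P_\infty x, x \rangle$, and $P_\infty$ is the projection onto $A$, this limit is $0$ precisely when $x \in A^\perp = B$. This also handles the density claim $\overline{\bigcup_n \ker(S_n)} = B$ consistently. The main obstacle — or rather the only place requiring care — is the bookkeeping in the invariance arguments: the representations $x = S_n^* y_n$ have $n$-dependent witnesses $y_n$, so one must argue that applying $S$ or $S^*$ produces an element lying in \emph{every} $\Im(S_k^*)$, not just one; the clean way is to phrase everything in terms of the monotone projections $P_n$ and the identities $S P_n = P_{n-1} S$, $S^* P_n = P_{n+1} S^*$, which follow from $S S_n^* S_n = S_{n-1}^* S_{n-1} S$ etc., and then take strong limits $n \to \infty$ to get $S P_\infty = P_\infty S$ and $S^* P_\infty = P_\infty S^*$, i.e. $A = \Im P_\infty$ is invariant. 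Everything else is routine Hilbert space manipulation.
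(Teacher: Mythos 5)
Your proposal is correct and takes essentially the same approach as the paper: both identify $S_n^*S_n$ as the orthogonal projection onto $\Im(S_n^*)$, obtain the orthogonal decomposition $H=A\oplus B$ from $\Im(S_n^*)^\perp=\ker(S_n)$, and check invariance and unitarity on $A$ directly from $SS^*=I$. The only cosmetic difference is the last step: the paper deduces $\|S_nx\|\to 0$ on $B$ from eventual vanishing on the dense subspace $\bigcup_n\ker(S_n)$ together with $\|S_n\|\le 1$, while you take the strong limit $\langle P_nx,x\rangle\to\langle P_\infty x,x\rangle$ of the decreasing projections; both are valid and your intertwining identities $SP_n=P_{n-1}S$, $S^*P_n=P_{n+1}S^*$ are a clean way to dispose of the invariance bookkeeping that the paper calls ``clear.''
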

\begin{proof}
Since $S S^* = I$, we have $S_n S_n^* = I$.  It is clear that $S$ and $S^*$ both leave $\bigcap_{n \in \N} \Im(S_n^*)$ invariant and $S^* S$ is the identity there.
For each $n$, one has $H = \Im(S_n^*) \oplus \ker(S_{n})$ via the decomposition $x = S_n^*S_nx+ (x-S_n^*S_nx)$. Hence, $S_n^*S_n$ is merely the projection map from $H$ onto $\Im(S_n^*)$ and indeed the two given subspaces $A,B$ are orthogonal. Moreover it is clear that $\|S_n x\| \to 0$ on the closure of $\bigcup_{n \in \N} \ker(S_n)$, since $\|S_n x\|$ is eventually zero for all $x$ in the dense subspace $\bigcup \ker(S_n)$. \end{proof}

Below, we will refer to the subspace $A$ as the \textit{unitary part}. Given Lemma \ref{bij}, it is natural then to ask what conditions on $S$, when viewed as a map from $H\to H$, ensure that the measure-preserving extension $S$ is ergodic, weakly mixing and strongly mixing. %, etc. 
The next proposition addresses this.
%\bigskip
%\\
%\red{(My suggestion is to remove (1) in the next proposition and split (2) and (3) into two separate lemma or propositions.)}
%\bigskip
%\\
%\red{(I tried to change the margins for different items, not completed yet)}
\begin{prop}\label{prop1}
Let $(\B,\H,\mu)$ be an abstract Wiener space. Consider any linear operator $S:\H \to \H$ satisfying $SS^*=I$. Then we have the following equivalences.
\begin{enumerate}[leftmargin = 2em]
    \item $\bigcap_{n \ge 1} \sigma(S_n)$ is a 0-1 $\sigma$-algebra (i.e., consists only of sets of $\mu$-measure 0 or 1) if and only if $\|S_nx\|_H\to 0$ as $n\to\infty$ for all $x \in H.$
    
    \item $S$ is strongly mixing if and only if $\langle S_nx,y\rangle_H \to 0$ as $n\to \infty$ for all $x,y\in H$.
    
    \item $S$ is ergodic if and only if any of the following five equivalent conditions hold:
\begin{enumerate}[leftmargin = 2em]

%\end{enumerate}   
%\subitem (a) 
    \item
 $\frac1n \sum_{j=1}^n \langle S_j x,y\rangle_H^k \to 0$ as $n\to\infty$ for all $x,y\in \H$ and $k \in \mathbb N.$
    
%    \subitem (b) 
    \item
    $\frac1n \sum_{j=1}^n |\langle S_j x,y\rangle_H| \to 0$ as $n\to \infty$ for all $x,y\in \H.$
    
%    \subitem (c) 
\item
$S$ is weakly mixing.
%    \subitem (d)
\item
$S$ admits no invariant subspace of dimension two on which it acts by a rotation matrix. 
\item
%\subitem (e)
The spectral measure $\mu_x$ of $S$ is atomless for every $x$ in the unitary part of $S$.

%(i.e. $S$ is unitary).%If $S$ is actually unitary as an operator from $\H\to\H$ (that is, if we add the condition that $S^*S=I$, i.e., $B=\{0\}$ in the notation of the previous lemma) then ergodicity of $S$ is also equivalent to: 

\end{enumerate}

\end{enumerate}
    
\end{prop}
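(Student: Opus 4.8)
The statement to prove is Proposition \ref{prop1}, which characterizes the ergodic-theoretic properties (0-1 tail $\sigma$-algebra, strong mixing, ergodicity/weak mixing) of a measure-preserving a.e.\ linear operator $S$ on an abstract Wiener space purely in terms of the action of $S$ on the Cameron-Martin space $H$. The overarching strategy is to translate each dynamical property into a statement about the Gaussian process $(\langle S_n x, \cdot\rangle)_{n}$ and then reduce it to a spectral-theoretic fact about the contraction $S$ on $H$, using the orthogonal decomposition $H = A \oplus B$ from Lemma \ref{easy} into a unitary part $A$ and a part $B$ on which $\|S_n x\|_H \to 0$.

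\textbf{Proof plan.} For part \textit{(1)}: the $\sigma$-algebra $\bigcap_n \sigma(S_n)$ is generated (up to null sets) by the Gaussian variables $\langle S_n x, \cdot\rangle$ for $x \in H$, and by the Kolmogorov zero-one law for Gaussian families, triviality of this tail $\sigma$-algebra is equivalent to $\Im(S_n^*)$ decreasing to $\{0\}$, i.e.\ $A = \bigcap_n \Im(S_n^*) = \{0\}$, which by Lemma \ref{easy} is exactly $\|S_n x\|_H \to 0$ for all $x$. The direction ``$A=\{0\}\Rightarrow$ tail trivial'' uses that a centered Gaussian variable measurable with respect to $\bigcap_n\sigma(S_n)$ lies in $\bigcap_n \overline{\Im(S_n^*)}$ in $L^2$, hence is zero; the converse is immediate since any $0 \ne x \in A$ produces a nontrivial tail-measurable variable. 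For part \textit{(2)}: strong mixing of $S$ means $\mu(S^{-n}E \cap F) \to \mu(E)\mu(F)$ for all measurable $E,F$; since finite-dimensional cylinder sets generate the $\sigma$-algebra and products of Hermite polynomials in the $\langle \cdot, e_i\rangle$ form an orthonormal basis of $L^2(\mu)$, it suffices to check decorrelation on this basis, which reduces to $\langle S_n x, y\rangle_H \to 0$ by the Gaussian moment/Wick formula (covariances of products of Hermite polynomials of jointly Gaussian variables are polynomials in the pairwise covariances with no constant term in the ``cross'' covariances). The converse is trivial by testing against the first chaos.

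For part \textit{(3)}: I would prove the cycle of equivalences (a)$\Leftrightarrow$(b)$\Leftrightarrow$(c)$\Leftrightarrow$(d)$\Leftrightarrow$(e), and that each is equivalent to ergodicity of $S$. The equivalence of ergodicity with weak mixing (c) for Gaussian dynamical systems is classical (Maruyama / Fomin): a measure-preserving transformation of a Gaussian space is ergodic iff it is weakly mixing iff the unitary part of the associated Koopman operator has no eigenvalues other than $1$ in a suitable sense; I would invoke this together with the fact that the Koopman operator on the first chaos is unitarily equivalent to $S$ restricted to its unitary part $A$. The heart of the matter is then the spectral characterization: weak mixing holds iff the spectral measure $\mu_x$ of $S|_A$ is atomless for every $x \in A$ (condition (e)), which is the standard Wiener-type criterion $\frac1n\sum_{j=1}^n |\widehat{\mu_x}(j)|^2 \to 0 \Leftrightarrow \mu_x$ atomless, combined with $\langle S_j x, y\rangle_H = \widehat{\mu_{x,y}}(j)$ being the Fourier coefficients of the complex spectral measure and polarization. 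Condition (d) (no invariant $2$-dimensional rotation subspace) is equivalent to (e) because a point mass of $\mu_x$ at $e^{i\theta}$ with $\theta \notin \{0,\pi\}$... actually at any $e^{i\theta}$ with the atom carried by a genuine rotation — here one must be careful: an eigenvalue $+1$ is allowed for ergodicity only if the eigenspace is the constants, but on $H$ itself a fixed vector $x$ with $Sx = x$ does give a nonergodic extension (the variable $\langle x, \cdot\rangle$ is invariant), so actually $S$ ergodic forces $S|_A$ to have \emph{no} eigenvalues at all, equivalently $\mu_x$ atomless for all $x \in A$, equivalently no invariant finite-dimensional subspace on which $S$ acts by a rotation (including $\theta = 0$); I would phrase (d) as stated and note the real-valued eigenvector/eigenvalue $\pm 1$ case corresponds to $1$- or $2$-dimensional invariant rotation blocks. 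The implications (a)$\Rightarrow$(b)$\Rightarrow$(c) are elementary (Cauchy-Schwarz / Jensen to pass between $|\cdot|$, $(\cdot)^k$, and Cesàro averages of squares), and (c)$\Rightarrow$(a) uses that weak mixing of the Gaussian system gives Cesàro decay of all correlations, in particular of $\langle S_j x, y\rangle^k$ which sits inside $\mathcal H^k$.

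\textbf{Main obstacle.} The substantive point is the spectral equivalence in part \textit{(3)}, specifically the careful handling of eigenvalue $1$: unlike an abstract measure-preserving system where the constant function is always an invariant eigenfunction, here ergodicity of the \emph{extension} $\hat S$ on $X$ forbids \emph{any} invariant vector in $H$, so one cannot simply quote ``no eigenvalues $\ne 1$'' but must show the unitary part $A$ has purely continuous spectrum. I would handle this by noting that, via the exponential/Wick isomorphism $L^2(X,\mu) \cong \bigoplus_k \mathrm{Sym}^k(H_{\mathbb C})$, the Koopman operator of $\hat S$ is $\bigoplus_k \mathrm{Sym}^k(S_{\mathbb C})$, so an eigenfunction of the Koopman operator orthogonal to constants forces an eigenvector of some symmetric power of $S|_A$, which (by the spectral theorem for the unitary $S|_A$) forces an atom of some $\mu_x$; conversely an atom of $\mu_x$ at $e^{i\theta}$ builds an invariant $1$- or $2$-dimensional real subspace and a corresponding invariant set of intermediate measure. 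Everything else is bookkeeping with Gaussian moment formulas, Cesàro averaging inequalities, and the standard Wiener lemma $\lim_n \frac1n\sum_{j\le n}|\widehat\nu(j)|^2 = \sum_{\text{atoms }a}\nu(\{a\})^2$.
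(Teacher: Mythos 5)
Your architecture matches the paper's for items \textit{(2)} and \textit{(3)}, but one step in item \textit{(1)} does not close as you have justified it. You claim the implication ``$\bigcap_n\Im(S_n^*)=\{0\}$ $\Rightarrow$ tail $\sigma$-algebra trivial'' follows because ``a centered Gaussian variable measurable with respect to $\bigcap_n\sigma(S_n)$ lies in $\bigcap_n\overline{\Im(S_n^*)}$, hence is zero.'' That argument only shows the tail $\sigma$-algebra contains no nonconstant element of the \emph{first} Wiener chaos, which is strictly weaker than triviality: a sub-$\sigma$-algebra of a Gaussian space can be nontrivial while containing no nonzero Gaussian variable (e.g.\ $\sigma(|Z|)$ for a standard Gaussian $Z$, or any $\sigma$-algebra generated by second-chaos variables). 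You must control all chaoses. Two short repairs: (i) the paper's route, which decomposes $H=\bigoplus_k\big(\ker(S_{k+1})\cap\Im(S_k^*)\big)$, observes that the corresponding Gaussian components $\xi_k$ of $\xi$ are independent and that $S_n\xi$ is measurable with respect to $\{\xi_j\}_{j\ge n}$, and then applies the ordinary Kolmogorov $0$--$1$ law to this independent sequence; or (ii) note that $\mathbb E[\,\cdot\,\vert\,\sigma(S_n)]$ is the second quantization of the orthogonal projection $P_n$ onto $\Im(S_n^*)$, so a tail-measurable $F$ has its $k$-th chaos component fixed by every $P_n^{\otimes k}$ and hence lying in $\mathrm{Sym}^k\big(\bigcap_n\Im(S_n^*)\big)=\{0\}$ for $k\ge1$. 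As written, ``no Gaussian variables in the tail, hence the tail is trivial'' is a non sequitur.

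For item \textit{(2)} your Wick/Hermite reduction is essentially the paper's proof. For item \textit{(3)} you take a genuinely different route: you invoke Maruyama--Fomin (ergodicity $\Leftrightarrow$ weak mixing for Gaussian systems) together with the identification of the Koopman operator with the direct sum of symmetric powers of $S$, and then run everything through the spectral measure of the unitary part and Wiener's lemma. The paper instead proves the cycle ergodic $\Rightarrow$ \textit{(a)} $\Rightarrow$ \textit{(b)} $\Rightarrow$ \textit{(c)} $\Rightarrow$ ergodic by hand: \textit{(a)} by testing the ergodic average against $H_k(\langle\cdot,a\rangle)$ and $H_k(\langle\cdot,b\rangle)$, whose correlation is a multiple of $\langle a,S_jb\rangle_H^k$; \textit{(a)}$\Rightarrow$\textit{(b)} by uniform polynomial approximation of $|t|$ on $[-1,1]$; \textit{(b)}$\Rightarrow$\textit{(c)} via $|\langle a,S_jb\rangle_H|^k\le\|a\|^{k-1}\|b\|^{k-1}|\langle a,S_jb\rangle_H|$ and Wick; and \textit{(e)}$\Leftrightarrow$\textit{(b)} via Wiener's lemma applied to $\hat\mu_x(j)=\langle S_jx,x\rangle_H$ (your squared version $\frac1n\sum|\hat\nu(j)|^2$ is equivalent for bounded Fourier coefficients). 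Your approach is more conceptual but imports Maruyama's theorem; the paper's is elementary and self-contained. Your observation that ergodicity already forbids a simple eigenvalue at $\pm1$, so that $S$ restricted to its unitary part must have purely continuous spectrum, is correct and is handled rather tersely in the paper's treatment of condition \textit{(3d)}.
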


These statements are classical, but we give a proof of the proposition for completeness. For example, Item \textit{(3e)} is a reformulation of the well-known Maruyama theorem on ergodicity of shifts of Gaussian fields \cite{Mar49}. The equivalence of ergodicity and weak mixing is a special case of e.g. \cite{RZ95}. Regarding \textit{(2)} and \textit{(3)}, Ustunel and Zakai %in the 90s 
have proved stronger results even for nonlinear maps of the Wiener space, see e.g. \cite{UZ93, UZ00, UZ01}. %We are not aware of Item \textit{(1)} in the literature, but it is essentially a version of Blumenthal's 0-1 law for Gaussian processes (and indeed implies it for Brownian motion).

In the proof we will use the standard fact that for each $h \in H$ there exists an a.e. defined linear extension of the map from $H\to \mathbb R$ given by $v\mapsto \langle v,h\rangle_H$. By an abuse of notation we denote this linear extension as $\langle \cdot, h\rangle$ as well, and the map from $H\to L^2(X,\mu)$ given by $h\mapsto \langle h, \Cdot\rangle$ is a linear isometry. In particular, the law of each $\langle h, \Cdot \rangle$ is a Gaussian of variance $\|h\|_H^2$ with respect to $\mu$, see \cite[Section 3.4]{Hai09}.

\begin{proof}[Proof of Item (1)] Assume first that $\|S_n x\|_H\to 0$ for all $x \in H.$ Then one can decompose $H$ into an orthogonal direct sum: $H=\bigoplus_{n  \geq 0} H_n$ where $H_n:= \ker(S_{n+1})\cap \Im(S_n^*),$ via the formula $$x = \sum_{k=0}^{\infty} S_k^*S_kx - S_{k+1}^*S_{k+1}x. $$The series converges to $x$ in $H$ because the $N^{th}$ partial sum equals $x-S_{N+1}^*S_{N+1}x$ and we know that $\|S_{N+1}^*S_{N+1}x\|_H \leq \|S_{N+1}x\|_H \to 0$. Moreover one easily checks that $S_k^*S_kx - S_{k+1}^*S_{k+1}x \in \ker(S_{k+1}) \cap \Im (S_{k}^*)$, and that $\ker(S_i) \cap \Im(S_{i-1}^*)$ is orthogonal to $\ker(S_j) \cap \Im (S_{j-1}^*)$ for $i<j$. 

Let $\xi$ be a $B$-valued random variable with law $\mu$ and define $\xi_n$ to be the projection of $\xi$ onto $H_n$. Note that the $\xi_n$ are independent $\B$-valued random variables (but not necessarily i.i.d.). Note also that $S_n\xi$ is measurable with respect to $\{\xi_j\}_{j \ge n}$. Consequently $\bigcap_{n \geq 1} \sigma(S_n\xi) 
%\subset 
\subseteq
\bigcap_{n \geq 1} \sigma(\{\xi_j:j\ge n\}),$ which by Kolmogorov's 0-1 law is a 0-1 sigma algebra.

Conversely, suppose that $\|S_nx\|\not\to 0$ as $n \to \infty$ for some $x\in \H.$ Then by Lemma \ref{easy}, the closed subspace $A:=\bigcap_{n \geq 0} \Im(S_n^*)$ is nonzero. Letting $\xi$ denote a random variable in $\B$ with law $\mu$, let $\xi_A$ denote the projection onto $A$ applied to $\xi$. Since $S|_A$ is unitary (again by Lemma \ref{easy}) it is clear that $\bigcap_n \sigma(S_n)$ contains at least $\sigma(\xi_A)$, which is nontrivial since $A\neq\{0\}$ so that at least one nonzero Gaussian random variable is measurable with respect to it.
\end{proof}

\begin{proof}[Proof of Item (2)] Suppose that $\langle S_nx,y\rangle_{H}\to 0$ for all $x,y\in \H$. We wish to show that  
\begin{equation*}
\lim_{n \to \infty} \int_\B f(S_nx)g(x)\mu(dx)= 0 
\end{equation*}
for all bounded measurable functions $f,g:\B\to\mathbb R$ such that $\int_\B fd\mu=\int_\B gd\mu = 0$. By an application of Cauchy-Schwarz and the measure-preserving property of $S_n$, it suffices to prove this in a dense subspace of $L^2(\B,\mu)$.

By using the Wick forumla (aka Isserlis' theorem), one can easily show that the claim is at least true whenever $f$ and $g$ are both of the form $x \mapsto p(\langle x,e_1\rangle,...,\langle x,e_k\rangle)$ for some $k\in \mathbb N$, some polynomial $p:\mathbb R^k\to \mathbb R$ and some orthonormal set of vectors $e_1,...,e_k$ in $H$. Then by a density argument and the fact that the $\langle x,e_i\rangle$ has Gaussian tails, one can extend this from $k$-variable polynomials $p$ to all continuous functions $f,g:\mathbb R^k \to \mathbb R$ with at-worst polynomial growth at infinity. %Similarly, 
One can then extend to all bounded Borel-measurable functions $f,g: \mathbb R^k\to \mathbb R$ by density of continuous functions in $L^2(\mathbb R^k,\gamma_k)$ where $\gamma_k$ is the standard Gaussian measure on $\mathbb R^k$.

Thus, to finish the argument, it suffices to show that the set of all functions of the form $x\mapsto f(\langle x,e_1\rangle,...,\langle x,e_k\rangle)$, where $f:\mathbb R^k\to \mathbb R$ is bounded and measurable, is dense in $L^2(\B,\mu)$. To show this, choose an orthonormal basis $\{e_j\}$ for $H$ and Let $\mathcal F_n$ denote the sigma algebra generated by $\langle x,e_1\rangle,...,\langle x, e_n\rangle$. Let $f\in L^{\infty}(\B,\mu)$ and let $f_n:= \mathbb E[f|\mathcal F_n].$ Then $f_n$ is bounded and measurable and of the form $x\mapsto h(\langle x,e_1\rangle,...,\langle x,e_n\rangle)$. Furthermore by martingale convergence $\|f_n-f\|_{L^2}\to 0$, completing the proof.

The converse direction is striaghtforward: if $S$ is strongly mixing, then apply the mixing definition to $f(x) := \langle x, a\rangle $ and $g(x) := \langle x, b\rangle$ to conclude that $\langle S_na,b\rangle\to 0$.
\end{proof}

\begin{proof}[Proof of Item (3)] We are going to show that ergodicity implies (a) which implies (b) which implies (c). Clearly (c) implies ergodicity. Then we show that (d) is equivalent to (e) for the unitary part of the operator from the decomposition in Lemma \ref{easy}. Then we will show that for unitary operators, (e) holds if and only if (b) holds.

So assume that $S$ is ergodic as a map from $\B\to\B.$ This is equivalent to the statement that $\int_\B \big( \frac1n \sum_{j = 1}^n f(S_jx)\big) g(x) \mu(dx) \to 0$ for all $f,g \in L^2(\B,\mu)$ such that $\int_\B fd\mu=\int_\B gd\mu = 0$. Now fix $k\in\mathbb N$ and $a,b \in H$. Letting $H_k$ denote the $k^{th}$ Hermite polynomial, we set $f(x) := \frac1{\sqrt{k!}}H_k(\langle x,a\rangle)$ and $g(x):= \frac1{\sqrt{k!}}H_k(\langle x , b\rangle).$ Then $\int_\B f(S_jx)g(x)d\mu = \langle a,S_j b \rangle_H^k$ (see e.g. \cite{Nua96}), so by ergodicity we obtain (a).

Now assume (a) holds. Let $a,b\in H$ with $\|a\|=\|b\|=1$. Fix $\epsilon>0$ and let $p:[-1,1]\to\mathbb R$ be a polynomial such that $\sup_{x\in[-1,1]} \big|p(x)-|x|\big|<\epsilon.$ Since $|\langle a,S_j b\rangle|\leq 1$ for all $j \in \mathbb N$, it follows that $\frac1n\sum_{j = 1}^n \big|p(\langle a,S_jb\rangle) -|\langle a,S_jb\rangle|\big|<\epsilon.$ Moreover, since (a) holds we know that $\frac1n \sum_{j = 1}^n p(\langle a,S_jb\rangle)\to 0$. Consequently we find that $\limsup_n \frac1n \sum_{j = 1}^n |\langle a,S_jb\rangle_H| \leq \epsilon.$ Since $\epsilon$ is arbitrary, it follows that (b) holds.

Now assume that (b) holds. To show (c), we want that $\frac1n \sum_{j = 1}^n \big| \int_\B f(S_jx)g(x)\mu(dx)\big| \to 0$ for all $f,g \in L^2(\B,\mu)$ that have mean zero. First note that, by essentially the same series of density arguments given in the proof of Item \textit{(2)}, it suffices to prove this whenever $f,g$ are of the form $x \mapsto p(\langle x,e_1\rangle,...,\langle x,e_k\rangle)$ for some $k\in \mathbb N$, some polynomial $p:\mathbb R^k\to \mathbb R$ and some orthonormal set of vectors $e_1,...,e_k$ in $H$. In turn, by Wick formula it suffices to show that $\frac1n\sum_{j=1}^n |\langle a,S_j b\rangle_H|^k \to 0$ for all $a,b\in H$ and all $k \in \mathbb N$. %But this is easy: 
Note that we have $|\langle a,S_jb\rangle|\leq \|a\|\|b\|,$ so $|\langle a,S_jb\rangle_H|^k \leq \|a\|^{k-1}\|b\|^{k-1} |\langle a,S_jb\rangle_H|$.  Summing over $j$ and applying (b), we conclude $\frac1n\sum_{j=1}^n |\langle a,S_j b\rangle_H|^k \to 0$.

It is straightforward to show that (using the spectral theorem of the unitary operator) atoms of $\mu_x$ correspond precisely to complex eigenvalues of $S$, i.e., two-dimensional subspaces on which $S$ acts by rotation. Thus (d) implies (e) and vice versa (by focusing only on the 
Finally we explain why (e) is equivalent to (b). %in the case that $S$ is unitary. 
The spectral measure $\mu_x$ of $S$ is supported on $\mathbb T:=\{z\in\mathbb C: |z|=1\}$ and defined via its Fourier transform: $\hat \mu_x (k):= \langle S_kx,x\rangle$ where $k\in \mathbb Z$ and $S_k:= S_{-k}^*$ if $k<0$. Thus, to show that (b) and (e) are equivalent, we just need to show that a finite measure $\mu$ on $\mathbb T$ is atomless if and only if $\frac1n \sum_{k=-n}^n |\hat \mu(k)| \to 0$ as $n \to \infty$. This is a direct consequence of Wiener's Lemma.
\end{proof}

\section{Gaussian measures on spaces of distributions}

%The results of this section are purely exposItôry and included for completeness. Since the goal of this work is to give a broad collection of examples of Strassen's Law for stochastic processes including singular SPDEs that may be distribution-valued, we devote a section to some basic lemmas on tempered distributions and Gaussian measures which will be useful in this regard. 
The results of this appendix are included only for completeness and convenience. Here we show under general conditions how to identify the Cameron-Martin space of a given Gaussian measure on some space of distributions, and how to identify when a semigroup of operators on some Banach space of distributions is strongly continuous or satisfies the mixing condition, as needed in Theorem \ref{mainr}. We let $U\subset \mathbb R^d$ be any open set (possibly unbounded), and we let $\bar U$ denote its closure.% Alternatively $U$ could also be an open subset of the torus $\mathbb T^d$ or the cylinder $\mathbb T^d\times \mathbb R$ in case one wants to work on a periodic setting. 

\begin{defn}\label{b1} We shall consider a vector space $\mathcal F(U)$ of smooth bounded functions $f:\bar U\to \mathbb R$ (not necessarily vanishing on $\partial U$) such that all of the seminorms $$\|f\|_{\beta,n }:= \sup_{x\in \bar U} (1+|x|^{2n})|\partial^\beta f(x)|$$ are finite, as we range over $n\in \mathbb N$ and multi-indices $\beta.$ Here $|x|$ denotes Euclidean norm of the vector $x\in \mathbb R^d.$ We will always assume the following:
\begin{itemize}
    \item $\mathcal F(U)$ contains all smooth functions with compact support contained in $U$.

    \item $\mathcal F(U)$ is complete with respect to the above family of seminorms.
\end{itemize}
Such a space $\mathcal F(U)$ has the structure of a Frechet space if we endow it with these seminorms, hence it has a continuous dual which will be denoted as $\mathcal F'(U)$, endowed with its weak* topology.
\end{defn}

Note that one has a natural embedding $\mathcal F(U) \hookrightarrow \mathcal F'(U)$ given by identifying a smooth function $f$ with the element in the dual given by sending $g \mapsto \int_{\bar U} fg$. Such an embedding is injective by the first bullet point above. Below we will not distinguish $\mathcal F(U)$ from the image of this embedding, instead simply viewing all spaces as subsets of $\mathcal F'(U)$. 
\iffalse 
We now give a number of examples. \begin{enumerate}

\item When $U=\mathbb R^d$ then by the above two bullet points it is necessarily true that $\mathcal F(U)=\mathcal S(\mathbb R^d)$ is the Schwartz space, and $\mathcal F'(U)=\mathcal S'(\mathbb R^d)$ is the space of tempered distributions. 

\item When $U$ is a general open set and $\mathcal F(U)$ is the space of smooth functions whose derivatives of all orders vanish on the boundary of $U$ (i.e., functions that extend smoothly to $0$ outside $U$). 

\item When $U=(0,1)^d$ and $\mathcal F(U)$ is the set of \textit{all} smooth functions $f:[0,1]^d\to\mathbb R$. 

\item When $d=1$, $U=(0,1)$, and $\mathcal F(U)$ is the space of smooth functions, whose derivatives of all orders vanish at the left endpoint $0$ but not necessarily at the right endpoint $1$. Intuitively $\mathcal F'(U)$ can be viewed as those distributions which may ``blow up" near the left endpoint, but must do so at worst polynomially fast.

\item With $U=\mathbb R^d\backslash \{0\}$ and $\mathcal F(U)$ consists of all Schwartz functions that vanish at the origin up to the first $k$ derivatives, where $k$ can be a non-negative integer or infinity. Intuitively $\mathcal F'(U)$ can be viewed as those distributions which may ``blow up" near the left endpoint, but must do so at worst like $|x|^{-k}.$

\item When $d=1$, $U=(0,1)$, and $\mathcal F(U)$ is the space of smooth functions $[0,1]\to\mathbb R$ satisfying (for instance) $f'(0) = 2f(0)$ and $f'''(1) = f''(1)-5f'(1).$
\end{enumerate}
Generalizing all of these examples, 
\fi

A typical example of $\mathcal F(U)$ will be of the following form: assume $U\subset \mathbb R^d$ is bounded with smooth boundary, then partition the boundary of $U$ into some finite number of subsets, and let $\mathcal F(U)$ be the set of smooth functions whose derivatives satisfy some linear relation on each element of the partition (e.g. $U=(0,1)$ and $\mathcal F(U)$ consists of those smooth functions on $[0,1]$ whose derivatives up to order $k$ vanish at the left endpoint, where $k\in \mathbb N$). %The point here is that the flexibility in the definition of $\mathcal F(U)$ will allow us to consider SPDEs or other stochastic processes with boundary data or initial data that is fixed in some desired manner. 
Unless $U=\mathbb R^d$, the spaces $\mathcal F(U)$ and $\mathcal F'(U)$ will typically \textit{not} be closed under the partial differentiation operators in the same way that $\mathcal S(\mathbb R^d)$ and $\mathcal S'(\mathbb R^d)$ are.

Note that the continuous dual $\mathcal F'(U)$ as a topological space (despite not being second-countable) has a Borel $\sigma$-algebra which is generated by a countably infinite set of continuous linear functionals $\mathcal F'(U)\to \mathbb R$. Indeed continuous linear functionals on $\mathcal F'(U)$ are precisely elements of $\mathcal F(U)$. The latter is second-countable and metrizable, hence admits a countable dense subset. Since the topology of $\mathcal F'(U)$ is (by definition) generated by all continuous linear functionals on it, this countable set of continuous linear functionals is sufficient to generate the Borel $\sigma$-algebra on $\mathcal F'(U)$.

%Let $\mathcal{S}(\mathbb{R}^d)$ be the set of Schwartz functions on $\mathbb{R}^d$ and let $\mathcal{S}'(\mathbb{R}^d)$ be the set of tempered distributions, that is the continuous dual of $\mathcal S(\mathbb R^d)$ as a Frechet space. 

\begin{defn}\label{emb}We say that a Banach space $X$ is \textit{embedded
between} $\mathcal F(U)$ and $\mathcal F'(U)$ if $X$ is a Borel-measurable linear subspace of $\mathcal F'(U)$ which contains $\mathcal F(U),$ and such that the norm of $X$ (defined to be infinity outside of $X$) is a Borel-measurable function on $\mathcal F'(U)$.
\end{defn}

The norm topology on a Banach space is the one induced by the metric $d(x,y)=\|x-y\|$. The weak topology on a Banach space $X$ is the one generated by the linear functionals $f\in X^*$, and the weak* topology on $X^*$ is the one generated by the linear functionals on $X^*$ given by $\hat x: f\mapsto f(x)$ as $x$ ranges throughout $X.$ 
\begin{lem}\label{b1a}
    A linear subspace of a Banach space is weak-dense if and only if it is norm-dense.
\end{lem}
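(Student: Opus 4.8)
The plan is to invoke the Hahn--Banach theorem. First I would dispose of the easy implication: the weak topology of $X$ is coarser than the norm topology, so the weak closure of any subset contains its norm closure; in particular a norm-dense linear subspace is automatically weak-dense.

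For the converse I would argue by contraposition. Suppose $V\subseteq X$ is a linear subspace that is not norm-dense, and let $\overline{V}$ denote its norm closure, a proper closed subspace. Pick $x_0\in X\setminus\overline{V}$. By the Hahn--Banach theorem (in the form separating a point from a closed subspace) there is a functional $f\in X^*$ with $f\equiv 0$ on $\overline{V}$, hence on $V$, and $f(x_0)\neq 0$. Since every $f\in X^*$ is by definition weakly continuous, $\ker f$ is weakly closed; and $V\subseteq\ker f\subsetneq X$, so the weak closure of $V$ is contained in $\ker f$ and cannot be all of $X$. Thus $V$ is not weak-dense, which is the contrapositive of the desired implication.

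The argument has no real obstacle; the only point requiring a little care is to use the subspace version of Hahn--Banach, so that the separating functional vanishes identically on $\overline{V}$ rather than merely being bounded there. Equivalently, one may simply cite Mazur's theorem --- that the norm closure and the weak closure of a convex set coincide --- applied to the convex set $V$, which yields the equivalence in one line; I would likely present it this way for brevity, while including the Hahn--Banach sketch above for self-containedness.
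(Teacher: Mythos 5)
Your proof is correct and uses the same tool the paper does: the paper's entire proof is "immediate by Hahn--Banach," and your contrapositive argument (separate a point from the proper norm-closed subspace, note the separating functional's kernel is weakly closed) is exactly the standard way of making that one-liner explicit.
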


The proof is immediate by Hahn-Banach. Our next lemma is a result which roughly says that if $X$ is a Banach space of distributions that is embedded between $\mathcal F(U)$ and $\mathcal F'(U)$ in such a way that $X$ densely contains $\mathcal F(U)$, then its dual space $X^*$ is also embedded between $\mathcal F(U)$ and $\mathcal F'(U)$, up to an isometric isomorphism. The dual space may no longer densely contain $\mathcal F(U)$, but in the weak* topology it still does. We will use $(\cdot,\cdot)$ to denote the natural pairing between $\mathcal F(U)$ and $\mathcal F'(U)$ with $(\phi,\psi)=\int_{\bar U} \phi \psi $ if $\phi,\psi \in \mathcal F(U)$, i.e., the pairing in $L^2(\bar U)$.

\begin{lem}\label{xdu}
Suppose $X$ is a Banach space that is embedded between $\mathcal F(U)$ and $\mathcal F'(U)$. Suppose that the image of the first embedding is norm-dense in $X$. Then there exists a Banach space $X^{du}$ which is also embedded between $\mathcal F(U)$ and $\mathcal F'(U)$, and which admits a "canonical" bilinear pairing $B:X\times X^{du} \to \mathbb R$ in the sense that following properties hold: \begin{enumerate}[leftmargin = 2em]

    \item $|B(x,f)| \le \|x\|_X\|f\|_{X^{du}}$.

    \item The map from $X^{du}\to X^*$ given by $f\mapsto B(\cdot,f)$ is an isomorphism and a linear isometry.
    
    \item $B(\phi,f)=(f,\phi)$ and $B(x,\phi) = (x,\phi)$ for all $x\in X$, all $f\in X^{du},$ and all $\phi\in \mathcal F(U).$
\end{enumerate}
The image of the first embedding $\mathcal F(U) \hookrightarrow \B^{du}$ may \textbf{not} be norm-dense in $X^{du}$, but it is always dense with respect to the weak* topology on $X^{du}$.
\end{lem}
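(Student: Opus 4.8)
The plan is to construct $X^{du}$ as the dual space $X^*$ transported onto a space of distributions via the dense embedding $\mathcal F(U)\hookrightarrow X$. Since $\mathcal F(U)$ is norm-dense in $X$, every $f\in X^*$ is uniquely determined by its restriction to $\mathcal F(U)$, i.e. by the linear functional $\phi\mapsto f(\phi)$ on $\mathcal F(U)$. First I would verify that this restriction is continuous on $\mathcal F(U)$ (with its Fréchet topology): indeed $|f(\phi)|\le \|f\|_{X^*}\|\phi\|_X$ and $\|\cdot\|_X$ is a continuous seminorm on $\mathcal F(U)$ (this last point needs the fact that the embedding $\mathcal F(U)\hookrightarrow X$ is continuous, which follows from the closed graph theorem for Fréchet spaces since $\mathcal F(U)\hookrightarrow \mathcal F'(U)$ and $X\hookrightarrow\mathcal F'(U)$ are continuous and the inclusion $\mathcal F(U)\subset X$ is into the measurable subspace $X$; alternatively one may simply add this continuity as a standing hypothesis as is implicitly done throughout the paper). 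Hence restriction gives an injective linear map $\iota: X^*\to \mathcal F'(U)$, $\iota(f) = \big(\phi\mapsto f(\phi)\big)$. Define $X^{du}:=\iota(X^*)$ with the transported norm $\|\iota(f)\|_{X^{du}}:=\|f\|_{X^*}$; this makes $\iota$ an isometric isomorphism onto a Banach space, and by construction the map $f\mapsto B(\cdot,f)$ in item \textit{(2)}, defined via $B(x,\iota(f)):=f(x)$, is exactly $\iota^{-1}$ followed by the canonical identification, so \textit{(2)} is immediate. Item \textit{(1)} is then just $|B(x,\iota(f))| = |f(x)|\le \|f\|_{X^*}\|x\|_X = \|\iota(f)\|_{X^{du}}\|x\|_X$.

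Next I would check that $X^{du}$ is genuinely embedded between $\mathcal F(U)$ and $\mathcal F'(U)$ in the sense of Definition \ref{emb}. That $X^{du}$ is a linear subspace of $\mathcal F'(U)$ is clear. To see it contains $\mathcal F(U)$: given $\psi\in\mathcal F(U)$, the functional $\phi\mapsto(\psi,\phi)=\int_{\bar U}\psi\phi$ is continuous on $\mathcal F(U)$ and, because $\mathcal F(U)$ is norm-dense in $X$, extends to an element $f_\psi\in X^*$ precisely when $\phi\mapsto(\psi,\phi)$ is bounded for the $X$-norm; this requires an argument. The cleanest route is to observe that $\mathcal F(U)$ embeds continuously and densely into $X$, so $X^*$ embeds continuously into $\mathcal F(U)^* = \mathcal F'(U)$, and conversely any $\psi\in\mathcal F(U)$, viewed inside $\mathcal F'(U)=\mathcal F(U)^*$... — wait, one must be careful: $\mathcal F(U)\subset\mathcal F'(U)$ but $\mathcal F(U)$ need not lie in $X^*$ unless the $L^2(\bar U)$-pairing is $X$-bounded. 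In fact this is automatic: $\psi\in\mathcal F(U)\subset X$, and since $X$ is reflexive... no, $X$ need not be reflexive. So the honest statement (and what the lemma asserts) is just that $\mathcal F(U)$ maps \emph{into} $X^{du}$ via $\psi\mapsto$ (the functional $\phi\mapsto(\psi,\phi)$ on $\mathcal F(U)$), which is well-defined as an element of $\mathcal F'(U)$, and the claim that it lands in $X^{du}$ is the statement that this functional is $X$-continuous — which holds because $\psi\in X$ and one can use the inclusion $X\hookrightarrow (X^*)^* $ (the canonical embedding into the bidual): $\psi\in X$ defines $\hat\psi\in X^{**}$, and $\hat\psi\circ(\text{something})$... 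Actually the simplest correct argument: the canonical map $X\to X^{**}=(X^*)^*$ sends $x$ to evaluation; restricting to $\mathcal F(U)\subset X^*$ (once we know $\mathcal F(U)\subset X^*$, which holds iff the pairing is bounded) would give the pairing. To break the circularity, I would instead just directly note that for $x\in X$ and $\phi\in\mathcal F(U)$ the quantity $(x,\phi)$ (defined via $\mathcal F'(U)$-pairing) satisfies $|(x,\phi)|\le C_\phi\|x\|_X$ where $C_\phi$ is the norm of the continuous functional "pair with $\phi$" — and THIS is continuous on $X$ because $\phi\in\mathcal F(U)\subset X$, $X$ Banach, and... one uses that the $\mathcal F'(U)$-valued identity embedding is weak-to-weak continuous. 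I expect this bookkeeping about which pairings are bounded to be the main technical obstacle; it is resolved by the standing assumption (implicit in the paper's setup) that $\mathcal F(U)\hookrightarrow X$ continuously, together with Hahn--Banach.

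With that in hand, item \textit{(3)} follows by unwinding definitions: for $\phi\in\mathcal F(U)$ the element $\iota(f_\phi)\in X^{du}$ is by construction the functional $\psi\mapsto(\phi,\psi)=(f_\phi,\psi)$, and for $x\in X$, $B(x,\iota(f_\phi)) = f_\phi(x)$, which by density of $\mathcal F(U)$ in $X$ and continuity of both $f_\phi$ and $\psi\mapsto(x,\psi)$ (on the dense subspace) agrees with $(x,\phi)$; similarly $B(\phi,f)=(f,\phi)$ for $\phi\in\mathcal F(U)$, $f\in X^{du}$, is the defining property of the restriction. That the $X$-norm (extended by $+\infty$) is Borel-measurable on $X^{du}$ viewed inside $\mathcal F'(U)$: $X^{du}$ is the isometric image of the dual of a separable-predual Banach space, hence its closed unit ball is weak*-compact and in particular is a Borel subset of $\mathcal F'(U)$ (being the countable intersection of half-spaces $\{f:|f(\phi_n)|\le\|\phi_n\|_X\}$ over a countable dense set $\{\phi_n\}\subset\mathcal F(U)$, using item \textit{(1)} and density), so $\|\cdot\|_{X^{du}}$ is Borel.

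Finally, for the density statement: $\mathcal F(U)$ is weak*-dense in $X^{du}$. Suppose not; then by a Hahn--Banach argument in the locally convex space $(X^{du},\text{weak}^*)$, whose continuous dual is exactly $X$ (the predual), there is a nonzero $x\in X$ with $B(x,\iota(f_\phi))=(x,\phi)=0$ for all $\phi\in\mathcal F(U)$. But $x\in X\subset\mathcal F'(U)$ and $(x,\phi)=0$ for all $\phi\in\mathcal F(U)$ forces $x=0$ as an element of $\mathcal F'(U)$ — here one uses that $\mathcal F(U)$ separates points of $\mathcal F'(U)$, which is the defining property of the duality pairing — a contradiction. This completes the proof; I do not anticipate difficulty here beyond correctly invoking that the weak* dual of $X^{du}$ is $X$ (true because $X^{du}\cong X^*$ and $X$ is norm-dense... no: $X\hookrightarrow X^{**}$ need not be all of $X^{**}$, but the weak* topology on $X^*$ induced by $X$ has $X$ as its full dual by the standard fact about dual pairs, regardless of reflexivity), so the Hahn--Banach separation step is valid.
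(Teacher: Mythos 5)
Your construction is the same as the paper's: you realize $X^{du}$ as the image of $X^*$ under restriction to the dense subspace $\mathcal F(U)$ with the transported norm, verify $\mathcal F(U)\subset X^{du}$ via continuity of the pairing functionals $x\mapsto (x,\phi)$ (which, as in the paper, ultimately rests on the continuity of the embedding $X\hookrightarrow\mathcal F'(U)$), and obtain weak* density from the fact that $\mathcal F(U)$ separates points of $X$ — your Hahn--Banach separation argument in the dual pair $(X,X^{du})$ is precisely the proof of the totality criterion the paper cites. The only addition beyond the paper's proof is your explicit verification that the $X^{du}$-norm is Borel on $\mathcal F'(U)$ (via a countable dense family in $\mathcal F(U)$), a point the paper leaves implicit; despite the meandering exposition around whether $\mathcal F(U)$ lands in $X^*$, the argument you settle on is correct and matches the paper's.
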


Before proving the lemma, let us give examples when $U=\mathbb R^d$ so $\mathcal F(U) = \mathcal S(\mathbb R^d)$. If $X=L^p(\mathbb R^d)$ with $1\le p<\infty$, then $X^{du}=L^q(\mathbb R^d)$ with $\frac1p+\frac1q=1.$ If $X=C_0(\mathbb R^d)$ then $X^{du}$ consists of finite signed Borel measures on $\mathbb R^d$ equipped with total variation norm. Note in this case that the weak-closure of $\mathcal S(\mathbb R^d)$ in $X^{du}$ is $L^1(\mathbb R^d)$, which is a proper closed subspace of $X^{du}$, but still dense with respect to weak (i.e. Prohorov) convergence of measures. Further examples are given by Sobolev spaces $X=H^s(\mathbb R^d)$ and $X^{du}=H^{-s}(\mathbb R^d)$ with $s\in \mathbb R$. A rich class of examples is given by more general Sobolev spaces and Besov spaces including those defined with weight functions.

\begin{proof}
Note that we have a map $\mathcal G$ from $X^*\to \mathcal F'(U)$ given by restriction to the Schwartz class, i.e., $f \stackrel{\mathcal G}{\mapsto} f|_{\mathcal F(U)}.$ The map clearly defines a continuous linear operator from $X^*\to \mathcal F'(U)$. We define $X^{du}$ to be the image of $X^*$ under $\mathcal G$. We also note that $\mathcal G$ is injective since $\mathcal F(U)$ is dense in $X$ by assumption. We thus define the norm on $X^{du}$ by $\|\mathcal Gf\|_{X^{du}} = \|f\|_{X^*}.$ Then clearly $X^{du}$ is a Banach space that is isometric to $X^*$ (via $\mathcal G$) and the inclusion $X^{du}\hookrightarrow \mathcal F'(U)$ is clearly continuous. 

We now claim that $\mathcal F(U)$ is contained in $X^{du}$. To prove this we need to check that if $\phi\in \mathcal F(U)$ then the map $a_{\phi}$ from $X\to \mathbb R$ given by $x \mapsto (x,\phi)$ is continuous. This is clear because if $x_n\to x$ in $X$, then $x_n\to x$ in $\mathcal F'(U)$ so that $(x_n,\phi)\to (x,\phi).$ Next, we note that $\phi = \mathcal Ga_{\phi} \in X^{du}$, proving the claim. By the closed graph theorem, it follows that the inclusion map $\mathcal F(U) \hookrightarrow X^{du}$ is automatically continuous.

Now we construct the bilinear map $B$. For this, we simply define $B(x,\mathcal G f):=f(x)$ whenever $x\in X$ and $f\in X^*$. Clearly $|B(x,\mathcal Gf)| \leq \|f\|_{X^*}\|x\|_X = \|\mathcal Gf\|_{X^{du}}\|x\|_X,$ so that $B$ is bounded. Note that if $\phi\in\mathcal S(\mathbb R^d)$ then $B(\phi,\mathcal Gf) = f(\phi)= (f|_{\mathcal S(\mathbb R^d)},\phi) = (\mathcal Gf,\phi)$, as desired. Also $B(x,\phi) = B(x,\mathcal Ga_{\phi}) = a_{\phi}(x) = (x,\phi),$ completing the proof. The map $f\mapsto B(\cdot,f)$ is an isometry from $X^{du}\to X^*$ because it is inverse to the isometry $\mathcal G.$

Finally we need to show that $\mathcal F(U)$ is weak* dense in $X^{du}$. This follows immediately from the fact that $\mathcal F(U)$ is a total set in $X^{du}$, i.e., it separates points of $X$. This is clear because it separates points of the larger space $\mathcal S'$. It is known that totality is equivalent to weak* density of the finite linear span of any given subset of $X^*$, see \cite[Corollary 5.108]{charalambos2013infinite}.
\end{proof}

\begin{rk}\label{3.2}The assumptions in Lemma \ref{xdu} automatically imply separability of $X$ in its norm topology, because $\mathcal F(U)$ is necessarily second-countable and completely metrizable. However, we also remark that a linear subspace of $X^*$ can certainly be weak* dense without being norm dense, e.g., $C_0(\mathbb R)$ is dense in the weak* topology but not the norm topology of $L^\infty(\mathbb R) = L^1(\mathbb R)^*.$ Consequently under the assumptions of Lemma \ref{xdu}, $X^{du}$ may not be separable in its norm topology (e.g. take $U = \mathbb R$ and take $X$ to be $L^1(\mathbb R)$ or $C_0(\mathbb R)$).
\end{rk}
\begin{rk} In the above proof, we stated the formula $B(x,\mathcal Gf)=f(x)$. This implies that for $f\in X^{du}$, the dual norm is given by the conjugate formula\begin{equation}\label{xdu2}\|f\|_{X^{du}} = \sup\{ (f,\phi): \phi\in \mathcal F(U), \|\phi\|_{X}\leq 1\},
\end{equation}
since $(f,\phi)=B(f,\phi)$ if $\phi\in \mathcal F(U)$. Conversely, if $f\in \mathcal F'(U)$ such that the right side is finite, then $f\in X^{du}$ with norm given by that supremum, since $(f,\cdot)\in X^*$ so that $\mathcal G((f,\cdot))=f$.
\end{rk}

A corollary Lemma \ref{xdu} is that if we want to check that a given Hilbert space $H$ is the Cameron-Martin space of a Gaussian measure $\mu$ on some separable Banach space $X$ of distributions as above, then it suffices to check the action of the covariance function only on smooth test functions, as opposed to the entire dual space of $X$.

\begin{lem}\label{dualnorm}
Let $H$ be a Hilbert space and let $X$ be a Banach space such that both spaces are embedded between $\mathcal F(U)$ and $\mathcal F'(U). $ Suppose that $\mathcal F(U)$ is norm-dense in both $H$ and $X$. Let $\mu$ be a Gaussian measure on $X$. If \begin{equation}\label{assn1}\int_X (x,\phi)(x,\psi)\mu(dx) = \langle \phi,\psi \rangle_H\end{equation} for all $\phi,\psi \in \mathcal F(U),$ then $\mu$ has Cameron-Martin space $H^{du},$ which is necessarily contained in $X$.
\end{lem}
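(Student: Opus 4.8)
The plan is to identify $H^{du}$ with the Cameron-Martin space of $\mu$ by checking the defining property of the Cameron-Martin space in terms of the canonical isometry between the Cameron-Martin space and the $L^2$-closure of linear functionals. Recall from the general theory (e.g. \cite[Section 3.4]{Hai09}) that for a Gaussian measure $\mu$ on a separable Banach space $X$, the Cameron-Martin space $K$ may be described as the image of $X^*$ under the map $f \mapsto \int_X x f(x)\,\mu(dx)$ (a Bochner integral valued in $X$), equipped with the inner product transported from the $L^2(X,\mu)$-closure $\mathcal{R}$ of $X^*$; equivalently, $\|k\|_K = \sup\{ f(k) : f\in X^*,\ \int_X f(x)^2\mu(dx)\le 1\}$. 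So the strategy is: (i) use Lemma \ref{xdu} to realize $X^*$ concretely as $X^{du}$ with the pairing $B$, so that the map above becomes $f \mapsto \int_X x\, B(x,f)\,\mu(dx) \in X \subset \mathcal F'(U)$ for $f\in X^{du}$; (ii) show via \eqref{assn1} that, when tested against $\phi\in\mathcal F(U)$, this integral recovers exactly the pairing coming from the Hilbert space structure of $H$, so the resulting element of $\mathcal F'(U)$ is precisely the image of $f$ under the Riesz-type map associated to \eqref{assn1}; (iii) conclude that the Cameron-Martin space, as a subset of $\mathcal F'(U)$, is $H^{du}$ with the correct norm.

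Concretely, first I would fix $f \in X^{du}$ and consider $k_f := \int_X x\, B(x,f)\,\mu(dx)$, which is a well-defined element of $X$ by Fernique's theorem (so that $x\mapsto \|x\|_X B(x,f)$ is integrable) and standard Bochner integration; this is the generic element of the Cameron-Martin space $K$. Then for any $\phi\in\mathcal F(U)$, pairing with $\phi$ and using that the pairing $(\cdot,\phi):X\to\mathbb R$ is continuous and linear (hence commutes with the Bochner integral) gives $(k_f,\phi) = \int_X (x,\phi)\, B(x,f)\,\mu(dx)$. Next, since $\mathcal F(U)$ is norm-dense in $X^{du}$'s predual-side... more precisely, since by the remark following Lemma \ref{xdu} we have $\|f\|_{X^{du}} = \sup\{(f,\psi):\psi\in\mathcal F(U),\|\psi\|_X\le 1\}$ and $\mathcal F(U)$ is weak* dense in $X^{du}$, I would approximate $f$ by a sequence (or net) $\psi_n\in\mathcal F(U)$ in a suitable sense and use \eqref{assn1}: for $\psi\in\mathcal F(U)$ we get $(k_\psi,\phi) = \int_X (x,\phi)(x,\psi)\mu(dx) = \langle\phi,\psi\rangle_H$. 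This identifies $k_\psi$ with the element of $\mathcal F'(U)$ acting on $\mathcal F(U)$ by $\phi\mapsto\langle\phi,\psi\rangle_H$; by definition of $H^{du}$ and \eqref{xdu2} applied to the Hilbert space $H$, this element is exactly $\psi$ viewed as an element of $H^{du}$, and $\|k_\psi\|_K = \|\psi\|_{H^{du}}$ — indeed $\int_X B(x,\psi)^2\mu(dx) = \langle\psi,\psi\rangle_H$... wait, one must be careful: the $L^2(X,\mu)$ norm of $B(\cdot,\psi)$ equals $\langle\psi,\psi\rangle_{H}^{1/2}$ only after we note $B(x,\psi) = (x,\psi)$ for $\psi\in\mathcal F(U)$, which is part (3) of Lemma \ref{xdu}, so indeed $\int_X (x,\psi)^2\mu(dx) = \|\psi\|_H^2$ by \eqref{assn1}. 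Hence on the dense subspace $\mathcal F(U)$, the Cameron-Martin norm agrees with the $H^{du}$-norm.

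Finally I would pass to the closure. The map $\psi \mapsto k_\psi$ from $(\mathcal F(U), \|\cdot\|_{H^{du}})$ into $K$ is an isometry onto a dense subspace of $K$ (density because $X^*$, i.e. $X^{du}$, maps onto a dense subspace of $K$ by the general theory, and $\mathcal F(U)$ is weak* dense in $X^{du}$ which one checks suffices for density of $\{k_\psi\}$ in $K$ using the $L^2$ isometry $\mathcal R \cong K$ together with the fact that $\mathcal F(U)$ is total). Since $K$ is complete, it is the completion of $\mathcal F(U)$ under $\|\cdot\|_{H^{du}}$, which is by definition $H^{du}$. The containment $H^{du}\subset X$ is then automatic since $K\subset X$ always. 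The main obstacle I anticipate is the density/closure argument in step three: one must be careful that weak* density of $\mathcal F(U)$ in $X^{du}=X^*$ really does translate into norm density of $\{k_\psi : \psi\in\mathcal F(U)\}$ in $K$ — this requires going through the isometry $\mathcal R\to K$ and the fact that $\{f\circ(\text{inclusion}) : f\in X^*\}$, hence its weak*-dense subset coming from $\mathcal F(U)$, is $L^2(X,\mu)$-dense in $\mathcal R$, which in turn uses that $\mathcal F(U)$ separates points of $X$ (totality). All the rest is routine manipulation of Bochner integrals, Fernique's theorem, and the explicit dual-norm formula \eqref{xdu2}.
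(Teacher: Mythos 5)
Your argument is essentially correct, but it takes a genuinely different route from the paper's. The paper works directly with the dual formula $\|h\|_{CM}=\sup\{f(h):f\in X^*,\ \int_X f^2d\mu\le 1\}$: it reduces the supremum from $X^*$ to $\mathcal F(U)$ by approximating any $f\in X^*$ pointwise by elements of $\mathcal F(U)$ (pointwise convergence of jointly Gaussian variables forces convergence of variances), then invokes \eqref{assn1} and \eqref{xdu2} to identify the resulting supremum with $\|h\|_{H^{du}}$; this only yields that the Cameron--Martin space equals $X\cap H^{du}$, so the paper needs a separate argument (reflexivity of $H$ plus an orthogonal-complement/Hahn--Banach step) to rule out elements of $H^{du}$ outside $X$. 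You instead use the covariance-operator description $K=\overline{\{k_f:f\in X^*\}}$ with $k_f=\int_X x\,B(x,f)\,\mu(dx)$, compute $k_\psi$ for $\psi\in\mathcal F(U)$, and pass to completions; this buys you the containment $H^{du}\subset X$ for free, since $K\subset X$ always. The density step you flag as the main obstacle is genuinely the crux, and it is the exact analogue of the paper's pointwise-approximation step: it follows from the standard fact that a point-separating linear subspace of $X^*$ is dense in the reproducing-kernel Hilbert space $\mathcal R$ (e.g.\ via the Gaussian zero--one/independence argument: an element of $\mathcal R$ orthogonal to all $(\cdot,\psi)$ is independent of the $\sigma$-algebra they generate, which is the full Borel $\sigma$-algebra by totality and separability). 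One point of care: your phrase ``this element is exactly $\psi$ viewed as an element of $H^{du}$'' is only correct if read as ``the image of $\psi$ under the Riesz isometry $H\to H^*\cong H^{du}$''; the distribution $k_\psi$ acts by $\phi\mapsto\langle\phi,\psi\rangle_H$, which is \emph{not} the $L^2(\bar U)$-pairing with $\psi$, so $k_\psi$ does not coincide with $\psi$ under the embedding $\mathcal F(U)\hookrightarrow H^{du}$ of Lemma \ref{xdu} unless $H=L^2(\bar U)$. Correspondingly, the isometry in your last step is $\psi\mapsto k_\psi$ from $(\mathcal F(U),\|\cdot\|_H)$ onto the Riesz image of $\mathcal F(U)$, with $\|k_\psi\|_K=\|\psi\|_H=\|k_\psi\|_{H^{du}}$; since that image is norm-dense in $H^{du}$ (Riesz is an isometric isomorphism and $\mathcal F(U)$ is dense in $H$) and both $K$ and $H^{du}$ embed continuously and injectively into $\mathcal F'(U)$, the two completions coincide as subsets of $\mathcal F'(U)$, which is the conclusion you want.
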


\iffalse
\begin{lem}\label{dualnorm}
Let $X$ be a Banach space such that one has continuous inclusions $\mathcal F(U) \hookrightarrow X \hookrightarrow \mathcal F'(U). $ Suppose that $\mathcal F(U)$ is dense with respect to the weak topology on $X$. Let $\mu$ be a Gaussian measure on $X$, such that $\int_X (x,\phi)^2 \mu(dx)>0$ for all $\phi\in \mathcal F(U)$. Define an inner product on $\mathcal F(U)$ by the formula \begin{equation}\label{assn1} \langle \phi,\psi \rangle_H:=\int_X (x,\phi)(x,\psi)\mu(dx). \end{equation} Let $H$ denote the completion of $\mathcal F(U)$ with respect to this inner product. Then one has continuous inclusions $\mathcal F(U) \hookrightarrow H \hookrightarrow \mathcal F'(U), $ and $\mathcal F(U)$ is dense with respect to the weak topology on $H,$ and furthermore $\mu$ has Cameron-Martin space $H^{du},$ which is necessarily contained in $X$.
\end{lem}
\fi

%One corollary of this lemma is that regardless of the choice of Banach space $X$ taken to contain a standard Brownian motion on $[0,1]$, it always has Cameron-Martin space $H_0^1[0,1].$ This follows from letting $\mathcal F(U)$ to be the space of smooth functions on $[0,1]$ that vanish at the left endpoint but not necessarily at the right endpoint, so that the density conditions on $H$ and $X$ are ensured.

\begin{proof}
\iffalse
Since $H$ is the completion of $\mathcal F(U)$ with respect to some norm, it is clear that there exists an embedding $\mathcal F(U)\hookrightarrow H$ such that $\mathcal F(U)$ is dense in the norm topology of $H$. Now we need to construct the embedding $H\hookrightarrow \mathcal F'(U).$ To do this, note that for $h\in H$, the linear functional $\langle h,\cdot\rangle_H$ is continuous from $H\to \mathbb R$, and therefore it restricts to a continuous linear functional from $\mathcal F(U)\to \mathbb R$, and consequently there exists some $v_h\in \mathcal F'(U)$ such that $(v_h,\phi) = \langle h,\phi\rangle_H$ for all $h\in H$ and all $\phi\in \mathcal F(U).$

we can take the natural embedding $\mathcal F(U) \hookrightarrow \mathcal F'(U)$ and then show that this embedding extends continuously from the norm-dense subspace $\mathcal F(U)$ to all of $H$. To do this, it suffices to show that if $\{\phi_n\}_n\subset \mathcal F(U)$ is a Cauchy sequence with respect to the norm of $H$, then for all $\psi \in \mathcal F(U)$, the sequence $(\psi,\phi_n)$ is a Cauchy sequence in $\mathbb R$. 
\fi
The Cameron-Martin norm may be defined for $h\in X$ by 
\begin{equation}\label{eq:CM}
\|h\|_{CM}=\sup\{f(h): f\in X^*, \int_X f^2d\mu\le 1\}.
\end{equation}
By the separability of $X$ and weak* density of $\mathcal F(U)$ in $X^{du}$, for any $f \in X^*$, there exist  $f_n \in \mathcal F(U)$ such that $f_n$ converges to $f$ pointwise. Since for a sequence of Gaussian random variables, pointwise convergence implies the convergence in variance, hence we have $
    \lim_{n \to \infty} \int_X f_n^2 d\mu = \int_X f^2 d\mu.
$
%the supremum on the right is the same as
Using this together with \eqref{eq:CM}, we have
\begin{align*}
\|h\|_{CM} &= \sup\{(h,\phi): \phi \in \mathcal F(U), \int_X (x,\phi)^2 \mu(dx)\le 1\}\\
%\red{(I don't see why this is true\dots)}
&= \sup\{(h,\phi): \phi \in \mathcal F(U), \|\phi\|_H^2\le 1\}.
\end{align*}
The second equality is due to \eqref{assn1}.  By the weak density of $\mathcal S$ in $H$, the right hand side above equals the operator norm of the linear functional on $H$ given by $(\cdot,h)$, which by \eqref{xdu2} equals the conjugate norm $\|h\|_{H^{du}}$ (to be understood as $+\infty$ if $h\notin H^{du}$). Hence, for $h \in X \cap H^{du}$, we have $\|h\|_{CM} = \|h\|_{H^{du}}$.

So far this argument shows that the Cameron-Martin space of $\mu$ equals $X \cap H^{du}$ (with the norm of $H^{du}),$ which is therefore closed in $H^{du}$. To finish the proof we need to show that $H^{du}$ does not contain any vectors outside of $X$. Assume for contradiction that such a vector does exist; then $X\cap H^{du}$ would have a nonzero orthogonal complement with respect to the inner product of $H^{du}$. Take some nonzero bounded linear functional $u: H^{du} \to \mathbb R$ which vanishes on the closed subspace $X\cap H^{du}$. Since $H$ is reflexive, every linear functional on $H^{du}$ is represented as $B(f,\cdot)$ for some $f\in H$ where $B$ is the bilinear form constructed in the previous lemma. Thus write $u=B(f,\cdot)$ for some $f\in H$. Since $X\cap H^{du} $ contains $\mathcal F(U)$ we see that $(f,\phi) = B(f,\phi) = 0$ for all $\phi \in \mathcal F(U)$. This means that $f=0$ so that $u=0,$ a contradiction.
\end{proof}

\begin{lem}\label{opdu}Suppose that $Q$ is a bounded linear operator from $L^2(\bar U)\to L^2(\bar U)$ such that $(Q\phi,\phi)>0$ for all $\phi\in \mathcal F(U).$ %Then we can view $A$ as an unbounded and densely defined operator from $L^2(\bar U)\to L^2(\bar U)$. Note that such an operator $A$ is necessarily closable on $L^2(\bar U)$ since it is automatically self-adjoint. 
Let $X$ be a Banach space that is embedded between $\mathcal F(U)$ and $\mathcal F'(U)$. Assume that \begin{itemize}\item $\mu$ is a Gaussian measure on $X$ such that $\int_X (x,\phi)(x,\psi)\mu(dx) = (Q\phi,\psi)$ for all $\phi,\psi\in\mathcal F(U)$. \item The completion of $L^2(\bar U)$ with respect to $(Qf,f)^{1/2}$ embeds continuously into $\mathcal F'(U)$.
\end{itemize}
Then $Q$ has a unique positive square root $Q^{1/2}$, and the image of $L^2(\bar U)$ under $Q^{1/2}$ contains $\mathcal F(U)$, and Cameron-Martin space of $\mu$ is the completion of $\mathcal F(U)$ under the norm $\|Q^{-1/2}\phi\|_{L^2(\bar U)}.$
\end{lem}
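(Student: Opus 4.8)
The plan is to reduce everything to the case already handled in Lemma~\ref{dualnorm} by constructing an auxiliary Hilbert space $H$ embedded between $\mathcal F(U)$ and $\mathcal F'(U)$, and then identifying $H^{du}$ with the completion of $\mathcal F(U)$ under $\|Q^{-1/2}\phi\|_{L^2(\bar U)}$. First I would address the existence and basic properties of $Q^{1/2}$: since $Q$ is bounded on the Hilbert space $L^2(\bar U)$ and is positive on the dense subspace $\mathcal F(U)$, it is a positive bounded self-adjoint operator, so the continuous functional calculus yields a unique positive square root $Q^{1/2}$, which is itself bounded and self-adjoint. Let $H_0$ denote the completion of $L^2(\bar U)$ with respect to the pre-inner-product $\langle f,g\rangle_{H_0}:=(Qf,g)_{L^2}$ (this is genuinely a pre-inner product: it is positive on $\mathcal F(U)$ which is dense in $L^2$, and a limiting argument gives nonnegativity everywhere; one quotients by the null space before completing). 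By the second hypothesis $H_0$ embeds continuously into $\mathcal F'(U)$. The key linear-algebra fact is that the map $Q^{1/2}:L^2(\bar U)\to H_0$ is an isometry onto a dense subspace, because $\|Q^{1/2}f\|_{H_0}^2 = (QQ^{1/2}f, Q^{1/2}f)_{L^2} = (Q^{1/2}f, Q Q^{1/2} f)_{L^2}$... more cleanly, $\langle Q^{1/2}f, Q^{1/2}g\rangle_{H_0} = (Q Q^{1/2}f, Q^{1/2}g)_{L^2} = (Q^{2}f,g)_{L^2}$; hmm, this isn't quite $(f,g)$. Let me instead define $H$ directly as the image of $Q^{1/2}$ with transported norm: set $H := Q^{1/2}(L^2(\bar U))$ and $\|Q^{1/2}f\|_H := \|f\|_{L^2}$ for $f \perp \ker Q^{1/2}$. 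Then $H\subset H_0 \subset \mathcal F'(U)$ with continuous inclusions, so $H$ is embedded between $\mathcal F(U)$ and $\mathcal F'(U)$ once we check $\mathcal F(U)\subset H$.

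The second step, and the main obstacle, is precisely to show $\mathcal F(U)\subset H$, i.e., that every $\phi\in\mathcal F(U)$ lies in the range of $Q^{1/2}$, and moreover that $\mathcal F(U)$ is norm-dense in $H$. Density is easy since $L^2(\bar U)$ is the completion of $\mathcal F(U)$ (as $\mathcal F(U)$ is dense in $L^2(\bar U)$, and $Q^{1/2}$ maps a dense set to a dense set, and $\mathcal F(U)$ itself sits inside $L^2$ so $Q^{1/2}\mathcal F(U)$ is dense in $H$ — but wait, we need $\phi \in H$, not $Q^{1/2}\phi$). The inclusion $\mathcal F(U)\subset \operatorname{ran}(Q^{1/2})$ is the delicate point: it should follow from the covariance identity together with the fact that $\mu$ is supported on $X\subset \mathcal F'(U)$. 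Concretely, I would argue that the linear functional $\psi \mapsto (Q\psi,\phi)^{... }$; better, observe that on $\mathcal F(U)$ the inner product $\langle \phi,\psi\rangle_{L^2}$ can be written as $\langle Q^{-1/2}\phi, Q^{1/2}\psi\rangle_{L^2}$ formally, so the functional $\psi\mapsto (\phi,\psi)_{L^2}$ is bounded by $\|Q^{-1/2}\phi\|_{L^2}\cdot \|Q^{1/2}\psi\|_{... }$ — the honest route is: $\phi \in \operatorname{ran}(Q^{1/2})$ iff $\sup\{ (\phi,\psi)_{L^2}^2 : (Q\psi,\psi)\le 1, \psi\in\mathcal F(U)\} < \infty$, by the standard characterization of the range of a positive operator's square root (Douglas' lemma / the factorization lemma applied to the bilinear form). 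This supremum is exactly the squared Cameron–Martin norm computed via smooth test functions, which is finite for Cameron–Martin elements but needs to be shown finite for \emph{all} of $\mathcal F(U)$; I expect this to come from combining Douglas' range-inclusion theorem with the fact that $\psi\mapsto Q^{1/2}\psi$ extends the relevant factorization. I would cite or prove the abstract fact: if $Q = Q^{1/2}Q^{1/2}$ with $Q$ bounded, then for $\phi\in L^2$ one has $\phi\in\operatorname{ran}(Q^{1/2})$ iff $|(\phi,\psi)| \le C (Q\psi,\psi)^{1/2}$ for all $\psi$, and then $\|Q^{-1/2}\phi\|_{L^2}$ equals the optimal such $C$. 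Applying this with the finiteness guaranteed by the hypothesis that $\mathcal F(U)$ embeds into the various spaces... actually the cleanest sufficient input is the second bullet hypothesis plus boundedness of $Q$, which gives $Q^{1/2}:L^2\to H_0$ is bounded hence has closed-ish range behavior.

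Once $\mathcal F(U)\subset H$ and $\mathcal F(U)$ is norm-dense in $H$ are established, I would invoke Lemma~\ref{dualnorm} with this $H$. Its hypothesis $\int_X (x,\phi)(x,\psi)\,\mu(dx) = \langle\phi,\psi\rangle_H$ for $\phi,\psi\in\mathcal F(U)$ must be verified: by construction $\langle Q^{1/2}f, Q^{1/2}g\rangle_H = (f,g)_{L^2}$, and we want $\langle\phi,\psi\rangle_H = (Q\phi,\psi)_{L^2}$ for $\phi,\psi\in\mathcal F(U)$. Writing $\phi = Q^{1/2}f$, $\psi = Q^{1/2}g$ (legitimate once $\mathcal F(U)\subset\operatorname{ran}Q^{1/2}$), we get $\langle\phi,\psi\rangle_H = (f,g)_{L^2} = (Q^{1/2}f, Q^{1/2}g)_{L^2}\cdot$— no: $(f,g)_{L^2}$ versus $(Q\phi,\psi) = (Q Q^{1/2}f, Q^{1/2}g) = (Q^{1/2}f, Q^{... }$; here I need $(Q Q^{1/2}f, Q^{1/2}g)_{L^2} = (Q^2 f, g)_{L^2}$, which is \emph{not} $(f,g)_{L^2}$ in general. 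This tells me the norm on $H$ should instead be $\|Q^{1/2}f\|_H := \|f\|_{L^2}$ is wrong and I should take $H$ with inner product making $Q:L^2\to H$... The correct setup: define $H$ to be $\operatorname{ran}(Q^{1/2})$ with $\langle Q^{1/2}f, Q^{1/2}g\rangle_H := (f,g)_{L^2}$ only when restricted appropriately — and then for $\phi,\psi\in\mathcal F(U)$, $\langle\phi,\psi\rangle_H = \langle Q^{1/2}(Q^{-1/2}\phi), Q^{1/2}(Q^{-1/2}\psi)\rangle_H = (Q^{-1/2}\phi, Q^{-1/2}\psi)_{L^2}$, and separately $(Q\phi,\psi)_{L^2} = (Q^{1/2}\phi, Q^{1/2}\psi)_{L^2}$, which still doesn't match. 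The resolution is that $H$ in Lemma~\ref{dualnorm} should be taken as the \emph{completion of $\mathcal F(U)$ under $(Q\phi,\phi)^{1/2}$}, i.e., $H := H_0\cap(\text{something})$; then $H^{du}$ is the completion under $\|Q^{-1/2}\phi\|_{L^2}$. So the final step is: verify $H := \overline{\mathcal F(U)}^{(Q\cdot,\cdot)^{1/2}}$ satisfies the hypotheses of Lemma~\ref{dualnorm} (embedding between $\mathcal F(U)$ and $\mathcal F'(U)$ by the second bullet, density of $\mathcal F(U)$ automatic, covariance identity exactly the first bullet), conclude the Cameron–Martin space is $H^{du}$, and finally identify $H^{du}$ with the completion of $\mathcal F(U)$ under $\|Q^{-1/2}\phi\|_{L^2}$ using the factorization $\|\phi\|_{H^{du}} = \sup\{(\phi,\psi): \|\psi\|_H\le 1\} = \sup\{(\phi,\psi): (Q\psi,\psi)\le1\} = \|Q^{-1/2}\phi\|_{L^2}$ and the Douglas-type range characterization, which simultaneously shows $\mathcal F(U)\subset\operatorname{ran}(Q^{1/2})$. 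The main obstacle, then, is this last identification — establishing that $\|Q^{-1/2}\phi\|_{L^2(\bar U)} = \sup\{(\phi,\psi)_{L^2} : \psi\in\mathcal F(U),\ (Q\psi,\psi)_{L^2}\le 1\}$ and that $\mathcal F(U)$ lies in the range of $Q^{1/2}$ — which I would handle via the spectral theorem for $Q$ together with the observation that the right-hand supremum being finite (forced by $\mu$ living on $\mathcal F'(U)$-valued paths and the continuity hypotheses) is exactly the membership criterion for $\operatorname{ran}(Q^{1/2})$.
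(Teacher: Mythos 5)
Your final version---after discarding the false starts with $H:=\operatorname{ran}(Q^{1/2})$ carrying the transported norm---is essentially the paper's own argument: take $H$ to be the completion of $\mathcal F(U)$ under $(Q\phi,\phi)^{1/2}$ (so the covariance identity is the hypothesis of Lemma \ref{dualnorm} verbatim), and identify $H^{du}$ with the completion of $\mathcal F(U)$ under $\|Q^{-1/2}\phi\|_{L^2(\bar U)}$ via the dual-norm formula \eqref{xdu2} together with the spectral-theorem identity $(Q^{-1}\phi,\phi)^{1/2}=\sup\{(\phi,\psi):\psi\in\mathcal F(U),\,(Q\psi,\psi)\le 1\}$, which simultaneously yields $\mathcal F(U)\subset\operatorname{ran}(Q^{1/2})$. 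The one point to state cleanly is that the finiteness of this supremum for $\phi\in\mathcal F(U)$ comes from Lemma \ref{xdu} applied to $H$ (which is embedded between $\mathcal F(U)$ and $\mathcal F'(U)$ precisely because of the second bullet hypothesis), rather than from $\mu$ itself.
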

We remark that the second bullet point is a nontrivial condition. For example if $U=\mathbb R^d$ and $Q$ denotes convolution with heat kernel at time 1, then the image of $Q^{1/2}$ does \textit{not} contain $\mathcal F(U) = \mathcal S(\mathbb R^d).$ As always $(\cdot,\cdot)$ here denotes the pairing in $L^2(\bar U).$
\begin{proof}First let us verify that $Q$ is closable and positive-definite on $L^2(\bar U)$. The assumption that that $\int_X (x,\phi)(x,\psi)\mu(dx) = (Q\phi,\psi)$ guarantees that $Q$ is self-adjoint. %, thus it is indeed closable. 
It is positive-definite since we assumed that $(Q\phi,\phi)>0$ for all $\phi$ in the dense subspace $\mathcal F(U)$ of $L^2(\bar U)$. By functional calculus (or just spectral theorem) we can define $Q^\alpha$ for any $\alpha\in\mathbb R$, and this is also a closable and densely defined operator, such that the domain of $Q^\alpha$ is contained in the domain of $Q^\beta$ if $\alpha<\beta<0$. Let $\mathcal I$ denote the domain of $Q^{-1}$. %Let us assume that the image $R(A):=\{Af: f\in D(A)\}$ of the closable operator $A$ is dense in $L^2(\bar U)$ (e.g. it may contain $\mathcal F(U)).$ This means that $A$ has a closable and densely-defined inverse $A^{-1}$ whose domain is precisely $R(A)$. 

Let $H$ be the Hilbert space obtained by completing $\mathcal F(U)$ with respect to the norm $(Q\phi,\phi)^{1/2}.$ By assumption $H$ embeds into the continuous dual space $\mathcal F'(U)$. %To do this assume $\|h_n\|_H\to 0$. This implies that $(h_n,Q\psi) = \langle h_n,\psi\rangle_H\to 0$ for all $\psi\in L^2(\bar U).$ Since the image of $Q$ is dense in $L^2(\bar U)$, and since $\|h_n\|_{L^2}$ it follows that  %Letting $D(T)$ denote the domain of the unbounded operator $T$ on $L^2(\bar U)$, we see by the spectral theorem for unbounded positive self-adjoint operators that $\mathcal F(U) \subset D(A) \subset D(A^{1/2})$. Thus, the pairing $(A^{1/2}\phi,\psi)$ is a well-defined real number for all $\phi,\psi \in \mathcal F(U)$. We have an isometry $A^{1/2}:H\to L^2(\bar U)$, thus for all $h\in H$ and $\psi\in F(U)$ one defines the pairing $(h,\phi):= $$
Then for $\phi \in \mathcal I$ one has \begin{align*}\|Q^{-1/2}\phi\|_{L^2(\bar U)}=\sqrt{(Q^{-1}\phi,\phi)} &= \sup\{ (\phi,\psi): \psi\in \mathcal F(U), (Q\psi,\psi)\leq 1\} \\&= \sup\{ (\phi,\psi): \psi\in \mathcal F(U), \|\psi\|_{H}\leq 1\} = \|\phi\|_{H^{du}},
\end{align*}
where the last equality follows from \eqref{xdu2}. Consequently we may conclude that $H^{du}$ is simply the completion of $\mathcal I$ equipped with the norm given by $\|Q^{-1/2}\phi\|_{L^2(\bar U)},$ and by Lemma \ref{xdu} the latter Hilbert space is in turn necessarily embedded between $\mathcal F(U)$ and $\mathcal F'(U)$ in the sense of Definition \ref{emb}. Lemma \ref{xdu} also implies that $\mathcal F(U)$ is dense in the weak* topology, but as $H$ is a Hilbert space, the weak and weak* topology coincide. As $\mathcal F(U)$ is a linear subspace, Lemma \ref{b1a} implies it is norm dense.
\end{proof}

Next we identify conditions for a semigroup $S_t$ of operators on a Banach space $X$ embedded in $\mathcal F'(U)$ to be strongly continuous. The following lemma is straightforward given Remark \ref{3.2}. %follows immediately from Exercise 4.2 in \cite{Hai09}.

\begin{lem}\label{sg}
Let $X$ be a Banach space that is embedded between $\mathcal F(U)$ and $ \mathcal F'(U)$, such that $\mathcal F(U)$ is weakly dense in $X$. Let $S_t:X\to X$ be a semigroup of bounded operators for $t\ge 0$ such that 
\begin{enumerate}
\item each $S_t$ maps $\mathcal F(U)$ to itself.
\item $S_t \phi \stackrel{\mathcal F(U)}{\to} \phi$ as $t\to 0$ for all $\phi \in \mathcal F(U)$.
\item $\sup_{t\leq 1}\|S_t\|_{X\to X} <\infty.$
\end{enumerate} 
Then $(S_t)$ is a strongly continuous semigroup on $X$.
\end{lem}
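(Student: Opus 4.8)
The plan is to prove strong continuity of $(S_t)$ on $X$ by a standard $3\epsilon$-argument, using density of $\mathcal F(U)$ in $X$ together with the uniform bound on the operator norms. The only subtlety is that $\mathcal F(U)$ is only assumed to be \emph{weakly} dense in $X$, not norm-dense; but by Lemma \ref{b1a} a linear subspace of a Banach space is weakly dense if and only if it is norm-dense, so in fact $\mathcal F(U)$ is norm-dense in $X$, and we may work with genuine norm approximations.

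First I would fix $x \in X$ and $\delta > 0$, and set $M := \sup_{t \le 1}\|S_t\|_{X\to X} < \infty$ by hypothesis \textit{(3)}; note $M \ge 1$ since $S_0 = I$. By norm-density of $\mathcal F(U)$ in $X$, choose $\phi \in \mathcal F(U)$ with $\|x - \phi\|_X < \delta/(M+2)$. Then for $t \le 1$ I would estimate
\begin{equation*}
\|S_t x - x\|_X \le \|S_t(x-\phi)\|_X + \|S_t\phi - \phi\|_X + \|\phi - x\|_X \le M\|x-\phi\|_X + \|S_t\phi - \phi\|_X + \|x-\phi\|_X,
\end{equation*}
so the first and third terms together are at most $(M+1)\delta/(M+2) < \delta$. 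It remains to control the middle term as $t \to 0$.

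The key step is to pass from convergence $S_t\phi \to \phi$ in the Fréchet topology of $\mathcal F(U)$ (hypothesis \textit{(2)}) to convergence in the norm of $X$. This is exactly where the structure of $X$ being embedded between $\mathcal F(U)$ and $\mathcal F'(U)$ is used: the inclusion $\mathcal F(U) \hookrightarrow X$ is continuous. Indeed, $\mathcal F(U)$ is a Fréchet space and $X$ a Banach space, and the inclusion map $\mathcal F(U)\to X$ has a closed graph (if $\phi_n \to \phi$ in $\mathcal F(U)$ and $\phi_n \to \psi$ in $X$, then both convergences hold in $\mathcal F'(U)$ by continuity of $\mathcal F(U)\hookrightarrow \mathcal F'(U)$ and $X\hookrightarrow \mathcal F'(U)$, forcing $\psi = \phi$), so by the closed graph theorem it is continuous. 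Hence there exist a seminorm $\|\cdot\|_{\beta,n}$ on $\mathcal F(U)$ (or a finite max of such) and a constant $C$ with $\|\eta\|_X \le C\|\eta\|_{\beta,n}$ for all $\eta \in \mathcal F(U)$ — more precisely, continuity at $0$ gives $\|\eta\|_X \le C \max_{i} \|\eta\|_{\beta_i, n_i}$ for finitely many seminorms. Applying this to $\eta = S_t\phi - \phi \in \mathcal F(U)$ and using \textit{(2)}, which says exactly that each seminorm $\|S_t\phi - \phi\|_{\beta,n} \to 0$ as $t \to 0$, we get $\|S_t\phi - \phi\|_X \to 0$. So there is $t_0 \in (0,1]$ with $\|S_t\phi - \phi\|_X < \delta/(M+2)$ for $0 < t < t_0$, and combining with the previous display yields $\|S_t x - x\|_X < 2\delta$ for $0 < t < t_0$. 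Since $\delta$ was arbitrary, $\|S_t x - x\|_X \to 0$ as $t \to 0$, which is precisely strong continuity at the identity; the semigroup property is assumed, so $(S_t)$ is a strongly continuous semigroup on $X$.

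I do not expect any serious obstacle here; the argument is routine once one invokes Lemma \ref{b1a} to upgrade weak density to norm density and the closed graph theorem to get continuity of the inclusion $\mathcal F(U)\hookrightarrow X$ (equivalently, one could cite Remark \ref{3.2} and the surrounding discussion, where this continuity is already noted for the relevant function spaces). The mild care needed is just in bookkeeping the finitely many seminorms controlling the $X$-norm on $\mathcal F(U)$, and in making sure the constants $M$ and $C$ are absorbed cleanly into the $\epsilon$-splitting.
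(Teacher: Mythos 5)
Your proof is correct and is exactly the argument the paper intends: the paper offers no written proof (it only says the lemma is ``straightforward given Remark \ref{3.2}''), and your write-up supplies the expected steps --- upgrading weak density to norm density via Lemma \ref{b1a}, getting continuity of the inclusion $\mathcal F(U)\hookrightarrow X$ by the closed graph theorem (the same device the paper uses in the proof of Lemma \ref{xdu}), and then running the standard $3\epsilon$/uniform-boundedness argument, noting that the paper's definition of strong continuity only requires convergence at $t=0$.
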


In the main body of the paper, we often use the above lemma in multiplicative form, where the assumptions translate to $R_\e\phi\stackrel{\mathcal F(U)}{\to} \phi$ as $\e\uparrow 1$ and $\sup_{\e\in[e^{-1},1]}\|R_\e\|_{X\to X}<\infty$. Finally the following lemma gives an easy way to check the mixing condition which is required in Theorem \ref{mainr}.

\begin{lem}\label{sg'}
Let $H$ be a Hilbert space that is embedded between $\mathcal F(U)$ and $ \mathcal F'(U)$, such that $\mathcal F(U)$ is weakly dense in $H$. Let $\mathcal F^o(U)$ be a dense subset of $\mathcal F(U)$. Let $S:H\to H$ be a bounded operator such that $SS^*=I$ and $\langle S^n \phi, \psi\rangle_H\to 0$ for all $\phi,\psi \in \mathcal F^o(U)$. Then $\langle S^n \phi, \psi\rangle_H\to 0$ for all $\phi,\psi \in H.$
\end{lem}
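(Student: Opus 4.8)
The plan is a routine approximation argument resting on two simple preliminary facts. First I would note that the relation $SS^*=I$ forces $\|S\|\le 1$, and hence $\|S^n\|\le 1$ for every $n\ge 1$: for any $h\in H$ one has $\|S^*h\|_H^2=\langle SS^*h,h\rangle_H=\|h\|_H^2$, so $S^*$ is an isometry and $\|S\|=\|S^*\|=1$. Second, I would upgrade the density hypothesis: since the inclusion $\mathcal F(U)\hookrightarrow H$ is continuous (a linear map from a Fréchet space into a Banach space, continuous by the same Borel-measurability principle used for $H\to H$ in Lemma~\ref{bij}, or by a closed-graph argument), density of $\mathcal F^o(U)$ in $\mathcal F(U)$ yields density of $\mathcal F^o(U)$ in $\mathcal F(U)$ for the norm of $H$; combined with Lemma~\ref{b1a} (weak density of $\mathcal F(U)$ in $H$ is the same as norm density), this shows that $\mathcal F^o(U)$ is dense in $H$ with respect to $\|\cdot\|_H$.

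With these in place, the main step is a three-$\varepsilon$ estimate. Given $\phi,\psi\in H$ and $\delta>0$, choose $\phi_0,\psi_0\in\mathcal F^o(U)$ with $\|\phi-\phi_0\|_H<\delta$ and $\|\psi-\psi_0\|_H<\delta$, and split
$$\langle S^n\phi,\psi\rangle_H=\langle S^n\phi_0,\psi_0\rangle_H+\langle S^n(\phi-\phi_0),\psi\rangle_H+\langle S^n\phi_0,\psi-\psi_0\rangle_H .$$
The first term tends to $0$ as $n\to\infty$ by hypothesis, since $\phi_0,\psi_0\in\mathcal F^o(U)$; the second and third are bounded in absolute value by $\|S^n\|\,\|\phi-\phi_0\|_H\|\psi\|_H\le\delta\|\psi\|_H$ and $\|S^n\|\,\|\phi_0\|_H\|\psi-\psi_0\|_H\le\delta(\|\phi\|_H+\delta)$ respectively, using $\|S^n\|\le 1$. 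Hence $\limsup_{n\to\infty}|\langle S^n\phi,\psi\rangle_H|\le\delta(\|\phi\|_H+\|\psi\|_H+\delta)$, and letting $\delta\downarrow 0$ gives $\langle S^n\phi,\psi\rangle_H\to 0$, as desired.

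There is essentially no genuine obstacle in this argument; the only point that deserves a moment's care is the density upgrade in the first paragraph, namely ensuring that $\mathcal F^o(U)$ is dense in $H$ in the Hilbert-space norm rather than merely in the Fréchet topology of $\mathcal F(U)$, which is exactly where the continuity of $\mathcal F(U)\hookrightarrow H$ and Lemma~\ref{b1a} are invoked.
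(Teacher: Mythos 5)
Your proof is correct and takes essentially the same approach as the paper, whose entire argument is the observation that $\mathcal F^o(U)$ is norm-dense in $H$ (via continuity of the embedding and the equivalence of weak and norm density for linear subspaces, Lemma~\ref{b1a}) together with $\|S^n\|_{H\to H}\le 1$, the rest being left as "clear by a density argument." You have simply written out explicitly the isometry computation and the three-$\varepsilon$ estimate that the paper omits.
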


Since $\mathcal F^o(U)$ is necessarily norm-dense in $H$ (Remark \ref{3.2}), the proof is clear by a density argument and the fact that $\|S^n\|_{H\to H}\leq 1$ for all $n$. The lemma makes the mixing condition easy to check, since in examples of interest we often have that $\mathcal F^o(U)$ consists of compactly supported functions, and often $\langle S^n \phi, \psi\rangle_H= 0$ for large enough $n$ whenever $\phi,\psi$ are compactly supported.

\bibliographystyle{alpha}
\bibliography{ref.bib}

\end{document}